\providecommand{\U}[1]{\protect\rule{.1in}{.1in}}
\newtheorem{theorem}{Theorem}[section]
\newtheorem{conjecture}[theorem]{Conjecture}
\newtheorem{corollary}[theorem]{Corollary}
\newtheorem{definition}[theorem]{Definition}
\newtheorem{lemma}[theorem]{Lemma}
\newtheorem{question}[theorem]{Question}
\newtheorem{proposition}[theorem]{Proposition}
\newtheorem{remark}[theorem]{Remark}
\def\N{{\mathbb N}}
\def\S{{\mathcal S}}
\def\Z{{\mathbb Z}}
\def\D{{\mathcal D}}
\def\U{{\mathcal U}}
\def \act{{G \curvearrowright X}}
\newcommand{\ZZ}{\mathbb{Z}}			
\newcommand{\NN}{\mathbb{N}}			
\newcommand{\RR}{\mathbb{R}}			
\newcommand{\Map}{{\rm Map}}
\newcommand{\IE}{\mathtt{IE}}
\newcommand{\IN}{\mathtt{IN}}
\newcommand{\IT}{\mathtt{IT}}
\newcommand{\Sym}{{\rm Sym}}
\newcommand{\htop}{h_{\rm{top}}}
\newcommand{\hnaive}{h^{\rm{nv}}_{\rm{top}}}
\newcommand{\hsof}{h^{\Sigma}_{\rm{top}}}
\newcommand{\mdim}{{\rm mdim}}
\newcommand{\ord}{{\rm ord}}
\newcommand{\cU}{{\mathcal U}}
\newcommand{\Nb}{{\mathbb N}}
\newcommand{\ox}{{\mathbf{x}}}
\newcommand{\comp}{{\rm c}}
\newcommand{\oA}{{\boldsymbol{A}}}
\newcommand{\upind}{{\overline{\rm I}}}
\title{Local entropy theory and applications}
\author{Felipe García-Ramos and Hanfeng Li}
\date{}
\newcommand{\Addresses}{{
  \bigskip

\hskip-\parindent F.~Garc\'ia-Ramos, \textsc{Physics Institute, Universidad Aut\'onoma de San Luis Potos\'i, M\'exico.\\
Faculty of Mathematics and Computer Science, Uniwersytet Jagielloński, Poland}\par\nopagebreak
  \textit{E-mail address}: \texttt{fgramos@conahcyt.com}

 \medskip

\hskip-\parindent  H.~Li, \textsc{
Department of Mathematics, SUNY at Buffalo,
Buffalo, NY 14260-2900, USA.}\par\nopagebreak
  \textit{E-mail address}: \texttt{hfli@math.buffalo.edu}

}}
\begin{document}

\maketitle
\abstract{
This paper is a survey about recent developments in the local entropy theory for topological dynamical systems and continuous group actions, with particular emphasis on the connections with other areas of dynamical systems and mathematics. 
\tableofcontents
\section{Introduction}

A measure-preserving transformation has completely positive entropy if every non-trivial factor has positive entropy. This is equivalent to saying that every non-trivial finite partition of the phase space has positive entropy. Bernoulli transformations are the prototypes for completely positive entropy, and it was an open question for some time whether these were the only examples. Nowadays there exist plenty of examples with completely positive entropy that are not Bernoulli.

 Blanchard introduced two notions for topological dynamical systems that emulated the behaviour of completely positive entropy \cite{Blanchard1992} in a topological context. A topological dynamical system has \textit{completely positive entropy}, if every non-trivial factor has positive topological entropy, and has \textit{uniform positive entropy} if the topological entropy of every non-dense finite open cover is positive. In contrast with measurable systems, these properties turned out to be different.
In order to comprehend them better, Blanchard introduced \textit{entropy pairs} in \cite{Blanchard1993}. Entropy pairs allow us to understand ``where'' the entropy lies in the system. The answer where, is not in the phase space, but in the product of the phase space. In vague terms, a pair of points is an entropy pair if every finite open cover that separates them has positive topological entropy. 

It turns out that this approach of studying topological dynamics, nowadays known as local entropy theory, is not only useful for understanding positive entropy and completely positive entropy. With time it was discovered that this perspective is related to other dynamical objects like Li-Yorke chaos, Ellis semigroup, maximal equicontinuous factor, sequence entropy, homoclinic points, mean dimension, spectrum, nilsystems, and cardinality of ergodic measures. Furthermore, it has paved the way for novel interactions between dynamical systems and other areas of mathematics like operator algebras, combinatorics, group theory, descriptive set theory, and continuum theory. 

This paper is not the first survey on local entropy theory (see \cite{GlasnerYe2009}), nonetheless we consider that many important contributions have appeared since the publication of that survey, in particular many applications and connections with other fields. All the results presented here, except Theorem \ref{thm:infty} and Theorem \ref{thm:infty2}, appear in other papers or essentially follow from known results. 
 
\textbf{Acknowledgements}: The first author would like to thank Yonatan Gutman, Dominik Kwietniak and Feliks Przytycki for the invitation to give a mini-course on this topic at IMPAN. This work was partially supported by the Simon's Foundation Award 663281 granted to the Institute of Mathematics of the Polish Academy of Sciences, and the Grant:U1U/W16/NO/01.03 of the Strategic Excellent Initiative program at the Jagiellonian University.}
\section{IE-tuples for group actions}  

In this section we provide an introduction to the basic results of local entropy theory and combinatorial independence. We will provide the definition of entropy in different contexts but we will not go into details about the basic results of entropy theory. For a more complete exposition of this topic we refer the reader to~\cite{Downarowicz2011, pollicott1998dynamical, Walters1982} for entropy theory of $\ZZ$-actions, and to~\cite{KerrLiBook2016, Ollagnier1985book} for entropy theory of  actions of amenable and sofic groups.

Throughout this paper $\NN$ denotes the set of positive integers and $G$ denotes a countably infinite group with identity $e_G$.

A (left) \textbf{action} of the group $G$ on $X$ is represented by $G \curvearrowright X$. In this paper we shall always assume that $X$ is a compact metrizable space and that $G$ acts by homeomorphisms. We denote by $\rho$ a compatible metric on $X$.

Let $G \curvearrowright X$ and $G\curvearrowright Y$ be two actions. A function $\pi\colon X \rightarrow Y$ is \textbf{$G$-equivariant} if $g\pi(x)=\pi(gx)$ for every $g \in G$ and $x \in X$, in this situation we write $\pi: \act \rightarrow G\curvearrowright Y$. If a $G$-equivariant function, $\pi:X\rightarrow Y$, is continuous and surjective, we say $G\curvearrowright Y$ is a \textbf{factor of} $\act$ and $\pi$ is a \textbf{factor map}. 
The following proposition is standard (e.g. see \cite[Appendix E.11-3.]{de2013elements}).
\begin{proposition}\label{prop:factor-eq_rel}
    If $\pi\colon \act \to G\curvearrowright Y$ is a factor map then $$R_\pi=\{(x,x'):\, \pi(x)=\pi(x')\}\subset X\times X$$ is a closed $G$-invariant equivalence relation. Conversely, if $Q\subset X\times X$ is a closed $G$-invariant equivalence relation then  $\pi\colon \act \to G\curvearrowright X/Q$ is a factor map where $G\curvearrowright X/Q$ is the induced action $gy=\pi(g\pi^{-1}(y))$. 
\end{proposition}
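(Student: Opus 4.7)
For the forward direction, the plan is to check each of the three properties directly. The set $R_\pi$ is an equivalence relation because it is the kernel pair of a function. It is closed because $Y$ is Hausdorff (being compact metrizable), so the diagonal $\Delta_Y \subseteq Y \times Y$ is closed and $R_\pi = (\pi \times \pi)^{-1}(\Delta_Y)$ is the preimage of a closed set under the continuous map $\pi \times \pi$. It is $G$-invariant because $G$-equivariance gives $\pi(gx) = g\pi(x) = g\pi(x') = \pi(gx')$ whenever $(x,x') \in R_\pi$, so $(gx, gx') \in R_\pi$.

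For the converse, I would first equip $X/Q$ with the quotient topology, for which the canonical projection $\pi \colon X \to X/Q$ is by definition continuous and surjective, and $X/Q$ is automatically compact as a continuous image of $X$. The key topological point is to verify that $X/Q$ is Hausdorff, which is where the closedness of $Q$ is used: given two distinct classes $[x] \ne [x']$, the point $(x,x') \notin Q$, and since $Q$ is closed and the projection $\pi \times \pi$ is closed (as a map between compact Hausdorff spaces, once Hausdorffness of $X/Q$ is established — so one really invokes the standard separation argument producing saturated open neighborhoods of $[x]$ and $[x']$ from open sets around the $Q$-saturations of $\{x\}$ and $\{x'\}$). Second countability of $X/Q$ is inherited from that of $X$, so by Urysohn's theorem $X/Q$ is compact metrizable.

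Next, I would define the $G$-action on $X/Q$ by $g \cdot [x] \isdef [gx]$. The fact that $Q$ is $G$-invariant makes this well-defined: if $[x] = [x']$ then $(x,x') \in Q$, hence $(gx, gx') \in Q$, so $[gx] = [gx']$. Each $g$ acts by a homeomorphism on $X/Q$ because $\pi \circ g = g \circ \pi$ on the nose, so continuity of the action on $X/Q$ follows from the universal property of the quotient applied to the continuous map $\pi \circ g \colon X \to X/Q$, and its inverse is the action of $g^{-1}$, which is likewise continuous. Equivariance of $\pi$ holds by construction, so $\pi \colon \act \to G \curvearrowright X/Q$ is a factor map.

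The routine part is verifying the equivalence relation, $G$-invariance, and equivariance conditions; the only subtle point — and the main obstacle worth highlighting — is the passage from closedness of $Q$ to Hausdorffness (and hence metrizability) of the quotient $X/Q$, which is the classical characterization of Hausdorff quotients via closed equivalence relations on compact spaces.
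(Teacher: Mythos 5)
Your argument is correct, and since the paper gives no proof of this proposition (it is labeled standard and referred to \cite[Appendix E.11-3.]{de2013elements}), yours is exactly the expected standard argument: kernel-pair/preimage-of-the-diagonal for the forward direction, and the quotient-topology construction with the classical ``closed equivalence relation on a compact Hausdorff space yields a Hausdorff quotient'' fact for the converse. Two small points of precision: the closed map to invoke is not $\pi\times\pi$ (whose closedness, as you note, presupposes Hausdorffness of $X/Q$) but the quotient map $\pi$ itself, which is closed because the saturation of a closed set $C$ equals the image of $Q\cap(C\times X)$ under the coordinate projection $X\times X\to X$, a closed map since $X$ is compact; normality of $X$ plus closedness of $\pi$ then produces the disjoint saturated open neighborhoods you allude to. The same closedness of $\pi$ is also what justifies your claim that $X/Q$ is second countable (take a countable base $\mathcal{B}$ of $X$ closed under finite unions and use the open sets $X/Q\setminus\pi(X\setminus B)$, $B\in\mathcal{B}$); general quotients of second countable spaces need not be second countable, so this step should not be presented as automatic inheritance. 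With these clarifications the proof is complete, including the well-definedness and continuity of the induced action via the universal property, which matches the formula $gy=\pi(g\pi^{-1}(y))$ in the statement.
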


\subsection{Amenable groups} 
If $K$ is a non-empty finite subset of $G$, we write $K \Subset G$.
	
	Let $\delta >0$ and $K \Subset G$. A set $F \Subset G$ is said to be $(K,\delta)$-invariant if $|KF \Delta F| \leq \delta |F|$. A sequence of non-empty  finite subsets, $\{F_{n}\}_{n\in\mathbb{N}}$, of $G$ is said to be \textbf{F\o lner} if it is eventually $(K,\delta)$-invariant for every $K \Subset G$ and $\delta >0$. A countable group $G$ is \textbf{amenable} if it admits a F\o lner sequence. Abelian groups and finitely generated groups of subexponential growth are always amenable.

 	Given two finite open covers $\mathcal{U},\mathcal{V}$ of $X$, we define their \textbf{join} by $$\mathcal{U} \vee \mathcal{V} = \{U \cap V : U \in \mathcal{U}, V\in \mathcal{V}\}.$$ Let $g \in G$. We define $g\mathcal{U} = \{gU : U \in \mathcal{U}\}$ and $N(\mathcal{U})$ as the smallest cardinality of a subcover of $\mathcal{U}$. If $F\Subset G$, denote by $\mathcal{U}^F$ the join
	
	$$\mathcal{U}^F = \bigvee_{g \in F}g^{-1}\mathcal{U}.$$

		Let $G$ be an amenable group, $G \curvearrowright X$ an action, $\mathcal{U}$ a finite open cover of $X$ and $\{F_n\}_{n \in \NN}$ a F\o lner sequence for $G$. We define the \textbf{topological entropy of
			$G \curvearrowright X$ with respect to $\mathcal{U}$} as%
		\[
		\htop(G \curvearrowright X,\mathcal{U})=\lim_{n\to\infty}\frac{1}{\left\vert
			F_{n}\right\vert }\log N(\mathcal{U}^{F_{n}})\in [0,\infty).
		\]
	The function $F \mapsto \log N(\mathcal{U}^{F})$ is subadditive and thus the limit exists and does not depend on the choice of the F\o lner sequence (see for instance~
 \cite[page 220]{KerrLiBook2016}). The \textbf{topological entropy} of $G \curvearrowright X$ is defined as
 \[
	\htop(G \curvearrowright X)=\sup_{\mathcal{U}}\htop(G \curvearrowright X,\mathcal{U})\in [0,\infty],
	\]
where $\U$ ranges over finite open covers of $X$. 
 
 The concept of entropy pairs was introduced by Blanchard in~\cite{Blanchard1993}. Entropy tuples are a natural generalization that has been studied in several papers.

A \textit{tuple} refers to an element of a product space $Z^k$ for some $k\geq 1$. When $k=2$, we simply call them pairs. 

	 For each $k\in \NN$, we define 
\begin{gather*}
  \Delta_k(X)=\{(x_1,\dots,x_k)\in X^k: x_1=x_2=\dots=x_k\}\text{, and} \\ 
  \Delta^2_k(X)=\{(x_1,\dots,x_k)\in X^k: \text{there exist }n\neq m\in \{1,\dots,k\}\text{ such that } x_n= x_m\}.
\end{gather*}

\begin{definition}
\label{def:entropy tuple}
Let $G$ be an amenable group, $\act$ an action and $k\geq 2$.
We call $\ox = (x_1, . . . , x_k) \in X^k \setminus \Delta_k(X)$ an \textbf{entropy tuple} if whenever $K_1, . . . , K_l$ (with $l\geq2$) are closed pairwise disjoint neighborhoods of some elements in $\{x_1, . . . , x_k\}$, then
$$
\htop(G \curvearrowright X,\mathcal{U})>0,
$$
where $\U=\{K_1^c,\dots, K^c_l\}$. 
\end{definition}

		Let $G\curvearrowright X$ be an action and $\mathbf{A}=(A_{1},\dots,A_{k})$ a tuple of subsets of $X$. We say $J\subset G$ is an \textbf{independence set
			for} $\mathbf{A}$ if for every non-empty finite $I\subset J$ and every $\phi \colon I\to\{1,\dots,k\}$ we have
		\[
		\bigcap_{s\in I}s^{-1}A_{\phi(s)}\neq\varnothing\text{.}%
		\]
		We define the \textbf{independence density of} $\mathbf{A}$ to be the largest $q\geq0$ such that every set $F\Subset G$ has a
		subset of cardinality at least $q\left\vert F\right\vert $ which is an
		independence set for $\mathbf{A}$.

	\begin{definition}
 \label{def:IE pairs}
		Let $G$ be an amenable group, $\act$ an action and $k \geq 1$. We say that $\ox=(x_{1},\dots,x_{k})\in X^{k}$ is an \textbf{independence entropy tuple (IE-tuple)} if for every product neighborhood
		$U_{1}\times \dots \times U_{k}$ of $\mathbf{x}$ we have that $(U_{1},\dots,U_{k})$ has
		positive independence density. We denote the set of IE-tuples of length $k$ by
		$\IE_{k}(X,G)$.
	\end{definition}

 \begin{remark}
 The first definition of IE-tuples of an action of an amenable group (stated in \cite{KerrLi2007}) is not exactly the previous definition but it is an equivalent notion \cite[Theorem 4.8]{KerrLi2013}. 
 \end{remark}

The following result is proved in \cite[Theorem 3.16]{KerrLi2007}. 
	\begin{theorem}[Kerr and Li \cite{KerrLi2007}]
 \label{thm:IE-tuples}
Let $G$ be an amenable group, $\act$ an action, $k\geq 2$ and  $(x_1,\dots, x_k) \in X^k \setminus \Delta_k(X)$. Then $(x_1,\dots, x_k)$ is an entropy tuple if and only if it is an IE-tuple. 
     
	\end{theorem}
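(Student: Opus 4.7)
The plan is to prove both implications using the combinatorics of independence sets and covers. For the direction IE-tuple $\Rightarrow$ entropy tuple, fix pairwise disjoint closed neighborhoods $K_1,\dots,K_l$ (with $l\geq 2$) of points $x_{i_j}\in\{x_1,\dots,x_k\}$ and choose open neighborhoods $U_i$ of each $x_i$ with $U_{i_j}\subseteq\interior(K_j)$. By the IE hypothesis there exists $q>0$ such that, for every F\o lner set $F_n$, some $J_n\subseteq F_n$ of cardinality at least $q|F_n|$ is an independence set for $(U_1,\dots,U_k)$. For each of the $l^{|J_n|}$ maps $\psi\colon J_n\to\{1,\dots,l\}$, pick $x_\psi\in\bigcap_{s\in J_n}s^{-1}U_{i_{\psi(s)}}\subseteq\bigcap_{s\in J_n}s^{-1}K_{\psi(s)}$; these points are pairwise distinct because the $K_j$ are disjoint. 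A cell $\bigcap_{s\in F_n}s^{-1}K_{\alpha(s)}^c$ of $\U^{F_n}$ containing $x_\psi$ must satisfy $\alpha(s)\neq\psi(s)$ on $J_n$, so a single $\alpha$ captures at most $(l-1)^{|J_n|}$ of the $x_\psi$'s. Covering all of them forces $N(\U^{F_n})\geq(l/(l-1))^{|J_n|}$, and taking $\log$ and dividing by $|F_n|$ yields $\htop(\act,\{K_1^c,\dots,K_l^c\})\geq q\log(l/(l-1))>0$.

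For the converse, suppose $(x_1,\dots,x_k)$ is an entropy tuple and fix a product neighborhood $U_1\times\cdots\times U_k$. Let $y_1,\dots,y_l$ be the distinct values occurring among the $x_i$'s; here $l\geq 2$ because $(x_1,\dots,x_k)\notin\Delta_k(X)$. Choose pairwise disjoint closed neighborhoods $K_j$ of $y_j$ small enough that $K_j\subseteq U_i$ whenever $x_i=y_j$ (possible since $y_j\in U_i$ in that case). The entropy tuple hypothesis gives $h:=\htop(\act,\{K_1^c,\dots,K_l^c\})>0$, so $N(\U^{F_n})$ grows at least like $e^{h|F_n|/2}$ along any F\o lner sequence. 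I then invoke a Sauer--Shelah / Karpovsky--Milman type combinatorial lemma (as used in \cite[Section~3]{KerrLi2007}): the exponential growth of minimal subcovers forces exponentially many realized $K$-patterns across $F_n$, and one extracts $J_n\subseteq F_n$ with $|J_n|\geq c|F_n|$ (for some constant $c>0$) on which every map $J_n\to\{1,\dots,l\}$ is realized as $s\mapsto j_s$ with $sx\in K_{j_s}$ for some $x\in X$. This exactly says $J_n$ is an independence set for $(K_1,\dots,K_l)$, and composing any $\phi\colon J_n\to\{1,\dots,k\}$ with the assignment $i\mapsto j$ such that $x_i=y_j$ shows $J_n$ is also an independence set for $(U_1,\dots,U_k)$. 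Hence $(U_1,\dots,U_k)$ has independence density at least $c$, and $(x_1,\dots,x_k)$ is an IE-tuple.

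The main obstacle is the Karpovsky--Milman type combinatorial lemma in the second direction: the passage from the cover-theoretic quantity $N(\U^{F_n})$ to a lower bound on the number of distinct $K$-patterns realized on $F_n$, followed by the Sauer--Shelah style extraction of a positive density ``fully shattered'' coordinate set. The quantitative parameters must be tuned so that exponential growth rate $h>0$ translates into a uniform density $c>0$ independent of $n$. The remaining bookkeeping---checking that the $K_j$'s can be chosen inside the required $U_i$'s, and that independence transfers along $i\mapsto j$---is routine but depends on the hypothesis $(x_1,\dots,x_k)\notin\Delta_k(X)$ to ensure $l\geq 2$, which is precisely what makes the entropy tuple notion non-vacuous.
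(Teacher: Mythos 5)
Your proposal is correct and follows essentially the same route as the Kerr--Li argument the paper cites: the direct counting bound $N(\U^{F})\geq (l/(l-1))^{|J|}$ for the IE-to-entropy-tuple direction, and Lemma~\ref{lem:combinatorial} (the Karpovsky--Milman/Sauer--Shelah type extraction) to pass from exponential growth of $N(\U^{F})$ to positive-density independence sets for $(K_1,\dots,K_l)$, then to $(U_1,\dots,U_k)$, in the converse. The only point to tidy is that your converse extracts independence subsets of F\o lner sets, whereas Definition~\ref{def:IE pairs} demands them inside every $F\Subset G$; this is closed either by noting that subadditivity gives $\htop(\act,\U)=\inf_{F\Subset G}\frac{1}{|F|}\log N(\U^{F})$, so the combinatorial lemma applies to arbitrary finite $F$, or by invoking the equivalence of the two density formulations recorded in the remark after Definition~\ref{def:IE pairs}.
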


IE-pairs are witnesses of topological entropy \cite[Proposition 3.9]{KerrLi2007} \cite[Theorem 4.8, Theorem~8.1]{KerrLi2013}. 

\begin{proposition}
[Blanchard \cite{Blanchard1993}, Kerr and Li \cite{KerrLi2007,KerrLi2013}]
\label{prop:IE pair}
Let $G$ be an amenable group and $\act$ an action. The following statements are equivalent.
\begin{enumerate}
    \item  $\htop(G \curvearrowright X)>0$.
    \item $\IE_2(X,G)\setminus \Delta_2(X) \neq \varnothing$.
    \item $\IE_k(X,G)\setminus \Delta^2_k(X) \neq \varnothing$ for every $k\geq 2$.
\end{enumerate}

\end{proposition}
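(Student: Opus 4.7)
The plan is to prove the cyclic chain $(3) \Rightarrow (2) \Rightarrow (1) \Rightarrow (3)$.

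The implication $(3) \Rightarrow (2)$ is immediate. If $(x_1,\dots,x_k) \in \IE_k(X,G) \setminus \Delta_k^2(X)$, then after reindexing $x_1 \neq x_2$, and an elementary check from \Cref{def:IE pairs} shows that any two-coordinate projection of an IE-tuple is an IE-pair: given a product neighborhood $V_1 \times V_2$ of $(x_1,x_2)$, the tuple $(V_1,V_2,X,\dots,X)$ is a product neighborhood of the original IE-tuple and therefore has positive independence density, and any independence set for it is also an independence set for $(V_1,V_2)$.

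For $(2) \Rightarrow (1)$, let $(x_1,x_2) \in \IE_2(X,G) \setminus \Delta_2(X)$. By \Cref{thm:IE-tuples}, this is an entropy pair. Choose disjoint closed neighborhoods $K_1,K_2$ of $x_1,x_2$. Then $\mathcal{U} = \{K_1^c,K_2^c\}$ satisfies $\htop(G \curvearrowright X,\mathcal{U}) > 0$ by \Cref{def:entropy tuple}, so $\htop(G \curvearrowright X) > 0$.

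The main work is $(1) \Rightarrow (3)$. Fix $k \geq 2$ and assume $\htop(G \curvearrowright X) > 0$; pick a finite open cover $\mathcal{U}$ with $\htop(G \curvearrowright X,\mathcal{U}) > 0$. The combinatorial heart is a Karpovsky--Milman / Sauer--Shelah type estimate: exponential growth of $N(\mathcal{U}^{F_n})$ along a F\o{}lner sequence $\{F_n\}$ forces the existence of a tuple of elements of a suitable refinement of $\mathcal{U}$ for which a positive-density subset of each $F_n$ is an independence set. Since $\htop(G \curvearrowright X)>0$ implies $X$ is infinite, one may refine $\mathcal{U}$ to have arbitrarily small mesh, and thereby arrange that the $k$ chosen sets $A_1,\dots,A_k$ have pairwise disjoint closures. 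Finally, compactness of $X^k$ combined with a pigeonhole/diagonal argument over the finitely many possible index $k$-tuples produces, in the limit, an IE-tuple $(x_1,\dots,x_k)$ with $x_i \in \overline{A_i}$; by disjointness this tuple lies outside $\Delta_k^2(X)$.

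The principal obstacle is the combinatorial extraction step: the ordinary Sauer--Shelah lemma only delivers independence for pairs of sets, so obtaining independence for a $k$-tuple whose members are geometrically separated requires either an iteration or the generalized Karpovsky--Milman lemma used in \cite{KerrLi2007}, combined with a judicious choice of refinement of $\mathcal{U}$. A subsidiary delicate point is ensuring that the independence density can be taken uniform in $n$ for a single tuple of indices, which is handled by pigeonholing over the finitely many possible $k$-tuples drawn from the refinement.
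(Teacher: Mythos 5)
Your implications $(3)\Rightarrow(2)$ and $(2)\Rightarrow(1)$ are fine (for the first, note that the case $k=2$ already suffices since $\Delta^2_2(X)=\Delta_2(X)$; for the second, one can also avoid invoking \Cref{thm:IE-tuples} by the direct estimate $N(\mathcal{U}^F)\geq 2^{q|F|}$ for $\mathcal{U}=\{\overline{U_1}^{\,c},\overline{U_2}^{\,c}\}$). The survey itself offers no proof of this proposition, citing \cite{Blanchard1993,KerrLi2007,KerrLi2013}, so the real question is whether your $(1)\Rightarrow(3)$ sketch works, and there I see a genuine gap, in fact two. First, \Cref{lem:combinatorial} transfers covering numbers into independence only for covers of the special form $\{A_1^c,\dots,A_k^c\}$ with $A_1,\dots,A_k$ \emph{pairwise disjoint}: disjointness is exactly what makes the encoding $x\mapsto(\text{the index of the } A_i \text{ containing } sx, \text{ or } 0)$ identify $N(\{A_1^c,\dots,A_k^c\}^F)$ with the combinatorial covering number $N_S$. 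An arbitrary small-mesh refinement $\mathcal{V}$ of $\mathcal{U}$ carries no such structure: its members overlap, their complements are large, and a Sauer--Shelah extraction applied to realizable itineraries may hand you overlapping members of $\mathcal{V}$, for which ``independence'' can hold vacuously (points whose orbit never leaves the intersection realize every pattern); small mesh does not prevent this, and you do not get to choose which members the lemma returns. The standard repair is Blanchard's reduction: choose closed $C_i\subset U_i$ still covering $X$; the join of the two-element covers $\{U_i,C_i^c\}$ refines $\mathcal{U}$, so by subadditivity some $\{U_i,C_i^c\}$ has positive entropy, and its complements $(U_i^c,C_i)$ form a disjoint closed pair to which \Cref{lem:combinatorial} applies. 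That, however, only proves $(1)\Rightarrow(2)$.

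Second, and more seriously, item $(3)$ asks for an IE-tuple of \emph{every} length $k$ with pairwise distinct entries, which in your scheme requires, for each $k$, a $k$-tuple of sets with pairwise disjoint closures and positive independence density. Nothing in ``refine $\mathcal{U}$ to arbitrarily small mesh'' produces such a tuple: refining only increases entropy of the cover, it does not make the combinatorial lemma yield $k$ geometrically separated sets. Starting from a disjoint pair $(A_1,A_2)$ with positive independence density, the natural candidates $V_\phi=\bigcap_{s\in E}s^{-1}A_{\phi(s)}$, $\phi\in\{1,2\}^E$, over a finite independence set $E$ with $2^{|E|}\geq k$ are indeed pairwise disjoint, but proving that $(V_\phi)_\phi$ retains positive independence density is a further nontrivial step: a proportional independence set inside $EF$ need not contain proportionally many full fibers $Et$, so a genuine combinatorial argument is needed; this is precisely the content behind \cite[Theorem 8.1]{KerrLi2013} (for amenable $G$ one may instead route through the variational principle, an ergodic measure of positive entropy, and measure IE-tuples as in \cite{KerrLi2009}). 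Your sketch contains no argument for this step, and the concluding ``pigeonhole/diagonal'' also understates the subdivision lemma needed to localize a positively independent tuple of closed sets to an IE-tuple of points. Finally, a minor point: pigeonholing over index tuples gives you a fixed tuple that works only along a subsequence of Følner sets, whereas \Cref{def:IE pairs} demands proportional independence sets inside \emph{every} finite $F$; to close that gap you need the equivalence of the two formulations recorded in \cite[Theorem 4.8]{KerrLi2013}.
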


The first connection between combinatorial independence and entropy of actions of $\Z$ was explored by Weiss in \cite[Chapter 8]{weiss2000single}. Following this path, Huang and Ye \cite{HuangYe2006} proved a result that is equivalent to Theorem \ref{thm:IE-tuples} for actions of $\Z$. The proof comes from combinatorial arguments using the Sauer-Perles-Shelah Lemma and measure-theoretic tools. In order to generalize these results to actions of amenable groups a different approach was taken in \cite{KerrLi2007}. 

Let $Z$ be a non-empty finite set, $k \geq 2$, and $\U$ the cover of 
$$
\{0, 1,\dots,k\}^Z =\Pi_{z\in Z} \{0, 1,\dots,k\}
$$ consisting of subsets of the form $\Pi_{z\in Z} \{i_z\}^c$, where $1 \leq i_z \leq k$ for each
$z\in Z$. For $S \subset \{0,1,\dots,k\}^Z$ we write $N_S(\U)$ to denote the minimal number of sets in $\U$ one needs to cover $S$.

The following result appeared in \cite[Lemma 3.3]{KerrLi2007} and is a key combinatorial result in the proof of Proposition \ref{prop:IE pair} and Theorem \ref{thm:IE-tuples}.  

\begin{lemma} [Kerr and Li \cite{KerrLi2007}]
\label{lem:combinatorial} Let $k \geq 2$ and $b > 0$. There exists a constant $c > 0$,
depending only on $k$ and $b$, such that for every non-empty finite set $Z$ and $S \subset \{0, 1,\dots, k\}^Z$ with
$N_S(\U) \geq k^{b|Z|}$, there exists $W \subset Z$ such that $|W | \geq c|Z|$ and $S|_W \supset \{1,\dots, k\}^W$.
\end{lemma}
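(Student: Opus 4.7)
The plan is to prove the contrapositive via a Sauer-Shelah-Perles-style dimensional bound adapted to the covering number $N_S(\U)$. Define the \emph{shattering dimension} $d$ of $S$ as the largest size of a $W \subset Z$ satisfying $S|_W \supset \{1,\ldots,k\}^W$; the lemma then asserts $d \geq c|Z|$ whenever $N_S(\U) \geq k^{b|Z|}$.

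The plan is to establish the auxiliary estimate
\[
N_S(\U) \leq \sum_{i=0}^{d} \binom{|Z|}{i} k^i
\]
by induction on $|Z|$. Given this, using the elementary bound $\sum_{i=0}^d \binom{n}{i}k^i \leq (d+1)(ekn/d)^d$, the hypothesis $N_S(\U) \geq k^{b|Z|}$ forces $d \geq c|Z|$ for some explicit $c = c(k,b) > 0$, and the maximal shattered $W$ witnesses the conclusion.

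For the inductive step, I fix $z_0 \in Z$, set $Z' = Z \setminus \{z_0\}$, and for each $v \in \{0, \ldots, k\}$ let $S_v \subset \{0,\ldots,k\}^{Z'}$ be the restrictions to $Z'$ of $\{s \in S : s(z_0) = v\}$. Put $S' = \bigcup_v S_v$ and $T = \bigcap_{v=1}^{k} S_v$. Two facts about shattering dimensions are immediate: first, that of $S'$ is trivially at most $d$; and second, if $T|_{W'} \supset \{1,\ldots,k\}^{W'}$ for some $W' \subset Z'$, then for each $(f,v) \in \{1,\ldots,k\}^{W' \cup \{z_0\}}$ an element of $T$ restricting to $f$ on $W'$ lifts to an element of $S_v$, hence $S|_{W' \cup \{z_0\}} \supset \{1,\ldots,k\}^{W' \cup \{z_0\}}$, forcing the shattering dimension of $T$ to be at most $d-1$. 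Combining these with a covering recursion of the form
\[
N_S(\U) \leq N_{S'}(\U') + k\, N_T(\U')
\]
(where $\U'$ is the analogous cover on $\{0,\ldots,k\}^{Z'}$), and Pascal's rule $\binom{|Z|}{i} = \binom{|Z|-1}{i} + \binom{|Z|-1}{i-1}$, yields the auxiliary estimate by induction. The covering recursion itself is meant to be obtained by lifting covering elements from $\U'$ to $\U$: each covering element of $T$ is lifted $k$ times, once for each value at $z_0$, handling those $s \in S$ with $s|_{Z'} \in T$; and each covering element of the $S'$-cover is lifted once, with the value at $z_0$ chosen to exploit the fact that every $s' \in S' \setminus T$ has a missing value in $\{1,\ldots,k\}$ (namely some $v$ with $s' \notin S_v$).

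The main obstacle is the covering recursion. Whereas the classical Sauer-Shelah additive inequality $|S| \leq |S'| + |T|$ is essentially free, its covering analog requires an explicit lifting construction, and the choice of the $z_0$-coordinate for each lifted element of the $S'$-cover must simultaneously avoid the missing values of \emph{all} $s' \in C \cap (S' \setminus T)$ for that covering element $C$. This likely requires a preliminary refinement of the cover of $S'$---for example, partitioning $S' \setminus T$ according to which value is missing, covering each piece separately, and lifting the covers of different pieces with different $z_0$-values---while keeping the total bounded by $N_{S'}(\U')$ up to the inductive hypothesis. Executing this construction correctly, and balancing the constants so the induction yields precisely the polynomial bound needed to extract a linear $c$ from $k$ and $b$, is the technical heart of the proof.
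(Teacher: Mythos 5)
Your reduction to the auxiliary estimate $N_S(\U)\leq\sum_{i=0}^{d}\binom{|Z|}{i}k^i$ and the two facts about the dimensions of $S'$ and $T$ are fine, but the step you yourself flag as the technical heart, the covering recursion $N_S(\U)\leq N_{S'}(\U')+k\,N_T(\U')$, is false. Take $k=2$, $Z=\{1,2,3\}$, $z_0=3$, and $S=\{(1,0,1),(0,1,2)\}$. Then $S_1=\{(1,0)\}$, $S_2=\{(0,1)\}$, so $T=\varnothing$ and $N_T(\U')=0$, while the single element $\{2\}^{\mathsf c}\times\{2\}^{\mathsf c}=\{0,1\}\times\{0,1\}$ of $\U'$ covers $S'$, so $N_{S'}(\U')=1$; but no single element of $\U$ covers both points of $S$ (its index at $z_0$ would have to differ from both $1$ and $2$), so $N_S(\U)=2>1$. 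The failure is structural: a cover element of $\U'$ needs a second lift whenever two restrictions it covers extend with different $z_0$-values, and this is not detected by membership in $T=\bigcap_{v=1}^k S_v$. Your proposed repair (partition $S'\setminus T$ by a missing value and cover the pieces separately) only yields $N_S(\U)\leq k\bigl(N_{S'}(\U')+N_T(\U')\bigr)$, and a multiplicative factor $k$ per induction step produces a bound of order $k^{|Z|}$, which cannot contradict the hypothesis $N_S(\U)\geq k^{b|Z|}$ when $b$ is small, so no constant $c>0$ can be extracted.

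The problem is not just with this recursion: the auxiliary estimate you are aiming for is itself too strong. For $k=2$, reading the value $0$ as ``undefined'', $N_S(\U)$ is exactly the minimum number of total $\{1,2\}$-valued functions needed so that every element of $S$ agrees with one of them on its support, i.e.\ the minimum size of a ``disambiguation'' of a partial concept class, and $d$ is its VC dimension; results of Alon, Hanneke, Holzman and Moran on partial concept classes show that no bound polynomial in $|Z|$ (for fixed $d$, even $d=1$) can hold for this quantity, which is precisely why the lemma assumes an exponential lower bound $k^{b|Z|}$ on $N_S(\U)$ and only concludes $|W|\geq c|Z|$ with $c$ depending on $b$. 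The proof in \cite{KerrLi2007} accordingly takes a different route: it deduces the lemma from the Karpovsky--Milman generalization of the Sauer--Perles--Shelah lemma, which is a cardinality statement (a subset of $\{0,1,\dots,k\}^Z$ of cardinality exponentially close to $(k+1)^{|Z|}$ has a $\{1,\dots,k\}$-shattered set of coordinates of positive density), after converting the covering-number hypothesis into such a cardinality hypothesis for a suitable restriction of $S$; no Sauer--Shelah-type inequality for $N_S(\U)$ itself is proved or needed. To salvage your write-up you would need to abandon the polynomial estimate and argue along those lines instead.
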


Here are some of the basic properties that IE-tuples satisfy (for the proofs see \cite[Proposition 3.9 and Theorem 3.15]{KerrLi2007}). 

\begin{theorem}[Blanchard \cite{Blanchard1993}, Glasner \cite{glasner1997simple}, Kerr and Li \cite{KerrLi2007}]
\label{thm:IE properties} Let $G$ be an amenable group, $\act$, $G\curvearrowright Y$ actions, and $k\in \N$. We have that 
    \begin{enumerate}
     
        \item $\IE_{k}(X,G)$ is a closed subset of $X^k$ that is invariant under the product action. 
        \item $\IE_{k}(X\times Y,G)=\IE_{k}(X,G)\times \IE_{k}(Y,G)$.
        \item If $\pi: \act \rightarrow G\curvearrowright Y$ is a factor map, then $$(\pi\times\dots\times \pi)(\IE_{k}(X,G))=\IE_{k}(Y,G).$$
        
     \end{enumerate} 
\end{theorem}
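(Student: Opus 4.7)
The plan is to verify each of the three items in turn, with (1) reducing to unwinding definitions, (2) coming from composition of independence sets, and (3) being the substantive step, relying on a compactness extraction and a combinatorial sublemma established via Zorn's lemma.

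For (1), closedness of $\IE_k(X,G)$ is immediate: if $(x_1,\ldots,x_k)$ fails to be an IE-tuple, witnessed by some product neighborhood $\prod U_i$ with zero independence density, then every point of $\prod U_i$ shares this witness. For $G$-invariance under the diagonal action, given $g\in G$ and any product neighborhood $\prod U_i$ of $(gx_i)_i$, the set $\prod g^{-1}U_i$ is a product neighborhood of $(x_i)_i$ with some positive density $q$. For each $F\Subset G$ I apply the definition to $g^{-1}F$ to obtain an independence set $J\subset g^{-1}F$ with $|J|\geq q|F|$ for $(g^{-1}U_i)_i$; then $gJ\subset F$ has the same cardinality and is an independence set for $(U_i)_i$, so the density of $(U_i)_i$ is at least $q$.

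For (2), the inclusion $\IE_k(X\times Y,G)\subset\IE_k(X,G)\times\IE_k(Y,G)$ follows by projecting product neighborhoods and independence sets to the coordinate factors. For the reverse, given $(x_i)_i\in\IE_k(X,G)$, $(y_i)_i\in\IE_k(Y,G)$, product neighborhoods $(U_i)_i$ and $(V_i)_i$ of densities $q_1,q_2>0$, and any $F\Subset G$, I first extract $J_1\subset F$ of size $\geq q_1|F|$ that is an independence set for $(U_i)_i$, then inside $J_1$ extract $J_2\subset J_1$ of size $\geq q_2|J_1|$ that is an independence set for $(V_i)_i$. Because intersections distribute over products, $J_2$ is an independence set for $(U_i\times V_i)_i$, yielding independence density at least $q_1q_2>0$.

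For (3), the easy inclusion follows by pulling back: if $(V_i)_i$ is a product neighborhood of $(\pi(x_i))_i$, continuity gives the product neighborhood $(\pi^{-1}V_i)_i$ of $(x_i)_i$, and equivariance plus surjectivity of $\pi$ show that any independence set for the pullback tuple also works for $(V_i)_i$. For the reverse inclusion, fix $(y_1,\ldots,y_k)\in\IE_k(Y,G)$ and a decreasing sequence of product neighborhoods $V^{(n)}$ shrinking to $\{(y_i)_i\}$; the same pullback argument shows each $(\pi^{-1}V_i^{(n)})_i$ has positive independence density in $X$. The plan is to produce, for each $n$, an IE-tuple of $\act$ inside $\prod_i\overline{\pi^{-1}V_i^{(n)}}$, then take a subsequential limit using compactness; the limit is an IE-tuple by closedness from (1) and projects to $(y_i)_i$ by continuity of $\pi$ and shrinking of $V^{(n)}$. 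The main obstacle is the sublemma that any tuple of open sets with positive independence density contains an IE-tuple in the product of its closures. I would attack this by Zorn's lemma on the family $\cF$ of nonempty closed product sets $A_1\times\cdots\times A_k$ inside the closures such that every product open neighborhood of $\prod A_i$ retains positive independence density. Chains have lower bounds in $\cF$ by a standard compactness argument, so minimal elements exist; the crux is showing minimality forces each $A_i$ to be a singleton. If some $A_i$ contained two distinct points separated by disjoint open sets $V,V'$, I would cover $A_i$ by the proper closed subsets $A_i\setminus V$ and $A_i\setminus V'$ and try to show at least one refined tuple still belongs to $\cF$. This reduces to a Ramsey-type splitting principle---that positive independence density of $(W_1,\ldots,W_k)$ is inherited by $(W_1',W_2,\ldots,W_k)$ or $(W_1'',W_2,\ldots,W_k)$ when $W_1=W_1'\cup W_1''$---which is the true combinatorial heart of the argument and is in the same family of extractions as Lemma~\ref{lem:combinatorial}.
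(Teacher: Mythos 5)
Your items (1) and (2) and the easy half of (3) are correct as written, and your overall route for the hard half of (3) — pull back shrinking neighborhoods, prove that any tuple of sets with positive independence density contains an IE-tuple in the product of its closures via a Zorn/compactness argument, then pass to a limit using closedness from (1) — is exactly the structure of the proof the paper points to (Kerr--Li, Propositions 3.9 and 3.15 of \cite{KerrLi2007}); the survey itself gives no proof, only the citation. One genuine difference worth noting: your nesting argument for the product formula (2) (extract $J_1\subset F$ for $(U_i)_i$, then apply the density of $(V_i)_i$ to the finite set $J_1$ itself) is valid precisely because this survey defines independence density by requiring a proportional independence subset of \emph{every} finite subset of $G$; with the original F{\o}lner-averaged density of \cite{KerrLi2007} this shortcut is unavailable and the product formula there needs the combinatorial machinery. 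So your (2) is a legitimately simpler argument for the definition as stated here, with the equivalence of the two definitions (\cite[Theorem 4.8]{KerrLi2013}) carrying it back to the cited statement.

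The genuine gap is in (3): the splitting principle — if $W_1=W_1'\cup W_1''$ and $(W_1,W_2,\dots,W_k)$ has positive independence density, then one of $(W_1',W_2,\dots,W_k)$, $(W_1'',W_2,\dots,W_k)$ does — is asserted, not proved, and it is not a routine extraction ``in the same family'' as Lemma~\ref{lem:combinatorial}; it is the entire content of the hard direction. Two things must be supplied. First, for a \emph{fixed} finite $F$ with a large independence set $J$ for the union tuple, you must extract a proportional subset of $J$ independent for one of the refined tuples; the natural pigeonhole fails for $k\geq 2$, and the standard argument encodes, for each $\phi\colon J\to\{1,\dots,k\}$, a witnessing point as a word over $k+1$ symbols (splitting symbol $1$ into two) and applies Lemma~\ref{lem:combinatorial} after checking the cover-number hypothesis $N_S(\U)\geq (k)^{b|J|}$ — this verification is where the work is, and the $k=1$ case (which your statement includes, since $k\in\N$) needs a separate, easier argument. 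Second, even granted the single-$F$ extraction, the refined tuple that wins may depend on $F$; to conclude that a \emph{single} one of the two refined tuples has positive independence density over all finite subsets you need a uniformization step (e.g., exploiting that the constant $c$ of Lemma~\ref{lem:combinatorial} depends only on $k$ and $b$, together with right-translation invariance of independence sets, to combine putative bad sets for both refinements into a bad set for the original tuple). This is Kerr--Li's Lemma 3.6/Proposition 3.9(2); without it your Zorn argument (which is otherwise correctly set up, including the step where failure of both refinements to lie in $\cF$ is contradicted through the splitting principle applied to a common neighborhood) does not close.
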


Let $k\in \N$. Denote by $\mathcal{F}^k(X)$ the space of all pairwise disjoint tuples of closed subsets of $X$ with length $k$. 
\begin{question}
Let $G$ be an amenable group and $\act$ an action. Can $\htop(\act)$ be computed with the use of combinatorial independence? In particular is it true that 
$$
\htop(\act)=\sup _{k\in \N}\sup_{\mathbf{A}\in \mathcal{F}^k(X)}(\text{independence density of } \mathbf{A})\log k?
$$

\end{question}

\subsection{Sofic groups}
For $n\in \NN$ we write $[n]=\{1,...,n\}$, and $\Sym(n)$ for the group of permutations of $[n]$. A group $G$ is \textbf{sofic} if there exist a sequence $\left\{ n_{i}\right\}_{i\in \NN}$ of positive integers which goes to infinity and a sequence $\Sigma=\{\sigma_i \colon G\rightarrow \Sym(n_i) \}_{i\in \N}$ that satisfies
\begin{align*}
\lim_{i\to\infty} \frac{1}{n_i} \left\vert \left\{ v\in [n_i]
: \sigma_{i}(st)v=\sigma_{i}(s)\sigma_{i}(t)v\right\}  \right\vert  &
=1 \mbox{ for every } s,t\in G \text{, and}\\
\lim_{i\to\infty} \frac{1}{n_i}\left\vert \left\{  v\in [n_i]
: \sigma_{i}(s)v\neq\sigma_{i}(t)v\right\}  \right\vert  & =1 \mbox{ for every } s\neq t\in G.
\end{align*}
In this case we say $\Sigma$ is a \textbf{sofic approximation sequence} for $G$. Residually finite groups and amenable groups are all sofic. It is not known if there exists a group that is not sofic. We refer the reader to \cite{CapraroLupini2015, Ceccherini-SilbersteinCoornaert2010, Pestov2008} for general information about sofic groups.	

	The following notion of topological entropy for sofic group actions was introduced by Kerr and Li~\cite{KerrLi2011} following the breakthrough of Bowen for measure-preserving actions~\cite{Bowen2010_2}.
	
    Let $G\curvearrowright X$ be an action, $F\Subset G$, $\delta>0,$ $n\in\mathbb{N}$, and $\sigma\colon G\to
	\Sym(n)$.
 \begin{definition}
 \label{def:Map}    

 We define $\Map(\rho,F,\delta,\sigma)$ as the set of all maps
	$\varphi\colon [n]  \rightarrow  X$ such that
	\[
	\left(  \frac{1}{n}\sum_{v=1}^{n}\rho(\varphi(\sigma(s)v),s\varphi
	(v))^{2}\right)  ^{1/2}\leq\delta \ \mbox{for every }s \in F.
	\]
	 \end{definition}

We write $N_{\varepsilon}(Y, \rho_{\infty})$ for the maximum cardinality of a subset $Y'$ of $Y \subset X^{[n]}$ such that whenever $\varphi_1,\varphi_2$ are distinct in $Y'$ then $$\max_{v \in [n]} \rho(\varphi_1(v),\varphi_2(v)) \geq \varepsilon.$$

	Let $G$ be a sofic group and $\Sigma = \{\sigma_i \colon G \rightarrow \Sym(n_i)\}_{i\in \N}$ a sofic approximation sequence for $G$. The \textbf{topological sofic entropy of }$G\curvearrowright X$
	\textbf{(with respect to }$\Sigma$) is defined by
	\[
	\hsof(G\curvearrowright X)=\sup_{\varepsilon>0}\inf_{F\Subset G}\inf_{\delta>0}\limsup_{i\to\infty}\frac{1}{n_{i}}\log N_{\varepsilon}%
	(\Map(\rho,F,\delta,\sigma_i),\rho_{\infty}) \in \{-\infty\}\cup [0,\infty].
	\]	
The value of $\hsof(G\curvearrowright X)$ does not depend on the choice of $\rho$ \cite[Proposition 10.25]{KerrLiBook2016}, and hence it is invariant under conjugacy.

\begin{remark}
\label{rem:amenable}
	If $G$ is an amenable group, then the topological entropy of any action $\act$ coincides with the sofic entropy with respect to any sofic approximation sequence~\cite[Theorem 5.3]{kerrli2013soficity} \cite[Theorem 10.37]{KerrLiBook2016}.
\end{remark}

	Let $G$ be a sofic group, $G\curvearrowright X$ an action and $\mathbf{A}=(A_{1},\dots,A_{k})$ a tuple of
	subsets of $X$. Given $F\Subset G$, $\delta>0$, $n\in\mathbb{N}$ and
	$\sigma\colon G\rightarrow \Sym(n)$, we say $J\subset[n]$ is a
	$(\rho,F,\delta,\sigma)$-\textbf{independence set for $\mathbf{A}$} if for every
	$\omega\colon J\to [k]  $ there exists $\varphi\in
	\Map(\rho,F,\delta,\sigma)$ such that $\varphi(v)\in A_{\omega(v)}$ for every $v\in
	J$.

\begin{definition} 
	Let $G$ be a sofic group, $G\curvearrowright X$ an action and $\Sigma=\left\{  \sigma_{i} \colon G \rightarrow \Sym(n_{i})\right\}
	_{i\in\mathbb{N}}$ a sofic approximation sequence for $G$. We say that a tuple of subsets of $X$, $\mathbf{A}=(A_{1},\dots,A_{k})$, 
has \textbf{positive upper independence density over $\Sigma$}, if there exists $q>0$ such that for every $F\Subset G$ and
	$\delta>0$, $\mathbf{A}$ has a $(\rho,F,\delta,\sigma_{i})$-independence set of cardinality at least $qn_{i}$ for infinitely many $i$'s.

We say $\mathbf{x}=(x_{1},\dots,x_{k})\in X^{k}$ is a \textbf{sofic independence entropy tuple with respect to $\Sigma$ ($\Sigma$-IE-tuple)} if for every product
	neighborhood $U_{1}\times\cdot\cdot\cdot\times U_{k}$ of
	$\mathbf{x}$ the tuple $\mathbf{U}=(U_1,\dots,U_k)$ has positive upper independence density over $\Sigma$.
We denote
	the set of $\Sigma$-\textbf{IE-tuples} of length $k$ by $\IE_{k}^{\Sigma}(X,G)$.
\end{definition}
 The previous property does not depend on the choice of the metric $\rho$, see \cite[Lemma 10.24]{KerrLiBook2016}.

\begin{remark}
If $G$ is an amenable group, the set of $\Sigma$-IE-tuples for any sofic approximation sequence $\Sigma$ is the same as the set of IE-tuples \cite[Theorem 4.8]{KerrLi2013}.     
\end{remark}

\begin{remark}
 Zhang \cite{zhang2012local} defined entropy tuples with respect to a sofic approximation sequence $\Sigma$, using the topological sofic entropy with respect to a finite open cover, analogous to Definition \ref{def:entropy tuple}. As it was noted in \cite[Remark 4.4]{KerrLi2013}, a non-diagonal tuple is an entropy tuple with respect to $\Sigma$ if and only if it is a $\Sigma$-IE-tuple.
\end{remark}

For the proof of the next result see \cite[Proposition 4.16 and Theorem 8.1]{KerrLi2013}. 
	\begin{theorem}
 [Kerr and Li \cite{KerrLi2013}]
 \label{thm: sofic IE pairs}
		 Let $G$ be sofic, $\Sigma$ a sofic approximation sequence for $G$,  and
		$G\curvearrowright X$ an action. The following statements are equivalent.
\begin{enumerate}
    \item  $\hsof(\act)>0$.
    \item $\IE^{\Sigma}_2(X,G)\setminus \Delta_2(X) \neq \varnothing$.
    \item $\IE^{\Sigma}_k(X,G)\setminus \Delta^2_k(X) \neq \varnothing$ for every $k\geq 2$.
\end{enumerate}


	\end{theorem}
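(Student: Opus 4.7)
The plan is to follow the template set by the amenable case (Proposition~\ref{prop:IE pair} together with Theorem~\ref{thm:IE-tuples}), replacing the F\o lner-based arguments with sofic approximation counts and using the combinatorial independence lemma (Lemma~\ref{lem:combinatorial}) as the main workhorse. The implication (3) $\Rightarrow$ (2) is immediate by taking $k=2$, so the real content is (2) $\Rightarrow$ (1) and (1) $\Rightarrow$ (3).

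For (2) $\Rightarrow$ (1), I would take a non-diagonal $\Sigma$-IE-pair $(x_1,x_2)$ and choose disjoint closed neighborhoods $A_1\ni x_1$ and $A_2\ni x_2$ with $\min_{a\in A_1,b\in A_2}\rho(a,b)\geq 3\varepsilon$ for some $\varepsilon>0$. The tuple $\mathbf{A}=(A_1,A_2)$ then has positive upper independence density $q>0$ over $\Sigma$. For any $F\Subset G$ and $\delta>0$, pick $i$ large with a $(\rho,F,\delta,\sigma_i)$-independence set $J\subset[n_i]$ of size at least $q n_i$. By definition every map $\omega\colon J\to\{1,2\}$ is realized by some $\varphi_\omega\in\Map(\rho,F,\delta,\sigma_i)$, and the collection $\{\varphi_\omega\}_\omega$ is $\varepsilon$-separated in $\rho_\infty$ (any two distinct $\omega$'s differ at some $v\in J$, where the two maps lie in $A_1$ and $A_2$ respectively). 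Thus $N_\varepsilon(\Map(\rho,F,\delta,\sigma_i),\rho_\infty)\geq 2^{qn_i}$ for infinitely many $i$, and so $\hsof(\act)\geq q\log 2>0$.

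For (1) $\Rightarrow$ (3), fix $k\geq 2$ and assume $\hsof(\act)=h>0$. Unwinding the definition, I extract $\varepsilon>0$ such that for all $F\Subset G,\delta>0$ the quantity $N_\varepsilon(\Map(\rho,F,\delta,\sigma_i),\rho_\infty)$ is at least $e^{h'n_i}$ for infinitely many $i$, for some $0<h'<h$. Cover $X$ by finitely many open sets $V_1,\dots,V_N$ of diameter less than $\varepsilon$, and to each $\varepsilon$-separated $\varphi$ associate an element of $[N]^{[n_i]}$ by assigning to $v$ any index $z$ with $\varphi(v)\in V_z$; distinct $\varepsilon$-separated maps yield distinct codes, so we obtain a set $S\subset[N]^{[n_i]}$ of size at least $e^{h'n_i}$. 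After grouping the $V_z$'s in every possible way into $k+1$ pieces and passing to the piece that carries the dominant exponential contribution (a standard pigeonhole over the finitely many partitions $\{1,\dots,N\}\to\{0,1,\dots,k\}$), we may apply Lemma~\ref{lem:combinatorial} to produce a constant $c>0$ and, for infinitely many $i$, a subset $W_i\subset[n_i]$ of size at least $cn_i$ together with a tuple $(C_1,\dots,C_k)$ of pairwise disjoint closed subsets of $X$ such that $W_i$ is a $(\rho,F,\delta,\sigma_i)$-independence set for $(C_1,\dots,C_k)$. Hence $(C_1,\dots,C_k)$ has positive upper independence density over $\Sigma$.

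The last step is the standard Zornification/compactness argument converting a tuple of closed pairwise disjoint sets with positive upper independence density into a $\Sigma$-IE-tuple with pairwise distinct entries. I would consider the collection of all $k$-tuples $(B_1,\dots,B_k)$ of closed subsets of $X$ with $B_j\subset C_j$ and positive upper independence density over $\Sigma$, order it by componentwise inclusion, and verify that the subadditivity/continuity properties of the upper independence density (inherited from the amenable proof essentially unchanged, using that a decreasing intersection of compact sets has positive upper independence density as soon as each finite stage does) allow a Zorn argument to produce a minimal element. A minimal element must have each $B_j$ a singleton $\{x_j\}$, and the $x_j$ are pairwise distinct since $B_j\subset C_j$ and the $C_j$ are disjoint. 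By construction $(x_1,\dots,x_k)\in \IE_k^\Sigma(X,G)\setminus\Delta_k^2(X)$.

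The main obstacle is the combinatorial extraction step: matching the sofic-entropy exponential count of $\varepsilon$-separated approximating maps to the hypothesis of Lemma~\ref{lem:combinatorial} and then transporting the resulting combinatorial independence (living on a subset $W_i\subset[n_i]$) back to a dynamical statement about $(\rho,F,\delta,\sigma_i)$-independence sets in the sense of the definition that precedes the theorem. This is where the sofic setting differs most from the amenable one, because independence sets live in $[n_i]$ rather than in the group $G$, and the compatibility with the maps $\varphi\in\Map(\rho,F,\delta,\sigma_i)$ has to be threaded through carefully.
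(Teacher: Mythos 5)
Your direction (2) $\Rightarrow$ (1) is correct and is the standard argument (note the survey itself contains no proof of this theorem; it points to \cite[Proposition 4.16 and Theorem 8.1]{KerrLi2013}, so your argument has to stand on its own). The genuine gap is in (1) $\Rightarrow$ (3), at the step where you invoke Lemma~\ref{lem:combinatorial}. That lemma's hypothesis is a lower bound on the cover number $N_S(\mathcal{U})$ with respect to the special cover of $\{0,1,\dots,k\}^Z$, not on the cardinality of $S$, and the way Kerr--Li manufacture such a bound is by coding relative to a tuple of \emph{pairwise disjoint closed} sets: disjointness is what makes the code well defined (each $\varphi(v)$ lies in at most one of the sets, with $0$ as the wildcard symbol) and what makes each combinatorial cover element pull back into a single dynamical cover element, so that separation or entropy counts transfer to $N_S(\mathcal{U})$. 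Your coding by an arbitrary open cover $V_1,\dots,V_N$ of mesh $<\varepsilon$, followed by ``grouping the $V_z$'s into $k+1$ pieces and keeping the dominant exponential contribution,'' supplies neither ingredient: unions of overlapping $V_z$'s are in general not pairwise disjoint, and no inequality is given (nor is one available) that converts $|S|\ge e^{h'n_i}$ into $N_{\pi_*S}(\mathcal{U})\ge k^{bn_i}$ for some grouping $\pi$. Without disjointness of the $C_j$'s the closing Zorn argument can perfectly well terminate at a tuple lying in $\Delta_k^2(X)$, which is exactly what must be excluded.

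Even after repairing this in the standard way --- use sets of diameter $<\varepsilon/4$, note that two $\varepsilon$-separated maps must, at some $v$, carry code symbols indexing sets at distance $\ge\varepsilon/2$, and pigeonhole over the finitely many ``far'' pairs of symbols --- what comes out is a \emph{pair} of disjoint closed sets with positive upper independence density over $\Sigma$, i.e.\ statement (2). Producing, for every $k$, $k$ pairwise disjoint closed sets with positive upper independence density is a genuinely separate step: in the amenable case one intersects translates $s_l^{-1}A_{i_l}$ of the two sets (a blocking trick), but in the sofic setting the relevant translates $\sigma_i(s_l)v$ of points of an independence set $J\subset[n_i]$ need not remain in $J$, so this does not go through ``essentially unchanged''; this extra work is precisely why the survey cites Theorem 8.1 of \cite{KerrLi2013} in addition to Proposition 4.16. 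Two further points you leave unaddressed: the tuple $(C_1,\dots,C_k)$ you extract is allowed to depend on $(F,\delta)$, whereas positive upper independence density over $\Sigma$ requires one tuple and one density $q$ working for all $(F,\delta)$ (fixable by a pigeonhole over the finitely many candidates together with monotonicity of $\Map(\rho,F,\delta,\sigma)$ in $(F,\delta)$, but it must be argued); and in the Zorn argument what forces a minimal element to consist of singletons is not the decreasing-intersection property you cite but the splitting lemma (if $B_j=B_j'\cup B_j''$ then one of the two refined tuples retains positive upper independence density over $\Sigma$), which in the sofic setting is itself a nontrivial lemma of \cite{KerrLi2013} rather than a routine transfer from the amenable case.
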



The next result is obtained from \cite[Proposition 4.16]{KerrLi2013}. Note that in the second point we don't get equality as in Theorem \ref{thm:IE properties}. This comes from the fact that factors of actions with zero sofic entropy may have positive sofic entropy.

\begin{theorem}[Kerr and Li \cite{KerrLi2013}]
\label{thm:sofic IE properties} Let $G$ be a sofic group, $\Sigma$ a sofic approximation sequence for $G$, $\act$, $G\curvearrowright Y$ actions and $k\in \N$. We have that 
    \begin{enumerate}
        \item $\IE^{\Sigma}_{k}(X,G)$ is a closed subset of $X^k$ that is invariant under the product action. 
        \item If $\pi: \act \rightarrow G\curvearrowright Y$ is a factor map, then $$(\pi\times\dots\times \pi)(\IE^{\Sigma}_{k}(X,G))\subset\IE^{\Sigma}_{k}(Y,G).$$
        
    \end{enumerate}
    
\end{theorem}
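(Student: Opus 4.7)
The plan is to verify each claim directly from the definition via positive upper independence density. For the closedness of $\IE^{\Sigma}_{k}(X,G)$, take a convergent sequence $\mathbf{x}^{(n)}\to \mathbf{x}$ in $X^k$ with each $\mathbf{x}^{(n)}\in \IE^{\Sigma}_k(X,G)$. Any product neighborhood $U_1\times\dots\times U_k$ of $\mathbf{x}$ eventually contains $\mathbf{x}^{(n)}$ and is therefore a product neighborhood of $\mathbf{x}^{(n)}$, so $(U_1,\dots,U_k)$ has positive upper independence density over $\Sigma$. This is immediate from unpacking definitions.

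For invariance under the product action, fix $g\in G$ and $\mathbf{x}\in \IE^{\Sigma}_k(X,G)$, and take a product neighborhood $V_1\times\dots\times V_k$ of $g\mathbf{x}$. Then each $U_i:=g^{-1}V_i$ is a neighborhood of $x_i$, so by hypothesis $(U_1,\dots,U_k)$ has positive upper independence density at least some $q>0$. Given $F\Subset G$ and $\delta>0$, enlarge to $F':=F\cup g^{-1}Fg\cup Fg\cup\{g,g^{-1}\}$ and pick $\delta'>0$ small (to be chosen). For infinitely many $i$ there is a $(\rho,F',\delta',\sigma_i)$-independence set $J_i'\subset[n_i]$ for $(U_1,\dots,U_k)$ with $|J_i'|\ge qn_i$. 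By the two soficity axioms, the set $E_i:=\{v\in[n_i]:\sigma_i(g^{-1})\sigma_i(g)v=v\text{ and }\sigma_i(g^{-1})\sigma_i(s)\sigma_i(g)v=\sigma_i(g^{-1}sg)v\text{ for all }s\in F\}$ satisfies $|E_i|/n_i\to 1$, so $J_i'':=J_i'\cap E_i$ has size at least $(q-o(1))n_i$. Put $J_i:=\sigma_i(g)J_i''$. For any $\omega\colon J_i\to[k]$, define $\omega'(v'):=\omega(\sigma_i(g)v')$ on $J_i''$, take a witness $\psi\in\Map(\rho,F',\delta',\sigma_i)$ with $\psi(v')\in U_{\omega'(v')}$, and set $\varphi(v):=g\,\psi(\sigma_i(g^{-1})v)$. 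For $v\in J_i$ this gives $\varphi(v)\in gU_{\omega(v)}=V_{\omega(v)}$, while the $\Map$ defect for $\varphi$ at $s\in F$ reduces on $E_i$ to the $\Map$ defect for $\psi$ at $g^{-1}sg\in F'$; choosing $\delta'$ sufficiently small yields $\varphi\in\Map(\rho,F,\delta,\sigma_i)$, so $J_i$ is the required independence set of size at least $(q-o(1))n_i$.

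For statement (2), let $\pi\colon\act\to G\curvearrowright Y$ be a factor map, let $\rho_Y$ denote a compatible metric on $Y$, and let $\mathbf{x}\in \IE^{\Sigma}_k(X,G)$. Take a product neighborhood $V_1\times\dots\times V_k$ of $\pi(\mathbf{x})$; then $U_i:=\pi^{-1}(V_i)$ is a neighborhood of $x_i$, and by hypothesis $(U_1,\dots,U_k)$ has positive upper independence density $q>0$. Given $F\Subset G$ and $\delta>0$, use uniform continuity of $\pi$ together with a Markov-type bound on the set where $\rho$ is large (and $\diam_{\rho_Y}(Y)<\infty$) to choose $\delta'>0$ so that $\varphi\in\Map_X(\rho,F,\delta',\sigma)$ forces $\pi\circ\varphi\in\Map_Y(\rho_Y,F,\delta,\sigma)$. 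Any $(\rho,F,\delta',\sigma_i)$-independence set $J$ for $(U_1,\dots,U_k)$ is then automatically a $(\rho_Y,F,\delta,\sigma_i)$-independence set for $(V_1,\dots,V_k)$, since every witness $\varphi$ pushes forward to $\pi\circ\varphi$ with $\pi\circ\varphi(v)\in V_{\omega(v)}$ for $v\in J$. This transfers the density $q$ verbatim to $Y$ and shows $\pi(\mathbf{x})\in\IE^{\Sigma}_k(Y,G)$.

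The main obstacle is the invariance under the product action: independence sets must be transported by $g$ using only the approximate equivariance afforded by $\sigma_i$, which forces one to enlarge $F$ to absorb conjugation by $g$ and to pass to the near-full density subset $E_i$ where the sofic multiplicative identities are exact. All remaining ingredients are standard compactness and uniform continuity manipulations.
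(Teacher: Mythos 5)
Your proposal is correct and follows essentially the same route as the source: the survey itself gives no proof but defers to Kerr--Li's Proposition 4.16, whose argument is exactly this definition-chasing one (pulling back product neighborhoods under $g^{-1}$ or $\pi^{-1}$, transporting independence sets via $\sigma_i(g)$ on the density-one set where the sofic identities hold, and converting witnesses in $\Map$ using uniform continuity, a Markov/Chebyshev bound, and the boundedness of the metric). The only loose ends in your write-up --- that the defect computation needs a couple of additional sofic identities beyond those literally listed in $E_i$, and that the exceptional set's contribution to the mean-square defect must be absorbed via $\diam(X)$ --- are routine and handled by exactly the standard estimates you name.
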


A product formula for $\Sigma$-IE-tuples is possible under extra assumptions on $\Sigma$ \cite[Theorem 5.2]{KerrLi2013}.

\subsection{Countable groups}
 Let $G \curvearrowright X$ be an action and $\mathcal{U}$ a finite open cover of $X$. We define the
		\textbf{naive topological entropy of $G\curvearrowright X$ with respect to $\mathcal{U}$} as%
		\[
		\hnaive(G \curvearrowright X,\mathcal{U})=\inf_{F\Subset G}\frac{1}{\left\vert
			F\right\vert }\log N(\mathcal{U}^{F})\in [0,\infty) .
		\]
  The \textbf{naive topological
		entropy} of $G\curvearrowright X$ is defined as
	\[
	\hnaive(G \curvearrowright X)=\sup_{\mathcal{U}}\hnaive(G \curvearrowright X,\mathcal{U}) \in [0,\infty],
	\]
where $\U$ ranges over the set of finite open covers. 
	The notion of naive topological entropy was introduced by Burton~\cite{burton2017naive}. He showed that if $G$ is not amenable then $\hnaive(G \curvearrowright X)$ can only take the values in the set $\{0, \infty\}$.

\begin{remark}
	If $G$ is amenable, then the topological entropy of any action $\act$ coincides with the naive topological entropy~\cite[Theorem 6.8]{DownarowiczFrejRomagnoli2016}.
\end{remark}

The next definition uses notions from Section 2.1, and it is very similar to Definition \ref{def:IE pairs}, the only difference being that there is no condition on the group. For this reason we use the same notation. 

	\begin{definition}
		Let $\act$ be an action and $k \geq 1$ an integer. We say a tuple $\mathbf{x}=(x_{1},\dots,x_{k})\in X^{k}$ is an \textbf{orbit independence entropy tuple (orbit IE-tuple)} if for every product neighborhood
		$U_{1}\times \dots \times U_{k}$ of $\mathbf{x}$, we have that $(U_{1},\dots,U_{k})$ has
		positive independence density. We denote the set of orbit IE-tuples of length $k$ by
		$\IE_{k}(X,G)$.
	\end{definition}

\begin{remark}
If $G$ is a sofic group and $\Sigma$ a sofic approximation sequence for $G$, then every $\Sigma$-IE-tuple is an orbit IE-tuple~\cite[Proposition 4.6]{KerrLi2013}, but it is not known if there exist orbit IE-tuples that are not $\Sigma$-IE-tuples. 
\end{remark}

Positive naive entropy can be characterized with combinatorial independence but not at the level of orbit IE-pairs. 


We now define a notion of multiple independence. 

\begin{definition}
   For any finite subset $\mathcal{A} \subset \mathcal{F}^2(X)$, we define the {\bf independence density} of $\mathcal{A}$ to be the largest $q\ge 0$ such that for every $F\Subset G$ there are some $J\subset F$ with $|J|\ge q|F|$ and some $ \mathbf{A}\in \mathcal{A}$ so that $J$ is an independence set for $\mathbf{A}$. 
\end{definition}

The proof of the following result appears in \cite[Theorem 2.5]{LiRong2019}.

		\begin{theorem}
  [Li and Rong \cite{LiRong2019}]
  \label{thm:naiveentropy}
		Let $G\curvearrowright X$ be an action.
	 We have that $\hnaive(G \curvearrowright X)>0$ if and only if there is a finite $\mathcal{A}\subset \mathcal{F}^2(X)$ with positive independence density. 
   In particular, if $\IE_2(X,G)\setminus \Delta_2(X) \neq \varnothing$, then $\hnaive(G \curvearrowright X)>0$.


	\end{theorem}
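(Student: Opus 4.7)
The plan is to prove the equivalence in two directions and obtain the ``in particular'' clause as a corollary of the easier direction.

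For $(\Leftarrow)$, suppose $\mathcal{A} = \{\mathbf{A}_1, \ldots, \mathbf{A}_n\}$ with $\mathbf{A}_j = (A_{j,1}, A_{j,2})$ has positive independence density $q > 0$. Disjointness of $A_{j,1}, A_{j,2}$ makes $\mathcal{V}_j := \{A_{j,1}^c, A_{j,2}^c\}$ a two-element open cover of $X$, and I form $\mathcal{V} := \bigvee_{j=1}^{n} \mathcal{V}_j$. Given $F \Subset G$, the hypothesis supplies $j$ and $J \subset F$ with $|J| \geq q|F|$ such that $J$ is independent for $\mathbf{A}_j$; for each $\psi \colon J \to \{1, 2\}$, pick $x_\psi \in \bigcap_{s \in J} s^{-1} A_{j,\psi(s)}$. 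Any member of $\mathcal{V}_j^F$ containing $x_\psi$ corresponds to $\phi \colon F \to \{1, 2\}$ with $\phi(s) \neq \psi(s)$ for all $s \in J$ (by disjointness of $A_{j,1}, A_{j,2}$); since any two distinct $\psi_1, \psi_2$ disagree at some $s_0 \in J$ where $\{\psi_1(s_0), \psi_2(s_0)\} = \{1, 2\}$, no single $\phi$ avoids both values, so no element of $\mathcal{V}_j^F$ contains both $x_{\psi_1}$ and $x_{\psi_2}$. Hence $N(\mathcal{V}_j^F) \geq 2^{|J|} \geq 2^{q|F|}$, and by refinement $N(\mathcal{V}^F) \geq 2^{q|F|}$, producing $\hnaive(G \curvearrowright X, \mathcal{V}) \geq q \log 2 > 0$. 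The ``in particular'' clause follows from this direction: for $(x_1, x_2) \in \IE_2(X, G) \setminus \Delta_2(X)$, choose disjoint closed neighborhoods $K_1 \ni x_1, K_2 \ni x_2$ and open $V_i$ with $x_i \in V_i \subset K_i$; the IE hypothesis gives $(V_1, V_2)$ positive independence density, and every independence set for $(V_1, V_2)$ is one for $(K_1, K_2)$ (since $V_i \subset K_i$), so $\mathcal{A} = \{(K_1, K_2)\}$ has positive independence density, and $(\Leftarrow)$ concludes.

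For $(\Rightarrow)$, fix a finite open cover $\mathcal{U} = \{U_1, \ldots, U_k\}$ with $\hnaive(G \curvearrowright X, \mathcal{U}) \geq b > 0$, so $N(\mathcal{U}^F) \geq e^{b|F|}$ for every $F \Subset G$. Shrink $\mathcal{U}$ by normality to a closed cover $\{B_1, \ldots, B_k\}$ with $B_i \subset U_i$, and set $\mathcal{A} := \{(B_i, X \setminus U_i)\}_{i=1}^{k}$, a finite family of disjoint closed pairs (since $B_i \subset U_i$). The strategy is to use the exponential growth of $N(\mathcal{U}^F)$ together with a Sauer-Shelah-type combinatorial extraction (in the spirit of Lemma \ref{lem:combinatorial}) to produce, for each $F$, a subset $W_F \subset F$ of density at least some $c > 0$ on which all $k$-ary labelings by $\{B_i\}$ are realized; then, by a pigeonhole isolating a single coordinate $i = i(F) \in [k]$, one extracts $J_F \subset W_F$ of positive density (depending only on $b$ and $k$, not on $F$) for which $J_F$ is an independence set for $(B_i, X \setminus U_i) \in \mathcal{A}$. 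This uniform density yields positive independence density for $\mathcal{A}$.

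The main obstacle is the conversion step between ``open'' realizability and ``closed-disjoint'' independence. Lemma \ref{lem:combinatorial} naturally produces statements of the form ``$gx \in U_{\psi(g)}$'' from the cover-counting assumption, but this neither upgrades to ``$gx \in B_{\psi(g)}$'' (a shrinking gap), nor does ``$\psi(g) \neq i$'' force ``$gx \notin U_i$'' (the $U_j$'s may overlap). Overcoming this likely requires re-deriving the combinatorial extraction with respect to the closed cover $\{B_i\}$ directly --- where ``inside $B_i$'' and ``outside $U_i$'' are honestly disjoint events --- while absorbing a controlled loss in the effective entropy parameter, together with a careful pigeonhole that charges independence onto a single coordinate's binary dichotomy. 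This combinatorial translation is the technical heart of the argument developed in \cite{LiRong2019}.
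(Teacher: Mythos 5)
Your backward direction and the ``in particular'' clause are correct and complete: forming the joined cover $\mathcal{V}=\bigvee_j\{A_{j,1}^c,A_{j,2}^c\}$ neatly absorbs the fact that the pair witnessing independence depends on $F$, and the count $N(\mathcal{V}_j^F)\ge 2^{|J|}$ is right since no member of $\mathcal{V}_j^F$ can contain two points realizing different patterns on $J$.

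The forward direction, however, is not a proof: you describe a strategy and then explicitly defer its ``technical heart'' to the cited paper, and the route you sketch has a genuine hole even as a plan. First, Lemma \ref{lem:combinatorial} does not deliver, from $N(\mathcal{U}^F)\ge e^{b|F|}$, a positive-density $W_F$ on which \emph{all} $k$-ary labelings by the shrinking $\{B_i\}$ are realized: the natural coding $x\mapsto(\min\{i: sx\in B_i\})_{s\in F}$ does not send the combinatorial cover elements into members of $\mathcal{U}^F$ when the $U_i$ overlap, which is exactly the conversion problem you flag. Second, even if you had such a $W_F$, the subsequent pigeonhole would not yield an independence set for a single pair $(B_i,X\setminus U_i)$: a point placed in $B_j$ with $j\ne i$ need not avoid $U_i$, so realizing patterns over the $B_j$'s says nothing about the second entry of the pair. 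The repair is to pigeonhole \emph{before} the combinatorics, at the level of covers. Let $\mathcal{W}_i=\{U_i,\,X\setminus B_i\}$; since any element of $\bigvee_{i=1}^k\mathcal{W}_i$ choosing $X\setminus B_i$ for every $i$ is empty, this join refines $\mathcal{U}$, so $\prod_{i=1}^k N(\mathcal{W}_i^F)\ge N\bigl((\textstyle\bigvee_i\mathcal{W}_i)^F\bigr)\ge N(\mathcal{U}^F)\ge e^{b|F|}$, and some $i=i(F)$ satisfies $N(\mathcal{W}_i^F)\ge e^{b|F|/k}$. Now the two target events are honestly disjoint: coding each $x$ by $\omega_x\in\{0,1,2\}^F$ with $\omega_x(s)=1$ if $sx\in B_i$, $\omega_x(s)=2$ if $sx\in X\setminus U_i$, and $0$ on the grey zone $U_i\setminus B_i$, the preimage of each combinatorial cover element $\prod_{s\in F}\{\phi(s)\}^c$ (with $\phi\colon F\to\{1,2\}$) lies in a single member of $\mathcal{W}_i^F$, so $N_S$ of the coded image is at least $N(\mathcal{W}_i^F)\ge 2^{b'|F|}$ with $b'=b/(k\log 2)$, and Lemma \ref{lem:combinatorial} (with $k=2$) produces $J\subset F$ with $|J|\ge c(b,k)|F|$ that is an independence set for $(B_i,X\setminus U_i)\in\mathcal{A}$. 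Since $c$ is independent of $F$, the finite family $\mathcal{A}=\{(B_i,X\setminus U_i)\}_{i=1}^k$ has positive independence density. As submitted, your proposal establishes only one implication of the stated equivalence.
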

	
	There exists an action of the free group $\mathbb{F}_2$ with positive naive topological entropy and no non-trivial orbit IE-pairs \cite[Proposition 5.2]{LiRong2019}.

\subsection{CPE and UPE}

As mentioned in the introduction, one of the original motivations of studying entropy pairs was to obtain a better comprehension of uniform positive entropy and completely positive entropy. 

We say a finite open cover is \textbf{non-dense} if every element of the cover is not dense. 
\begin{definition}
Let $\act$ be an action. We say that $\act$ has \textbf{naive uniform positive entropy (naive UPE)} if for every non-dense open cover, $\U=\{U_1,U_2\}$, of $X$, we have
$$
\hnaive(\act,\U)>0.
$$

If the group is amenable, we simply call this property \textbf{UPE}. 
\end{definition}

The following result was proved in \cite{Blanchard1993} for actions of $\Z$, in \cite{KerrLi2007} for actions of amenable groups (with the use of Lemma \ref{lem:combinatorial}). The same proof works for actions of countable groups. 

\begin{proposition}
[Blanchard \cite{Blanchard1993}, Kerr and Li \cite{KerrLi2007}]
  Let $\act$ be an action. We have $\mathtt{IE}_2(X,G)=X^{2}$ if and only if $G\curvearrowright X$ has naive UPE.
\end{proposition}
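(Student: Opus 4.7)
The plan is to prove the two implications separately. For $(\Rightarrow)$ I would chase the definitions with a short counting argument, and for $(\Leftarrow)$ I would invoke Lemma \ref{lem:combinatorial} via a standard coding, handling one degenerate case separately.

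For the forward direction, suppose $\IE_2(X,G) = X^2$ and let $\U = \{U_1, U_2\}$ be a non-dense open cover. Since each $U_i$ is not dense, $X \setminus \overline{U_i}$ is a non-empty open set; pick a non-empty open $W_i \subset U_i^{\complement}$ and any $x_i \in W_i$. By hypothesis $(x_1, x_2) \in \IE_2(X,G)$, so the product neighborhood $W_1 \times W_2$ has some positive independence density $q > 0$. For each $F \Subset G$ extract an independence set $J \subset F$ with $|J| \geq q|F|$, and for every $\phi \colon J \to \{1,2\}$ pick a witness $x_\phi \in \bigcap_{s \in J} s^{-1}W_{\phi(s)}$. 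The key observation is that if $x_\phi$ lies in an element $\bigcap_{s \in F} s^{-1}U_{\psi(s)}$ of $\U^F$, then for every $s \in J$ the point $sx_\phi$ is simultaneously in $W_{\phi(s)} \subset U_{\phi(s)}^{\complement}$ and in $U_{\psi(s)}$, forcing $\psi(s) = 3 - \phi(s)$ throughout $J$. Hence distinct $\phi$'s require distinct covering elements, $N(\U^F) \geq 2^{|J|} \geq 2^{q|F|}$, and $\hnaive(\act,\U) \geq q\log 2 > 0$.

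For the converse, fix $(x_1, x_2) \in X^2$ and a product neighborhood $V_1 \times V_2$; I want to exhibit a positive lower bound for the independence density of $(V_1, V_2)$. In the generic case I would choose disjoint closed sets $C_i \subset V_i$ with non-empty interior (trivial if $x_1 \neq x_2$; otherwise pick $C_1$ a small closed neighborhood of $x_1$ in $V_1$ and $C_2$ a small closed set with interior inside $V_2 \setminus C_1$, which exists whenever $V_1 \cap V_2$ has more than one point). Then $\U = \{X \setminus C_2, X \setminus C_1\}$ is a non-dense open cover, so naive UPE yields $b > 0$ with $N(\U^F) \geq 2^{b|F|}$ for all $F$. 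Define the coding $\Phi \colon X \to \{0,1,2\}^F$ by $\Phi(x)(s) = i$ if $sx \in C_i$ and $0$ otherwise; the identity $N(\U^F) = N_{\Phi(X)}(\U')$ in the notation of Lemma \ref{lem:combinatorial} is immediate since $sx \in X \setminus C_{3 - \psi(s)}$ iff $\Phi(x)(s) \neq 3 - \psi(s)$. Applying the lemma (with $k = 2$) yields a constant $c > 0$ and a set $W \subset F$ with $|W| \geq c|F|$ and $\Phi(X)|_W \supset \{1,2\}^W$, which translates directly to $W$ being an independence set for $(C_1, C_2)$ and hence for $(V_1, V_2)$.

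The remaining degenerate case is $V_1 \cap V_2 = \{x\}$ with $x_1 = x_2 = x$, which forces $x$ to be an isolated point of $X$; here one cannot fit two disjoint closed sets with interior inside both $V_i$. Assuming $X \neq \{x\}$ (otherwise the statement is trivial), I would instead take a closed set $C \subset X \setminus \{x\}$ with non-empty interior and use the cover $\U = \{X \setminus \{x\}, X \setminus C\}$, which is again a non-dense open cover. The same coding-plus-lemma scheme produces $W \subset F$, $|W| \geq c|F|$, such that for every $\phi \colon W \to \{1,2\}$ there is an $x_\phi$ with $s x_\phi \in \{x\}$ or $s x_\phi \in C$ as prescribed; taking $\phi \equiv 1$ already gives $\bigcap_{s \in W} s^{-1}\{x\} \neq \emptyset$, and since $\{x\} \subset V_1 \cap V_2$ the inclusion $s^{-1}\{x\} \subset s^{-1}V_i$ makes $W$ an independence set for $(V_1, V_2)$ regardless of the assignment. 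The main obstacle is precisely this isolated-point bookkeeping; everything else is a direct application of the Kerr--Li combinatorial machinery packaged in Lemma \ref{lem:combinatorial}.
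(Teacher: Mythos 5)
Your argument is correct and follows exactly the route the paper points to for this proposition: a direct counting argument showing that an independence set for $(W_1,W_2)$ with $W_i\subset U_i^{\complement}$ forces $N(\mathcal{U}^F)\geq 2^{q|F|}$, and, conversely, the coding of orbits into $\{0,1,2\}^F$ combined with Lemma~\ref{lem:combinatorial} (the Kerr--Li combinatorial lemma the paper explicitly cites as the key ingredient), with a correct, if slightly fussy, treatment of the diagonal/isolated-point case. No gaps; this is essentially the standard Blanchard/Kerr--Li proof the paper refers to.
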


  Given a sofic approximation sequence for $G$, we say $G\curvearrowright X$ has \textbf{$\Sigma$-UPE} if $\mathtt{IE}_2^{\Sigma}(X,G)=X^{2}$. While not used in the current literature, $\Sigma$-UPE is equivalent to having positive topological sofic entropy for every non-dense open cover consisting of two sets \cite[Remark 4.4]{KerrLi2013}. 
  


UPE behaves similarly when considering different forms of entropy, with CPE one has to be more careful because it deals with factors. 

\begin{definition}
Let $G$ be an amenable group and $\act$ an action. We say that $\act$ has \textbf{completely positive entropy (CPE)} if every non-trivial factor has positive topological entropy. 
\end{definition}

CPE can also be characterized using the \textbf{topological Pinsker factor} for actions of amenable groups, that is, the largest factor with zero topological entropy in the sense that every factor with zero topological entropy is also a factor of the topological Pinsker factor. An action of an amenable group has CPE if and only if the topological Pinsker factor is trivial. 
 Using Proposition \ref{prop:IE pair} and Theorem \ref{thm:IE properties} (3) we obtain the following result. 
\begin{corollary}
[Blanchard and Lacroix \cite{blanchard1993zero}]
      Let $G$ be an amenable group and $\act$ an action. The topological Pinsker factor of $\act$ is obtained as the quotient of the smallest closed $G$-invariant equivalence relation on $X$ containing the set of IE-pairs.
  
In particular, $\act$ has CPE if and only if the smallest closed $G$-invariant equivalence relation on $X$ containing the set of IE-pairs is $X^2$.
      
\end{corollary}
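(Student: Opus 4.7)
The plan is to let $R$ denote the smallest closed $G$-invariant equivalence relation on $X$ containing $\IE_2(X,G)$, let $\pi\colon \act \to G\curvearrowright Y$ be the corresponding factor map with $Y = X/R$ (which exists by Proposition \ref{prop:factor-eq_rel}), and then prove two things: (a) $G\curvearrowright Y$ has zero topological entropy, and (b) any zero-entropy factor of $\act$ factors through $\pi$. Together these say $Y$ is the topological Pinsker factor.

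For (a), I would apply the image formula in Theorem \ref{thm:IE properties}(3) to get $(\pi\times\pi)(\IE_2(X,G)) = \IE_2(Y,G)$. By construction every IE-pair in $X$ is contained in $R$, so the pairs in $\IE_2(Y,G)$ are all on the diagonal of $Y$; that is, $\IE_2(Y,G)\setminus \Delta_2(Y) = \varnothing$. The equivalence $(1)\Leftrightarrow(2)$ in Proposition \ref{prop:IE pair} then yields $\htop(G\curvearrowright Y) = 0$.

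For (b), suppose $\pi'\colon \act\to G\curvearrowright Z$ is a factor map with $\htop(G\curvearrowright Z)=0$. Proposition \ref{prop:IE pair} gives $\IE_2(Z,G)\subset \Delta_2(Z)$, and Theorem \ref{thm:IE properties}(3) again yields $(\pi'\times\pi')(\IE_2(X,G)) = \IE_2(Z,G)\subset \Delta_2(Z)$, so $\IE_2(X,G)\subset R_{\pi'}$. Since $R_{\pi'}$ is a closed $G$-invariant equivalence relation (Proposition \ref{prop:factor-eq_rel}) and $R$ is the smallest such containing $\IE_2(X,G)$, we conclude $R\subset R_{\pi'}$. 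This inclusion of equivalence relations is exactly what it takes for $\pi'$ to descend through $\pi$, showing that $G\curvearrowright Z$ is a factor of $G\curvearrowright Y$. The ``in particular'' statement is then immediate: CPE means every non-trivial factor has positive entropy, which by the characterization just established is equivalent to the topological Pinsker factor $Y = X/R$ being trivial, i.e., to $R = X^2$.

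The argument is essentially bookkeeping once one invokes Theorem \ref{thm:IE properties}(3) and Proposition \ref{prop:IE pair}; the only point requiring care is making sure that ``smallest closed $G$-invariant equivalence relation containing $\IE_2(X,G)$'' is a well-defined object — this is clear since the intersection of an arbitrary family of closed $G$-invariant equivalence relations is again closed, $G$-invariant, and an equivalence relation, and $X^2$ itself is one such relation. No deep obstacle arises; the surjectivity built into Theorem \ref{thm:IE properties}(3) (which genuinely uses amenability) is what makes the proof so short in this setting.
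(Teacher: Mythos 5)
Your proof is correct and follows exactly the route the paper indicates: the survey derives this corollary precisely by combining Proposition \ref{prop:IE pair} with the image formula of Theorem \ref{thm:IE properties}(3), together with the correspondence between factors and closed $G$-invariant equivalence relations from Proposition \ref{prop:factor-eq_rel}. Your write-up simply fills in the bookkeeping the paper leaves implicit, so there is nothing to correct.
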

    
As mentioned before, there exists an action with positive naive topological entropy and no non-trivial orbit IE-pairs \cite[Proposition 5.2]{LiRong2019}. This implies that the previous corollary does not hold for actions of countable groups in the naive entropy context.  

Given a sofic group $G$ and  $\Sigma$ a sofic approximation sequence for $G$, we may also define the \textit{topological $\Sigma$-Pinsker factor} for actions with $\htop^{\Sigma}(\act)\geq 0$. The existence of these factors can be proved using \cite[Lemma 2.13]{li2013soficmean}. A topological $\Sigma$-Pinsker factor may still have factors with positive sofic topological entropy. If every non-trivial factor has positive sofic topological entropy with respect to $\Sigma$ we say it has \textbf{$\Sigma$-CPE}. Using Theorem \ref{thm: sofic IE pairs} and Theorem \ref{thm:sofic IE properties} (2), one can see that if the smallest closed $G$-invariant equivalence relation on $X$ containing the set of $\Sigma$-IE-pairs is $X^2$ then $\act$ has $\Sigma$-CPE. We do not know if the converse holds. 


\begin{question}
      Let $G$ be a sofic group, $\Sigma$ a sofic approximation sequence for $G$, and $\act$ an action with $\Sigma$-CPE. Is $X^2$ the smallest closed $G$-invariant equivalence relation on $X$ containing $\IE_2^{\Sigma}(X,G)$?
\end{question}

\section{Variations}

Entropy pairs are incredibly flexible in the sense that several natural variations are indeed relevant and connect to other objects in dynamical systems like sequence entropy, Ellis semigroup, nilsystems, and mean dimension. 

Some of these notions behave particularly well for minimal actions. 
We say an action $\act$ is \textbf{minimal} if every $G$-orbit is dense. 

\subsection{Sequence entropy and IN-tuples}

Let $\act$ be an action. Following Goodman \cite{SE}, for a sequence $S = \{ s_n
\}_{n\in\Nb}$ in $G$ we define the \textbf{topological sequence entropy of $\act$ along $S$ with respect to a finite open cover $\cU$} by
\[ \htop (\act,\cU ; S ) = \limsup_{n\to\infty} \frac1n \log
N\bigg( \bigvee_{i=1}^n s_i^{-1} \cU \bigg) \in[0,\infty). \]  
We define 
\[\htop ^{*}(\act) = \sup_{\U,S} \htop (\act,\cU ; S ), \] 
where the supremum is taken over all finite open covers $\U$ and all sequences $S$ in $G$.
Huang, Li,
Shao, and Ye \cite{huang2003null} and  Huang, Maass and Ye \cite{huang2004sequence} emulated the entropy pair/tuple definition for sequence entropy. 
\begin{definition}
 Let $\act$ be an action and $k\geq 2$. We say that $\ox = (x_1 , \dots, x_k)\in X^k\setminus \Delta_k(X)$ is a \textbf{sequence
entropy tuple} if whenever $K_1, \dots , K_l$ (with $l\geq 2$) are closed pairwise
disjoint neighborhoods of some elements of
$\{x_1 , \dots , x_k\}$, the open cover
$\{ K_1^\comp , \dots, K_l^\comp \}$ has positive topological sequence
entropy with respect to some sequence in $G$.  
\end{definition}
 

\begin{definition}\label{D-IN-pair}
Let $\act$ be an action and $k\geq 1$. We say that $\ox = (x_1 , \dots, x_k )\in X^k$ is an \textbf{IN-tuple} if for any product
neighborhood $U_1 \times\cdots\times U_k$ of $\ox$, we have that $(U_1,
\dots, U_k)$ has arbitrarily large finite independence sets. We
denote the set of IN-tuples of length $k$ by $\IN_k (X,G)$.
\end{definition}

For the proof of the next result see \cite[Theorem 5.9]{KerrLi2007}.

\begin{theorem}
[Kerr and Li \cite{KerrLi2007}]
\label{P-SE vs IN}
Let $\act$ be an action, $k\geq 2$ and $(x_1, \dots , x_k )\in X^k\setminus \Delta_k(X)$. Then $(x_1, \dots , x_k)$ is a sequence entropy tuple
if and only if it is an IN-tuple.
\end{theorem}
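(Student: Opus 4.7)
Without loss of generality assume $x_1,\dots,x_k$ are pairwise distinct: if $x_i=x_j$ then replacing $U_i,U_j$ by their intersection shows that both the IN and sequence entropy tuple conditions descend to the subtuple of distinct values, which still has at least two entries since $(x_1,\dots,x_k)\notin\Delta_k(X)$.

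For the \emph{if} direction, let $K_1,\dots,K_l$ with $l\geq 2$ be pairwise disjoint closed neighborhoods of some elements $x_{n_1},\dots,x_{n_l}$ of the tuple and let $\U=\{K_1^c,\dots,K_l^c\}$. Since a subtuple of an IN-tuple is IN, picking open sets $V_{n_j}$ with $x_{n_j}\in V_{n_j}\subset K_j$ gives arbitrarily large independence sets for $(V_{n_1},\dots,V_{n_l})$. For any such independence set $J$ of size $n$ and every $\phi\colon J\to\{n_1,\dots,n_l\}$, pick a witness $y_\phi\in\bigcap_{s\in J}s^{-1}V_{\phi(s)}$. Since $V_{n_j}\subset K_j$ and the $K_j$'s are disjoint, $y_\phi$ belongs to $\bigcap_{s\in J}s^{-1}K^c_{\psi(s)}$ only when $\psi(s)\neq\phi(s)$ for every $s\in J$, so each element of $\bigvee_{s\in J}s^{-1}\U$ accommodates at most $(l-1)^n$ of the $l^n$ witnesses. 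Hence $N\bigl(\bigvee_{s\in J}s^{-1}\U\bigr)\geq (l/(l-1))^n$. Concatenating independence sets of rapidly growing sizes (say of size $2^m$ for $m\in\NN$) into a single sequence $S$ in $G$ produces $\htop(\act,\U;S)\geq\log(l/(l-1))>0$, so $(x_1,\dots,x_k)$ is a sequence entropy tuple.

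For the \emph{only if} direction, fix a product neighborhood $U_1\times\cdots\times U_k$ and choose pairwise disjoint closed neighborhoods $K_j\subset U_j$ of $x_j$. By hypothesis the cover $\U=\{K_1^c,\dots,K_k^c\}$ has positive sequence entropy $h>0$ along some sequence $S=(s_i)_{i\in\NN}$ in $G$, and one may assume the $s_i$ are distinct (otherwise $\bigvee_{i\leq n}s_i^{-1}\U$ is eventually constant and $\htop(\act,\U;S)=0$). Thus $N\bigl(\bigvee_{i=1}^n s_i^{-1}\U\bigr)\geq e^{hn/2}$ for infinitely many $n$. Define a signature map $\tau\colon X\to\{0,1,\dots,k\}^{[n]}$ by $\tau_i(x)=j$ if $x\in s_i^{-1}K_j$ (well defined since the $K_j$'s are disjoint) and $\tau_i(x)=0$ otherwise, and let $T=\tau(X)$. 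A direct check shows that a cover element $\bigcap_{i}s_i^{-1}K_{\phi(i)}^c$ contains $x$ iff $\tau_i(x)\neq\phi(i)$ for all $i$, so that $N\bigl(\bigvee_{i=1}^n s_i^{-1}\U\bigr)$ equals the corresponding covering number $N_T(\U_{\mathrm{KL}})$ in the setup of Lemma \ref{lem:combinatorial}. Applying that lemma with $b=h/(2\log k)$ yields $c=c(k,b)>0$ and $W\subset[n]$ with $|W|\geq cn$ such that $T|_W\supset\{1,\dots,k\}^W$, which translates to $\bigcap_{i\in W}s_i^{-1}K_{\psi(i)}\neq\varnothing$ for every $\psi\colon W\to[k]$. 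Thus $\{s_i:i\in W\}$ is an independence set for $(K_1,\dots,K_k)$, hence for $(U_1,\dots,U_k)$, of size tending to infinity, so $(x_1,\dots,x_k)$ is an IN-tuple.

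The main obstacle lies in the \emph{only if} direction: one must convert exponential growth of $N\bigl(\bigvee_{i=1}^n s_i^{-1}\U\bigr)$ into combinatorial shattering of the signature set $T$, which is exactly what Lemma \ref{lem:combinatorial} provides. Once the signature encoding is set up, the desired arbitrarily large independence sets are read off immediately from the shattered coordinate subset $W$.
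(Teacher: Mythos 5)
Your proof is correct and follows essentially the same route as the Kerr--Li argument the survey cites for this theorem: encode the join $\bigvee_{i\le n}s_i^{-1}\mathcal{U}$ as a subset of $\{0,1,\dots,k\}^{[n]}$ and apply Lemma \ref{lem:combinatorial} for the forward direction, and use witnesses of independence plus a concatenated sequence of rapidly growing independence sets for the converse. The only blemish is your parenthetical justification for taking the $s_i$ distinct (a sequence with repetitions need not have eventually constant joins), but it is harmless: since the $K_j$ are pairwise disjoint, the shattering $T|_W\supset\{1,\dots,k\}^W$ already forces the $s_i$ with $i\in W$ to be pairwise distinct, so that assumption can simply be dropped.
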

We say that $(X, G)$ is {\bf null} if $\htop^* (\act)=0$. 
For a proof of the following result see \cite[Proposition 5.4]{KerrLi2007}.
\begin{proposition}
[Huang, Li, Shao and Ye \cite{huang2003null}, Kerr and Li \cite{KerrLi2007}]
\label{prop:INpair}
Let $\act$ be an action. 
    Then $\act$ is null if and only if $\IN_2 (X,G)\subset \Delta_2(X)$.
\end{proposition}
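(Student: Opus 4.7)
The plan is to reduce to Theorem \ref{P-SE vs IN}: since that theorem identifies non-diagonal IN-pairs with non-diagonal sequence entropy pairs, it suffices to prove that $\htop^*(\act)>0$ if and only if $(X,G)$ admits a non-diagonal sequence entropy pair.

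One direction is immediate from the definitions. If $(x_1,x_2)$ is a non-diagonal sequence entropy pair and $K_1,K_2$ are disjoint closed neighborhoods of $x_1,x_2$, then by definition the cover $\{K_1^{\comp},K_2^{\comp}\}$ has positive topological sequence entropy along some sequence in $G$, whence $\htop^*(\act)>0$ and $(X,G)$ is not null.

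For the converse I would argue contrapositively. Assume $\htop^*(\act)>0$, and fix a finite open cover $\U=\{U_1,\dots,U_k\}$ with a sequence $S=\{s_n\}$ for which $\htop(\act,\U;S)>0$; then $N(\bigvee_{i=1}^{n}s_i^{-1}\U)\geq e^{cn}$ along some subsequence. By passing to a shrinking refinement of $\U$ (which only increases the sequence entropy) I may assume $\U$ contains two members $U_1,U_2$ with $\overline{U_1}\cap\overline{U_2}=\varnothing$. Applying Lemma \ref{lem:combinatorial} to the symbolic encoding of $X$ by $\U$ along $\{s_1,\dots,s_n\}$, exactly as in the proof of Theorem \ref{P-SE vs IN} in \cite{KerrLi2007}, produces subsets $W_n\subset\{s_1,\dots,s_n\}$ with $|W_n|\to\infty$ that are combinatorial independence sets for the tuple $(U_1,\dots,U_k)$, and hence for the pair $(U_1,U_2)$. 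In particular, the disjoint closed pair $(\overline{U_1},\overline{U_2})$ admits arbitrarily large independence sets.

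To extract a non-diagonal IN-pair I would apply Zorn's lemma to the poset $\mathcal{P}$ of non-empty disjoint closed pairs $(B_1,B_2)\subset\overline{U_1}\times\overline{U_2}$ that admit arbitrarily large independence sets, ordered by reverse componentwise inclusion. Chains have upper bounds given by componentwise intersections: persistence of arbitrarily large independence sets follows from a pigeonhole on the countable collection of size-$n$ subsets of $G$, using that ``$F$ is an independence set for $(B_1,B_2)$'' is downward-monotone along the chain. A minimal element of $\mathcal{P}$ is then a pair of singletons $(\{x_1\},\{x_2\})$ with $x_1\neq x_2$, and $(x_1,x_2)$ is by construction a non-diagonal IN-pair. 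The main obstacle is the splitting argument forcing minimality to imply singletons: if $B_1=C_1\sqcup C_2$ were a proper decomposition, one must show that one of $(C_i,B_2)$ still belongs to $\mathcal{P}$; this is carried out by recording, for each independence witness $x_\phi$ of $(B_1,B_2)$, into which of $C_1$ or $C_2$ the points $sx_\phi$ fall, and then running a Ramsey-type / Sauer--Shelah argument (in the spirit of Lemma \ref{lem:combinatorial}) to promote this bookkeeping into a linear-sized independence set for one of the halves.
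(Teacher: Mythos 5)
There are two genuine gaps in your converse direction, and both occur at the places where the real work happens. First, the combinatorial step is applied backwards: Lemma \ref{lem:combinatorial}, used as in Kerr--Li, converts a lower bound on $N\bigl(\bigvee_{i=1}^n s_i^{-1}\U\bigr)$ into large independence sets for the tuple of \emph{complements} $(U_1^c,\dots,U_k^c)$ of the cover members, not for $(U_1,\dots,U_k)$ itself (compare Definition \ref{def:entropy tuple}: the closed sets $K_i$ carrying independence are the complements of the cover $\{K_1^c,\dots,K_l^c\}$). This matters because your refinement trick inserts two tiny cover members $U_1,U_2$ with disjoint closures at an arbitrarily chosen location; positive sequence entropy of the refined cover gives you nothing about independence of $(\overline{U_1},\overline{U_2})$ (those points could be fixed points, say), while the sets that do inherit independence, the $U_i^c$, are large and overlapping, so disjointness is lost. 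The correct source of non-diagonality is not a disjoint pair chosen in advance but the fact that $\bigcap_i U_i^c=\varnothing$: one localizes the full $k$-tuple $(U_1^c,\dots,U_k^c)$ to an IN-tuple $(x_1,\dots,x_k)$ with $x_i\in U_i^c$, notes that the $x_i$ cannot all coincide, and projects onto two distinct coordinates.

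Second, the Zorn's lemma localization does not work. The upper-bound step fails: a decreasing chain of disjoint closed pairs, each with arbitrarily large independence sets, can have an intersection without that property (take $B_1^m=\overline{B(a,1/m)}\cup\overline{B(p,1/m)}$, $B_2^m=\overline{B(b,1/m)}\cup\overline{B(q,1/m)}$ with $(p,q)$ an IN-pair and $a,b$ dynamically trivial; the intersection pair $(\{a,p\},\{b,q\})$ generally admits no large independence sets). Worse, the minimal elements you aim for --- singleton pairs $(\{x_1\},\{x_2\})$ admitting arbitrarily large independence sets --- typically do not exist at all (already in the full shift this would force strong periodicity constraints on $x_1,x_2$), which is consistent with chains having no upper bounds; being an IN-pair only requires independence of all \emph{neighborhoods}, which is strictly weaker. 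The standard argument avoids this: using the splitting lemma you rightly flag (the Sauer--Shelah-type halving step), one builds a nested \emph{sequence} of closed tuples with the independence property and diameters tending to $0$, and the limit points form an IN-tuple because every product neighborhood of them contains one of the closed tuples in the sequence --- one never claims the property for the limiting singletons themselves. With these two corrections your outline collapses onto the proof of \cite[Proposition 5.4]{KerrLi2007} that the survey cites; as written, however, both the source of independence and the localization are unsound.
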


Another application of combinatorial independence is a formula for topological sequence entropy using IN-tuples \cite[Theorem 4.4]{HuangYe2009}.
\begin{theorem} 
[Huang and Ye \cite{HuangYe2009}]
\label{thm:seqentr}
Let $\act$ be an action and $k\geq 2$. 
We have that  $\IN_k (X,G)\subset \Delta^2_k(X)$ if and only if $ \htop^{*}(\act)< \log k$. 
Furthermore,
$$
\htop^{*}(\act)=\log n 
$$
for some $n\in \N \cup \{\infty\}$.
\end{theorem}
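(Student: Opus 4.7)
The plan is to prove the equivalence by contrapositive in both directions and then derive the integer-valued statement.

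For the direction $\IN_k\not\subset\Delta^2_k\Rightarrow\htop^*(\act)\geq\log k$, I take an IN-tuple $(x_1,\ldots,x_k)$ with pairwise distinct coordinates and pick pairwise disjoint closed neighborhoods $K_1,\ldots,K_k$. The IN-tuple property applied to the product neighborhood $\mathrm{int}\,K_1\times\cdots\times\mathrm{int}\,K_k$ yields arbitrarily large independence sets $J_m\subset G$ for $(\mathrm{int}\,K_1,\ldots,\mathrm{int}\,K_k)$. I construct an open cover $\cV=\{V_0,V_1,\ldots,V_k\}$ of $X$ with $V_i$ a small open neighborhood of $K_i$ disjoint from $\bigcup_{j\neq i}K_j$ for $i\geq 1$ and $V_0=X\setminus\bigcup_{i=1}^k K_i$; by construction each $V_i$ with $i\geq 1$ meets only $K_i$. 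For an independence set $J$ and $\phi\colon J\to\{1,\ldots,k\}$, a witness point $x_\phi\in\bigcap_{s\in J}s^{-1}\mathrm{int}\,K_{\phi(s)}$ satisfies $sx_\phi\in V_{\phi(s)}$ and lies in no other $V_j$, so each element of $\cV^J$ contains at most one $x_\phi$, yielding $N(\cV^J)\geq k^{|J|}$. Concatenating the $J_m$ with $|J_m|$ growing fast enough that $|J_m|/n_m\to 1$ (for $n_m=|J_1|+\cdots+|J_m|$) produces a sequence $S$ with $\htop(\act,\cV;S)\geq\limsup_m |J_m|\log k/n_m=\log k$, hence $\htop^*(\act)\geq\log k$.

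For the converse $\htop^*(\act)\geq\log k\Rightarrow\IN_k\not\subset\Delta^2_k$, I fix a small $\varepsilon>0$ and pick $\cU$ and $S=\{s_i\}_{i\in\NN}$ with $\htop(\act,\cU;S)>\log k-\varepsilon$, so $N(\bigvee_{i=1}^n s_i^{-1}\cU)\geq k^{n(1-o(1))}$ for infinitely many $n$. The set of non-empty patterns $\Phi\subset[m]^{[n]}$ (where $m=|\cU|$) satisfies $|\Phi|\geq N(\bigvee_{i=1}^n s_i^{-1}\cU)$. A multi-valued Sauer--Shelah style lemma (generalizing \Cref{lem:combinatorial}) then extracts a subset $W\subset[n]$ of size at least $cn$ and $k$ distinct cover elements $U_{j_1},\ldots,U_{j_k}\in\cU$ with $\Phi|_W\supset\{j_1,\ldots,j_k\}^W$, which is precisely the statement that $\{s_i:i\in W\}$ is an independence set for $(U_{j_1},\ldots,U_{j_k})$. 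Pigeonholing over the finite collection of $k$-subsets of $\cU$ fixes a tuple $(U_{j_1},\ldots,U_{j_k})$ with arbitrarily large independence sets. Iterating with covers $\cU^{(\ell)}$ of diameter less than $1/\ell$, choosing representatives $x_i^{(\ell)}\in U_{j_i(\ell)}^{(\ell)}$, and passing to a convergent subsequence in the compact space $X^k$, I obtain a limit $(x_1,\ldots,x_k)\in\IN_k$ by the monotonicity of independence under enlargement of the sets.

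The principal obstacle is ensuring the limit lies outside $\Delta^2_k$, since distinct small cover elements can in principle accumulate on a single point; at each scale the $x_i^{(\ell)}$ can be chosen pairwise distinct via a system of distinct representatives in the infinite open sets $U_{j_i(\ell)}^{(\ell)}$, but pairwise distinctness need not survive the limit. I would address this either by strengthening the combinatorial extraction to force the $k$ cover elements to have well-separated representatives (exploiting the richness $|\Phi|\geq k^{n(1-o(1))}$ via a Ramsey-type pigeonhole to rule out concentration near a single diagonal), or by an inductive reduction on $k$: if $\IN_k\subset\Delta^2_k$, every IN $k$-tuple would descend to an IN-tuple of length less than $k$ (since sub-tuples of IN-tuples are IN-tuples), and careful bookkeeping of the sequence-entropy contributions from such lower-length tuples would force $\htop^*(\act)<\log k$.

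Finally, the integer-valued statement follows from the equivalence by setting $n=\sup\{k\geq 1:\IN_k\not\subset\Delta^2_k\}\in\NN\cup\{\infty\}$, which is at least $1$ since $\IN_1=X$: the equivalence applied at each $k\leq n$ gives $\htop^*(\act)\geq\log n$, while if $n<\infty$ applying it at $k=n+1$ gives $\htop^*(\act)<\log(n+1)$; hence $\htop^*(\act)=\log n$.
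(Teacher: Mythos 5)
The survey does not actually prove this theorem (it is quoted from Huang--Ye), so your argument has to stand on its own. Your first direction is fine: the cover $\cV=\{V_0,V_1,\dots,V_k\}$ built from disjoint closed neighborhoods, the bound $N(\cV^J)\geq k^{|J|}$, and the concatenation of independence sets $J_m$ with $|J_m|/n_m\to 1$ is exactly the standard way to get $\htop^{*}(\act)\geq \log k$ from a pairwise-distinct IN $k$-tuple. The converse, however, has a genuine gap precisely where you flag it. Nothing in your extraction prevents the $k$ distinct cover elements $U^{(\ell)}_{j_1},\dots,U^{(\ell)}_{j_k}$ from clustering, at every scale $\ell$, around fewer than $k$ points; then your limit tuple lies in $\Delta^2_k(X)$ and yields nothing. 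Neither proposed repair is an argument: the ``Ramsey-type pigeonhole to rule out concentration near the diagonal'' is not formulated, and the ``inductive reduction on $k$ with careful bookkeeping'' is essentially a restatement of the implication to be proved. The known proofs (Kerr--Li for $k=2$/nullness, Huang--Ye in general) are structured differently to avoid exactly this: from a cover with large pattern growth one first produces a tuple of \emph{pairwise disjoint closed sets} (complements of unions of cover elements, a $k$-fold coarsening of $\cU$) admitting arbitrarily large independence sets --- this is what the covering-number combinatorial lemmas are for, and it is not the same as applying a cardinality Sauer--Shelah bound to realized patterns of possibly overlapping cover elements --- and then localizes by iterated subdivision, using a hereditary lemma (if $(A_1,\dots,A_k)$ has arbitrarily large independence sets and $A_1=A_1'\cup A_1''$, then so does one of the refined tuples) to land an IN $k$-tuple in $A_1\times\cdots\times A_k$, non-degenerate because the $A_i$ are disjoint. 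Your ``multi-valued Sauer--Shelah style lemma'' is also invoked without a precise statement or proof; it is the technical heart of Huang--Ye's paper, not a routine variant of Lemma~\ref{lem:combinatorial}.

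The final paragraph is a non sequitur. From the stated equivalence alone, your $n=\sup\{k\geq 1:\IN_k(X,G)\not\subset\Delta^2_k(X)\}$ gives only $\log n\leq \htop^{*}(\act)<\log(n+1)$, which does not exclude values in the open interval. The ``furthermore'' clause is strictly stronger than the equivalence: one needs $\IN_{k}(X,G)\subset\Delta^2_{k}(X)\Rightarrow \htop^{*}(\act)\leq\log(k-1)$, equivalently that $\htop^{*}(\act)>\log(k-1)$ already forces a non-degenerate IN $k$-tuple. Your second direction is calibrated at the weaker threshold $\htop^{*}(\act)\geq\log k$ (covers with sequence entropy $>\log k-\varepsilon$ and pattern counts about $k^{n}$), so even if completed it would not yield the integrality of $\htop^{*}$; the sharp $(k-1)$-versus-$k$ threshold is exactly what Huang--Ye's combinatorial lemmas supply and cannot be bypassed by the bookkeeping you describe.
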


 IN-tuples and IE-tuples have similar basic properties \cite[Proposition 5.4]{KerrLi2007}. 
\begin{theorem}[Huang, Li, Shao and Ye \cite{huang2003null}, Kerr and Li \cite{KerrLi2007}]
\label{thm:IN properties}  Let $\act$ and $G\curvearrowright Y$ be actions and $k\in \N$. We have that 
    \begin{enumerate}
        \item $\IN_{k}(X,G)$ is a closed subset of $X^k$ that is invariant under the product action. 
        \item If $\pi: \act \rightarrow G\curvearrowright Y$ is a factor map, then $$(\pi\times\dots\times \pi)(\IN^{}_{k}(X,G))=\IN^{}_{k}(Y,G).$$
        
    \end{enumerate}
    
\end{theorem}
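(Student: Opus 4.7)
For part (1), both closedness and product-action invariance are routine consequences of the definition. If $\ox^{(n)} \to \ox$ in $X^k$ with each $\ox^{(n)} \in \IN_k(X,G)$, then any product open neighborhood $U_1 \times \cdots \times U_k$ of $\ox$ eventually contains $\ox^{(n)}$, so the IN-property of $\ox^{(n)}$ forces $(U_1, \ldots, U_k)$ to have arbitrarily large independence sets; hence $\ox$ is itself an IN-tuple. For invariance under the product action, given $g\in G$, a product neighborhood $V_1 \times \cdots \times V_k$ of $g\ox$ pulls back to the product neighborhood $g^{-1}V_1 \times \cdots \times g^{-1}V_k$ of $\ox$; an independence set $J$ for the latter yields the translated independence set $gJ$ of the same cardinality for $(V_1, \ldots, V_k)$, via the identity $\bigcap_{s \in I} s^{-1}g^{-1}V_{\phi(s)} = \bigcap_{t \in gI} t^{-1}V_{\phi(g^{-1}t)}$.

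For the forward inclusion of (2), take $\ox \in \IN_k(X,G)$ and set $\oy = (\pi \times \cdots \times \pi)(\ox)$. Given any product open neighborhood $V_1 \times \cdots \times V_k$ of $\oy$, the preimage $\pi^{-1}(V_1) \times \cdots \times \pi^{-1}(V_k)$ is a product neighborhood of $\ox$, so admits arbitrarily large independence sets. Since $\pi$ is equivariant and surjective, $\pi\bigl(\bigcap_{s \in I} s^{-1}\pi^{-1}(V_{\phi(s)})\bigr) = \bigcap_{s \in I} s^{-1}V_{\phi(s)}$, so any such independence set is simultaneously one for $(V_1, \ldots, V_k)$.

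The reverse inclusion of (2) is the substantive part; let $\pi_k = \pi \times \cdots \times \pi$. Given $\oy \in \IN_k(Y,G)$, the plan is to show $\pi_k^{-1}(\oy) \cap \IN_k(X,G) \neq \varnothing$ by contradiction. If this intersection were empty, then by closedness of $\IN_k(X,G)$ (from part 1) and compactness of the fiber, one extracts a finite cover $U^{(1)}, \ldots, U^{(M)}$ of $\pi_k^{-1}(\oy)$ by product open sets such that each $(U^{(j)}_1, \ldots, U^{(j)}_k)$ admits only independence sets of cardinality at most some fixed $N$. The tube lemma applied to the closed map $\pi_k$ then yields a product open neighborhood $V_1 \times \cdots \times V_k$ of $\oy$ with $\pi^{-1}(V_1) \times \cdots \times \pi^{-1}(V_k) \subseteq \bigcup_j U^{(j)}$. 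Since $\oy$ is an IN-tuple, the tuple $(\pi^{-1}(V_1), \ldots, \pi^{-1}(V_k))$ inherits arbitrarily large independence sets by pulling back as in paragraph~2.

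The hard part is now the combinatorial extraction. Fix an independence set $J$ of cardinality $L$ (to be chosen large in terms of $N, M, k$) for the pullback tuple, and for each coloring $\phi : J \to [k]$ pick a witness $z_\phi \in \bigcap_{s \in J} s^{-1}\pi^{-1}(V_{\phi(s)})$. Since the cover of the product forces $\pi^{-1}(V_i) \subseteq \bigcup_j U^{(j)}_i$ componentwise, each pair $(s,\phi)$ admits an index $j_{s,\phi} \in [M]$ with $sz_\phi \in U^{(j_{s,\phi})}_{\phi(s)}$. Consider the enriched coloring $\Phi : [k]^J \to ([k]\times[M])^J$ defined by $\Phi(\phi)(s) = (\phi(s), j_{s,\phi})$; since $\Phi$ is injective, its image has cardinality $k^L$. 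Applying the Sauer–Perles–Shelah lemma in the multi-alphabet form (a variant of Lemma~\ref{lem:combinatorial}) to this image inside $([k]\times[M])^J$, one obtains, for $L$ sufficiently large, a subset $S \subseteq J$ of cardinality greater than $N$ and a fixed $j^* \in [M]$ such that every coloring $\phi' : S \to [k]$ is realized as $\Phi(\phi)|_S = (\phi', j^*)$ for some $\phi$; the associated $z_\phi$ then witnesses $S$ as an independence set for $(U^{(j^*)}_1, \ldots, U^{(j^*)}_k)$, contradicting the bound~$N$. This shattering step is the main obstacle: it parallels but is milder than the density-sensitive combinatorial input required for the analogous IE-tuple statement, since here one only needs arbitrarily large, not positive-density, independence sets.
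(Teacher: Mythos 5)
Part (1) and the forward inclusion of (2) are correct and routine. The problem is the lifting direction, and the gap sits exactly at the step you yourself flag as the crux. Your ``multi-alphabet Sauer--Perles--Shelah'' extraction is fed only one piece of information: that the enriched family $\Phi([k]^J)$ has cardinality $k^L$ inside $([k]\times[M])^J$. No lemma of Sauer--Shelah type can, from cardinality alone, return a nonempty set $S\subseteq J$ together with a \emph{single} index $j^*$ such that every coloring of $S$ is realized with second coordinate constantly $j^*$. Indeed, take $M=k$ and suppose the admissible choices happen to force $j_{s,\phi}=\phi(s)$; this is compatible with everything you actually use (you only use the componentwise inclusion $\pi^{-1}(V_i)\subseteq\bigcup_j U^{(j)}_i$), the family still has cardinality $k^L$, yet at each coordinate $s$ only the ``diagonal'' letters $(i,i)$ occur, so the desired conclusion fails already for $|S|=1$. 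The information that could rule out such configurations is the full product-cover relation $\pi^{-1}(V_1)\times\cdots\times\pi^{-1}(V_k)\subseteq\bigcup_{j}\bigl(U^{(j)}_1\times\cdots\times U^{(j)}_k\bigr)$, which couples $k$ different coordinates of $J$ at a time, and your argument never exploits it beyond its componentwise consequence. There is also a second structural weakness in the contradiction scheme: even if one could extract (say by repeated splitting) indices $j_1,\dots,j_k$, one per component, such that $(U^{(j_1)}_1\cap\pi^{-1}(V_1),\dots,U^{(j_k)}_k\cap\pi^{-1}(V_k))$ keeps arbitrarily large independence sets, these indices need not coincide, and mixed products are not among the finitely many tuples on which you imposed the bound $N$, so no contradiction follows.

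The argument behind the cited result (Kerr--Li, Proposition 5.4) runs differently and avoids both issues. First one proves a splitting lemma: if $(A_1,\dots,A_k)$ has arbitrarily large finite independence sets and $A_1=A_{1,1}\cup A_{1,2}$, then $(A_{1,i},A_2,\dots,A_k)$ has arbitrarily large finite independence sets for $i=1$ or $i=2$ (this is where the combinatorial input enters, with one set split into two pieces at a time). Iterating the splitting over finite closed covers of each coordinate by sets of small diameter and using compactness of $X$, one obtains the realization statement: any tuple $(A_1,\dots,A_k)$ of closed sets with arbitrarily large finite independence sets contains an IN-tuple $(x_1,\dots,x_k)$ with $x_i\in A_i$. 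The lifting is then immediate: for product neighborhoods $V^n_1\times\cdots\times V^n_k$ shrinking to $(y_1,\dots,y_k)\in\IN_k(Y,G)$, the tuples $(\pi^{-1}(\overline{V^n_1}),\dots,\pi^{-1}(\overline{V^n_k}))$ have arbitrarily large finite independence sets (your pullback observation), hence contain IN-tuples $\ox^n$; a limit point of the $\ox^n$ lies in $\IN_k(X,G)$ by part (1) and is mapped by $\pi\times\cdots\times\pi$ to $(y_1,\dots,y_k)$. To repair your write-up you would need to either prove the splitting lemma and follow this route, or supply a genuinely new combinatorial argument that uses the $k$-wise cover constraint you dropped; as written, the proof has a real gap.
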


\subsection{Tameness and IT-tuples}
Given an action $\act$, the \textbf{Ellis semigroup} $E(\act)$ is defined as the closure of 
$$\{x\mapsto gx:g\in G\}\subset X^X$$ 
in the product topology. Using the composition as the operation one can check that $E(\act)$ is indeed a semigroup. We
say that $\act$ is {\bf tame} if the cardinality of $E(\act)$ is at most $2^{\aleph_0}$ (the cardinality of the real numbers). 

Tameness was originally defined by Köhler \cite{kohler1995enveloping} by requiring that no continuous function $f: X \rightarrow \mathbb{R}$ has an
infinite $\ell_1$-isomorphism set. We use the equivalent definition as stated by Glasner in \cite{glasner2006tame}. 

In contrast to the density conditions in the context of entropy, here we are interested in the existence of infinite independence sets. This is related to Rosenthal's $\ell_1$ Theorem \cite{rosenthal1978some} and the Ramsey method involved in its proof (see \cite{gowers2003ramsey}). 

\begin{definition}\label{D-IT-pair}
Let $\act$ be an action and $k\geq 1$. 
We say that $\ox = (x_1 , \dots, x_k )\in X^k$ is an {\it IT-tuple}
if for any product
neighborhood $U_1 \times\cdots\times U_k$ of $\ox$ the tuple $(U_1,
\dots, U_k)$ has an infinite independence set. We denote the set of
$IT$-tuples of length $k$ by $\IT_k (X,G)$.
\end{definition}
%

Every orbit IE-tuple is an IT-tuple \cite[Corollary 7.2]{KerrLi2013}. Every IT-tuple is an IN-tuple. 

The following result was proved in \cite[Proposition 6.4]{KerrLi2007}.

\begin{theorem}
[Kerr and Li \cite{KerrLi2007}]
Let $\act$ be an action. 
    Then $\act$ is tame if and only if $\IT_2 (X,G)\subset \Delta_2(X)$.
\end{theorem}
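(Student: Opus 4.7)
The proof proceeds via two classical ingredients. First, K\"ohler's original characterization of tameness \cite{kohler1995enveloping}: $\act$ is tame if and only if no $f \in C(X)$ admits an infinite $\ell_1$-isomorphism subset of $G$, i.e.\ an infinite $A \subset G$ such that $\{f \circ g : g \in A\}$ is equivalent to the standard $\ell_1$-basis inside $C(X)$. Second, Rosenthal's $\ell_1$ theorem \cite{rosenthal1978some} in its $C(X)$ formulation: a bounded sequence $(f_n)$ in $C(X)$ admits an $\ell_1$-equivalent subsequence if and only if there exist reals $a < b$ and a subsequence $(f_{n_k})$ such that, for every disjoint pair of finite sets $P, Q \subset \NN$, $\bigcap_{k \in P} f_{n_k}^{-1}(-\infty, a) \cap \bigcap_{k \in Q} f_{n_k}^{-1}(b, \infty) \neq \varnothing$.

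For the forward direction, I argue by contrapositive. Take $(x_1, x_2) \in \IT_2(X, G) \setminus \Delta_2(X)$ and pick open neighborhoods $U_1 \ni x_1$, $U_2 \ni x_2$ with $\overline{U_1} \cap \overline{U_2} = \varnothing$. The IT-pair definition yields an infinite independence set $J = \{g_n\}_{n \in \NN}$ for $(U_1, U_2)$. By Urysohn's lemma, pick $f \in C(X, [0, 1])$ with $f \equiv 0$ on $\overline{U_1}$ and $f \equiv 1$ on $\overline{U_2}$. For each finite $F \subset \NN$ and $\phi : F \to \{1, 2\}$, any witness $x \in \bigcap_{n \in F} g_n^{-1} U_{\phi(n)}$ satisfies $(f \circ g_n)(x) = \phi(n) - 1 \in \{0,1\}$. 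Splitting the coefficients in a linear combination $\sum_{n \in F} c_n (f \circ g_n)$ by sign and choosing $\phi$ to match forces $\|\sum c_n (f \circ g_n)\|_\infty \geq \tfrac{1}{2} \sum_n |c_n|$; hence $\{f \circ g_n\}$ is $\ell_1$-equivalent, so $J$ is an infinite $\ell_1$-isomorphism set for $f$, contradicting tameness.

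For the converse, again by contrapositive, assume $\act$ is not tame and fix $f \in C(X)$ with an infinite $\ell_1$-isomorphism set $\{g_n\}$. Rosenthal's theorem supplies a subsequence (still denoted $\{g_n\}$) and reals $a < b$ so that $\{g_n\}$ is an infinite independence set for the pair of open sets $(U_1, U_2) = (f^{-1}(-\infty, a), f^{-1}(b, \infty))$, with $\overline{U_1} \cap \overline{U_2} = \varnothing$ since $a < b$.

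The remaining step --- the main technical obstacle --- is the standard realization claim: whenever a pair of open sets with disjoint closures admits an infinite independence set, its closure contains an IT-pair. This is proved by a Zorn-type compactness argument parallel to the IE-tuple realization in \cite[Proposition 3.9]{KerrLi2007}: consider the poset of pairs $(C_1, C_2)$ of closed nonempty subsets of $\overline{U_1}, \overline{U_2}$ with the property that every open product neighborhood of $C_1 \times C_2$ still admits an infinite independence set, ordered by componentwise inclusion; compactness ensures decreasing chains have lower bounds still in the class, Zorn delivers a minimal element, and an elementary separation argument (if some $C_i$ contained two distinct points $y \neq y'$, choose disjoint open $W \ni y, W' \ni y'$ and refine via the covering $C_i = (C_i \cap \overline{W}) \cup (C_i \setminus W)$ combined with the independence properties of finite unions of rectangles to produce a strictly smaller element) forces minimal elements to be singleton pairs. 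The resulting singleton $(x_1^*, x_2^*) \in \overline{U_1} \times \overline{U_2}$ is a non-diagonal IT-pair, contradicting $\IT_2(X, G) \subset \Delta_2(X)$.
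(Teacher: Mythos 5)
Your overall skeleton is the same as the one behind the cited result of Kerr and Li: use K\"ohler's $\ell_1$-characterization of tameness, get the forward direction by turning an infinite independence set for a pair of sets with disjoint closures into an $\ell_1$-sequence via a Urysohn function (this half of your argument is correct), and get the converse from Rosenthal's dichotomy plus a ``realization'' statement saying that a pair of sets with disjoint closures admitting an infinite independence set forces a non-diagonal IT-pair in the product of the closures. The problem is that you dismiss this realization statement as a routine Zorn argument ``parallel to the IE-tuple realization'' with an ``elementary separation argument.'' The minimality step of that Zorn argument requires exactly the following splitting property: if $(A_1\cup A_1',A_2)$ has an infinite independence set, then $(A_1,A_2)$ or $(A_1',A_2)$ has an infinite independence set. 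Nothing about ``independence properties of finite unions of rectangles'' gives this: for each finite configuration the witnessing point may send different group elements into different halves, and the half that works may change from one finite set (or one neighborhood) to the next, so no pigeonhole stabilizes the choice.

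Note also that the analogous splitting in the IE and IN settings is itself not elementary --- it rests on the Sauer--Perles--Shelah-type combinatorial Lemma \ref{lem:combinatorial} --- and even if you ran that finite argument along larger and larger finite subsets of your infinite independence set, you would only obtain arbitrarily large finite independence sets for one of the halves, i.e.\ an IN-pair, not an IT-pair: arbitrarily large finite independence sets do not concatenate into an infinite one, because the large finite sets need not be nested. Producing an \emph{infinite} independence set for one of the halves is the genuinely hard, Ramsey-theoretic core of Kerr and Li's proof of the IT-realization statement (the infinite splitting lemma in Section 6 of \cite{KerrLi2007}, going back to the combinatorics in Rosenthal's proof of the $\ell_1$ theorem), and your proposal supplies no argument for it. Until you either prove that splitting lemma or import it explicitly, the converse direction of your proof has a gap precisely at its main step.
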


\begin{corollary}
 If an action $\act$ is null then it must be tame.   
\end{corollary}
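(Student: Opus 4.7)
The plan is to chain together three facts that have already been established earlier in the excerpt, so the argument reduces to a short containment of tuple sets. First I would invoke the characterization of nullness via IN-pairs (Proposition~\ref{prop:INpair}): if $\act$ is null, then $\IN_2(X,G) \subset \Delta_2(X)$. Second, I would use the remark stated just before Theorem~\ref{P-SE vs IN}'s tame analogue, namely that every IT-tuple is an IN-tuple; this gives the inclusion $\IT_2(X,G) \subset \IN_2(X,G)$. Combining these two yields $\IT_2(X,G) \subset \Delta_2(X)$. Finally, applying the tame-versus-IT-pair characterization (the theorem immediately preceding this corollary) allows one to conclude that $\act$ is tame.

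In more detail: suppose $\act$ is null. Let $(x_1,x_2) \in \IT_2(X,G)$. Then by definition every product neighborhood of $(x_1,x_2)$ admits an infinite independence set, which in particular supplies arbitrarily large finite independence sets. Hence $(x_1,x_2) \in \IN_2(X,G)$, establishing the inclusion $\IT_2(X,G) \subset \IN_2(X,G)$. By Proposition~\ref{prop:INpair} applied to the hypothesis that $\act$ is null, we obtain $\IN_2(X,G) \subset \Delta_2(X)$, and therefore $\IT_2(X,G) \subset \Delta_2(X)$. By the tameness characterization in terms of IT-pairs, $\act$ is tame.

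Since all three ingredients (the IN characterization of null, the IT characterization of tame, and the IT~$\subset$~IN inclusion) are quoted from the text, I do not expect any genuine obstacle here; the corollary is essentially a tautology given the prior machinery. If anything, the only subtle point worth emphasizing in the writeup is the direction of the implication IT~$\Rightarrow$~IN, which might look like it goes the wrong way until one recalls that the IN condition only asks for arbitrarily large \emph{finite} independence sets, a strictly weaker requirement than possessing a single infinite independence set.
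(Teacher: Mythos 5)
Your proposal is correct and is exactly the argument the paper intends: null gives $\IN_2(X,G)\subset\Delta_2(X)$ by Proposition~\ref{prop:INpair}, the stated inclusion $\IT_2(X,G)\subset\IN_2(X,G)$ (an infinite independence set yields arbitrarily large finite ones) transfers this to IT-pairs, and the IT-characterization of tameness concludes. No difference from the paper's (implicit) proof.
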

There exist minimal actions of $\Z$ that are tame but not null (see \cite[Section 11]{KerrLi2007} and \cite{fuhrmann2020tameness}).

The following result was proved for minimal actions of $\Z$ by Glasner \cite{glasner2006tame}, then Kerr and Li proved it for actions of $\Z$ in \cite{kerr2005dynamical} and for actions of amenable groups in \cite{KerrLi2007}. The following generalization was proved by Li and Rong \cite[Theorem 1.3]{LiRong2019}.
\begin{theorem}
[Glasner \cite{glasner2006tame}, Kerr and Li \cite{kerr2005dynamical,KerrLi2007}, Li and Rong \cite{LiRong2019}]
If an action $\act$ is tame then it has zero naive topological entropy.
\end{theorem}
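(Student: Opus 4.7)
The plan is to argue by contradiction: assume $\act$ is tame yet $\hnaive(\act)>0$. By Theorem~\ref{thm:naiveentropy} there is a finite family $\mathcal{A}\subset \mathcal{F}^2(X)$ with positive independence density $q>0$. Choosing any sequence $F_n\Subset G$ with $|F_n|\to\infty$ and extracting, for each $n$, a set $J_n\subset F_n$ with $|J_n|\ge q|F_n|$ that is an independence set for some $\mathbf{A}_n\in\mathcal{A}$, a pigeonhole on the finite family $\mathcal{A}$ would yield a single pair $\mathbf{A}=(A_1,A_2)\in\mathcal{A}$ of disjoint closed subsets of $X$ and a subsequence along which $J_n$ is an independence set for $\mathbf{A}$ with $|J_n|\to\infty$. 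So $(A_1,A_2)$ admits arbitrarily large finite independence sets in $G$.

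The crucial step would then be to promote this to an \emph{infinite} independence set. I would use Urysohn's lemma to pick $f\in C(X)$ with $0\le f\le 1$, $f|_{A_1}=1$, $f|_{A_2}=0$, and consider the uniformly bounded family $\{f\circ s\}_{s\in G}\subset C(X)$. Any independence set $J$ of size $m$ for $(A_1,A_2)$ forces the sequence $(f\circ s)_{s\in J}$ to realize every $\{0,1\}$-valued pattern on appropriate points of $X$, so it is Boolean independent with an $\ell_1$-constant bounded below independently of $m$. Rosenthal's $\ell_1$-theorem (or the Bourgain--Fremlin--Talagrand dichotomy) applied to $\{f\circ s\}_{s\in G}$ would rule out the weakly Cauchy alternative and deliver an infinite $\ell_1$-subsequence, equivalently an infinite independence set $J_\infty\subset G$ for $(A_1,A_2)$.

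To conclude, I would extract a non-diagonal IT-pair inside $A_1\times A_2$. Iteratively cover $A_1$ and $A_2$ by finitely many small closed sets and, at each step, retain an infinite sub-independence set by pigeonholing over the finitely many choices of the refined pair; a Cantor-style diagonal procedure would produce $(x_1,x_2)\in A_1\times A_2$ such that every product neighborhood $U_1\times U_2$ of $(x_1,x_2)$ has an infinite independence set (obtained as a cofinite subset of $J_\infty$). Thus $(x_1,x_2)\in\IT_2(X,G)$, and since $A_1\cap A_2=\varnothing$ we have $x_1\neq x_2$, so $\IT_2(X,G)\not\subset\Delta_2(X)$, contradicting tameness by the preceding theorem of Kerr and Li.

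The principal obstacle is the Rosenthal--BFT step: turning ``arbitrarily large finite independence sets with uniform Boolean-independence constant'' into a genuine infinite independence set requires the full strength of this functional-analytic dichotomy, and one must carefully verify that the separation of $A_1$ from $A_2$ yields a uniform lower bound on the $\ell_1$-constant so that a single infinite subsequence can be extracted. The initial pigeonhole reduction and the final neighborhood-shrinking argument are by comparison routine amplifications within local entropy theory.
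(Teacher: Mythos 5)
Your argument breaks at the step you yourself flag as crucial: upgrading ``arbitrarily large finite independence sets for $(A_1,A_2)$'' to an infinite independence set via Rosenthal/BFT. Rosenthal's dichotomy applies to an infinite sequence and says it has either an $\ell_1$-subsequence or a weakly Cauchy (pointwise convergent) subsequence; it does not convert uniformly bounded-below finite $\ell_1^m$-structure into an infinite $\ell_1$-subsequence. A family $\{f\circ s\}_{s\in G}$ can contain arbitrarily large finite Boolean-independent subfamilies with a uniform constant while every sequence in it still has a pointwise convergent subsequence -- this is exactly the gap between null and tame. Indeed, if your step were valid, your proof would show that any pair of disjoint closed sets admitting arbitrarily large finite independence sets forces an IT-pair, i.e.\ that every non-null action is non-tame; but the paper records (Section 3.2) that there exist minimal $\Z$-actions that are tame and not null, and in such a system any disjoint closed neighborhoods of a non-diagonal IN-pair have arbitrarily large finite independence sets. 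So the weakly Cauchy alternative is not ruled out by what you have, and the conclusion $(A_1,A_2)$ has an infinite independence set is simply unavailable.

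The loss happens already at your pigeonhole reduction: Theorem~\ref{thm:naiveentropy} gives a \emph{finite family} $\mathcal{A}$ whose independence density is positive in the strong sense that \emph{every} $F\Subset G$ contains a proportion-$q$ subset independent for \emph{some} member, and passing to one member along a subsequence of $F_n$'s only retains ``independence sets of unbounded size'', which is the IN-level (null-level) information. You cannot hope to recover positive density for a single pair either: the paper notes an $\mathbb{F}_2$-action with positive naive entropy and no non-trivial orbit IE-pairs. The actual content of the Li--Rong theorem is precisely the missing implication: tameness forces the independence density of every finite $\mathcal{A}\subset\mathcal{F}^2(X)$ to vanish, which is proved by exploiting the density condition over all finite subsets of $G$ (in the spirit of Kerr--Li's result that orbit IE-tuples, i.e.\ pairs with positive independence density, are IT-tuples), not by a finite-to-infinite $\ell_1$ upgrade. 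Your final localization step (shrinking neighborhoods and extracting a non-diagonal IT-pair from an infinite independence set) is standard, but it sits on top of a step that fails for a structural reason, so the proof as proposed does not go through.
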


For a proof of the following result see \cite[Proposition 6.4]{KerrLi2007}. 
\begin{theorem}[Kerr and Li \cite{KerrLi2007}]
\label{thm:IN properties}  Let $\act$, $G\curvearrowright Y$ be actions and $k\in \N$. We have that 
    \begin{enumerate}
        \item $\IT_{k}(X,G)$ is a closed subset of $X^k$ that is invariant under the product action. 
        \item If $\pi: \act \rightarrow G\curvearrowright Y$ is a factor map, then $$(\pi\times\dots\times \pi)(\IT^{}_{k}(X,G))=\IT^{}_{k}(Y,G).$$
        
    \end{enumerate}
    
\end{theorem}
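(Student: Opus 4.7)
For closedness, take $\ox^{(n)} \in \IT_k(X,G)$ with $\ox^{(n)} \to \ox$ and fix a product neighborhood $U_1 \times \cdots \times U_k$ of $\ox$. Eventually $\ox^{(n)} \in U_1 \times \cdots \times U_k$, so the IT property of $\ox^{(n)}$ supplies an infinite independence set for $(U_1,\ldots,U_k)$. For invariance under the diagonal action, fix $g \in G$ and $\ox \in \IT_k(X,G)$: if $V_1 \times \cdots \times V_k$ is a product neighborhood of $(gx_1,\ldots,gx_k)$, then $(g^{-1}V_1,\ldots,g^{-1}V_k)$ is a product neighborhood of $\ox$ and the IT property provides an infinite independence set $J$ for it; the identity $(gs)^{-1}V = s^{-1}g^{-1}V$ shows that $gJ$ is an infinite independence set for $(V_1,\ldots,V_k)$.

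\textbf{Part (2), inclusion $\subseteq$.} For $\ox \in \IT_k(X,G)$ and a product neighborhood $V_1 \times \cdots \times V_k$ of $(\pi(x_1),\ldots,\pi(x_k))$, pull back to $(\pi^{-1}(V_1),\ldots,\pi^{-1}(V_k))$, which is a product neighborhood of $\ox$. An infinite independence set for this pullback pushes forward via $\pi$ (using $G$-equivariance) to one for $(V_1,\ldots,V_k)$.

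\textbf{Part (2), inclusion $\supseteq$.} To lift $(y_1,\ldots,y_k) \in \IT_k(Y,G)$, I work with \emph{closed IT tuples}: $k$-tuples $(A_1,\ldots,A_k)$ of non-empty closed subsets of $X$ such that every open neighborhood tuple $(U_1,\ldots,U_k)$ with $U_i \supset A_i$ admits an infinite independence set. First, the fiber tuple $(\pi^{-1}(y_1),\ldots,\pi^{-1}(y_k))$ is closed IT: given $U_i \supset \pi^{-1}(y_i)$, compactness of the fibers produces open $V_i \ni y_i$ in $Y$ with $\pi^{-1}(V_i) \subset U_i$, and an infinite independence set for $(V_1,\ldots,V_k)$ --- from the IT property of $(y_1,\ldots,y_k)$ --- lifts through $\pi$ to one for $(\pi^{-1}(V_1),\ldots,\pi^{-1}(V_k))$. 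Apply Zorn to the poset of closed IT tuples with $A_i \subseteq \pi^{-1}(y_i)$ under reverse componentwise inclusion; chains have lower bounds via pointwise intersections (non-empty by compactness; closed IT because any open neighborhood of the intersection already dominates some chain member). Extract a minimal element $(A_1^*,\ldots,A_k^*)$. If some $A_i^*$ contains two distinct points, decompose $A_i^* = B_1 \cup B_2$ into proper non-empty closed subsets using disjoint open separators; the key \textbf{splitting lemma} asserts that the tuple obtained by replacing $A_i^*$ with $B_j$ (leaving other coordinates intact) is still closed IT for some $j \in \{1,2\}$, contradicting minimality. Hence each $A_i^*$ is a singleton, giving the desired IT lift of $(y_1,\ldots,y_k)$.

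\textbf{Main obstacle.} The splitting lemma --- passing from an infinite independence set $J$ for a tuple with first coordinate $A = B_1 \cup B_2$ to an infinite $J' \subseteq J$ that is an independence set after $A$ is replaced by $B_j$ for some $j$ --- is the technical crux. Unlike the IE setting, where the density-based Lemma~\ref{lem:combinatorial} applies and finite pigeonholing suffices, a naive compactness attempt to find a single choice $\epsilon \colon J \to \{1,2\}$ consistent with every pattern $\phi \colon I \to \{1,\ldots,k\}$ fails: counterexamples already arise at $|I|=2$, as different $\phi$'s can demand incompatible values of $\epsilon$. The proof instead exploits that the families of independence sets are downward closed and runs an infinite Ramsey-theoretic argument (iterated over finite sizes with diagonalization) to extract an infinite $J'$ whose every finite subset is an independence set for the tuple using $B_j$ in place of $A$.
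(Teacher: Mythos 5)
Parts (1) and the inclusion $(\pi\times\dots\times\pi)(\IT_k(X,G))\subseteq \IT_k(Y,G)$ in (2) are correct and routine, and your skeleton for the lifting direction (pass to tuples of closed subsets of the fibers $\pi^{-1}(y_i)$ that retain infinite independence on all open neighborhood tuples, take a Zorn-minimal such tuple, and split a non-singleton coordinate) is exactly the standard route; the survey itself gives no proof but cites Kerr--Li, whose argument for Proposition 6.4 has this same architecture. The Zorn step itself is fine as you set it up (nested compact sets, and any open neighborhood of the intersection of a chain already contains a chain member).

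The genuine gap is the splitting lemma, which you correctly identify as the crux but do not prove: if $(B_1\cup B_2, A_2,\dots,A_k)$ has an infinite independence set, then $(B_j,A_2,\dots,A_k)$ has one for some $j$. Your one-sentence description of how this would go --- ``iterated over finite sizes with diagonalization'' --- is not an adequate substitute, and as described it does not give the needed conclusion: iterating over finite pattern lengths and pigeonholing over $j\in\{1,2\}$ produces, at best, arbitrarily large \emph{finite} independence sets for one of the refined tuples, i.e.\ an IN-type statement, whereas the IT property requires a single infinite set $J'$ every finite subset of which is an independence set for the refined tuple. The passage from ``arbitrarily large finite'' to ``one infinite'' is precisely where the difficulty sits (these are inequivalent in general, which is why $\IN_k$ and $\IT_k$ differ), and it is handled in Kerr--Li by a genuinely infinitary Ramsey-theoretic lemma tied to the combinatorial core of Rosenthal's $\ell_1$ theorem. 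Without a proof (or at least a precise statement and citation) of that lemma, the $\supseteq$ inclusion in (2) is not established.
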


Null and tame actions are strongly related to their maximal equicontinuous factor. 

We say $\act$ is \textbf{equicontinuous} if for every $\varepsilon>0$ there exists $\delta>0$, such that if $\rho(x,y)\leq \delta$ then $\rho(gx,gy)\leq \varepsilon$ for all $g\in G$, and $\act$ is \textbf{uniquely ergodic} if there is only one $G$-invariant Borel probability measure. Minimal equicontinuous actions are uniquely ergodic.
The \textbf{maximal equicontinuous factor}, $G\curvearrowright X_{eq}$, is the largest equicontinuous factor, i.e. every equicontinuous factor is also a factor of $G\curvearrowright X_{eq}$. 
The factor map to the maximal equicontinuous factor will be denoted by $\pi_{eq}$. Given a minimal action we denote the invariant Borel probability measure of the maximal equicontinuous factor by $\nu_{eq}$.
For an overview of minimal systems and maximal equicontinuous factors see \cite{auslander1988minimal}. 

A structure theorem for minimal systems involves the description of its natural factors or extensions. Let $N\in \N$ and $\pi:\act \rightarrow G\curvearrowright Y$ a factor map, we say that $\pi$ is \textbf{almost $N$-to-1} if $N$ is the smallest integer such that the $G_{\delta}$ set $\{x\in X:|\pi^{-1}\circ \pi(x)|\leq N\}$ is dense. If $\act$ is minimal, we say that $\pi$ is \textbf{almost finite-to-one} if there exists $y\in Y$ such that $|\pi^{-1}(y)|< \infty$; actually every almost finite-to-one factor map is almost $N$-to-1 for some $N\in \N$.

An action is \textbf{almost automorphic} if $\pi_{eq}$ is almost 1-to-1. A minimal action is \textbf{regular} if
$\nu_{eq}(\{ y\in X_{eq}:|\pi_{eq}^{-1}(y)|=1\} )=1$. Every minimal regular action is uniquely ergodic and almost automorphic. 

Almost automorphy and unique ergodicity were proven for minimal null actions of $\Z$ by Huang, Li, Shao and Ye in \cite{huang2003null}, and for minimal tame actions of abelian groups by Huang \cite{huang2006tame} and Kerr and Li \cite{KerrLi2007} independently. The following generalization was proved in \cite[Corollary 5.4]{glasner2018structure}. 
\begin{theorem}
[Glasner \cite{glasner2018structure}]
\label{thm:glasner}
   Every minimal tame action, which has an invariant Borel probability measure, is almost automorphic and uniquely ergodic.
\end{theorem}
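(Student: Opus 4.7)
The plan is to combine the Bourgain--Fremlin--Talagrand theorem with structural Ellis semigroup arguments. Since tameness is equivalent to $\IT_2(X,G) \subset \Delta_2(X)$, that theorem yields that the Ellis semigroup $E(\act)$ is a \emph{Rosenthal compactum}: every $p \in E(\act)$ is a Baire class 1 map $X \to X$, and therefore has a dense $G_\delta$ set of continuity points. This Baire-1 regularity of the enveloping semigroup is the main analytic input I would use.

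For almost automorphy, I would fix a minimal left ideal $M$ in $E(\act)$ and an idempotent $u \in M$ (both exist by standard Ellis theory). The continuity set $C(u)$ is a dense $G_\delta$ in $X$. First, I would recall the Ellis description of proximality on a minimal system: $(x,y)$ is proximal iff some element of $M$ collapses it, and points fixed by some minimal idempotent are distal. A short continuity argument at $x \in C(u)$, combined with minimality, shows $ux = x$, so each $x \in C(u)$ is distal. Second, I would show that for minimal tame systems the regionally proximal relation coincides with the proximal relation, again via the Rosenthal-compactum property of $E(\act)$ (every regionally proximal pair can be realized inside a single minimal ideal). Since $X_{eq}$ is the quotient of $X$ by the smallest closed $G$-invariant equivalence relation containing $\RP(X,G)$, this gives $\pi_{eq}^{-1}(\pi_{eq}(x)) = \{x\}$ for every $x \in C(u)$, and hence $\pi_{eq}$ is almost 1-to-1.

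For unique ergodicity, let $\mu$ be any $G$-invariant Borel probability measure on $X$. Since $X_{eq}$ is minimal equicontinuous and hence uniquely ergodic, $(\pi_{eq})_*\mu = \nu_{eq}$. Disintegrate $\mu = \int_{X_{eq}} \mu_y \, d\nu_{eq}(y)$ along $\pi_{eq}$. Because $u$ is a pointwise limit of elements of $G$ (in the product topology on $X^X$), bounded convergence applied to continuous test functions gives $u_*\mu = \mu$. Since the descended action of $u$ on $X_{eq}$ is an idempotent in the Ellis \emph{group} of the equicontinuous factor, it must be the identity; thus $u$ preserves each fiber of $\pi_{eq}$, and the uniqueness of disintegration gives $u_*\mu_y = \mu_y$ for $\nu_{eq}$-a.e.\ $y$. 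For $y$ with $\pi_{eq}^{-1}(y) \cap C(u) \neq \emptyset$ the fiber is already a singleton by the previous paragraph, so $\mu_y$ is a Dirac mass; the remaining step is to show that the set of such $y$ is $\nu_{eq}$-conull, which together with the disintegration formula pins down $\mu$ uniquely.

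The main obstacle will be bridging the gap between \emph{topological} regularity (singleton fibers form a residual set in $X$) and \emph{measure-theoretic} regularity (their $\pi_{eq}$-image is $\nu_{eq}$-conull), which is exactly what unique ergodicity requires. Comeager subsets of a compact group can have arbitrarily small Haar measure, so this upgrade is not automatic; carrying it out uses the joint action of minimal idempotents on $X$ and on invariant measures, and constitutes the real technical content of \cite{glasner2018structure} beyond the Ellis-semigroup structural results. A secondary subtlety is verifying that the proximal/regionally-proximal coincidence, classical for $\mathbb{Z}$- and abelian-group actions, survives for arbitrary countable $G$, which is again where the Rosenthal-compactum machinery does the heavy lifting.
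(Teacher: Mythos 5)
Your sketch has a genuine gap, and it is structural: your entire argument for almost automorphy never uses the invariant Borel probability measure, but the conclusion is false without it. There exist nontrivial tame minimal metric systems that are strongly proximal — for example the projective action of $PSL(2,\mathbb{Z})$ on the circle, which is tame because circularly ordered systems are tame (Glasner--Megrelishvili) — and such systems have trivial maximal equicontinuous factor, hence are as far from almost automorphic as possible, while satisfying every hypothesis you actually invoke in that paragraph. The step that breaks is precisely ``$x\in C(u)$ implies $ux=x$, hence $x$ is distal.'' First, continuity of $u$ at $x$ does not force $ux=x$: in a strongly proximal tame system a minimal left ideal of $E(\act)$ contains constant maps, which are idempotent and continuous everywhere yet fix only one point. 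Second, even granting $ux=x$ for the single idempotent $u$ you fixed, this never yields distality of $x$: in any minimal system every point is fixed by some idempotent of each minimal left ideal, whereas a distal point must be fixed by all of them. The asserted coincidence of the regionally proximal and proximal relations for tame minimal actions fails in the same examples. In Glasner's argument (the source the survey cites; the survey itself gives no proof) the invariant measure enters already at this stage, by forcing the strongly proximal ingredients of his structure theorem for tame minimal systems to be trivial; it is not an add-on needed only for unique ergodicity.

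For unique ergodicity you candidly leave the decisive step open — that the set of $y\in X_{eq}$ whose fibre is a singleton is $\nu_{eq}$-conull — and since the topological input it is supposed to upgrade was not established either, the proposal as written proves neither assertion of the theorem. Two smaller repairs you would need in any case: dominated convergence along a net is not valid, so to get $u_*\mu=\mu$ you should first invoke Bourgain--Fremlin--Talagrand to note that $E(\act)$ is Fr\'echet--Urysohn for tame systems, so $u$ is a pointwise limit of a sequence from $G$; and knowing that the proximal cell of $x$ is $\{x\}$ controls $P[x]$, not the fibre of $\pi_{eq}$, because $X_{eq}$ is the quotient by the smallest closed invariant equivalence relation containing $\RP$, which may be strictly larger than $\RP$ itself, so an extra argument is needed even at that point.
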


Note that every action of an amenable group has an invariant Borel probability measure (e.g. see \cite[Theorem 4.4]{KerrLiBook2016}). 

Regularity was proved for minimal null subshifts of amenable groups by García-Ramos \cite{garcia2017weak} and for minimal null actions of $\Z$ by García-Ramos, Jäger and Ye in \cite{garcia2021mean}. The following generalization appeared in \cite[Theorem 1.2]{fuhrmann2021irregular}. 
\begin{theorem}
[Fuhrmann, Glasner, Jäger and Oertel \cite{fuhrmann2021irregular}]
\label{thm:tameregular}
    Let $\act$ be a minimal almost automorphic action. If $\act$ is tame, then it is regular. 
\end{theorem}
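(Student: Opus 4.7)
The plan is as follows. By Theorem \ref{thm:glasner}, the minimal tame action $G \curvearrowright X$ is uniquely ergodic; let $\mu$ denote its unique invariant Borel probability measure, so that $(\pi_{eq})_* \mu = \nu_{eq}$. Define $Y_0 = \{y \in X_{eq} : |\pi_{eq}^{-1}(y)| = 1\}$ and $X_0 = \pi_{eq}^{-1}(Y_0)$. Almost automorphy gives $Y_0 \neq \varnothing$, and minimality of $G \curvearrowright X_{eq}$ together with $G$-invariance of $Y_0$ forces $Y_0$ to be a dense $G_\delta$ subset of $X_{eq}$. Because $\pi_{eq}$ restricts to a continuous bijection $X_0 \to Y_0$, we have $\mu(X_0) = \nu_{eq}(Y_0)$, and since the unique invariant measure $\nu_{eq}$ of a minimal equicontinuous system is ergodic, this value lies in $\{0, 1\}$. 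Regularity is the statement $\nu_{eq}(Y_0) = 1$, so it suffices to rule out $\nu_{eq}(Y_0) = 0$.

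Suppose, toward a contradiction, that $\nu_{eq}(Y_0) = 0$, so that $\mu$-almost every $x \in X$ lies in a fiber of size at least two. Apply the Jankov--von Neumann measurable selection theorem to the Borel set
\[
\{(x, x') \in X \times X : \pi_{eq}(x) = \pi_{eq}(x'),\ x \neq x'\}
\]
to obtain Borel sections $s_1, s_2 : X_{eq} \setminus Y_0 \to X$ with $\pi_{eq} \circ s_i = \mathrm{id}$ and $s_1(y) \neq s_2(y)$ for every $y$. Using Lusin's theorem and inner regularity of $\nu_{eq}$, pass to a compact set $K \subset X_{eq}$ with $\nu_{eq}(K) > 0$ on which both sections are continuous and $\rho(s_1(y), s_2(y)) \geq \varepsilon$ uniformly for some $\varepsilon > 0$. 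Fix $y_0 \in K$, put $x_i := s_i(y_0)$, and pick disjoint open neighborhoods $U_1 \ni x_1$ and $U_2 \ni x_2$ in $X$ small enough that $s_i(V \cap K) \subset U_i$ for some neighborhood $V$ of $y_0$ in $X_{eq}$.

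The objective is then to exhibit $(x_1, x_2)$ as a non-diagonal element of $\IT_2(X, G)$, which contradicts tameness by the IT-pair characterization stated just before the theorem. Concretely, for every $n \in \NN$ one must produce $g_1, \dots, g_n \in G$ such that for each $\omega : \{1, \dots, n\} \to \{1, 2\}$ there is $x^\omega \in X$ with $g_j x^\omega \in U_{\omega(j)}$ for all $j$. Using unique ergodicity of $G \curvearrowright X_{eq}$ together with the Haar-like equicontinuous structure, one can select $g_1, \dots, g_n$ so that the orbit points $g_j y_0$ are approximately equidistributed with respect to $\nu_{eq}$; then for each pattern $\omega$, one looks for a base point $y^\omega$ near $y_0$ with $g_j y^\omega \in V \cap K$ for all $j$ and a lift $x^\omega \in \pi_{eq}^{-1}(y^\omega)$ whose shifts $g_j x^\omega$ land in the prescribed neighborhoods via $s_{\omega(j)}(g_j y^\omega)$.

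The main obstacle is precisely the coherence of the lifts: the sections $s_1, s_2$ are not $G$-equivariant, so a single consistent $x^\omega$ realizing all $n$ prescribed fiber-choices simultaneously is not automatic. This is where I expect the argument to invoke the Bourgain--Fremlin--Talagrand description of tameness (Rosenthal compactness of the Ellis semigroup $E(G \curvearrowright X)$), applied to a continuous function $f : X \to [0,1]$ separating $s_1(K)$ from $s_2(K)$: the positive-density returns of the $G$-orbit of $y_0$ to $K$, coupled with the non-coherence of the two sections, should force the orbit $\{f \circ g\}_{g \in G}$ to contain an $\ell_1$-equivalent subsequence, contradicting tameness and closing the argument. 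Navigating this last step cleanly is the delicate core of the proof.
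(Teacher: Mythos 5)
There is a genuine gap, and you have in fact flagged it yourself: everything up to the last two paragraphs is only a reduction, and the decisive step --- turning ``$\nu_{eq}$-almost every fiber of $\pi_{eq}$ is non-degenerate'' into a non-diagonal element of $\IT_2(X,G)$ (equivalently, an infinite independence set for a pair of disjoint closed sets, equivalently an $\ell_1$-sequence of translates of some $f\in C(X)$) --- is never carried out. Note that the survey itself offers no proof to compare against (the theorem is quoted from Fuhrmann--Glasner--J\"ager--Oertel), so your argument has to stand on its own, and at present it does not. Two concrete problems sit in the unproved step. First, choosing $g_1,\dots,g_n$ by approximate equidistribution of the single orbit $\{g_jy_0\}$ does not produce base points $y^{\omega}$ with $g_jy^{\omega}\in V\cap K$ for \emph{all} $j$: for that you would need, at minimum, a recursive positive-measure-intersection argument on the equicontinuous factor (using that $X_{eq}$ is a homogeneous space of a compact group in which the image of $G$ is dense, continuity of translation on $L^1(\nu_{eq})$, and $\int \nu_{eq}(B\cap k^{-1}B)\,dk=\nu_{eq}(B)^2>0$) to keep $\bigcap_j g_j^{-1}(V\cap K)$ of positive measure. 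Second, and more seriously, even with such base points the information you have --- that each fiber $\pi_{eq}^{-1}(g_jy)$ meets both $U_1$ (via $s_1$) and $U_2$ (via $s_2$) --- does not yield, for a prescribed pattern $\omega$, a \emph{single} point $x^{\omega}$ with $g_jx^{\omega}\in U_{\omega(j)}$ simultaneously for all $j$; the sections are not equivariant, and this coherence-of-lifts statement is essentially the untameness you are trying to prove. Invoking Bourgain--Fremlin--Talagrand at that moment is circular: Rosenthal compactness of $E(\act)$ is an equivalent formulation of tameness, not a device that manufactures the $\ell_1$-sequence for you. The cited paper does substantive work exactly at this point, so the proposal as written is a plausible plan with the core missing.

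A secondary, fixable issue: you invoke Theorem \ref{thm:glasner} to get unique ergodicity of $\act$, but that theorem assumes the existence of a $G$-invariant measure on $X$, which is not among the hypotheses here (for non-amenable $G$ a minimal tame action need not carry one). Fortunately the measure $\mu$ plays no role in your argument: regularity concerns only $\nu_{eq}$, which exists and is ergodic because the minimal equicontinuous system $X_{eq}$ is uniquely ergodic, and the $0$--$1$ dichotomy for $\nu_{eq}(Y_0)$ follows from $G$-invariance of $Y_0$ (which is Borel, being a $G_\delta$ by upper semicontinuity of fibers). So drop the appeal to Theorem \ref{thm:glasner} and the set $X_0$ altogether; the real work remains the construction of the independence structure described above.
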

Combining Theorem \ref{thm:glasner} and Theorem \ref{thm:tameregular} we obtain the following.
\begin{corollary}
\label{cor:tameregular}
 Let $G$ be an amenable group and $\act$ a minimal tame action. Then $\act$ is regular.    
\end{corollary}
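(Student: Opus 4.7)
The plan is to chain together the two theorems stated immediately above the corollary, using amenability to supply the hypothesis needed by the first one. Specifically, since $G$ is amenable, every continuous action of $G$ on a compact metrizable space admits a $G$-invariant Borel probability measure (as noted after Theorem \ref{thm:glasner}, e.g.\ from \cite[Theorem 4.4]{KerrLiBook2016}). Thus $\act$ satisfies the hypothesis of Theorem \ref{thm:glasner}.

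Next, I would apply Theorem \ref{thm:glasner} directly: because $\act$ is minimal, tame, and has an invariant Borel probability measure, the action is almost automorphic (and uniquely ergodic). In particular, the factor map $\pi_{eq}\colon \act \to G\curvearrowright X_{eq}$ to the maximal equicontinuous factor is almost $1$-to-$1$.

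Finally, I would invoke Theorem \ref{thm:tameregular}: the action $\act$ is minimal and almost automorphic, and is assumed to be tame, so it is regular. This completes the proof. Since the argument is essentially a two-line concatenation of cited results, there is no real obstacle; the only point worth emphasizing is that amenability is used precisely to furnish the invariant measure required to apply Theorem \ref{thm:glasner}, after which Theorem \ref{thm:tameregular} upgrades almost automorphy to regularity.
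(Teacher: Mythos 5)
Your proposal is correct and follows exactly the paper's argument: the paper derives the corollary by noting that amenability supplies an invariant Borel probability measure, applying Theorem \ref{thm:glasner} to get almost automorphy, and then applying Theorem \ref{thm:tameregular} to conclude regularity. Nothing is missing.
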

One may also wonder what happens when we restrict IT tuples of a certain size.

\begin{conjecture}
[Conjecture 1 in \cite{huang2021minimal}]
Let $G$ be an amenable group, $\act$ a minimal action and $k\geq 2$. If $\IT_k(X,G)\subset \Delta^2_k(X)$, then $\pi_{eq}$ is almost finite-to-one.
\end{conjecture}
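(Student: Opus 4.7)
The plan is to argue by contradiction via a clean localization of IT-tuples to fibers of $\pi_{eq}$.

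First I would show that
\[ \IT_k(X,G) \subset \bigcup_{y \in X_{eq}} \pi_{eq}^{-1}(y)^k. \]
Since $X_{eq}$ is minimal equicontinuous it is in particular tame, so $\IT_2(X_{eq},G) \subset \Delta_2(X_{eq})$. By restricting the coloring $\phi$ in Definition~\ref{D-IT-pair} to any two of the indices, every sub-pair of an IT-tuple is itself an IT-pair; hence $\IT_k(X_{eq},G) \subset \Delta_k(X_{eq})$. Applying the factor property for IT-tuples (Theorem~\ref{thm:IN properties}) to $\pi_{eq}$ then yields the displayed inclusion.

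Next, suppose for contradiction that $\pi_{eq}$ is not almost finite-to-one. By the definition in the paper this means $|\pi_{eq}^{-1}(y)| = \infty$ for every $y \in X_{eq}$. Fix any $y_0 \in X_{eq}$ and choose $k$ distinct points $x_1, \dots, x_k \in \pi_{eq}^{-1}(y_0)$ with pairwise disjoint neighborhoods $U_1, \dots, U_k$ in $X$. The crux is to arrange that $(x_1, \dots, x_k)$ is an IT-tuple, which then contradicts $\IT_k(X,G) \subset \Delta^2_k(X)$. Concretely one must produce an infinite $J \subset G$ such that for every $\phi \colon J \to \{1,\dots,k\}$, $\bigcap_{s \in J} s^{-1} U_{\phi(s)} \ne \emptyset$. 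Using amenability of $G$, take a $G$-invariant Borel probability measure $\mu$ on $X$ and disintegrate $\mu = \int \mu_y\, d\nu_{eq}(y)$; since fibers are everywhere infinite, on a full-measure set of $y$ the measure $\mu_y$ is supported on infinitely many points of $\pi_{eq}^{-1}(y)$. I would then try to combine this abundance of disjoint measurable sections of $\pi_{eq}$ with a Rosenthal-type $\ell^1$-dichotomy, in the spirit of the Ramsey argument behind Theorem~\ref{P-SE vs IN}, to extract the desired infinite independence set for $(U_1,\dots,U_k)$.

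The main obstacle is precisely this last step: the jump from ``every fiber is infinite'' to ``combinatorial independence of $k$ distinct fiber points'' is non-trivial, because the $G$-action across the fibers of $\pi_{eq}$ need not be equicontinuous and may scramble fiber points in a way that kills $\ell^1$-independence. A clean solution likely requires a refinement of the structure theorem of Glasner~\cite{glasner2018structure} to a ``$k$-tame'' setting: one should show that $\IT_k(X,G) \subset \Delta^2_k(X)$ forces $\pi_{eq}$ to factor through an almost $(k-1)$-to-one intermediate extension, so that any further multiplicity of fibers manufactures a non-diagonal IT-$k$-tuple by a controlled lifting of IT-pairs to $k$-tuples. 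Supplying this structural input is, to my knowledge, what has kept the conjecture open.
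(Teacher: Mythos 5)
This statement is an open conjecture (Conjecture 1 of Huang--Lian--Shao--Ye \cite{huang2021minimal}); the survey records it without proof, so there is no argument of the paper to compare yours against, and a correct proof would settle an open problem. Your first step is correct but routine: $X_{eq}$ is equicontinuous, hence tame, sub-pairs of IT-tuples are IT-pairs, and the factor property for IT-tuples then forces every tuple in $\IT_k(X,G)$ to lie in $\pi_{eq}^{-1}(y)^k$ for a single $y\in X_{eq}$. The entire content of the conjecture is the step you attempt afterwards, namely that an everywhere-infinite fiber structure manufactures a tuple in $\IT_k(X,G)\setminus\Delta^2_k(X)$, and that is where the argument has a genuine gap.

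Two concrete problems. First, the measure-theoretic input is false as stated: knowing that $|\pi_{eq}^{-1}(y)|=\infty$ for every $y$ does not imply that the disintegration $\mu=\int\mu_y\,d\nu_{eq}(y)$ has conditionals supported on infinitely many points on a set of positive $\nu_{eq}$-measure. There are minimal systems for which $\pi_{eq}$ is a measure-theoretic isomorphism (so $\mu_y$ is a single atom almost surely) while no fiber is finite, so the ``abundance of disjoint measurable sections'' you want to feed into a Rosenthal-type argument simply need not exist. Second, and more fundamentally, even granting many fiber points, nothing forces your chosen $x_1,\dots,x_k\in\pi_{eq}^{-1}(y_0)$ --- or any $k$ pairwise distinct points of a fiber --- to form an IT-tuple: the hypothesis $\IT_k(X,G)\subset\Delta^2_k(X)$ does not make the action tame (non-diagonal IT-pairs may be plentiful), so neither Rosenthal's $\ell^1$-dichotomy in the form used for Theorem~\ref{P-SE vs IN} nor Glasner's structure theorem for tame minimal systems (Theorem~\ref{thm:glasner}, \cite{glasner2018structure}) is available, and the ``$k$-tame'' refinement of that structure theory which you invoke at the end is precisely the missing ingredient --- as you yourself acknowledge. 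As it stands, the proposal proves only the easy localization of IT-tuples to fibers of $\pi_{eq}$ and leaves the conjecture open.
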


A result related to this conjecture was proved by Maass and Shao \cite{maass2007structure}. 

IT-tuples can also be used to bound ergodic measures \cite[Theorem A]{huang2021minimal}. 

\begin{theorem}
[Huang, Lian, Shao and Ye \cite{huang2021minimal}]
Let $G$ be an amenable group and $\act$ a minimal action.  If
$\pi_{eq}$ is almost finite-to-one, and there is some integer
$k \geq 2$ such that $\IT_k(X,G)\subset \Delta^2_k(X)$, then $\act$ has only finitely many ergodic probability measures.   
\end{theorem}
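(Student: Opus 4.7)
The plan is to argue by contradiction: assume $\act$ has infinitely many distinct ergodic probability measures $\mu_1,\mu_2,\dots$, and produce an IT-tuple of length $k$ with pairwise distinct coordinates, contradicting $\IT_k(X,G)\subset \Delta^2_k(X)$. The strategy uses the almost finite-to-one hypothesis to convert distinct ergodic measures into distinguishable data inside the finite fibers of $\pi_{eq}$, then uses equicontinuity of the base together with a Ramsey-type extraction to promote sufficiently many such distinguishable fiber data into honest combinatorial independence.

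First, I would set up the fiber structure. Since $(X_{eq},G)$ is minimal equicontinuous, it is uniquely ergodic with measure $\nu_{eq}$, so every invariant measure on $X$ projects to $\nu_{eq}$. Because $\pi_{eq}$ is almost $N$-to-1, the invariant Borel set $\{y:|\pi_{eq}^{-1}(y)|\le N\}$ has full $\nu_{eq}$-measure by unique ergodicity of the base; disintegrating any ergodic measure over $\nu_{eq}$ thus produces atomic conditional measures supported on these finite fibers. Next, for each ergodic $\mu_i$, the Lindenstrauss pointwise ergodic theorem for amenable group actions yields a Borel set $\Gamma_i$ of full $\mu_i$-measure consisting of $\mu_i$-generic points; mutual singularity of distinct ergodic measures makes the $\Gamma_i$ pairwise disjoint. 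For any $M\in\N$, taking $M$ distinct ergodic measures and a typical base point $y_0\in X_{eq}$ yields $M$ pairwise distinct points $\xi_1,\dots,\xi_M$ in the single fiber $\pi_{eq}^{-1}(y_0)$, with $\xi_i\in\Gamma_i$.

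The main step, and the expected main obstacle, is to show that for $M$ sufficiently large (as a function of $k$ and $N$) some subtuple of $(\xi_1,\dots,\xi_M)$ of length $k$ is an IT-tuple. For any product neighborhood $U_1\times\cdots\times U_k$ of such a subtuple, one must produce an infinite $J\subset G$ so that every $\phi\colon J\to[k]$ is realized by $\bigcap_{g\in J}g^{-1}U_{\phi(g)}\neq\varnothing$. The heuristic is that equicontinuity of $X_{eq}$ makes the return times of $y_0$ to any fixed neighborhood syndetic, so fibers close to $\pi_{eq}^{-1}(y_0)$ are visited along these times; mutual singularity of the $\mu_i$ combined with the genericity of $\xi_i$ makes the orbits $g\xi_i$ follow statistically distinguishable empirical distributions in the corresponding neighborhoods; a Sauer--Shelah/Ramsey-type extraction, analogous to the one used for IE-tuples in Lemma \ref{lem:combinatorial}, should convert this statistical distinguishability into combinatorial independence along an infinite set. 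Turning measure-theoretic disjointness and base equicontinuity into honest infinite combinatorial independence is the real technical hurdle; once achieved, the resulting IT-tuple has pairwise distinct entries and contradicts the hypothesis $\IT_k(X,G)\subset \Delta^2_k(X)$, forcing the set of ergodic measures to be finite.
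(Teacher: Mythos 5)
Your reduction to finding a non-degenerate IT-tuple is the right general shape, but two of its load-bearing steps do not hold as written. First, the claim that almost $N$-to-1-ness of $\pi_{eq}$ plus unique ergodicity of the base forces $\nu_{eq}(\{y:|\pi_{eq}^{-1}(y)|\le N\})=1$ is false: unique ergodicity only gives a zero--one law for this invariant set, and a dense (even dense $G_\delta$) invariant set can have measure zero. Irregular Toeplitz subshifts are almost 1-to-1 extensions of their odometer, yet the set of base points with singleton fibre has Haar measure zero and the typical fibre can be infinite (positive-entropy Toeplitz systems show this sharply). So the topological hypothesis gives no control over $\nu_{eq}$-a.e.\ fibre by itself; any measure-theoretic finiteness of fibres has to be won using the independence hypothesis (compare Theorem \ref{thm:tameregular}, where it is tameness that forces regularity). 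Note also that this flawed step is the only place your outline actually uses the almost finite-to-one hypothesis, while the step you do legitimately get for free (finding $M$ generic points for $M$ distinct ergodic measures inside one fibre, by projecting the full-measure sets of generic points to the base) never needs it -- so in your sketch the hypothesis that must carry the theorem is either used incorrectly or not at all.

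Second, the ``main step'' is not an argument but a restatement of the theorem. An infinite independence set $J$ for $(U_1,\dots,U_k)$ requires, for every $\phi\colon J\to\{1,\dots,k\}$, a single point $z$ with $gz\in U_{\phi(g)}$ for all $g\in J$. Your points $\xi_1,\dots,\xi_k$, being generic for mutually singular measures, only witness (and only statistically) the constant patterns; all mixed patterns must be realized by other points of $X$, and neither mutual singularity nor equicontinuity of the base produces them. Lemma \ref{lem:combinatorial} cannot be invoked here: its input is an exponentially large family of already-realized patterns (a large covering number), which is exactly what is missing; and even if one produced arbitrarily large finite independence sets this would only yield IN-tuples, whereas IT-tuples require one common infinite $J$, hence an additional Ramsey/compactness argument. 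Finally, nothing in the outline lets the bound $N$ on fibre cardinality interact quantitatively with $k$ and with the number of ergodic measures -- e.g.\ via a pigeonhole inside the finite fibres along return times of the base, which is the kind of mechanism a correct proof needs, and which must also bridge the gap between the (possibly $\nu_{eq}$-null) set of small fibres and the full-measure set of base points carrying your generic points. As it stands, the proposal defers the entire content of the theorem to the step labelled ``the real technical hurdle.''
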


\subsection{Nilsystems and cubic independence tuples}

Nilsystems are natural generalizations of equicontinuous systems. They provide connections with other areas of mathematics such as geometry and number theory. For a thorough exploration on these topics see \cite{host2018nilpotent}. In this subsection we define IN-tuples of order $d$ and we will see their relation to nilsystems. 

A \textbf{topological dynamical system (TDS)} is a pair $(X,T)$, where $X$ is a compact metrizable space and $T:X\rightarrow X$ is a homeomorphism. 

Let $H$ be a group. For $g, h\in H$ and $A,B \subset H$, we write $[g, h] =
ghg^{-1}h^{-1}$ for the commutator of $g$ and $h$ and
$[A,B]$ for the subgroup generated by $\{[a, b] : a \in A, b\in B\}$.
The commutator subgroups $H_j$, $j\ge 1$, are defined inductively by
setting $H_1 = H$ and $H_{j+1} = [H_j ,H]$. Let $d \ge 1$ be an
integer. We say that $H$ is {\bf $d$-step nilpotent} if $H_{d+1}$ is
the trivial subgroup.

Let $H$ be a $d$-step nilpotent Lie group and $\Gamma$ a discrete
cocompact subgroup of $H$. The compact manifold $X = H/\Gamma$ is
called a {\bf $d$-step nilmanifold}. The group $H$ acts on $X$ by
left translations and we write this action as $(g, x)\mapsto gx$.
For any $\tau\in H$, the transformation $x\mapsto \tau x$ of $X$ is called
 a {\bf $d$-step nilsystem}. A \textbf{nilsystem} is a $d$-step nilsystem for some $d\geq 1$. A 1-step nilsystem is simply a rotation on a compact abelian Lie group, and hence it must be equicontinuous.

 We may also define nilsystems of order $d$ using inverse limits. We say that a TDS $(X,T)$ is an \textbf{inverse limit of} the TDSs $(X_i,T_i)_{i\in \N}$ if there exist factor maps 
$\pi_i: X_{i+1}\rightarrow X_i$, such that 
\begin{itemize}
    \item $X$ is the subset of
$\prod_{i\in \N}X_i$ given by $\{ (x_i)_{i\in \N }: \pi_i(x_{i+1}) =
x_i \text{ }\forall i\in \N\}$, 
\item the topology on $X$ is induced by the product topology on $\prod_{i\in \N}X_i$, and
\item $T$ acts as $T_i$ on each coordinate.
\end{itemize}
A minimal system $(X,T)$ is called a \textbf{nilsystem of order $d$} if it is an inverse limit of minimal $d$-step nilsystems. A minimal system $(X,T)$ is called a \textbf{nilsystem of order $\infty$} if it is an inverse limit of minimal nilsystems. 

Let $(X,T)$ be a minimal TDS and $d\in \N \cup \{\infty\}$. The \textbf{maximal nilfactor of order $d$} is the largest factor that is a nilsystem of order $d$. The factor map to the maximal nilfactor of order $d$ will be denoted by $\pi_d:X\to X_d$. Using results from \cite{leibman2005pointwise} one can show the existence of maximal nilfactors. Maximal nilfactors can also be constructed using regionally proximal pairs of finite order \cite[Theorem 3.9]{shao2012regionally} and infinite order \cite[Remark 3.5]{dong2013infinite}. The maximal nilfactor of order 1 is the maximal equicontinuous factor. 



\begin{definition}
Let $(X, T)$ be a TDS and $d,k \in \N$. We say that $\ox=(x_1,\ldots,x_k) \in X^k$  is an \textbf{$IN^{[d]}$-tuple} if for all $ \ell \in \N$ and product neighborhood $U_1\times\dots \times U_k$ of $\ox$, there exist vectors $\vec{p}_{j}\in \N^d$ for $j=1,\ldots,\ell$ (with $\vec{p}_{j}\neq \vec{p}_{j'}$ for every $j\neq j' \in \{1,...,\ell\}$), such that
\[ \bigcup_{j=1}^\ell \{ \vec{p}_j \cdot \vec{\epsilon} : \vec{\epsilon}\in \{0,1\}^d  \}\setminus \{0\}   \] 
{(where $\cdot$ is the usual dot product on $\mathbb{R}^d$)} is an independence set for $(U_1,\ldots,U_k)$.
We denote the set of $IN^{[d]}$-tuples of length $k$ by $\IN_k^{[d]}(X,T)$. 
\end{definition}
\begin{remark}
\label{rem:IN1}
 Note that $\IN_k^{[1]}(X,T)$ is defined similar to $\IN_k^{}(X,\mathbb{Z})$ but the independence sets are required to be in $\mathbb{N}$ instead of $\mathbb{Z}$. Nonetheless one can check that $\IN_k^{[1]}(X,T)=\IN_k^{}(X,\mathbb{Z})$.
 \end{remark}

A finite subset $F$ of $\N$ is called a {\bf finite IP-set}  if
there exists a finite subset $\{p_1,p_2,\ldots,p_m\}$ of $\N$ such
that
$$F=FS(\{p_i\}_{i=1}^m)=\{p_{i_1}+\cdots+p_{i_k}:1\leq i_1<\cdots<i_k\leq m \text{ and } k\in \N\}.$$

\begin{definition}
Let $(X, T)$ be a TDS and $k\in \N$. We say that $\ox=(x_1,\ldots,x_k) \in X^k$  is an \textbf{$IN^{[\infty]}$-tuple} if for any product neighborhood $U_1\times \cdots
\times U_k$ of $\ox$, there exist independence sets for $(U_1,...,U_k)$ that contain
arbitrarily long finite IP-sets. We denote the set of these tuples by $\IN_k^{[\infty]}(X,T)$.
\end{definition}

Every minimal nilsystem of order $d\in \N \cup \{\infty\}$ is uniquely ergodic \cite[Section 11.2, Theorem 11]{host2018nilpotent}. We denote the the unique invariant Borel probability measure on $X_{d}$ by $\nu_{d}$.

The following result was recently proved in \cite{qiuyu} building on \cite[Theorem 4.5]{dong2013infinite}.

\begin{theorem}
[Dong, Donoso, Maass, Shao and Ye \cite{dong2013infinite}, Qiu and Yu \cite{qiuyu}]
\label{thm:nil infty}
Let $(X, T)$ be a minimal TDS. If $\IN_2^{[\infty]}(X,T)\subset \Delta_2(X)$, then $(X,T)$ is uniquely ergodic and $\pi_{\infty}$ is regular, that is, $$\nu_{\infty}(\{ y\in X_{\infty}:|\pi_{\infty}^{-1}(y)|=1\} )=1.$$
\end{theorem}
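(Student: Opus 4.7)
The plan is to establish the regularity of $\pi_\infty$ first; unique ergodicity will then follow from a standard disintegration argument. Since each minimal nilsystem is uniquely ergodic and an inverse limit of uniquely ergodic minimal systems is uniquely ergodic, $(X_\infty,T)$ is uniquely ergodic with invariant measure $\nu_\infty$. Once regularity is in hand, any $T$-invariant Borel probability measure $\mu$ on $X$ must project to $\nu_\infty$, and disintegrating $\mu=\int \mu_y\,d\nu_\infty(y)$ forces $\mu_y$ to be a Dirac mass for $\nu_\infty$-a.e.\ $y$, so $\mu$ is completely determined by $\nu_\infty$. It therefore suffices to prove regularity.

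For regularity, we argue by contrapositive: suppose the Borel set $B=\{y\in X_\infty:|\pi_\infty^{-1}(y)|\geq 2\}$ has $\nu_\infty(B)>0$. By the Jankov--von Neumann measurable selection theorem, there exist Borel maps $s_1,s_2\colon B\to X$ with $\pi_\infty\circ s_i=\id_B$ and $s_1(y)\neq s_2(y)$ on $B$. We push $\nu_\infty|_B$ forward along $(s_1,s_2)$, form the Ces\`aro averages $\frac{1}{N}\sum_{n=0}^{N-1}(T\times T)^n_*((s_1,s_2)_*\nu_\infty|_B)$, and take a weak-$*$ limit to obtain a nonzero $(T\times T)$-invariant Borel measure $\lambda$ on $X\times X$ supported in $R_{\pi_\infty}\setminus\Delta_2(X)$. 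Passing to an ergodic component and a generic support point yields a pair $(x_1,x_2)$ with $x_1\neq x_2$, $\pi_\infty(x_1)=\pi_\infty(x_2)$, and $(x_1,x_2)$ generic along $T\times T$-orbits for a nontrivial invariant measure on the fibered product over $X_\infty$.

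The crux of the proof is to promote $(x_1,x_2)$ to an element of $\IN_2^{[\infty]}(X,T)\setminus\Delta_2(X)$, contradicting the hypothesis. Since the maximal nilfactor of order $\infty$ is obtained as the quotient by $\RP^{[\infty]}$ (\cite[Remark~3.5]{dong2013infinite}), we have $R_{\pi_\infty}=\RP^{[\infty]}$, so for every $d\in\N$ the cubic/Host--Kra structure attached to $\RP^{[\infty]}$ supplies many ``faces'' $\{\vec{p}\cdot\vec{\epsilon}:\vec{\epsilon}\in\{0,1\}^d\}$ along which $T^{\vec{p}\cdot\vec{\epsilon}}x_1$ and $T^{\vec{p}\cdot\vec{\epsilon}}x_2$ can be placed independently inside any prescribed product neighborhood $U_1\times U_2$ of $(x_1,x_2)$. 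A Ramsey-style IP van der Waerden extraction from these cubic patterns then produces arbitrarily long finite IP-sets that are independence sets for $(U_1,U_2)$, which is exactly the defining condition for $(x_1,x_2)\in\IN_2^{[\infty]}(X,T)$. The main obstacle is precisely this extraction: even the finite-$d$ analogue in \cite[Theorem~4.5]{dong2013infinite} is delicate, relying on Host--Kra seminorms and the structure theory of minimal distal nilextensions, and passing to $d=\infty$ with genuine IP-structure (as the definition of $\IN_2^{[\infty]}$ demands) is the technical core of \cite{qiuyu}; any self-contained proof must recurse an IP van der Waerden argument through a tower of nilpotent extensions.
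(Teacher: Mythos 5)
Your reduction in the first paragraph is fine: $X_\infty$ is uniquely ergodic, and if $\nu_\infty$-almost every fibre of $\pi_\infty$ is a singleton then every invariant measure of $(X,T)$ disintegrates over $\nu_\infty$ into Dirac masses, so regularity does imply unique ergodicity. (Note that the survey itself offers no proof of this theorem; it is quoted from Qiu--Yu building on Dong--Donoso--Maass--Shao--Ye, so your argument has to stand on its own.) The problem is that your proof of regularity does not stand. The crux --- promoting a pair of $R_{\pi_\infty}\setminus\Delta_2(X)$ to an element of $\IN_2^{[\infty]}(X,T)$ --- is precisely the step you do not carry out: you describe it as the technical core of the cited papers and stop, so as a proof the proposal is incomplete. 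Worse, the justification you sketch for it is false as stated. It is not true that every off-diagonal pair of $R_{\pi_\infty}=\RP^{[\infty]}$ admits independence along cubes or IP-sets into arbitrary product neighborhoods: under the very hypothesis of the theorem, $R_{\pi_\infty}$ can be nontrivial. A Sturmian system is null, hence has no nontrivial IN-pairs and a fortiori no nontrivial $\IN_2^{[\infty]}$-pairs, yet its maximal $\infty$-step nilfactor is the underlying irrational rotation (any distal factor collapses the proximal fibre relation), so $\pi_\infty$ is far from injective. If your claim about $\RP^{[\infty]}$ were correct, the theorem would conclude that $\pi_\infty$ is injective, which is false; consistently, the finite-order result (Theorem \ref{thm:nil}) also concludes only almost one-to-one-ness. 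So any correct argument must exploit the measure-theoretic genericity of the chosen pair in an essential way, and you give no indication of how.

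There is a second gap, in the construction of the pair itself: failure of \emph{topological} regularity does not produce a $(T\times T)$-invariant measure on $R_{\pi_\infty}$ with mass off the diagonal. Your selections $s_1,s_2$ are not equivariant, the two points chosen in a fibre may well be proximal or even asymptotic, and the weak-$*$ limit of the Ces\`aro averages can collapse onto $\Delta_2(X)$. In fact, whenever an invariant measure of $X$ disintegrates over $\nu_\infty$ into point masses almost everywhere, every invariant measure carried by $R_{\pi_\infty}$ is the diagonal one, while the topological fibres can still fail to be singletons on a set of positive $\nu_\infty$-measure; in that situation your contrapositive argument cannot even get started, although the theorem still has content there. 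The published proofs handle the regularity statement differently: from non-regularity one extracts a positive-measure set of fibres containing two points separated by a fixed $\varepsilon$, and then builds the required IP-independence sets directly, by a combinatorial/recurrence argument run through the tower of nilfactors --- exactly the part your proposal leaves to the references.
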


\begin{theorem}
[Qiu and Yu \cite{qiuyu}]
 Let $(X,T)$ be a minimal TDS. If there is some integer
$k \geq 2$ such that $\IT^{\infty}_k(X,G)\subset \Delta^2_k(X)$, then $(X,T)$ has only finitely many ergodic probability measures.  
\end{theorem}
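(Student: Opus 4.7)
The plan is to mirror the Huang–Lian–Shao–Ye argument for IT-tuples (the theorem immediately above), with the maximal equicontinuous factor and IT-tuples replaced by the maximal $\infty$-step nilfactor $\pi_\infty\colon X\to X_\infty$ and $\IN^{[\infty]}$-tuples, and with Theorem \ref{thm:nil infty} playing the role that Corollary \ref{cor:tameregular} plays in the IT setting. Throughout I read the statement as a typographical variant of $\IN_k^{[\infty]}(X,T)\subset\Delta^2_k(X)$, since this is the only infinite-order cubic independence notion defined in the paper.

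First I would upgrade Theorem \ref{thm:nil infty} to the following quantitative form: the hypothesis $\IN_k^{[\infty]}(X,T)\subset\Delta^2_k(X)$ forces $\pi_\infty$ to be almost $(k-1)$-to-one, i.e.\ the $G_\delta$ set $\{y\in X_\infty:|\pi_\infty^{-1}(y)|\le k-1\}$ is dense and hence residual by minimality. The argument is by contrapositive: if a residual set of $y\in X_\infty$ admitted a fiber of cardinality at least $k$, one could pick pairwise distinct $x_1,\dots,x_k$ in a common fiber and, using the IP-Ramsey construction behind the characterization of regional proximality of infinite order in \cite{dong2013infinite,qiuyu}, build arbitrarily long finite IP-sets that are simultaneous independence sets for any preassigned product neighborhood of $(x_1,\dots,x_k)$. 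This would produce a tuple in $\IN_k^{[\infty]}(X,T)\setminus\Delta^2_k(X)$, contradicting the hypothesis.

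Next, since $(X_\infty,T)$ is a minimal nilsystem of order $\infty$ it is uniquely ergodic with invariant measure $\nu_\infty$ (\cite[Section 11.2]{host2018nilpotent}), so every ergodic $T$-invariant Borel probability measure $\mu$ on $X$ satisfies $(\pi_\infty)_*\mu=\nu_\infty$. Disintegrating $\mu=\int_{X_\infty}\mu_y\,d\nu_\infty(y)$, the first step gives that for $\nu_\infty$-a.e.\ $y$ the conditional measure $\mu_y$ is supported on the finite set $\pi_\infty^{-1}(y)$ of at most $k-1$ points.

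Finally, I would conclude finiteness of the ergodic decomposition by the tower argument from the proof of \cite[Theorem A]{huang2021minimal}: fix a generic fiber $\pi_\infty^{-1}(y_0)=\{p_1,\dots,p_m\}$ with $m\le k-1$; each ergodic $\mu$ is determined by the probability vector $(\mu_{y_0}(p_j))_{j=1}^m$, and the $T$-equivariance of the disintegration together with the orbit-closure structure of $X$ above $y_0$ restricts these vectors to finitely many compatible choices, bounding the number of ergodic measures by a function of $k$ alone. The main obstacle is Step 1: passing from the $k=2$ case in Theorem \ref{thm:nil infty} to arbitrary $k$ requires showing that any pairwise distinct $k$-tuple whose coordinates share a $\pi_\infty$-fiber is an $\IN^{[\infty]}$-tuple, which amounts to an iterated IP-Ramsey construction on top of the cubic structures of \cite{host2018nilpotent} and is the technical heart of the proof.
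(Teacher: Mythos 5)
Your reading of the statement ($\IT^{\infty}_k$ as $\IN_k^{[\infty]}$) is reasonable, and the overall architecture (a fiber bound over $X_\infty$, unique ergodicity of $X_\infty$, mutual singularity of ergodic measures) has the right shape; note, however, that the survey states this theorem without proof, citing \cite{qiuyu}, so there is no internal argument to compare against. The genuine gap sits at the junction of your Steps 1 and 2. Step 1, even if carried out, only yields the \emph{topological} statement that $\{y\in X_\infty: |\pi_\infty^{-1}(y)|\le k-1\}$ is residual, and Step 2 silently converts this into the \emph{measure-theoretic} statement that for $\nu_\infty$-a.e.\ $y$ the conditional $\mu_y$ lives on a fiber of at most $k-1$ points. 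Residual subsets of $X_\infty$ can be $\nu_\infty$-null: this is exactly the distinction between ``almost automorphic'' and ``regular'' stressed in the survey (Theorem \ref{thm:tameregular}), and the entire content of the $k=2$ case (Theorem \ref{thm:nil infty}) is the upgrade from the almost 1-to-1 statement, already available from \cite{dong2013infinite,qiu2020independence}, to regularity. A concrete warning: Toeplitz subshifts are minimal almost 1-to-1 extensions of uniquely ergodic odometers and yet can carry uncountably many ergodic measures, so no purely topological fiber bound over a uniquely ergodic factor can ever bound the number of ergodic measures. Hence the independence hypothesis must be used a second time, in measure-theoretic form: what is needed is a lemma producing a nondegenerate tuple in $\IN_k^{[\infty]}(X,T)$ from $k$ distinct points of a single $\pi_\infty$-fiber lying in the supports of the (pairwise mutually singular) conditional measures of invariant measures, with the IP-independence sets built from positive-measure recurrence rather than from a residual-fiber contrapositive. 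That is the technical heart, and your Step 1 as formulated does not supply it; it also departs from the Huang--Lian--Shao--Ye template you invoke, since there almost finiteness of $\pi_{eq}$ is a \emph{hypothesis}, not a consequence of the independence assumption.

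Separately, Step 3 is both overbuilt and underjustified. Once the correct measure statement is in hand, finiteness is immediate: all ergodic measures push forward to $\nu_\infty$, distinct ergodic measures are mutually singular, so their disintegrations are pairwise mutually singular for $\nu_\infty$-a.e.\ $y$, and a fiber carrying at most $k-1$ atoms cannot support more than $k-1$ pairwise singular probability measures; no tower argument is required, and one gets the bound $k-1$ directly. As written, however, the claim that ``each ergodic $\mu$ is determined by the probability vector $(\mu_{y_0}(p_j))_{j=1}^m$ at a fixed generic fiber'' is not meaningful, because disintegrations are defined only up to $\nu_\infty$-null sets, and no argument is offered that only finitely many such vectors are compatible with invariance. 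So the proposal's skeleton can be repaired, but the key missing ingredient is the measure-theoretic construction of $\IN^{[\infty]}$-tuples from conditional measures, not the topological almost-$(k-1)$-to-one reduction.
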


We also have the following finite order structure theorems. 
For the proof of the following result see \cite[Theorem 5.7]{qiu2020independence}.
\begin{theorem}
[Qiu \cite{qiu2020independence}]
\label{thm:nil}
Let $(X, T)$ be a minimal TDS and $d\in \N$. If $\IN_2^{[d]}(X,T)\subset \Delta_2(X)$, then $\pi_d$ is almost 1-to-1.
\end{theorem}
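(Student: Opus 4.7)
The plan is a proof by contrapositive: assume $\pi_d$ is not almost 1-to-1, and construct a non-diagonal $\IN_2^{[d]}$-pair to contradict the hypothesis. The starting point is the correspondence $X_d = X/\RP^{[d]}(X,T)$ between the maximal nilfactor of order $d$ and the regionally proximal relation of order $d$, cited in the excerpt via \cite[Theorem 3.9]{shao2012regionally}. A standard observation shows that $\{x \in X : \pi_d^{-1}(\pi_d(x)) = \{x\}\}$ is a $G_\delta$ set, so by minimality it is either empty or a dense $G_\delta$. Hence, if $\pi_d$ is not almost 1-to-1, every $x$ has a non-trivial $\RP^{[d]}$-class, and one can fix a non-diagonal pair $(x_0, y_0) \in \RP^{[d]}$ together with disjoint open neighborhoods $U \ni x_0$ and $V \ni y_0$.

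I would then invoke the cube characterization of $\RP^{[d]}$: for every $\delta > 0$ there exist a vector $\vec{n} = (n_1, \ldots, n_d) \in \Z^d$ and points $x'_0, y'_0 \in X$ within $\delta$ of $x_0, y_0$ such that $\rho(T^{\vec{n}\cdot \vec{\epsilon}} x'_0, T^{\vec{n}\cdot \vec{\epsilon}} y'_0) < \delta$ for every $\vec{\epsilon} \in \{0,1\}^d \setminus \{0\}$. This is precisely the cube structure appearing in the definition of $\IN^{[d]}$-tuples. The heart of the argument is then a combinatorial extraction producing, for each $\ell \in \N$, distinct vectors $\vec{p}_1, \ldots, \vec{p}_\ell \in \N^d$ such that
\[ E_\ell \;=\; \bigcup_{j=1}^\ell \{\vec{p}_j \cdot \vec{\epsilon} : \vec{\epsilon} \in \{0,1\}^d\} \setminus \{0\} \]
is an independence set for $(U,V)$. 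The $\vec{p}_j$ are chosen by iteratively coupling the cube structure above with the uniform recurrence of the minimal action $(X,T)$, so that every $\{U,V\}$-coloring of $E_\ell$ is simultaneously realized along a single orbit. Passing to a limit point in $\overline{U} \times \overline{V}$ of the witnesses as $\ell \to \infty$ yields a non-diagonal $\IN_2^{[d]}$-pair, contradicting the hypothesis.

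The main obstacle is the iterative step. At stage $j+1$ one adjoins the $2^d - 1$ integers $\vec{p}_{j+1} \cdot \vec{\epsilon}$ to $E_j$, and every $\{U,V\}$-coloring of the enlarged set must remain realizable by a single orbit. Two issues arise: the new cube corners must not collide in $\Z$ with the finitely many previously chosen ones (handled by selecting $\vec{p}_{j+1}$ inside a sufficiently lacunary cone in $\N^d$, dodging the finitely many linear obstructions), and joint realizability of all colorings must be preserved (handled by lifting the problem to the Host--Kra $d$-step cube system and exploiting the minimality of the face action, in the spirit of \cite{shao2012regionally, dong2013infinite} and mirroring the infinite-order argument behind Theorem \ref{thm:nil infty} in \cite{qiuyu}). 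Once these technical points are resolved, compactness of $X$ delivers the limiting $\IN_2^{[d]}$-pair and completes the proof.
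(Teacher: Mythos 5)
Your overall contrapositive framing is fine (and the reduction ``not almost 1-to-1 $\Leftrightarrow$ every $\RP^{[d]}$-class is non-trivial'' is correct for minimal systems), but the heart of your argument has a genuine gap: you only ever use the existence of \emph{one} non-diagonal pair $(x_0,y_0)\in\RP^{[d]}$ together with its cube characterization, and membership in $\RP^{[d]}$ alone is provably too weak to force $(x_0,y_0)$ to be an $\IN^{[d]}_2$-pair. Already for $d=1$ (where $\IN^{[1]}_2(X,T)=\IN_2(X,\ZZ)$ by Remark \ref{rem:IN1} and $X_1$ is the maximal equicontinuous factor) a Sturmian subshift is a counterexample to your key step: it is minimal, not equicontinuous, so $\RP^{[1]}$ contains non-diagonal pairs with the full approximate-cube-return property you invoke, yet the system is null, so no non-diagonal $\IN^{[1]}_2$-pair exists and no extraction of the kind you describe can succeed. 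This shows that the ``iterative coupling of the cube structure with uniform recurrence'' cannot be carried out from your stated input; the hypothesis that \emph{every} fiber of $\pi_d$ is non-trivial must enter the construction in an essential way, and your sketch never uses it after the first paragraph. The passage where you assert that ``joint realizability of all colorings'' is handled by lifting to the Host--Kra cube system and minimality of the face action is precisely the crux of the theorem, and as written it is an appeal to authority rather than an argument.

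Two further points. First, even granting arbitrarily large cube-structured independence sets for $(U,V)$, your final limit step needs a lemma (a cube analogue of the compactness argument Kerr and Li use for $\IN$-tuples) producing an $\IN^{[d]}_2$-pair inside $\overline{U}\times\overline{V}$; this is plausible but not automatic, since independence is a property of the sets, not of the chosen pair, and the cube shape of the independence sets has to survive the diagonalization. Second, for comparison: the survey does not prove this statement at all, it cites Qiu \cite[Theorem 5.7]{qiu2020independence}, whose proof goes through substantially more machinery (the dynamical cube/regionally proximal structure theory of \cite{shao2012regionally} and the strategy of \cite{dong2013infinite,qiuyu}) applied under the full ``no singleton fiber'' hypothesis, rather than to a single $\RP^{[d]}$ pair. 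To repair your outline you would need to explain exactly how the density of points with non-trivial $\pi_d$-fiber is converted into the freedom to realize all $\{U,V\}$-colorings along a common cube configuration; that is where the real content of the theorem lies.
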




\subsection{Mean dimension pairs}

Mean dimension is a dynamical invariant introduced for actions of amenable groups by Gromov \cite{gromov1999topological} and studied by Lindenstrauss and Weiss \cite{lindenstrauss2000mean}. It is particularly useful for studying infinite entropy systems and deciding when a system can be embedded in another \cite{Lindenstrauss99,gutman2020embedding}.  Mean dimension was extended for actions of sofic groups by Li \cite{li2013soficmean}. 

We say a finite open cover $\mathcal{V}$ \textbf{refines} $\U$, $\mathcal{V} \succ \U$, if every member of $\mathcal{V}$ is contained in a member of $\U$.
\begin{definition}
Let $\U$ be a finite open cover of $X$. We denote
$$
\ord(\U)=(\max_{x\in X}\sum_{U\in \U}1_{U} (x))-1 \text{, and}
$$
$$
\D(\U)=\min_{\mathcal{V} \succ \U}\ord(\mathcal{V}).
$$
\end{definition}

\begin{definition}Let $G$ be an amenable group, $\act$ an action, $\{F_n\}_{n\in \N}$ a F\o lner sequence for $G$ and $\U$ a finite open cover of $X$. We define the \textbf{mean dimension of $\act$ with respect to $\U$} as
$$ \mdim (\act,\U)=\lim_{n\rightarrow \infty}\frac{\D( \U^{F_n})}{|F_n|}.$$ The \textbf{mean dimension of }$\act$ is the supremum of $\mdim (\act,\U)$ over all finite open covers and is denoted by $\mdim(\act)$. 
\end{definition}

Mean dimension is invariant under conjugacy and does not depend on the choice of the F\o lner sequence \cite[Theorem 9.4.1]{coornaert2015topological}. For a more thorough exposition on the theory of mean dimension for actions of amenable groups see \cite[Chapter 10]{coornaert2015topological}.

We will now define mean dimension for actions of sofic groups. 
Let $\U$ be a finite open cover and $n\in \N$. We denote by $\U^{[n]}$ the finite open cover of $X^{[n]}$ consisting of $U_1\times U_2\times \dots \times U_n$ for $U_1, \dots, U_n\in \U$.

    
	

Let $F\Subset G$, $\delta>0$, $n\in \N$ and $\sigma: G\rightarrow \Sym(n)$. We have that $\Map(\rho, F, \delta, \sigma)$ is a closed subset of $X^{[n]}$. Consider the restriction
$\U^{[n]}|_{\Map(\rho, F, \delta, \sigma)}$ of $\U^{[n]}$ to $\Map(\rho, F, \delta, \sigma)$.
Denote $\mathcal{D}(\U^{[n]}|_{\Map(\rho, F, \delta, \sigma)})$ by $\mathcal{D}(\U, \rho, F, \delta, \sigma)$.


Let $F\Subset G$ and
$\delta > 0$. For a finite open cover $\U$ of $X$ and $\Sigma=\{\sigma_i \colon G\rightarrow \Sym(n_i) \}_{i\in \N}$ a sofic approximation sequence, we define
\begin{align*}
\mathcal{D}_\Sigma(\U, \rho ,F, \delta ) &=
\varlimsup_{i\to\infty} \frac{\mathcal{D}(\U, \rho, F, \delta, \sigma_i)}{n_i},\\
\mathcal{D}_\Sigma(\U) &=\inf_{F\Subset G} \inf_{\delta > 0} \mathcal{D}_\Sigma(\U, \rho ,F, \delta ).
\end{align*}
If $\Map (\rho ,F,\delta ,\sigma_i )$ is empty for all sufficiently large $i$, we set
$\mathcal{D}_\Sigma(\U, \rho ,F, \delta ) = -\infty$.
We define the {\bf sofic mean dimension (with respect to $\Sigma$)} as
$$ \mdim_\Sigma (\act) = \sup_{\U} \mathcal{D}_\Sigma(\U)$$
for $\U$ ranging over  finite open covers of $X$.

\begin{remark} \label{R-mean top dim1}

The quantities
$\mathcal{D}_\Sigma(\U)$ and $\mdim_\Sigma(\act)$ do not depend on the choice of $\rho$ \cite{li2013soficmean}. In particular, $\mdim_\Sigma(\act)$ is invariant under conjugcay.
If $G$ is an amenable group then $\mdim_{\Sigma}$ does not depend on the choice of a sofic approximation sequence, and the value coincides with the mean dimension as introduced by Gromov \cite[Theorem 3.1]{li2013soficmean}.  

\end{remark}

\begin{definition}
\label{def:mdpairs}
  Let $G$ be a sofic group, $\Sigma$ a sofic approximation sequence for $G$ and $\act$ an action. We say that $(x,y)\in X^2\setminus \Delta_2(X)$ is a \textbf{$\Sigma$-mean dimension pair} if for any closed disjoint neighborhoods, $K_x$ and $K_y$, of $x$ and $y$ respectively, we have that $\D_{\Sigma}(\{K_x^c,K_y^c\})>0$. We denote the set of $\Sigma$-mean dimension pairs by $D^{\Sigma}_m(X,G)$.
\end{definition}

  \begin{theorem}
  [García-Ramos and Gutman \cite{garciameandimesnion}]
\label{cor:zmd}
 Let $G$ be a sofic group, $\Sigma$ a sofic approximation sequence for $G$ and $\act$ an action. Then $\mdim_\Sigma (\act)>0$ if and only if $D^{\Sigma}_m(X,G)\neq \emptyset.$
\end{theorem}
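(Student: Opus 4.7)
The forward direction is immediate: if $(x,y) \in D^\Sigma_m(X,G)$, then for any choice of disjoint closed neighborhoods $K_x, K_y$ of $x,y$ (which exist since $x \neq y$ and $X$ is metrizable), the definition gives $\D_\Sigma(\{K_x^c, K_y^c\}) > 0$, and hence $\mdim_\Sigma(\act) \geq \D_\Sigma(\{K_x^c, K_y^c\}) > 0$.

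For the converse, the plan is to follow the Blanchard--Kerr--Li local-entropy template, with $\D_\Sigma$ playing the role of the entropy functional. The two structural properties of $\D_\Sigma$ I would first establish (lifting the analogous facts for $\D$ on a finite cover) are monotonicity, $\mathcal{V} \succ \U \Rightarrow \D_\Sigma(\mathcal{V}) \geq \D_\Sigma(\U)$, and subadditivity under joins, $\D_\Sigma(\U \vee \mathcal{V}) \leq \D_\Sigma(\U) + \D_\Sigma(\mathcal{V})$. Both pass pointwise through $\U \mapsto \U^{[n_i]}|_{\Map(\rho,F,\delta,\sigma_i)}$, survive $\limsup_i$, and then pass through the $(F,\delta)$-infimum; the key point that makes the infimum-of-sum clean is that $\D_\Sigma(\U,\rho,F,\delta)$ is monotone in $(F,\delta)$ (enlarging $F$ or shrinking $\delta$ shrinks $\Map$, and the dimension of the restricted cover only drops), so the outer infimum is in fact a directed limit.

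With these tools in hand, assume $\mdim_\Sigma(\act) > 0$ and pick a finite open cover $\U = \{U_1, \ldots, U_n\}$ with $\D_\Sigma(\U) > 0$. By normality of $X$, fix a closed shrinking $\{K_i\}$ with $K_i \subseteq U_i$ and $\bigcup_i K_i = X$, and set $W_i = X \setminus K_i$. Each $\{U_i, W_i\}$ is an open cover of $X$, and the join $\bigvee_i \{U_i, W_i\}$ refines $\U$ (after discarding the empty intersection $\bigcap_i W_i$). Monotonicity and subadditivity together give
\[
0 < \D_\Sigma(\U) \leq \D_\Sigma\!\left(\bigvee_{i=1}^n \{U_i, W_i\}\right) \leq \sum_{i=1}^n \D_\Sigma(\{U_i, W_i\}),
\]
so some $i_0$ satisfies $\D_\Sigma(\{U_{i_0}, W_{i_0}\}) > 0$. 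Setting $A_0 := X \setminus U_{i_0}$ and $B_0 := K_{i_0}$ yields nonempty disjoint closed sets (both nonempty, lest the cover be trivial and $\D_\Sigma$ vanish) with $\D_\Sigma(\{A_0^c, B_0^c\}) > 0$.

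The last step is a diagonal shrinking to a pair of points. I would inductively build decreasing nonempty closed sequences $A_0 \supset A_1 \supset \cdots$ and $B_0 \supset B_1 \supset \cdots$ with $A_n \cap B_n = \emptyset$, $\diam(A_n), \diam(B_n) \to 0$, and $\D_\Sigma(\{A_n^c, B_n^c\}) > 0$ at every stage. Given $(A_n, B_n)$: cover $A_n$ by finitely many closed subsets $A_n^j \subseteq A_n$ of diameter less than $1/(n+1)$; then $\{A_n^c, B_n^c\}$ agrees (modulo redundant subsets of $B_n^c$) with $\bigvee_j \{(A_n^j)^c, B_n^c\}$, so subadditivity forces $\D_\Sigma(\{(A_n^j)^c, B_n^c\}) > 0$ for some $j$, which I designate $A_{n+1}$; repeating symmetrically on the $B$-side produces $B_{n+1} \subseteq B_n$, and disjointness is automatic from $A_{n+1} \subseteq A_n$, $B_{n+1} \subseteq B_n$, $A_n \cap B_n = \emptyset$. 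The intersections $\{x\} = \bigcap_n A_n$ and $\{y\} = \bigcap_n B_n$ are single points, with $x \in A_0$, $y \in B_0$, so $x \neq y$. For any disjoint closed neighborhoods $K_x \ni x$ and $K_y \ni y$, eventually $A_n \subseteq K_x$ and $B_n \subseteq K_y$, hence $\{K_x^c, K_y^c\} \succ \{A_n^c, B_n^c\}$ and monotonicity yields $\D_\Sigma(\{K_x^c, K_y^c\}) \geq \D_\Sigma(\{A_n^c, B_n^c\}) > 0$, so $(x,y) \in D^\Sigma_m(X,G)$. The main obstacle, in my view, is establishing the subadditivity of $\D_\Sigma$ cleanly across the two nested infima; once this is secure, the rest is a fairly direct transcription of the entropy-pair blueprint to the mean-dimension setting.
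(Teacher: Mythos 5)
Your argument is correct, and it is essentially the same argument as in the cited source (the survey itself states the theorem without proof): the Blanchard-style localization blueprint, resting on monotonicity of $\D_\Sigma$ under refinement and subadditivity under joins --- where your observation that the $(F,\delta)$-infimum is a directed limit of monotone quantities is exactly the point needed to push the finite-level Lindenstrauss--Weiss subadditivity of $\D$ through the sofic definition --- followed by the nested shrinking of disjoint closed sets to produce the pair. The only minor loose end is that at each inductive stage the chosen $A_n^j$ must be nonempty, which follows by the same observation you made at stage $0$ (a two-set cover containing $X$ as a member has $\D_\Sigma=0$).
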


\begin{question}
Let $\Sigma$ be a sofic approximation sequence for $G$, and $\act$ an action. Is every $\Sigma$-mean dimension pair a $\Sigma$-IE-pair?
\end{question}
We do not know the answer of the previous question even when $G=\Z$. 

 Let $G$ be a sofic group and $\Sigma$ a sofic approximation sequence for $G$. An action $\act$ has \textbf{completely positive sofic mean dimension (with respect to $\Sigma$)} if every non-trivial factor has positive sofic mean dimension. Generalizing a result of Lindenstrauss and Weiss \cite[Theorem 3.6]{lindenstrauss2000mean}, Li proved that the shift action $G\curvearrowright I^{G}$ of a sofic group $G$ (where $I$ is the unit interval) has completely positive sofic mean dimension \cite[Theorem 7.5]{li2013soficmean}. 
 
 An action with zero mean dimension may still have a factor with positive mean dimension. 

\begin{theorem}
[García-Ramos and Gutman \cite{garciameandimesnion}]
 Let $\Sigma$ be a sofic approximation for $G$ and $\act$ an action. If the smallest closed $G$-invariant equivalence relation containing $\Sigma$-mean dimension pairs is $X^2$, then $\act$ has completely positive sofic mean dimension.   
\end{theorem}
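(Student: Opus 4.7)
The plan is to argue by contrapositive, mirroring the amenable-group argument for the topological Pinsker corollary stated earlier. Suppose $\pi\colon \act\to G\curvearrowright Y$ is a non-trivial factor with $\mdim_\Sigma(G\curvearrowright Y)\leq 0$; by Theorem~\ref{cor:zmd}, this yields $D^\Sigma_m(Y,G)=\varnothing$. I would show that every $\Sigma$-mean dimension pair $(x,x')\in D^\Sigma_m(X,G)$ lies in the closed $G$-invariant equivalence relation $R_\pi$ of Proposition~\ref{prop:factor-eq_rel}. Since $\pi$ is non-trivial, $R_\pi\neq X^2$, and this would contradict the hypothesis that the smallest closed $G$-invariant equivalence relation containing $D^\Sigma_m(X,G)$ is $X^2$.

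The key lemma to establish is a push-forward property for $\Sigma$-mean dimension pairs under factor maps: if $(x,x')\in D^\Sigma_m(X,G)$ with $\pi(x)\neq \pi(x')$, then $(\pi(x),\pi(x'))\in D^\Sigma_m(Y,G)$. Combined with $D^\Sigma_m(Y,G)=\varnothing$, this forces $\pi(x)=\pi(x')$, and hence $D^\Sigma_m(X,G)\subset R_\pi$. To prove the lemma, let $L,L'$ be disjoint closed neighborhoods of $\pi(x),\pi(x')$ in $Y$, and set $K=\pi^{-1}(L)$, $K'=\pi^{-1}(L')$. These are disjoint closed neighborhoods of $x,x'$ in $X$, and $\{K^{\comp},{K'}^{\comp}\}=\pi^{-1}\{L^{\comp},{L'}^{\comp}\}$, so the lemma reduces to the general factor inequality
$$\D_\Sigma(\pi^{-1}\V)\leq \D_\Sigma(\V)\quad\text{for every finite open cover }\V\text{ of }Y.$$

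To establish this inequality at the finite level, let $\pi^{[n]}\colon X^{[n]}\to Y^{[n]}$ denote the coordinatewise extension of $\pi$. Using uniform continuity of $\pi$ together with an $L^2$-averaging argument on Definition~\ref{def:Map} — split $[n]$ into indices where $\rho_X$ is small enough that $\rho_Y\circ(\pi\times\pi)$ stays below a prescribed threshold, and an exceptional set whose relative density is controlled by $\delta^2/\eta^2$ times $\diam(Y)^2$ — one obtains that for every $F\Subset G$ and $\delta'>0$ there exists $\delta>0$ (independent of $n$ and $\sigma$) with $\pi^{[n]}(\Map(\rho_X,F,\delta,\sigma))\subset \Map(\rho_Y,F,\delta',\sigma)$. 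Pulling back an order-optimal refinement of $\V^{[n]}|_{\Map(\rho_Y,F,\delta',\sigma)}$ via $\pi^{[n]}$ produces a refinement of $(\pi^{-1}\V)^{[n]}|_{\Map(\rho_X,F,\delta,\sigma)}$ of no greater order, since preimages preserve intersections; this yields $\D(\pi^{-1}\V,\rho_X,F,\delta,\sigma)\leq \D(\V,\rho_Y,F,\delta',\sigma)$. Taking $\limsup_i$ along the sofic approximation and then the two infima delivers the desired global inequality. The main obstacle here is the $L^2$-matching between $\Map(\rho_X,\cdots)$ and $\Map(\rho_Y,\cdots)$, which is standard in sofic mean dimension arguments but must be handled carefully; the cover-pullback estimate on $X^{[n]}$ is then routine.
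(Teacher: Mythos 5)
Your proposal is correct, and it is essentially the intended argument: the paper only cites this result (it is a survey, attributing the proof to García-Ramos and Gutman), but the route you take — contrapositive via Proposition \ref{prop:factor-eq_rel}, the witnessing Theorem \ref{cor:zmd} applied to a non-trivial zero-mean-dimension factor, and the factor inequality $\D_\Sigma(\pi^{-1}\V)\leq\D_\Sigma(\V)$ obtained from the uniform-continuity/Chebyshev matching of sofic model spaces plus the cover-pullback order estimate — is exactly the mean-dimension analogue of the paper's own treatment of $\Sigma$-CPE via $\Sigma$-IE-pairs. The only cosmetic slip is the phrase describing the exceptional set: its relative density is bounded by $\delta^2/\eta^2$, and it is its contribution to the average that is bounded by $(\delta^2/\eta^2)\,\diam(Y)^2$; this does not affect the argument.
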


\begin{question}
Can positive mean dimension be characterized with some form of combinatorial independence? 
\end{question}

\section{Applications and connections} 
\subsection{Homoclinic points of algebraic actions}

In this subsection we are interested in how homoclinic points relate to entropy. This line of research began with the work of Lind and Schmidt \cite{LindSchmidt1999} on algebraic actions of $\Z^d$.  

We say an action $\act$ is \textbf{algebraic} if $X$ is a compact metrizable abelian group and $G$ acts by automorphisms of $X$.

For algebraic actions it is more convenient to work with points rather than pairs or tuples because we have a reference fixed point, $e_X$, the identity of $X$. 

Let $\act$ be an algebraic action. We say $x\in X$ is an \textbf{IE-point} 
if $(x,e_X)\in \IE_2(X,G)$.
The set of IE-points is a $G$-invariant closed subgroup of $X$ \cite[Theorem 6.4]{KerrLi2013}, called the \textbf{IE-group} and will be denoted  by $\IE(X,G)$.
Furthermore, for each $k\in \NN$, we have \cite[Theorem 6.4]{KerrLi2013}
\begin{align*}
\IE_k(X, G)&=\{(yx_1, \dots, yx_k): x_1, \dots, x_k\in \IE(X, G), y\in X\}.
\end{align*}

If $G$ is sofic and $\Sigma$ is a sofic approximation sequence for $G$, we say $x\in X$ is an \textbf{IE$^{\Sigma}$-point} 
if $(x,e_X)\in \IE^{\Sigma} _2(X,G)$.
The set of IE$^{\Sigma}$-points is a $G$-invariant closed subset of $X$, and will be denoted by $\IE^{\Sigma}(X,G)$.

Given an algebraic action $G\curvearrowright X$, we say $x\in X$ is a \textbf{homoclinic point} \cite{LindSchmidt1999} if $sx\to e_X$ as $G\ni s\to \infty$. The homoclinic points form a $G$-invariant subgroup of $X$,  called the \textbf{homoclinic group} and will be denoted  by $A(X,G)$.


	We say $G \curvearrowright X$ is \textbf{expansive} if there exists $c >0$ such that whenever $x,y \in X$, if $x \neq y$ then there exists $g \in G$ such that $\rho(gx,gy)> c$. 

The following result was proved for actions of amenable groups in \cite{ChungLi2015}. The following generalization appeared in \cite[Corollary 7.2]{barbieri2022markovian}.

\begin{theorem}
[Chung and Li \cite{ChungLi2015}, Barbieri, García-Ramos and Li \cite{barbieri2022markovian}]
\label{P_homoclinic_points}

	Let $G\curvearrowright X$ be an expansive algebraic action. We have:
	\begin{enumerate}
		\item $A(X, G)\subset \IE(X, G)$.
		\item If $G$ is sofic and $\Sigma$ is any sofic approximation sequence for $G$, then $A(X,G)\subset \IE^{\Sigma}(X,G)$ .
	\end{enumerate}
\end{theorem}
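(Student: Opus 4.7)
The plan is to verify directly the defining condition --- positive (upper) independence density of every pair of open neighborhoods of $(x, e_X)$ --- exploiting the abelian group structure of $X$. Fix $x \in A(X,G)$ and open neighborhoods $U_1 \ni x$, $U_2 \ni e_X$. The key algebraic construction is that for any finite $J \subset G$ and any coloring $\phi \colon J \to \{1,2\}$, the test point
\[
y_\phi \;=\; \sum_{t \in J,\; \phi(t)=1} t^{-1}x \;\in\; X
\]
satisfies $sy_\phi = \mathbf{1}_{\phi(s)=1}\,x + \sum_{t \in J\setminus\{s\},\,\phi(t)=1} st^{-1}x$ for every $s \in J$. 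If the ``cross-term'' sum can be placed in a small neighborhood of $e_X$, then $sy_\phi \in U_{\phi(s)}$ for all $s \in J$, witnessing that $J$ is an independence set for $(U_1,U_2)$.

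Homoclinicity of $x$ supplies per-element control: for any symmetric open neighborhood $V$ of $e_X$, there is $K \Subset G$ with $sx \in V$ for every $s \notin K$. In the amenable case (Part (1)), a greedy packing in any F\o lner set $F$ produces $J \subset F$ with $|J|\ge |F|/|K|$ and $st^{-1}\notin K$ for distinct $s,t\in J$, so every individual cross term $st^{-1}x$ lies in $V$. The difficulty is that there can be up to $|J|-1$ such cross terms, and in a general compact group a sum of many small terms need not be small. I would resolve this by first reducing to the \emph{summable} homoclinic subgroup $A^{\Delta}(X,G)$ consisting of those $x$ with $\sum_{g\in G}\rho(gx,e_X) < \infty$ for a translation-invariant compatible metric, for which the cross-term sum is globally bounded by the tail $\sum_{g\notin K}\rho(gx,e_X)$ independently of $|J|$. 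Closedness of $\IE_2(X,G)$ (Theorem~\ref{thm:IE properties}(1)) together with density of $A^{\Delta}(X,G)$ in $A(X,G)$ then completes Part~(1).

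For Part~(2), the same scheme adapts to the sofic setting. Given $F \Subset G$, $\delta > 0$, and a sofic approximation $\sigma_i \colon G \to \Sym(n_i)$, use near-freeness of $\sigma_i$ to find $J \subset [n_i]$ of density bounded below uniformly in $i$ (for all large $i$) such that $\{\sigma_i(K)v : v \in J\}$ is a disjoint family. For each coloring $\omega \colon J \to \{1,2\}$, define the microstate
\[
\varphi_\omega(v) \;=\; \sum_{u \in J,\; \omega(u)=1} \sigma_i(g_{u,v})^{-1}x,
\]
where $g_{u,v} \in K$ is chosen when it exists with $\sigma_i(g_{u,v})u = v$ (by the disjointness property, at most one $u$ works for each $v$) and the summand is $e_X$ otherwise. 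Approximate multiplicativity of $\sigma_i$ combined with summability of $x$ will force $\varphi_\omega \in \Map(\rho,F,\delta,\sigma_i)$ with $\varphi_\omega(v) \in U_{\omega(v)}$ for $v \in J$, establishing positive upper independence density of $(U_1,U_2)$ over $\Sigma$. Closedness of $\IE^{\Sigma}_2(X,G)$ (Theorem~\ref{thm:sofic IE properties}(1)) then propagates the $\Sigma$-IE-point property from $A^{\Delta}(X,G)$ to all of $A(X,G)$.

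The principal obstacle is establishing density of summable homoclinic points in $A(X,G)$ for a general expansive algebraic action of a countable group: this is the nontrivial algebraic-geometric input. The natural strategy uses expansiveness to realize $X$ as a closed $G$-invariant subgroup of a group shift $H^G$ via a Markov-partition-type construction, wherein homoclinic points correspond to configurations whose $G$-support is eventually trivial and finitely-supported configurations (which are automatically summable) are dense among them. A secondary technical point in Part~(2) is arranging $\varphi_\omega$ to satisfy the $(\rho,F,\delta,\sigma_i)$-approximate equivariance estimate simultaneously with the pointwise independence condition; the interplay between the sofic approximation errors and the summability tail bound is the main technical substance of the sofic generalization by Barbieri--García-Ramos--Li.
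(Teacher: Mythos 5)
Your core computation in Part (1) is fine for \emph{summable} homoclinic points: translation invariance of $\rho$ gives $\rho(a+b,e_X)\le\rho(a,e_X)+\rho(b,e_X)$, the elements $st^{-1}$ for $t\in J\setminus\{s\}$ are distinct and lie outside $K$, so the cross-term sum is controlled by the tail $\sum_{g\notin K}\rho(gx,e_X)$, and greedy packing gives independence density at least $1/|K\cup K^{-1}\cup\{e_G\}|$ for every $F\Subset G$; this is exactly the classical Lind--Schmidt summation mechanism, used by Chung and Li where summability is available (e.g.\ via $\ell^1$-invertibility for expansive principal actions). The genuine gap is the reduction from arbitrary homoclinic points to summable ones, which is where the entire content of the theorem lives. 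The density of $A^{\Delta}(X,G)$ in $A(X,G)$ is not established, and the route you sketch for it is false: for the expansive principal $\ZZ$-action $X_f\subset\mathbb{T}^{\ZZ}$ with $f=3-u-u^{-1}$, the defining relation forces every finitely supported point to be $e_X$, while $A(X_f,\ZZ)$ is nontrivial, so ``finitely supported configurations are dense among homoclinic points'' already fails in the simplest example. What is actually true is stronger but far less general: \emph{all} homoclinic points are summable for expansive actions of $\ZZ^d$ and in the $\ell^1$-invertible settings, and no such statement is available for arbitrary expansive algebraic actions of arbitrary countable (let alone sofic) groups. That is precisely why the proof cited for the stated generality (Barbieri--Garc\'ia-Ramos--Li) does not use summability at all: it shows every expansive algebraic action satisfies a strong topological Markov property, a gluing lemma that lets one combine the translates $t^{-1}x$ with $e_X$ over disjoint windows without ever summing many small terms; that gluing (or a proof that homoclinic implies summable) is the missing idea, and your proposal pushes all of the difficulty into it.

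Part (2) has an additional quantitative defect even granting summability. In the definition of positive upper independence density over $\Sigma$, the density $q$ must be uniform in $F$ and $\delta$. In your construction the packing window is $K$, so $|J|/n_i\approx 1/|KK^{-1}|$; but truncating the microstate at $K$ leaves, at the boundary sites of each window (those $v$ with $g_v\in K$ but $sg_v\notin K$), an approximate-equivariance error of the \emph{fixed} size $\sup_{h\notin K}\rho(hx,e_X)$ on a non-negligible fraction of sites --- $K$ comes from homoclinicity and cannot be made F\o lner-like when $G$ is nonamenable. To land in $\Map(\rho,F,\delta,\sigma_i)$ for arbitrarily small $\delta$ you are then forced to enlarge $K$ with $\delta$, which shrinks $q$ and violates the required uniformity. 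With genuine summability this is repairable (keep the packing window $K$ fixed, truncate instead at a larger window $W=W(F,\delta)$, and note that distinct windows containing a given site contribute distinct group translates, so interference is bounded by the tail sum beyond $K$), but your sketch does not address it; also, as written, $\sigma_i(g_{u,v})^{-1}x$ conflates a permutation of $[n_i]$ with a group element acting on $X$. None of these repairs, however, removes the main gap identified above.
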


The family of \textbf{elementary amenable groups} is the smallest class of groups containing all finite groups and all abelian groups and is closed under taking subgroups, quotient groups, extensions, and inductive limits~\cite{Day1957,chou1980elementary}. Elementary amenable groups are amenable, but there exist amenable groups that are not elementary amenable \cite{Grigorchuk1986}. 

For each locally compact abelian group $Y$, we denote by $\widehat{Y}$ its \textbf{Pontryagin dual}. It consists of all continuous group homomorphisms $Y\rightarrow \RR/\ZZ$, and becomes a locally compact abelian group under pointwise addition and the topology of uniform convergence on compact subsets.
Then $Y$ is compact metrizable if and only if $\widehat{Y}$ is countable discrete.

For a compact metrizable abelian group $X$, there is a natural one-to-one correspondence between algebraic actions of $G$ on $X$ and actions of $G$ on $\widehat{X}$ by automorphisms. There is also a natural one-to-one correspondence between the latter and the left $\ZZ G$-module structure on $\widehat{X}$ ($\ZZ G$ is the group ring which consists of all functions $f:G\rightarrow \ZZ$ of finite support, for more information see \cite{Passman1985}). Thus, up to isomorphism, there is a natural one-to-one correspondence between algebraic actions of $G$ and countable left $\ZZ G$-modules.

We say an algebraic action $G\curvearrowright X$ is \textbf{finitely presented} if $\widehat{X}$ is a finitely presented left $\ZZ G$-module. 
Expansive algebraic actions are always finitely generated ($\widehat{X}$  is finitely generated) \cite[Proposition 2.2 and Corollary 2.16]{Schmidt1995}.

\begin{theorem}
[Chung and Li \cite{ChungLi2015}, Barbieri, García-Ramos and Li \cite{barbieri2022markovian}]
\label{thm:IE group}
	Let $G$ be an elementary amenable group with an upper bound on the orders of finite subgroups of $G$, and $G\curvearrowright X$ a finitely presented expansive algebraic action.
	Then $\overline{A(X,G)} = \IE(X,G)$.
\end{theorem}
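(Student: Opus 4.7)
The plan is to prove both inclusions separately. The containment $\overline{A(X,G)} \subset \IE(X,G)$ is almost immediate: by Theorem \ref{P_homoclinic_points}(1) we already have $A(X,G) \subset \IE(X,G)$, and since $\IE(X,G)$ is a closed subgroup of $X$ (by Theorem \ref{thm:IE properties}(1) and the remarks preceding this theorem), it contains the closure $\overline{A(X,G)}$.

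For the reverse inclusion, I would pass to the quotient algebraic action $G \curvearrowright Y := X/\overline{A(X,G)}$ and aim to reduce to showing that $\htop(G \curvearrowright Y) = 0$. The factor map $\pi\colon X \to Y$ is a group homomorphism sending $e_X$ to $e_Y$, and by Theorem \ref{thm:IE properties}(3) it maps $\IE_2(X,G)$ onto $\IE_2(Y,G)$. Therefore it sends the IE-group of $X$ into $\IE(Y,G)$. If I can prove $\IE(Y,G) = \{e_Y\}$, then $\IE(X,G) \subset \ker\pi = \overline{A(X,G)}$ and the proof is complete. By Proposition \ref{prop:IE pair}, triviality of $\IE(Y,G)$ is equivalent to $\htop(G \curvearrowright Y) = 0$.

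To prove that $G \curvearrowright Y$ has zero entropy, I would invoke the entropy formula for finitely presented expansive algebraic actions of elementary amenable groups with bounded torsion (the Deninger/Li--Thom framework): the topological entropy is the logarithm of the Fuglede--Kadison determinant of the presentation matrix in the group von Neumann algebra $\mathcal{L}G$, which under these hypotheses (where the determinant conjecture holds) vanishes precisely when the homoclinic group is trivial. Thus the task reduces to showing $A(Y,G) = \{e_Y\}$. Any homoclinic point $y$ of $Y$ admits a lift $\tilde{y} \in X$ for which $s\tilde{y}$ approaches $\overline{A(X,G)}$ as $s \to \infty$ in $G$; using expansiveness of $G \curvearrowright X$ together with the density of $A(X,G)$ in $\overline{A(X,G)}$, one adjusts $\tilde{y}$ by a summable combination of elements of $A(X,G)$ to a genuine homoclinic point of $X$, which forces $y = e_Y$.

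The main obstacle is the verification that the quotient $G \curvearrowright Y$ inherits the hypotheses required to apply the entropy--determinant equivalence: namely, that $\widehat{Y}$ is finitely presented as a $\ZZ G$-module (which is delicate because $\ZZ G$ need not be Noetherian and submodules of finitely presented modules are not automatically finitely presented), and that the quotient action remains expansive. Both points rely on the specific algebraic structure forced by the hypotheses on $G$ and on finitely presented expansiveness of the original action; the lifting argument for homoclinic points and the machinery of $\ell^1$-inverses of the presentation matrix in $\mathcal{L}G$ form the technical core of the argument, essentially the content of \cite{ChungLi2015} together with the extension in \cite{barbieri2022markovian}.
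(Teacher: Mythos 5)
Your first inclusion and your overall reduction are sound and do follow the strategy of the cited proofs (the survey itself only states the result; the argument lives in \cite{ChungLi2015} and \cite{barbieri2022markovian}): $A(X,G)\subset \IE(X,G)$ comes from Theorem \ref{P_homoclinic_points}, closedness of the IE-group finishes that direction, and the converse is indeed reduced, via the quotient $Y=X/\overline{A(X,G)}$, the image property of IE-tuples, the algebraic description of $\IE_2$ and Proposition \ref{prop:IE pair}, to showing $\htop(G\curvearrowright Y)=0$. The gap is in how you propose to prove that vanishing. You want to (a) show $A(Y,G)=\{e_Y\}$ by lifting homoclinic points of $Y$ to homoclinic points of $X$, and (b) invoke for $Y$ an equivalence ``entropy $=\log$ of a Fuglede--Kadison determinant, which vanishes precisely when the homoclinic group is trivial.'' Step (b) is not an available black box: the implication ``trivial homoclinic group $\Rightarrow$ zero entropy'' for expansive finitely presented actions is essentially the theorem being proved (it appears as a corollary of Theorem \ref{thm:IE group} in the survey), and to apply anything of this sort to $Y$ you would need $\widehat{Y}$ to be a finitely presented $\ZZ G$-module and $G\curvearrowright Y$ to be expansive. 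You flag this yourself as ``the main obstacle,'' but it is not a removable technicality: $\widehat{Y}$ is the annihilator submodule of $\overline{A(X,G)}$ inside $\widehat{X}$, and over the non-Noetherian ring $\ZZ G$ such a submodule need not even be finitely generated, nor does expansiveness automatically pass to quotients by closed invariant subgroups. So the reduction is circular in exactly the place where the work has to be done.

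Step (a) is also unjustified as stated. Homoclinic points do not lift along quotient maps in general, and ``adjusting $\tilde{y}$ by a summable combination of elements of $A(X,G)$'' has no meaning inside the compact group $X$: there is no norm in which to sum group elements, and the explicit corrections one can make in the principal/square case come from applying the $\ell^1$-inverse of the presentation matrix to concrete lifts in $\ell^\infty(G,\RR)^n$ --- a device that is not available for the quotient action. Moreover, the known proofs run in the opposite direction: Chung--Li (polycyclic-by-finite) and Barbieri--Garc\'ia-Ramos--Li (elementary amenable with bounded torsion) prove \emph{directly} that $X/\overline{A(X,G)}$ has zero entropy, by working with the annihilator module inside the original finitely presented $\widehat{X}$ and combining the entropy/Fuglede--Kadison determinant formula and addition formula of Li and Li--Thom with the strong Atiyah conjecture (available for these groups), which makes the division closure of $\ZZ G$ tractable enough to force the vanishing; triviality of $A(Y,G)$ is then a consequence, not an intermediate input. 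As written, the two claims at the heart of your zero-entropy step remain unproven, and deferring them to ``the content of \cite{ChungLi2015} and \cite{barbieri2022markovian}'' does not close the gap, because those papers do not establish the statements in the form (for the quotient action) that your argument needs.
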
	

The previous theorem was proved for polycyclic-by-finite groups by Chung and Li \cite[Theorem 1.2]{ChungLi2015}, and generalized in \cite[Corollary 7.5]{barbieri2022markovian}. The statement of the result holds more generally, for amenable groups such that the strong Atiyah conjecture is satisfied, and there is an upper bound on the orders of finite subgroups (see \cite{barbieri2022markovian} for definitions and details).

\begin{question}
Does the statement of Theorem \ref{thm:IE group} hold if we relax the hypothesis of elementary amenability for $G$ and simply ask for amenability?

\end{question}
The statement of Theorem \ref{thm:IE group} is not satisfied for actions of the free group  $\mathbb{F}_2$ in the context of naive topological entropy \cite[Example 7.6]{barbieri2022markovian}. 

\begin{question}
Is there a finitely presented expansive algebraic action of $\mathbb{F}_2$ with positive topological sofic entropy for some sofic approximation sequence and trivial homoclinic group?
\end{question}
As a corollary of Theorem \ref{thm:IE group} we obtain the following result. 
\begin{corollary}
    Let $G$ be an elementary amenable group  with an upper bound on the orders of finite subgroups of $G$, and $G\curvearrowright X$ an expansive finitely presented algebraic action. Then, 
    \begin{enumerate}
        \item $\act$ has positive topological entropy if and only if ${A(X,G)}\neq \{e_X\}$, and 
        \item $\act$ has UPE if and only if $\overline{A(X,G)}=X.$
    \end{enumerate}
\end{corollary}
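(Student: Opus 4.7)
The plan is to deduce both statements as essentially formal consequences of Theorem \ref{thm:IE group}, together with the explicit description of $\IE_2(X,G)$ for algebraic actions and the characterizations of positive entropy and UPE via IE-pairs. The central observation I will use is that, because $X$ is abelian and $\IE(X,G)$ is a subgroup of $X$, the formula
$$\IE_2(X,G) = \{(yx_1,yx_2) : x_1, x_2 \in \IE(X,G),\ y \in X\}$$
reduces to the cleaner statement: $(a,b) \in \IE_2(X,G)$ if and only if $a - b \in \IE(X,G)$. Indeed, in one direction $(yx_1, yx_2)$ gives $a - b = x_1 - x_2 \in \IE(X,G)$, and conversely if $a - b \in \IE(X,G)$ then $(a,b) = (b + (a-b),\, b\cdot e_X)$ fits the template.

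For part (1), I will first note that $A(X,G) \neq \{e_X\}$ is equivalent to $\overline{A(X,G)} \neq \{e_X\}$, since the closure of the trivial subgroup is trivial. By Theorem \ref{thm:IE group} this is in turn equivalent to $\IE(X,G) \neq \{e_X\}$. Using the reformulation above, $\IE(X,G)$ has a nonzero element if and only if $\IE_2(X,G)$ contains a non-diagonal pair (take $x \in \IE(X,G) \setminus \{e_X\}$ to get the IE-pair $(x, e_X)$, and conversely). Proposition \ref{prop:IE pair} then identifies this last condition with $\htop(\act) > 0$, closing the chain of equivalences.

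For part (2), since $G$ is elementary amenable (hence amenable) I will invoke the proposition characterizing naive UPE as $\IE_2(X,G) = X^2$; for amenable $G$ this agrees with UPE. By the reformulation this equality holds if and only if $a - b \in \IE(X,G)$ for all $a,b \in X$, which is just $\IE(X,G) = X$. Applying Theorem \ref{thm:IE group} once more identifies this with $\overline{A(X,G)} = X$.

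No genuine obstacle arises here; the heavy lifting is contained in Theorem \ref{thm:IE group}, and what remains is purely organizational. The only point requiring a brief verification is the reformulation of $\IE_2$ via group-differences, which uses only the abelian group structure on $X$ and the fact that $\IE(X,G)$ is a subgroup, both supplied by the standing setup recalled just before the statement.
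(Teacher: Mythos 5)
Your proof is correct and follows exactly the route the paper intends: the corollary is stated there as an immediate consequence of Theorem \ref{thm:IE group}, combined with the difference-form description of $\IE_2(X,G)$ via the IE-group, Proposition \ref{prop:IE pair} for positive entropy, and the characterization of (naive) UPE as $\IE_2(X,G)=X^2$. The details you supply (closure of the trivial subgroup, the equivalence $(a,b)\in\IE_2(X,G)\iff a-b\in\IE(X,G)$) are precisely the organizational steps the paper leaves implicit.
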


\begin{remark}
    Meyerovitch proved that there exist an abelian group $G$ and an expansive algebraic action $\act$ with positive topological entropy and trivial homoclinic group \cite[Theorem 1.2]{Meyerovitch2017}.
\end{remark}

An algebraic action can be seen as a measure-preserving action with respect to the Haar measure of $X$. For algebraic actions of amenable groups, the Haar measure is a measure of maximal entropy, thus one has positive topological entropy if and only if one has positive measure-theoretical entropy \cite[Theorem 2.2]{deninger2006fuglede}. Furthermore, an algebraic action of an amenable group has UPE if and only if it has completely positive entropy in the measure-theoretic sense (every non-trivial measurable factor has positive measure-theoretic entropy) \cite{ChungLi2015}. 


\subsection{Continuum theory}

 In this subsection we will talk about a couple of interactions between topological dynamics and continuum theory. A topological space $X$ is a \textbf{continuum} if it is a compact connected metrizable space. For an introduction to continuum theory see \cite{nadler1992continuum}.

 In the first topic that we will expose, the underlying theme is that
complicated dynamics should imply existence of complicated continua.  This line of investigation began with the work of Barge and Martin \cite{barge1985chaos}; one of their results implies that if $f$ is a continuous self-map of the interval $I = [0,1]$ with positive topological entropy, then the inverse limit space $\varprojlim(I,f)$ (the inverse limit by infinite copies of the same system) contains an indecomposable continuum. Indecomposable continua were introduced by Brouwer. A continuum is \textbf{indecomposable} if it contains more than one point and it cannot be expressed as the union of any two of its proper subcontinua. The dyadic solenoid and the pseudo-arc are indecomposable continua; the cube and the torus are not indecomposable.

Let $Y$ be a compact metrizable space and $\varepsilon>0$. A continuous mapping $g\colon X\rightarrow Y$ is an \textbf{$\varepsilon$-mapping}
if for every $y \in Y$, the diameter of $g^{-1}(y)$ is less than $\varepsilon$. A continuum $X$ is \textbf{$Y$-like} if for every $\varepsilon>0$ there is an $\varepsilon$-mapping from $X$ onto $Y$. 

Using local entropy theory and introducing a new dynamical pair called zigzag pair the following result was proved in \cite[Corollary 5.4 and Corollary 5.6]{darji2017chaos}. 
\begin{theorem}
[Darji and Kato \cite{darji2017chaos}] Let $Y$ be a finite graph treated as a topological space, and $X$ a $Y$-like continuum. If $X$ admits a homeomorphism $T$ such that $(X,T)$ has positive topological entropy, then $X$ contains an indecomposable continuum. Furthermore, if $X$ admits a homeomorphism $T$ such that $(X,T)$ has UPE then $X$ is an indecomposable continuum. 
 
\end{theorem}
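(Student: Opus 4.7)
My plan is to leverage the local entropy machinery developed earlier in the paper together with the new notion of a \emph{zigzag pair}. The starting point is Proposition \ref{prop:IE pair}: positive topological entropy of $(X,T)$ yields a non-diagonal IE-pair $(x_1,x_2) \in \IE_2(X,\Zb) \setminus \Delta_2(X)$. Fixing disjoint closed neighborhoods $K_1 \ni x_1$ and $K_2 \ni x_2$ and open neighborhoods $U_i \subset \interior(K_i)$, the positive independence density of $(U_1,U_2)$ produces, for every $n \in \Nb$, a set $J \subset \{-n,\dots,n\}$ with $|J| \geq qn$ such that every prescription $\omega\colon J \to \{1,2\}$ is realized by an orbit, i.e.\ $\bigcap_{s \in J} T^{-s} U_{\omega(s)} \neq \varnothing$.

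Next I would convert this dynamical independence data into geometric data inside the finite graph $Y$. For each $\varepsilon > 0$ there is an $\varepsilon$-map $\phi_\varepsilon\colon X \to Y$; every zigzag orbit projects to a path in $Y$ that visits $\phi_\varepsilon(U_1)$ and $\phi_\varepsilon(U_2)$ in every prescribed pattern. Because $Y$ has only finitely many edges, the sheer number of combinatorial patterns forces these paths to wind tightly through a loop or to be trapped between a small number of vertices, and this structural rigidity is what the authors' notion of zigzag pair should encode. The second step of my plan is to prove that every IE-pair in a $Y$-like continuum is automatically a zigzag pair, i.e.\ that the abstract combinatorial independence of $(x_1,x_2)$ in fact witnesses a geometric zigzag across every finite-graph approximation.

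Once zigzag-pair status is established, I would extract an indecomposable subcontinuum as follows: take the closure of a suitable union of zigzag orbits (or an appropriate $\omega$-limit set), and show that the $Y$-like approximations together with the zigzag property prevent this continuum from being written as the union of two proper subcontinua. Equivalently, one can aim for Kuratowski's criterion by producing three points no proper subcontinuum of which contains all three: the zigzag data in $Y$ lets one pick such points as limits of orbit segments realizing three independent patterns. For the UPE case, Proposition \ref{prop:IE pair}'s hypothesis strengthens to $\IE_2(X,\Zb)=X^2$, so every pair of points is a zigzag pair; this should force every proper subcontinuum of $X$ to have empty interior, which (for a continuum) is exactly indecomposability of $X$ itself.

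The hard part will be the middle step: passing from purely combinatorial independence to the geometric zigzagging that survives every $\varepsilon$-approximation. Independence density only guarantees that \emph{some} orbit realizes each prescription; one must show that such orbits cannot be collapsed by an $\varepsilon$-map to an uninteresting path in $Y$, \emph{uniformly} in $\varepsilon$. Handling this uniformity across all graph approximations, while keeping track of how the two neighborhoods $U_1$ and $U_2$ behave under $\phi_\varepsilon$, is where the zigzag-pair formalism from \cite{darji2017chaos} does the real work; the continuum-theoretic conclusion about indecomposability should then follow from standard arguments once the zigzag structure is in hand.
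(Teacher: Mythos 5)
This statement is quoted in the survey from \cite{darji2017chaos} without proof, so the only basis for comparison is the cited argument, whose architecture (positive entropy $\Rightarrow$ non-diagonal IE-pair $\Rightarrow$ zigzag pair $\Rightarrow$ indecomposable subcontinuum, and $\IE_2(X,\Zb)=X^2$ in the UPE case forcing every proper subcontinuum to have empty interior) your outline correctly reproduces. But as a proof it has a genuine gap exactly where you say the ``real work'' lies: you never define a zigzag pair, never prove that an IE-pair of a $Y$-like continuum is one, and never prove that zigzag pairs preclude a decomposition into two proper subcontinua. The heuristic that ``the sheer number of combinatorial patterns forces these paths to wind tightly through a loop'' is not an argument: independence density gives, for each prescription $\omega\colon J\to\{1,2\}$, \emph{some} point whose orbit realizes it, but different prescriptions are realized by different points, and nothing you say rules out that an $\varepsilon$-map $\phi_\varepsilon$ sends all of this data to paths in $Y$ that backtrack through a single arc, uniformly in $\varepsilon$. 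Making that uniformity precise (and compatible with letting $\varepsilon\to 0$ while $U_1,U_2$ stay fixed) is the content of the Darji--Kato zigzag formalism; invoking that formalism as a black box is circular in a blind proof of their theorem.

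Two further points. First, your ``Kuratowski criterion'' is misstated: three points such that no \emph{proper} subcontinuum contains \emph{all three} do not certify indecomposability (take $0,\tfrac12,1$ in $[0,1]$); the correct criterion asks for three points no two of which lie in a common proper subcontinuum, i.e.\ points in distinct composants. Second, your reduction of the UPE case is fine in principle --- a continuum is indecomposable if and only if every proper subcontinuum has empty interior --- but it is again contingent on the unproven implication from ``every pair is an IE-pair'' to ``every pair is a zigzag pair'' and on a local argument showing that a zigzag pair inside the interior of a proper subcontinuum yields a contradiction; that step is not sketched. As it stands the proposal is a plausible reading of the strategy of \cite{darji2017chaos}, not a proof of the theorem.
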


Now we will present a result where continuum theory and local entropy theory are applied to obtain a result regarding the possible values of sequence entropy. 

 Let $S_{G}(X)$ be the set of the values
$\htop ^{*}(\act)$ for all possible actions $\act$. Theorem \ref{thm:seqentr} implies that $\{0\}\subset S_{G}(X) \subset \{0,\log 2, \log 3,...\}\cup \{\infty\}$. The following result appeared in \cite[Theorem 8.11]{snoha2020topology}. 

\begin{theorem}
[Snoha, Ye and Zhang \cite{snoha2020topology}]

\label{thm:snoha}
Let $G$ be a finitely generated group such that there is a surjective homomorphism $G \rightarrow \ZZ$, and $\{0\}\subset A \subset \{0,\log 2, \log 3,...\}\cup \{\infty\}$. There exists a one-dimensional
continuum $X$ such that $S_{G}(X)=A$.
\end{theorem}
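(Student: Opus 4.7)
The plan is to reduce the problem to single-transformation dynamics via the surjection $\phi\colon G\twoheadrightarrow\mathbb{Z}$ and then to hand-build the required continuum out of rigid one-dimensional pieces. Any homeomorphism $T$ of a compact metric space $X$ pulls back through $\phi$ to a $G$-action $g\cdot x=T^{\phi(g)}x$, and the sequence entropy of this pulled-back action along any sequence $(s_n)\subset G$ equals the sequence entropy of $(X,T)$ along $(\phi(s_n))\subset\mathbb{Z}$. Surjectivity of $\phi$ ensures that every $\mathbb{Z}$-sequence is realised in this way, so the set of values of $\htop^{*}$ achievable by pulled-back $G$-actions is precisely $\{\htop^{*}(X,T):T\in\mathrm{Homeo}(X)\}$. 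Hence it suffices to produce a one-dimensional continuum $X$ such that (i) for each $a\in A$ some homeomorphism $T_a$ of $X$ has $\htop^{*}(X,T_a)=a$, and (ii) every $G$-action $G\curvearrowright X$ (not merely those pulled back from $\mathbb{Z}$) satisfies $\htop^{*}(G\curvearrowright X)\in A$.

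For the construction, I would enumerate $A\setminus\{0\}=\{a_i\}$ and for each $a_i$ take a pointed one-dimensional continuum $(Y_i,p_i)$ together with a homeomorphism $T_i$ fixing $p_i$ such that $\htop^{*}(Y_i,T_i)=a_i$. Each $Y_i$ should be Slovak-like, in the sense that $\mathrm{Homeo}(Y_i)$ is the cyclic group $\langle T_i\rangle$; such pointed building blocks can be assembled via inverse limits of graph maps with carefully tuned non-wandering behaviour (by Theorem~\ref{thm:seqentr} the realisable values are quantised at $\log n$, which fits this framework), with rigidity supplied by Cook-type or Slovak-type techniques from continuum theory. I would moreover arrange that the $Y_i$ are pairwise non-homeomorphic and that no $Y_i$ embeds in any $Y_j$ for $i\ne j$. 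Then form the wedge $X=\bigvee_i Y_i$ at the points $p_i$; when $A$ is infinite, choose diameters of the $Y_i$ to tend to zero so that $X$ remains compact and metrizable, and when $\infty\notin A$ equip $X$ with additional global structure (for instance an equicontinuous arc backbone) that forces any single homeomorphism to be non-trivial on only finitely many summands.

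To see $A\subset S_G(X)$, extend $T_i$ by the identity on the other summands to a homeomorphism $T$ of $X$; sequence entropy on a wedge decomposes as the maximum of summand sequence entropies, so $\htop^{*}(X,T)=a_i$, and pulling back through $\phi$ gives a $G$-action achieving $a_i$. For the reverse inclusion $S_G(X)\subset A$, take an arbitrary action $\rho\colon G\to\mathrm{Homeo}(X)$: the pairwise non-homeomorphism and non-embeddability of the $Y_i$ force each $\rho(g)$ to preserve the summand decomposition, and then $\rho(g)|_{Y_i}=T_i^{\phi_i(g)}$ for some homomorphism $\phi_i\colon G\to\mathbb{Z}$. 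The sequence entropy along $(s_n)$ then equals $\max_i\htop^{*}(Y_i,T_i;(\phi_i(s_n)))$, and taking the supremum over sequences gives $\htop^{*}(G\curvearrowright X)=\max\{a_i:\phi_i\ne 0\}$, which lies in $A$ (the backbone structure from the previous paragraph ensures only finitely many $\phi_i$ can be simultaneously non-trivial when $\infty\notin A$).

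The main obstacle is precisely the rigidity and assembly step: producing, for each prescribed $a_i$, a pointed one-dimensional continuum $Y_i$ whose homeomorphism group is just $\langle T_i\rangle$ and whose sequence entropy is exactly $a_i$, and then gluing these (possibly infinitely many) summands into a single one-dimensional continuum without introducing extraneous global homeomorphisms that could push $\htop^{*}$ outside $A$. Rigidity and dynamical richness pull in opposite directions, and when $A$ is infinite but omits $\infty$ one must further prevent homeomorphisms from twisting infinitely many summands at once. This is exactly where specialised continuum-theoretic devices---Cook continua, Slovak spaces, and inverse limits of graphs with prescribed bonding maps---enter the argument and constitute the technical heart of the proof.
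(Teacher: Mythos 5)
Your high-level strategy is reasonable and is in the same spirit as the cited argument: pull back homeomorphisms through the surjection $\phi\colon G\to\ZZ$ to realize values, and use continuum-theoretic rigidity (Cook-type pieces) to control \emph{all} homeomorphisms of $X$, hence all $G$-actions, for the reverse inclusion $S_G(X)\subset A$. The pullback step and the observation that quantization (Theorem \ref{thm:seqentr}) makes the target set discrete are fine, and the claim that a wedge with invariant summands has $\htop^{*}$ equal to the supremum of the summands can indeed be justified via IN-tuples (a nondegenerate cross-summand IN-tuple is impossible when the summands are invariant and the neighborhoods avoid the wedge point). But the proposal defers exactly the two steps that constitute the theorem. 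First, you assume for each $a_i$ a pointed one-dimensional continuum $Y_i$ with $\mathrm{Homeo}(Y_i)=\langle T_i\rangle$ and $\htop^{*}(Y_i,T_i)=a_i$ exactly; this is essentially the whole content of the result and cannot be discharged by appealing to ``inverse limits of graph maps with Cook-type or Slovak-type rigidity.'' Inverse limits of graphs with dynamically rich bonding maps typically carry many self-homeomorphisms (shift homeomorphisms, conjugates, etc.), and the known Slovak/Cook constructions give rigidity but do not by themselves let you prescribe the exact value $\log n$ (or $\infty$) of $\htop^{*}$ for the unique generator; reconciling rigidity with a prescribed positive sequence entropy is the technical heart, and your proposal contains no construction for it.

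Second, and more concretely, the case where $A$ is infinite but $\infty\notin A$ breaks as written. In your wedge $X=\bigvee_i Y_i$, the map acting as $T_i$ on \emph{every} summand simultaneously (identity nowhere) is a homeomorphism of $X$ whenever the summands are invariant pieces glued at a single point with diameters tending to zero; it has IN-tuples of every length (one inside each $Y_i$), hence $\htop^{*}=\infty\notin A$ by Theorem \ref{thm:seqentr}, so $S_G(X)\not\subset A$. Your proposed fix, an ``equicontinuous arc backbone,'' does not address this: a homeomorphism can fix the backbone pointwise and still act nontrivially inside infinitely many summands, so nothing forces it to be trivial on all but finitely many $Y_i$. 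Ruling out such simultaneous actions requires a mechanism that genuinely couples the dynamics of different pieces (in the actual construction this is done by assembling subcontinua of a single Cook continuum in carefully chosen patterns, so that the behavior of a self-map on one piece constrains its behavior on the others), and without such a mechanism the inclusion $S_G(X)\subset A$ is not established.
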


\begin{question}
For which groups does the conclusion of Theorem \ref{thm:snoha} hold?
\end{question}

\begin{question}
Is it true that for every set $\{0\} \subset A \subset [0, \infty]$ such that $\N a\subset A$ for every $a\in A$, there exists a compact metrizable space $X$ such that the possible values of topological entropy for all actions of $\ZZ$ over $X$ is $A$?
\end{question}

\subsection{Descriptive complexity}

Given a compact metrizable space, one may study the properties of the set of dynamical systems (or actions) with a certain property on that. One way to approach this is with the viewpoint of Baire category theory and descriptive set theory, which can give us information of how complicated a set is. This approach can allow us to discard possible characterizations of the studied properties. For example, Halmos and Rohlin proved that the properties of the sets of measure-preserving mixing and weak-mixing systems are different and thus concluded the existence of weak-mixing systems that are not mixing without explicitly constructing one. 

While the existence of actions with CPE and not with UPE is known, in this subsection we will see a striking difference between these families.

This line of research began with the work of Pavlov when he found a sufficient condition for completely positive entropy on subshifts of finite type of $\Z^d$, and asked if the condition was also necessary \cite{pavlov2018topologically}. This question was answered in the negative by Barbieri and García-Ramos by defining a rank (or hierarchy) for actions with completely positive entropy \cite{barbieri2021hierarchy}. Westrick proved that this rank is actually a $\Pi_1^1$-rank \cite{westrick2019topological} (see also \cite{darji2021note}), which implies that this rank can be used to prove that certain sets are complicated (not Borel) in the sense of descriptive set theory.  

We will give a quick overview of classical descriptive complexity. For a more thorough exposition of these topics see \cite{kechris2012classical,moschovakis2009descriptive}. 

Given a compact metrizable space $X$, we define $\text{TDS(X)}$ as the set of all continuous functions from $X$ into $X$ endowed with the  uniform convergence topology. Note that $\text{TDS(X)}$ is a \textbf{Polish space}, i.e., a separable space whose topology can be induced by a complete metric. 
In previous sections we defined UPE, CPE and expansiveness for group actions. The definition for semigroup actions and in particular continuous maps $T:X\rightarrow X$ is the natural adaptation. 
Let $T:X\rightarrow X$ be a continuous function. We say that $T$ is \textbf{mixing} if for every pair of non-empty open sets $U$ and $V$ there exists $N\in \NN$ such that $T^{-n}U\cap V \neq \emptyset$ for all $n\geq N$.  

We also define the following subspaces
\begin{gather*}
\textup{UPE}(X)=\{T\in \textup{TDS(X)}: T\textup{ has UPE}\},
\\
\textup{CPE}(X)=\{T\in \textup{TDS(X)}: T\textup{ has CPE}\},
\\
\textup{Mix}(X)=\{T\in \textup{TDS(X)}: T\textup{ is mixing}\},\textup{ and}
\\
\textup{Exp}(X)=\{T\in \textup{TDS(X)}: T\textup{ is expansive}\}.
\end{gather*}

We say that a subset of a Polish space is \textbf{analytic} if it is the continuous image of a Polish space and \textbf{coanalytic} (or $\Pi_1^1$) if it is the complement of an analytic set.
All Borel subsets of a Polish space are both analytic and coanalytic. Moreover, if a set is both analytic and coanalytic, then it must be Borel. However, in every uncountable Polish space there are analytic, and hence coanalytic, sets which are not Borel. Loosely speaking, if a set is analytic or coanalytic but not Borel, it means that the set cannot be described with countable information.

A standard way to prove that a coanalytic set is not Borel is to reduce it to a known combinatorial set which is not Borel. More specifically, if $\mathcal{B}$ is a known non-Borel subset of some Polish space $\mathcal{Y}$, $\mathcal{A} \subset  \mathcal{X}$ and $f:\mathcal{Y}\rightarrow \mathcal{X}$ is a Borel function such that $f^{-1}( \mathcal{A}) = \mathcal{B}$, then $\mathcal{A}$ is not Borel. In this case, we say that $\mathcal{B}$ is \textbf{Borel reducible} to $\mathcal{A}$. This inspires the following definition. 
\begin{definition}
A coanalytic subset $\mathcal{A}$ of a Polish space $\mathcal{X}$ is \textbf{complete coanalytic} (or $\Pi_1^1$-complete) if for every coanalytic subset $\mathcal{B}$ of a Polish space $\mathcal{Y}$ there exists a Borel function $f:\mathcal{Y}\rightarrow \mathcal{X}$ such that $f^{-1}( \mathcal{A}) = \mathcal{B}$. 
\end{definition}
Complete coanalytic sets are the most complicated coanalytic sets. 

For a proof of the following result see \cite[Corollary 3.2 and Proposition 2.12]{darji2021local}.
\begin{theorem}
[Darji and García-Ramos \cite{darji2021local}]
Let $X$ be a compact metrizable space. Then $\textup{UPE}(X)$ is a Borel subset of $\textup{TDS}(X)$, and $\textup{CPE}(X)$ is a conalytic subset of $\textup{TDS}(X)$.
\end{theorem}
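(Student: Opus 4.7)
The plan is to express both $\textup{UPE}(X)$ and $\textup{CPE}(X)$ through the combinatorial independence machinery developed in Section 2, applied to the $\N$-action generated by $T$.

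For UPE, use the Blanchard--Kerr--Li characterization $T \in \textup{UPE}(X) \iff \IE_2(X,T)=X^2$, which amounts to saying that every pair $(U,V)$ of non-empty open sets of $X$ has positive independence density. Fixing a countable basis $(V_n)_{n\in\N}$ for the topology of $X$, it suffices to check this for pairs of non-empty basic opens. For a fixed pair $(U,V)$ and a finite $J\subset\N$, the requirement that $J$ be an independence set unpacks into the $2^{|J|}$ conditions $\bigcap_{s\in J} T^{-s}U_{\phi(s)} \neq \varnothing$ for $\phi\colon J\to\{1,2\}$, each of which is open in $T\in\textup{TDS}(X)$ since uniform convergence preserves non-empty intersections of preimages of open sets. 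Thus positive independence density of $(U,V)$ reads as
\[
\exists q \in \QQ_{>0}\ \forall F \Subset \N\ \exists J\subset F,\ |J|\geq q|F|,\ J \text{ an independence set for } (U,V),
\]
a Borel condition on $T$; taking a countable intersection over pairs of basic opens shows that $\textup{UPE}(X)$ is Borel.

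For CPE, apply the Blanchard--Lacroix corollary to the $\N$-action: $T \in \textup{CPE}(X)$ iff the smallest closed $T$-invariant equivalence relation on $X$ containing $\IE_2(X,T)$ equals $X^2$. Equivalently, $T \notin \textup{CPE}(X)$ iff there exists a closed $T$-invariant equivalence relation $R \subsetneq X^2$ with $\IE_2(X,T) \subset R$. Work in the Polish space $\textup{TDS}(X) \times K(X^2)$, where $K(X^2)$ is the hyperspace of closed subsets of $X^2$ under the Hausdorff topology, and set
\[
C = \{(T,R) : R \text{ is a closed } T\text{-invariant equivalence relation},\ R \neq X^2,\ \IE_2(X,T)\subset R\}.
\]
Since $\textup{CPE}(X)^\complement$ is the projection of $C$ to $\textup{TDS}(X)$, it suffices to verify that $C$ is Borel, which will make $\textup{CPE}(X)^\complement$ analytic and hence $\textup{CPE}(X)$ coanalytic.

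The algebraic conditions defining $C$ are Borel: that $R$ is a closed equivalence relation is a closed condition on $R$; $T$-invariance follows from the continuity of $(T,R) \mapsto (T \times T)(R) \in K(X^2)$ together with closedness of the inclusion relation in $K(X^2)^2$; and $R \neq X^2$ is open. The delicate clause is $\IE_2(X,T)\subset R$. The key observation is that this is equivalent to the following countably checkable condition: for every basic rectangle $(U,V)$ with $\overline{U}\times\overline{V}\cap R=\varnothing$, the pair $(U,V)$ does not have positive independence density with respect to $T$. The $(\Leftarrow)$ direction is immediate, since any IE-pair outside the closed set $R$ admits a basic neighborhood $(U,V)$ with $\overline{U}\times\overline{V}\cap R=\varnothing$ which must by definition have positive independence density. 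The $(\Rightarrow)$ direction uses the local IE-pair principle: if $(U,V)$ has positive independence density, then $\overline{U}\times\overline{V}$ contains an IE-pair. With this equivalence, $\IE_2(X,T)\subset R$ is a countable intersection over basic $(U,V)$ of Borel conditions on $(T,R)$, since the hypothesis on $R$ is open in $R$ and zero independence density is Borel in $T$ by the first paragraph; hence Borel.

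The main obstacle is the local IE-pair principle invoked in the $(\Rightarrow)$ direction. It is proved by an iterative compactness argument: starting from $(\overline{U},\overline{V})$, a splitting lemma extracted from Lemma \ref{lem:combinatorial} in the binary case $k=2$ shows that any finite covering of the boxes contains a pair of members on which positive independence density is preserved; iterating with diameters shrinking to zero and intersecting the resulting decreasing chain of closed boxes produces a single pair $(x,y) \in \overline{U} \times \overline{V}$ each of whose neighborhoods has positive independence density, that is, an IE-pair. This is a local refinement of Proposition \ref{prop:IE pair} and is the only non-routine ingredient in the argument.
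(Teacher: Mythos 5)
Your strategy is sound, and it is essentially the argument that the cited source is invoked for (the survey itself gives no proof of this theorem, deferring to \cite{darji2021local}): UPE becomes Borel via the characterization $\IE_2(X,T)=X^2$, reduction to a countable basis, and the openness in $T$ of each finite independence condition (which is correct: if $\bigcap_{s\in J}T^{-s}U_{\phi(s)}\neq\varnothing$, pick a witness $x$ and use that $(T,x)\mapsto T^sx$ is jointly continuous); CPE becomes coanalytic via Blanchard--Lacroix and quantification over $K(X^2)$, with the Kerr--Li localization (positive independence density of $(U,V)$ produces an IE-pair in $\overline{U}\times\overline{V}$, \cite[Proposition 3.9]{KerrLi2007}) turning $\IE_2(X,T)\subset R$ into a countable conjunction of Borel conditions on $(T,R)$. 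Two small caveats: you tacitly need the countable basis chosen so that basic closures fit inside arbitrary neighborhoods (e.g.\ rational balls around a countable dense set) for the easy direction of your rectangle criterion, and, as the survey itself does, you are taking for granted that the entropy-pair/Pinsker machinery transfers to $\N$-actions by continuous maps.

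One stated justification is wrong, although the conclusion you need survives: the collection of closed equivalence relations is \emph{not} a closed subset of $K(X^2)$, because transitivity is not preserved under Hausdorff limits. For example, on $X=[0,1]$ the closed equivalence relations $R_n=\Delta\cup[0,\tfrac12-\tfrac1n]^2\cup[\tfrac12+\tfrac1n,1]^2$ converge in the Hausdorff metric to $\Delta\cup[0,\tfrac12]^2\cup[\tfrac12,1]^2$, which is not transitive. What is true, and is all your argument requires, is that this collection is Borel (in fact $G_\delta$): reflexivity and symmetry are closed conditions, while failure of transitivity is the projection to $K(X^2)$, along the compact factor $X^3$, of $\{(R,x,y,z):(x,y)\in R,\ (y,z)\in R,\ (x,z)\notin R\}$, which is the intersection of a closed set with an open set, hence $\sigma$-compact, so the projection is $F_\sigma$. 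With this repair your set $C$ is Borel, its projection is analytic, and the coanalyticity of $\textup{CPE}(X)$ follows exactly as you say; the UPE half needs no change.
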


Whether $\textup{CPE}(X)$ is Borel or not will depend on $X$. There exist infinite compact metrizable spaces that do not admit dynamics with positive topological entropy, thus for these spaces $\textup{CPE}(X)$ is empty (and hence Borel) \cite{cook1967continua}. 

There are two techniques to prove that $\textup{CPE}(X)$ or subfamilies, are not Borel. One can use Borel reductions or the entropy rank introduced by Barbieri and García-Ramos.  

The following result is a combination of \cite[Theorem 3.7]{darji2021note}, \cite[Theorem 35.23]{kechris2012classical}, and the proof of \cite[Theorem 3.3]{barbieri2021hierarchy},

\begin{theorem}
[Barbieri and García-Ramos \cite{barbieri2021hierarchy}] Let $X$ be a Cantor space. We have that $\textup{CPE}(X)$ is a coanalytic but not Borel subset of $\textup{TDS}(X)$.
\end{theorem}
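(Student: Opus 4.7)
The coanalyticity of $\textup{CPE}(X)$ has already been established in the previous theorem, so the task is to rule out Borelness. The plan is to exhibit an unbounded $\Pi_1^1$-rank on $\textup{CPE}(X)$ and then invoke the boundedness theorem for coanalytic sets: every Borel coanalytic set admits only bounded $\Pi_1^1$-ranks, so producing an unbounded one forces non-Borelness.

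The rank I would use is the one introduced by Barbieri and García-Ramos. Starting with the diagonal $\Delta_2(X)$, one transfinitely iterates an operator that, at each successor step, enlarges the current closed $T$-invariant equivalence relation $R \subset X^2$ to the smallest closed $T$-invariant equivalence relation containing $R$ together with all IE-pairs of the quotient by $R$; at limit stages one takes closures of increasing unions. The Blanchard--Lacroix-type corollary recorded earlier (characterization of the topological Pinsker factor via IE-pairs) guarantees that this chain stabilizes at $X^2$ if and only if $T$ has CPE, so defining $\phi(T)$ to be the least countable ordinal at which stabilization occurs yields an ordinal-valued function on $\textup{CPE}(X)$.

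Given $\phi$, three ingredients would complete the argument. First, cite Westrick (with the refinement of Darji--García-Ramos) to conclude that $\phi$ is a bona fide $\Pi_1^1$-rank on $\textup{CPE}(X)$; this rests on a careful analysis of the Borel complexity of the iterative definition, together with the fact that the IE-pair relation depends on $T$ in a suitably definable way. Second, invoke the construction of Barbieri and García-Ramos, which for every countable ordinal $\alpha$ produces a system $T_\alpha \in \textup{CPE}(X)$ with $\phi(T_\alpha) \geq \alpha$; since $X$ is the Cantor space one can realize these examples as subshifts, where inverse-limit-style towers of symbolic extensions are assembled so that the IE-iteration reaches $X^2$ only at stage $\alpha$. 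Third, apply Kechris Theorem 35.23: an unbounded $\Pi_1^1$-rank on a coanalytic set forces the set to be non-Borel.

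The hard part will be the second ingredient, the realization of arbitrarily high ranks on the Cantor space. The descriptive-complexity input (that $\phi$ is a $\Pi_1^1$-rank) and the final application of the boundedness theorem function as black boxes once the setup is in place, but building, for each $\alpha < \omega_1$, a CPE subshift whose iterated IE-collapse requires exactly $\alpha$ transfinite steps demands a delicate recursion mirroring the ordinal decomposition of $\alpha$, and this is where the bulk of the work in Barbieri--García-Ramos and its refinements lies.
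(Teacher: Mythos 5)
Your proposal follows exactly the route the paper indicates: it cites the same three ingredients, namely that the Barbieri--Garc\'ia-Ramos entropy rank is a $\Pi_1^1$-rank on $\textup{CPE}(X)$ (Westrick, as refined in the Darji--Garc\'ia-Ramos note), the construction in the proof of Barbieri--Garc\'ia-Ramos of Cantor systems of arbitrarily large countable rank, and the boundedness theorem (Kechris, Theorem 35.23) to conclude non-Borelness. This is essentially the same argument, correctly assembled, with the right identification of where the real work lies.
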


For the proof of the following result see \cite[Theorem 4.1]{darji2021local}.
\begin{theorem}
[Darji and García-Ramos \cite{darji2021local}]
\label{thm:d-g}
For every $d\geq 1$ we have that $\textup{CPE}(I^d)$ is a complete coanalytic (thus not Borel) subset of $\textup{TDS}(I^d)$.
\end{theorem}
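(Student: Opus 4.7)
The plan is to apply the general theory of $\Pi_1^1$-ranks. By the preceding theorem, $\textup{CPE}(I^d)$ is a coanalytic subset of $\textup{TDS}(I^d)$, so the task reduces to proving $\Pi_1^1$-hardness. By Westrick's theorem \cite{westrick2019topological}, the CPE rank introduced by Barbieri and Garc\'ia-Ramos \cite{barbieri2021hierarchy} is a $\Pi_1^1$-rank on $\textup{CPE}(X)$ for any compact metrizable $X$. A standard boundedness theorem in descriptive set theory (Theorem 35.23 of \cite{kechris2012classical}) asserts that a coanalytic set equipped with an unbounded $\Pi_1^1$-rank is $\Pi_1^1$-complete. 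Hence the task reduces to showing that the CPE rank is unbounded on $\textup{CPE}(I^d)$: for every countable ordinal $\alpha$ there exists a continuous self-map of $I^d$ which belongs to $\textup{CPE}(I^d)$ and has CPE rank at least $\alpha$.

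The first step is to invoke the Barbieri--Garc\'ia-Ramos construction \cite{barbieri2021hierarchy}, which produces, for each $\alpha<\omega_1$, a subshift $f\colon C\to C$ on the Cantor space with CPE and CPE rank at least $\alpha$. The second step is to transfer these examples to $I^d$. One embeds $C$ as a closed subset of $I^d$ and extends $f$ to a continuous $\tilde f\colon I^d\to I^d$ in such a way that $C$ is a global attractor: there is a neighborhood $V$ of $C$ with $\tilde f(\overline V)\subset V$, the iterates $\tilde f^n(\overline V)$ converge to $C$ in the Hausdorff metric, and every point of $I^d$ eventually enters $V$. A concrete realization is a retract/skew-product type construction in which $\tilde f$ acts as $f$ along $C$ and as a uniform contraction in the complementary directions.

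The main obstacle is to verify that $\tilde f$ inherits CPE from $f$ and that its CPE rank is at least that of $f$. The key observation is that, because $C$ is a global attractor, any factor $\pi\colon (I^d,\tilde f)\to (Y,g)$ has $\pi(C)$ as a global attractor for $g$, so the topological entropy and, more importantly, the topological Pinsker factor of $g$ are governed by those of $g|_{\pi(C)}$, which is itself a factor of $(C,f)$. This transfer is then applied inductively along the transfinite Pinsker tower used to define the CPE rank in \cite{barbieri2021hierarchy}, yielding $r(\tilde f)\ge r(f)\ge \alpha$ and in particular $\tilde f\in \textup{CPE}(I^d)$.

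Once unboundedness of the rank on $\textup{CPE}(I^d)$ is established, the boundedness theorem immediately yields that $\textup{CPE}(I^d)$ is $\Pi_1^1$-complete; non-Borelness is then automatic, since any complete coanalytic set in an uncountable Polish space (such as $\textup{TDS}(I^d)$) must fail to be Borel.
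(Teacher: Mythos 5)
Your plan hinges on the claim that the boundedness theorem (\cite[Theorem 35.23]{kechris2012classical}) turns ``coanalytic with an unbounded $\Pi_1^1$-rank'' into ``$\Pi_1^1$-complete.'' That is not what the boundedness theorem gives: an unbounded $\Pi_1^1$-rank on a coanalytic set yields only that the set is \emph{not Borel}. Completeness is strictly stronger and, in ZFC, requires exhibiting an explicit Borel reduction from a known complete coanalytic set (e.g.\ well-founded trees) into $\textup{CPE}(I^d)$. This is exactly the distinction the survey draws between the two techniques (``Borel reductions or the entropy rank''): the rank method, via Westrick's result and \cite{barbieri2021hierarchy}, is what gives the weaker ``coanalytic but not Borel'' statements (as for the Cantor space), whereas the survey explicitly notes that completeness for the Cantor space is still missing ``some extra arguments.'' The theorem you are asked to prove is established in \cite[Theorem 4.1]{darji2021local} by constructing such a reduction into maps of $I^d$; your argument, even if every intermediate step worked, would only re-prove non-Borelness of $\textup{CPE}(I^d)$, not completeness.

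There is also a genuine dynamical obstruction in your transfer step. If you embed the Cantor set $C$ in $I^d$ and extend $f$ to $\tilde f$ so that $C$ is a global attractor with uniform contraction in the complementary directions, then collapsing $C$ to a point is a closed invariant equivalence relation, hence gives a factor of $(I^d,\tilde f)$ on a nondegenerate quotient space. In that quotient every invariant measure is the point mass at the image of $C$ (every other point is wandering), so by the variational principle the factor has zero topological entropy while being non-trivial; consequently $\tilde f$ fails CPE outright, regardless of how large the CPE rank of $f$ was. So the step ``the Pinsker factor of any factor of $\tilde f$ is governed by its restriction to $\pi(C)$'' breaks down precisely at the factor that kills $C$. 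Producing CPE systems (with prescribed rank behaviour, uniformly and Borel-in-parameter) on cubes is a substantive part of the Darji--Garc\'ia-Ramos construction and cannot be obtained by a generic attractor extension of the Cantor examples.
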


The previous result also holds for the circle, the torus and some other compact orientable manifolds. 

\begin{remark}
    If one restricts to the space of Lipschitz functions in $\textup{TDS}(I)$ one may still obtain a result similar to Theorem \ref{thm:d-g}. We do not know what happens if we restrict to smooth functions. 
\end{remark}

One may wonder what happens if we consider dynamical systems with natural dynamical properties.

The following result is a combination of \cite[Theorem 3]{salo2019entropy}, \cite[Theorem 35.23]{kechris2012classical} and \cite[Theorem 3.7]{darji2021note}. 

\begin{theorem}
[Salo \cite{salo2019entropy}] 
\label{thm:salo}
Let $X$ be a Cantor space. We have that $\textup{Exp}(X) \cap \textup{CPE}(X)$ is a coanalytic but not Borel subset of $\textup{TDS}(X)$.
\end{theorem}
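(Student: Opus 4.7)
The plan has two components: establishing that $\textup{Exp}(X)\cap\textup{CPE}(X)$ is coanalytic, and then establishing that it is not Borel by exhibiting a Borel reduction from a known complete coanalytic set.

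For coanalyticity, the only thing missing on top of the previous theorem (which gives that $\textup{CPE}(X)$ is coanalytic in $\textup{TDS}(X)$) is to check that $\textup{Exp}(X)$ is Borel. I would use the characterization of expansiveness via an expansivity constant: $T$ is expansive iff there exists $c>0$ such that for all distinct $x,y\in X$ there is some $n\in\ZZ$ with $\rho(T^{n}x,T^{n}y)>c$. For $c=1/k$ a standard compactness argument shows that the set of such $T$ is closed in $\textup{TDS}(X)$, so $\textup{Exp}(X)$ is $F_\sigma$. Intersecting a Borel set with a coanalytic set yields a coanalytic set.

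For non-Borelness, the plan is to apply the Borel-reduction scheme described before Theorem~\ref{thm:d-g}, taking $\mathcal{B}$ to be the set $\mathrm{WF}$ of well-founded trees on $\NN$ inside the Polish space of all trees on $\NN$; this set is complete coanalytic by Kechris' Theorem~35.23. The goal is to produce a Borel (in fact continuous) map $\Phi:\text{Tr}\to\textup{TDS}(X)$ such that $\Phi(T)\in\textup{Exp}(X)$ for every tree $T$, and $\Phi(T)\in\textup{CPE}(X)$ iff $T\in\mathrm{WF}$. This is exactly the content of Salo's Theorem~3: a parameterized family of expansive subshifts (which can be embedded into any Cantor space $X$) whose CPE status tracks well-foundedness. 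Intuitively, an infinite branch in $T$ is used to manufacture an explicit non-trivial zero-entropy factor of $\Phi(T)$, obstructing CPE; well-foundedness, on the other hand, permits a transfinite induction on the rank of $T$ which forces every non-trivial factor of $\Phi(T)$ to have positive topological entropy. Darji's Theorem~3.7 supplies the final piece, confirming that the CPE condition aligns with well-foundedness through a $\Pi_1^1$-rank (in the spirit of Westrick's and Barbieri--García-Ramos' entropy rank) so that $\Phi^{-1}(\textup{CPE}(X))=\mathrm{WF}$.

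Combining these ingredients, $\Phi^{-1}(\textup{Exp}(X)\cap\textup{CPE}(X))=\mathrm{WF}$, and since $\mathrm{WF}$ is not Borel, neither is $\textup{Exp}(X)\cap\textup{CPE}(X)$. The main obstacle is Salo's construction itself: it must simultaneously encode the tree into a genuinely expansive system and ensure that the CPE property flips precisely at the well-founded/ill-founded dichotomy; the delicate direction is the forward one, producing a non-trivial zero-entropy factor from any infinite branch while keeping the ambient system expansive. Once that construction is granted (as in \cite{salo2019entropy}), the Borel reduction, together with the coanalyticity from the first step, completes the argument.
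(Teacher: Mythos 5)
Your first step (coanalyticity) is essentially fine: $\textup{Exp}(X)$ is Borel and $\textup{CPE}(X)$ is coanalytic, so the intersection is coanalytic. (A small correction: the set of $T$ admitting the fixed expansivity constant $c$ is not obviously closed, since the witnessing times $n$ can escape to infinity along a sequence $T_i\to T$; writing the complement as $\bigcup_k\{T:\exists x,y,\ \rho(x,y)\geq 1/k,\ \forall n\ \rho(T^nx,T^ny)\leq c\}$ and projecting closed sets along the compact factor $X\times X$ shows that the fixed-constant set is $G_\delta$, which is all you need.)

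The non-Borel part, however, rests on a misattribution and is not the argument that actually works here. Salo's Theorem~3 is not a Borel reduction $\Phi:\mathrm{Tr}\to\textup{TDS}(X)$ with $\Phi(T)$ expansive and $\Phi(T)\in\textup{CPE}(X)$ iff $T$ is well-founded; if such a reduction existed, it would show that $\textup{Exp}(X)\cap\textup{CPE}(X)$ is \emph{complete} coanalytic, and the survey explicitly states that completeness of $\textup{CPE}(X)$ and $\textup{Exp}(X)\cap\textup{CPE}(X)$ is still open (``some extra arguments are missing''). What Salo actually provides is the realization, by subshifts (hence expansive Cantor systems), of arbitrarily large countable values of the CPE rank of Barbieri--Garc\'ia-Ramos. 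The intended combination is rank-theoretic rather than reduction-theoretic: by Westrick and by Theorem~3.7 of the Darji--Garc\'ia-Ramos note, this rank is a $\Pi^1_1$-rank on the coanalytic set $\textup{CPE}(X)$; by the boundedness theorem for $\Pi^1_1$-ranks (Kechris, Theorem~35.23), any Borel subset of $\textup{CPE}(X)$ must have rank bounded below $\omega_1$; since Salo's expansive examples have unbounded rank, $\textup{Exp}(X)\cap\textup{CPE}(X)$ cannot be Borel. Your use of Theorem~3.7 to ``confirm that $\Phi^{-1}(\textup{CPE}(X))=\mathrm{WF}$'' conflates these two mechanisms: a $\Pi^1_1$-rank does not verify a reduction, it feeds into boundedness. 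So the conclusion is reachable, but you need to replace the WF-reduction step by the rank-unboundedness argument (or else actually construct such a reduction, which would be a new result beyond what is being surveyed).
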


\begin{remark}
    For any compact metrizable space $X$, the family of maps with shadowing and CPE is a Borel subset of $\textup{TDS}(X)$ \cite[Proposition 3.5 and Corollary 3.4]{darji2021local}.
\end{remark}

The following result can be obtained with techniques used by Blokh \cite{blokh1984transitive}  (see \cite[Corollary 3.9]{darji2021local}). 
\begin{proposition}
    Let $X$ be a finite graph treated as a topological space. Then $\textup{Mix}(X)\cap \textup{CPE}(X)$ is a Borel subset of $\textup{TDS}(X)$. 
\end{proposition}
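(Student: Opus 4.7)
The plan is to reduce $\textup{Mix}(X)\cap \textup{CPE}(X)$ to a Borel set by combining three ingredients: Borelness of mixing, Borelness of UPE (already established earlier in the paper), and Blokh's structural theorem for mixing graph maps, which will force $\textup{CPE}$ and $\textup{UPE}$ to agree on $\textup{Mix}(X)$.

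First, I would verify that $\textup{Mix}(X)$ is Borel in $\textup{TDS}(X)$. Pick a countable base $\{U_i\}_{i\in\N}$ for the topology of $X$. Then $T\in \textup{Mix}(X)$ if and only if
\[
\forall i,j\in\N\ \exists N\in\N\ \forall n\geq N:\ T^{-n}(U_i)\cap U_j\neq \varnothing.
\]
For each fixed $i,j,n$, the set $\{T\in\textup{TDS}(X): T^{-n}(U_i)\cap U_j\neq\varnothing\}$ is open, since the map $T\mapsto T^{n}$ is continuous and the non-emptiness of intersection with a fixed open set is lower semicontinuous. Thus $\textup{Mix}(X)$ is $F_{\sigma\delta}$, and in particular Borel.

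Next, I would invoke Blokh's techniques from \cite{blokh1984transitive} to show that on $\textup{Mix}(X)$ the properties CPE and UPE coincide. The direction UPE $\Rightarrow$ CPE is immediate (every non-trivial factor of a UPE system inherits positive entropy via Proposition \ref{prop:IE pair} and Theorem \ref{thm:IE properties}(3)). For the converse on mixing graph maps, Blokh's spectral decomposition / specification-type results for continuous self-maps of topological graphs imply that a topologically mixing $T\colon X\to X$ with positive topological entropy has no non-trivial factor of zero entropy: the strong orbit-tracking structure forces every non-dense finite open cover of $X$ to have positive topological entropy, i.e. UPE. So, restricted to $\textup{Mix}(X)$, the equivalence $\textup{CPE}\Leftrightarrow \textup{UPE}$ holds (trivial one-point $X$ is handled separately).

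Combining the two steps,
\[
\textup{Mix}(X)\cap \textup{CPE}(X) = \textup{Mix}(X)\cap \textup{UPE}(X),
\]
and since $\textup{UPE}(X)$ is Borel by the theorem of Darji and García-Ramos cited above, the intersection is Borel as claimed. The main obstacle is the second step: making the Blokh-type equivalence $\textup{CPE}\Leftrightarrow\textup{UPE}$ on $\textup{Mix}(X)$ precise requires carefully importing the structural results for mixing graph maps (spectral decomposition, specification, or equivalently the behavior of the topological Pinsker factor on a graph), and checking that no non-trivial zero-entropy factor can coexist with topological mixing on a finite graph. Everything else is soft descriptive-set-theoretic bookkeeping.
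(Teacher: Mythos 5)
Your proposal is correct and follows essentially the route the paper has in mind (the paper only points to Blokh's techniques via the cited corollary): everything rests on Blokh's structure theory for mixing graph maps plus routine descriptive-set bookkeeping, and your verification that $\textup{Mix}(X)$ is Borel is fine (the class you get is a countable intersection of countable unions of $G_\delta$ sets rather than $F_{\sigma\delta}$, but that is immaterial). The step you flag as the main obstacle has a clean citation chain: Blokh proved that every topologically mixing continuous self-map of a finite graph has the specification property, and Blanchard proved that specification implies UPE, which in turn implies CPE. This yields the stronger inclusion $\textup{Mix}(X)\subset \textup{UPE}(X)\subset \textup{CPE}(X)$, so in fact $\textup{Mix}(X)\cap \textup{CPE}(X)=\textup{Mix}(X)$ and Borelness follows from your first step alone, without invoking the Borelness of $\textup{UPE}(X)$; your weaker identification $\textup{Mix}(X)\cap \textup{CPE}(X)=\textup{Mix}(X)\cap \textup{UPE}(X)$ also suffices, and note that no positive-entropy hypothesis is needed in the Blokh step, since mixing graph maps automatically have specification (and hence positive entropy).
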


In particular the previous result applies to the interval and the circle.

 In contrast to this, the following result was proved in \cite[Theorem 5.25]{darji2021local}. 

\begin{theorem}
[Darji and García-Ramos \cite{darji2021local}]
Let $X$ be a Cantor space. Then $\textup{Mix}(X) \cap \textup{CPE}(X)$ is a coanalytic but not Borel subset of $\textup{TDS}(X)$. 
\end{theorem}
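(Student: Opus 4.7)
The first step is to verify that $\textup{Mix}(X) \cap \textup{CPE}(X)$ is coanalytic. Coanalyticity of $\textup{CPE}(X)$ is stated earlier in the excerpt, so it suffices to show $\textup{Mix}(X)$ is Borel. Fixing a countable basis $\{U_i\}$ of non-empty open sets of $X$, one has
\[
\textup{Mix}(X) = \bigcap_{i,j} \bigcup_{N \in \mathbb{N}} \bigcap_{n \ge N} \{ T \in \textup{TDS}(X) : T^{-n}U_i \cap U_j \neq \varnothing \},
\]
which presents mixing as a countable Boolean combination of open conditions on $T$. Hence $\textup{Mix}(X)$ is Borel and its intersection with $\textup{CPE}(X)$ is coanalytic.

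For non-Borelness, the plan is to exhibit a Borel reduction from a known non-Borel coanalytic set. A natural target is $\textup{CPE}(Y)$ for a Cantor space $Y$, which is not Borel by the Barbieri--García-Ramos theorem quoted above. Specifically, I would construct a Borel map $\Phi : \textup{TDS}(Y) \to \textup{TDS}(X)$ such that (i) $\Phi(T)$ is topologically mixing for every $T$, and (ii) $\Phi(T) \in \textup{CPE}(X)$ if and only if $T \in \textup{CPE}(Y)$. Then $\Phi^{-1}(\textup{Mix}(X) \cap \textup{CPE}(X)) = \textup{CPE}(Y)$ is not Borel, so neither is the target.

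For the construction of $\Phi$, a first guess is $\Phi(T) = T \times \sigma$ with $\sigma$ a fixed UPE mixing Bernoulli shift; this preserves CPE in both directions via the product formula for IE-pairs in Theorem \ref{thm:IE properties} together with the characterisation of CPE through the closed invariant equivalence relation generated by IE-pairs, but it is mixing only when $T$ is. I would replace the bare product by a wildcard coding: realise $T$ (after conjugation) as a subshift on a finite alphabet, adjoin a new symbol $\#$ allowed to occur in runs of arbitrary length, and declare admissible those sequences whose maximal $\#$-free blocks lie in the language of $T$. The freedom of the $\#$-blocks forces mixing, while $T$ survives as a topological factor of $\Phi(T)$ via a natural block map. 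The main obstacle is the direction $T \in \textup{CPE}(Y) \Rightarrow \Phi(T) \in \textup{CPE}(X)$; the reverse is immediate since $T$ is a factor of $\Phi(T)$. For the hard direction, one must analyze the $\IE_2$-pairs created by the wildcard enrichment and verify that, when $T$ is CPE, the smallest closed invariant equivalence relation on $\Phi(T)$ containing these pairs is everything. This is where the coding parameters have to be calibrated delicately, paralleling the balancing act in Salo's proof of Theorem \ref{thm:salo}.
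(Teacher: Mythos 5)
Your first half is fine: each set $\{T\in\textup{TDS}(X):T^{-n}U_i\cap U_j\neq\varnothing\}$ is open in the uniform topology (composition is continuous and $U_i$ is open), so your formula exhibits $\textup{Mix}(X)$ as a Borel ($\Pi^0_3$) set, and intersecting it with the coanalytic set $\textup{CPE}(X)$ indeed gives a coanalytic set.

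The non-Borel half, however, has genuine gaps. First, your reduction needs $\Phi(T)\in\textup{CPE}(X)$ if and only if $T\in\textup{CPE}(Y)$, and you call the backward implication ``immediate since $T$ is a factor of $\Phi(T)$.'' That factor claim is false in general: transitivity and mixing pass to factors, so the mixing wildcard system $\Phi(T)$ cannot factor onto any non-transitive $T$ (take $T$ with two fixed points --- there is no block map that does anything sensible with the symbol $\#$ and recovers $T$). So the direction you actually need for $\Phi^{-1}(\textup{Mix}(X)\cap\textup{CPE}(X))=\textup{CPE}(Y)$, namely that $T\notin\textup{CPE}$ forces $\Phi(T)\notin\textup{CPE}$, is unproved (and doubtful: for $T$ two fixed points, $\Phi(T)$ may well be CPE), while the forward direction you yourself leave as an uncalibrated ``balancing act.'' Second, the coding step ``realise $T$ (after conjugation) as a subshift on a finite alphabet'' is unavailable: $\textup{TDS}(Y)$ consists of arbitrary continuous self-maps of the Cantor set, and only expansive systems are conjugate to subshifts; even restricting to a class where coding is possible (say reducing from Salo's set $\textup{Exp}(X)\cap\textup{CPE}(X)$ of Theorem \ref{thm:salo}), one must still select conjugacies, and a copy of the wildcard subshift inside the fixed space $X$, in a Borel way for $\Phi$ to be a Borel map. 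So as written the reduction does not get off the ground. For comparison, the proof cited in the survey (\cite[Theorem 5.25]{darji2021local}) avoids reductions altogether and uses the other technique mentioned in this section: the CPE rank of \cite{barbieri2021hierarchy} is a $\Pi^1_1$-rank on $\textup{CPE}(X)$ (\cite{westrick2019topological,darji2021note}), a Borel subset of a coanalytic set has bounded rank by the boundedness theorem \cite[Theorem 35.23]{kechris2012classical}, and one exhibits topologically mixing Cantor systems of arbitrarily high CPE rank; this sidesteps exactly the two-way CPE-preservation your construction would have to verify. If you do want to pursue the reduction route, note that carrying it out from a complete coanalytic source would settle the open question, stated just below the theorem, of whether $\textup{Mix}(X)\cap\textup{CPE}(X)$ is complete coanalytic --- a sign that the missing step is not a routine calibration.
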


 Proving that $\textup{CPE}(X)$ and $\textup{Exp}(X) \cap \textup{CPE}(X)$ are complete coanalytic for a Cantor space $X$ might be accessible, but some extra arguments are missing. The following question seems harder to approach.
 
 \begin{question}
     Let $X$ be a Cantor space. Is $\textup{Mix}(X) \cap \textup{CPE}(X)$ a complete coanalytic subset of $\textup{TDS}(X)$?
 \end{question}

 \begin{question}
     Let $X$ be a torus of dimension at least $2$. Is $\textup{Mix}(X) \cap \textup{CPE}(X)$ a Borel subset of $\textup{TDS}(X)$?
 \end{question}

A $G$\textbf{-subshift} over a finite set $A$ is a closed subset of $A^G$  that is invariant under the shift action of $G$. Thus, for every $G$-subshift $X$ one can naturally assign an action $\act$. A subshift is of finite type if it can be determined using a finite number of forbidden patterns. It is not difficult to see that for every group $G$ and every finite alphabet $A$ the family of subshifts of finite type in $A^{G}$ is countable and hence any subset of this family will be a Borel ($F_{\sigma}$) set. Nonetheless, the complexity of these families can be studied using another branch of descriptive set theory that uses computable functions instead of Borel functions. We will not give the precise definitions of effective Borel and coanalytic sets, since they are more involved than their topological counterparts. The interested reader can check \cite{moschovakis2009descriptive}.
For the following theorem see \cite[Theorem 1]{westrick2019topological}.
\begin{theorem}
[Westrick \cite{westrick2019topological}]
The property of completely positive entropy is effective complete coanalytic in the set of ${\ZZ^2}$-subshifts of finite type in $A^{\ZZ^{2}}$ for a finite set $A$.   
\end{theorem}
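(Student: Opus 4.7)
The plan is to establish the result in two parts: an upper bound showing CPE is effectively $\Pi_1^1$ in the set of $\ZZ^2$-SFTs, and a lower bound showing $\Pi_1^1$-hardness via a computable reduction from a known effectively complete $\Pi_1^1$ set.

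For the upper bound, I would parametrize $\ZZ^2$-SFTs over a finite alphabet $A$ by natural numbers encoding their finite sets of forbidden patterns, yielding a recursively presented countable space. By the corollary of Blanchard and Lacroix stated earlier, the SFT $\ZZ^2 \curvearrowright X$ has CPE if and only if the smallest closed $\ZZ^2$-invariant equivalence relation on $X$ containing $\IE_2(X,\ZZ^2)$ equals $X^2$. Equivalently, failure of CPE is witnessed by the existence of a proper closed $\ZZ^2$-invariant equivalence relation $R \subsetneq X \times X$ such that the quotient $\ZZ^2 \curvearrowright X/R$ has zero topological entropy (Proposition \ref{prop:factor-eq_rel}). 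Such an $R$ is coded by a real parameter (a closed subset of $X \times X$), and the defining conditions -- being a closed equivalence relation, being $\ZZ^2$-invariant, being proper, and zero entropy of the quotient -- are all effectively arithmetic in this code (entropy zero is a $\Pi^0_3$-type statement via the F\o lner definition, since open covers are finite combinatorial objects for subshifts). Hence the negation of CPE is effectively $\Sigma_1^1$, placing CPE itself in $\Pi_1^1$.

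For the hardness direction, the goal is to construct a computable map $T \mapsto X_T$ from trees $T \subset \N^{<\N}$ (a natural effectively $\Pi_1^1$-complete parameter space, via well-foundedness) to $\ZZ^2$-SFTs so that $X_T$ has CPE if and only if $T$ is well-founded. I would build $X_T$ as a hierarchical, self-similar SFT in the spirit of Robinson tiles or Mozes substitution systems. The alphabet and local rules of $X_T$ would simultaneously encode: a hierarchy of marker grids at exponentially increasing scales, with each level-$n$ cell labeled by a depth-$n$ node of $T$ consistent with its grid-parent; together with a ``filler'' sublayer that carries positive entropy independent of the marker structure. Infinite branches of $T$ would then manifest as non-trivial zero-entropy factors of $X_T$ obtained by projecting onto the marker hierarchy along the branch, causing CPE to fail whenever $T$ is ill-founded. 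Conversely, when $T$ is well-founded, every marker hierarchy terminates, the marker projections collapse, and the filler's IE-pairs saturate the invariant equivalence relation, forcing it to be all of $X_T \times X_T$ by the Blanchard--Lacroix corollary.

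The main obstacle is the combinatorial design of $X_T$ in the hardness step. One must engineer the local rules so that: (i) the marker hierarchy faithfully encodes arbitrary trees via finite forbidden patterns in $\ZZ^2$; (ii) every non-trivial zero-entropy factor of $X_T$ must arise from projection along an actual infinite branch of $T$, ruling out spurious zero-entropy factors from the filler or from partial marker data; and (iii) the correspondence between branches and zero-entropy factors is robust enough that well-foundedness of $T$ genuinely forces CPE. Condition (i) leans on the rich hierarchical tilings available only in dimension at least two (Berger--Robinson), which is exactly why the theorem is specifically about $\ZZ^2$. The effectivity of the map $T \mapsto X_T$ then follows routinely from the finite-local nature of the construction, so once (i)--(iii) are verified the reduction is computable and the completeness follows.
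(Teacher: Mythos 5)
The statement you are proving is quoted in this survey without proof (it is Westrick's Theorem 1 in \cite{westrick2019topological}), so the comparison is with her argument. Your upper-bound half is essentially right in spirit, though the cleanest effective $\Pi^1_1$ form is not ``there is no proper closed invariant equivalence relation with zero-entropy quotient'' but rather the Blanchard--Lacroix form used in the paper: CPE holds iff every closed $\ZZ^2$-invariant equivalence relation containing $\IE_2(X,\ZZ^2)$ equals $X^2$; here ``$(x,y)$ is an IE-pair'' is arithmetic for SFTs, and one avoids having to show that the topological entropy of an abstract quotient $X/R$ is arithmetic in a code for $R$ (open covers of $X/R$ are $R$-saturated covers of $X$, not finite combinatorial objects, so your $\Pi^0_3$ claim as stated needs repair).

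The genuine gap is the hardness direction: conditions (i)--(iii) are not verifiable technicalities, they are the entire content of the theorem, and the blueprint you describe would in fact fail. If $X_T$ is a Robinson-style hierarchical marker layer labelled by nodes of $T$, coupled with an independent positive-entropy ``filler'' layer, then the projection onto the marker layer is a factor map onto a non-trivial zero-entropy system \emph{regardless of $T$}, so CPE fails for every tree and the reduction does not distinguish well-founded from ill-founded inputs. Moreover, the dichotomy ``$T$ well-founded $\Rightarrow$ every marker hierarchy terminates'' cannot be enforced by finite local rules: a well-founded tree on $\N^{<\N}$ can have nodes of unbounded depth, and by compactness an SFT admitting arbitrarily deep finite hierarchies must also contain limit configurations exhibiting infinite (or degenerate, unrooted) hierarchical structure; these limit points are exactly what a correct construction has to tame. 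Westrick's actual proof does something structurally different: building on the CPE rank of Barbieri and Garc\'ia-Ramos (the transfinite closure of the entropy-pair relation) and her ``seas of squares'' technology, she designs SFTs in which the structured, low-entropy features are gradually absorbed by the iterated closure of $\IE_2$, so that CPE holds with prescribed (arbitrarily large) rank for well-founded data and fails otherwise; no layer of the system survives as a globally defined zero-entropy factor. Without an argument of this kind replacing your (i)--(iii), the reduction, and hence the completeness claim, is not established.
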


Completely positive mean dimension can also be studied from this point of view. In this situation a different model is more natural.  

 We now recall the basics of the Hausdorff metric. Given a metric space $(Y,\rho)$ we denote with $K(Y)$ the space of all non-empty compact subsets of $Y$. We equip $K(Y)$ with the Vietoris topology, which is generated by the \textbf{Hausdorff metric} defined as
$$
\rho_H(A,B)=\inf \{\delta>0: A\subset B_{\delta}\text{ and } B\subset A_{\delta}\},
$$
where $C_{\delta}=\{x\in Y:\rho(x,C)<\delta\}$.

If $Y$ is compact, then so is $K(Y)$. For more information see \cite[Section 4.F]{kechris2012classical}. 

Let $A$ be a compact metric space. We equip $A^{\Z}$ with the product topology, and we denote the shift action by $\sigma:A^{\Z}\rightarrow A^{\Z}$. We define 
$$
\S(A)=\{X\subset A^{\Z}:X\text{ is closed, non-empty and } \sigma(X)= X\}.
$$ Since $\S(A)$ is a closed subset of $K (A^{\Z})$, we also equip it with the induced topology. Note that $\S(A)$ is also a compact metrizable space, and hence it is Polish.  We also define the following subspace
\[
\S_{c+}(A)=
\{
X\in \S(A): (X,\sigma)\textup{ has completely positive mean dimension}
\}.
\]
\begin{theorem}
[García-Ramos and Gutman \cite{garciameandimesnion}]
    We have that $\S_{c+}(I)$ is a complete coanalytic subset of $\S(I)$.
\end{theorem}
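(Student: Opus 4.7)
The plan is to establish the two ingredients of completeness separately: first that $\S_{c+}(I)$ is coanalytic in $\S(I)$, and second that every coanalytic subset of a Polish space Borel-reduces to it. For coanalyticity I would parameterize factors of a subshift $X\in\S(I)$ by closed $\sigma$-invariant equivalence relations $R$ on $X$; these form a Borel subset of $\S(I)\times K(I^\Z\times I^\Z)$ in the Hausdorff topology, and the assignment $(X,R)\mapsto(X/R,\sigma)$ is Borel. Next I would argue that ``$\mdim_\Sigma(X/R)>0$'' is itself Borel in $(X,R)$: by definition it amounts to the existence of a finite open cover $\U$ of $X/R$ with $\D_\Sigma(\U)>0$, such covers correspond to $R$-saturated covers of $X$ drawn from a fixed countable basis of cylinders, and $\D_\Sigma(\U)$ is an $\inf_F\inf_\delta\limsup_i$ of Borel functions of $(X,\U)$. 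Consequently $X\in\S_{c+}(I)$ iff ``for every closed $\sigma$-invariant equivalence relation $R$ on $X$, $R=X\times X$ or $\mdim(X/R)>0$'', a universal quantifier over a Borel predicate, hence $\Pi_1^1$.

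For hardness I would Borel-reduce the set $\mathrm{WF}$ of well-founded subtrees of $\NN^{<\NN}$ (a classical $\Pi_1^1$-complete set) into $\S_{c+}(I)$, following the approach used for the analogous CPE-entropy result (Theorem \ref{thm:d-g}) in \cite{darji2021local} and \cite{westrick2019topological}. To each tree $T$ I would assign a subshift $X_T\in\S(I)$ whose lattice of non-trivial factors mirrors the subtree lattice of $T$: each node contributes a building block of strictly positive mean dimension, glued in such a way that an infinite branch of $T$ would produce a descending tower $X_T\to Y_1\to Y_2\to\cdots$ of non-trivial factors with mean dimensions tending to zero; the inverse limit of this tower is then a non-trivial zero-mean-dimension factor of $X_T$, using that non-triviality of the $Y_n$ is preserved under surjective inverse limits and that mean dimension is continuous along directed inverse limits. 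Hence $T$ ill-founded would force $X_T\notin\S_{c+}(I)$. Conversely, well-foundedness would force every non-trivial factor of $X_T$ to inherit at least one positive-mean-dimension block, so $X_T\in\S_{c+}(I)$. Borel dependence of $T\mapsto X_T$ is transparent from the coordinate-wise nature of the construction.

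The main obstacle is designing the blocks and the gluing so that the non-trivial factors of $X_T$ are genuinely controlled by the subtree lattice of $T$, with no spurious non-trivial zero-mean-dimension factors appearing when $T$ is well-founded. A promising route is to adapt the rank of Barbieri and García-Ramos \cite{barbieri2021hierarchy} for CPE to the mean-dimension setting: iterate the ``maximal zero-mean-dimension factor'' operation --- available because mean dimension is subadditive under products, monotone under subsystems, and continuous under directed inverse limits --- and verify that the resulting ordinal-valued assignment is a $\Pi_1^1$-rank on $\S(I)$ with domain exactly $\S_{c+}(I)$. Once such a rank is in place, the standard $\Pi_1^1$-boundedness machinery upgrades it to a Borel reduction from $\mathrm{WF}$, yielding completeness.
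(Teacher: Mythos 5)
Your upper-bound half is the expected route and is roughly right in outline: writing membership in $\S_{c+}(I)$ as a universal quantification over closed $\sigma$-invariant equivalence relations $R$ with a Borel matrix ``$R=X\times X$ or $\mdim(X/R)>0$'' gives coanalyticity. But even here the parameterization is glossed: a basic cylinder is in general neither $R$-saturated nor is its saturation open, so covers of $X/R$ cannot simply be ``drawn from a fixed countable basis of cylinders''; you need the quotient metric (or a pseudometric/compactness argument) to extract a countable Borel family of covers that computes $\mdim(X/R)$, and for $\Z$-subshifts the sofic quantities $\D_\Sigma$ are an unnecessary detour. These are fixable technical points.

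The completeness half, which is the real content of the theorem, is not proved. First, the reduction $T\mapsto X_T$ is never constructed: you yourself identify ``designing the blocks and the gluing'' as the main obstacle, but that design \emph{is} the proof. Second, the fallback via a rank is logically insufficient: every coanalytic set carries a $\Pi_1^1$-rank, so producing one has no completeness content; $\Pi_1^1$-boundedness yields only non-Borelness, and only after you exhibit elements of $\S_{c+}(I)$ of cofinally many countable ranks --- which is again the deferred tree construction --- while ``complete coanalytic'' requires the explicit Borel reduction itself (the survey's care in distinguishing the Cantor-space CPE results, which are only ``coanalytic, not Borel'', from Theorem \ref{thm:d-g} shows this distinction is substantive). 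Third, the limit mechanism is miswired: as written, $X_T\to Y_1\to Y_2\to\cdots$ is a chain of successive quotients, whose limit is a direct limit (the quotient by the smallest closed invariant equivalence relation containing the increasing union), and non-triviality of that limit is exactly what can fail --- it can collapse to a point; if instead you mean a genuine inverse tower $\cdots\to Y_2\to Y_1$ of factors of $X_T$, then indeed $\mdim(\varprojlim_n Y_n)\le\liminf_n\mdim(Y_n)$ and non-triviality survives, but producing such a tower from an infinite branch is again the missing construction. Finally, the well-founded direction cannot be handled by transplanting the entropy machinery: mean dimension is not monotone under factor maps (as the survey notes, a zero mean dimension action can have a factor of positive mean dimension), mean dimension pairs are only known to give one implication for completely positive mean dimension, and there is no pushforward formula for them analogous to Theorem \ref{thm:IE properties}(3); so the claim that every non-trivial factor of $X_T$ ``inherits a positive-mean-dimension block'' needs an argument tailored to the construction, not a general principle.
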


\subsection{Li-Yorke chaos}

In the seminal work of Li and Yorke it was proved that the existence of a periodic point of order 3 of an interval map implies some form of disorder \cite{liperiod}. This notion is now known as Li-Yorke chaos. An active line of research has been to understand under which conditions positive topological entropy implies Li-Yorke chaos. 

Let $S$ be an infinite subset of $G$.
A pair $(x,y)\in X\times X$ is called \textbf{$S$-scrambled}
if
\[\limsup_{S\ni s\to\infty}\rho(sx,sy)>0 \quad\text{ and }\quad
\liminf_{S\ni s\to\infty}\rho(sx,sy)=0.\]
A subset $K\subset X$ with at least two points is called \textbf{$S$-scrambled} if
any two distinct points $x,y\in K$ form an $S$-scrambled pair.
\begin{definition}
  Let $S$ be an infinite subset of $G$. We say that $\act$ is \textbf{Li-Yorke chaotic along $S$}
if there exists an uncountable $S$-scrambled subset of $X$.  
We say that $\act$ is \textbf{Li-Yorke chaotic} if it is {Li-Yorke chaotic along $G$}.
\end{definition}

Blanchard, Glasner, Kolyada, and Maass \cite{BlanchardGlasnerKolyadaMaass2002} proved, using local entropy theory and measure-theoretic methods, that any TDS with positive topological entropy must be Li-Yorke chaotic. This result was then proved 
for actions of amenable groups by Kerr and Li \cite{KerrLi2007} using local entropy theory and purely topological arguments. The following generalization appears in \cite[Theorem 1.1]{huang2021positive}.

\begin{theorem} 
[Huang, Li and Ye \cite{huang2021positive}]
\label{thm:liyorkesequence}
Let $G$ be an amenable group and $\act$ an action. If $\act$ has positive topological entropy,
then for any infinite subset $S\subset G$, $\act$ is Li-Yorke chaotic along $S$. 
\end{theorem}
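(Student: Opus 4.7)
The plan is to derive an uncountable $S$-scrambled set via two complementary ingredients: (a) existence of arbitrarily large independence sets inside $S$, deduced from local entropy theory, and (b) a Mycielski--Kuratowski topological genericity argument. By Proposition \ref{prop:IE pair}, positive topological entropy supplies non-diagonal IE-tuples of every length $k\ge 2$. I would fix a non-diagonal IE-pair $(x_0,y_0)$ and disjoint closed neighborhoods $\overline{U_0}\ni x_0$, $\overline{V_0}\ni y_0$ of mutual distance at least some $\delta>0$.

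The crux is to show that for every $n\in\N$ the pair $(U_0,V_0)$ has an independence set of cardinality $\ge n$ contained in $S$. This is a genuine strengthening of the defining positive independence density in $G$, since $S$ may be extremely sparse, and constitutes the main technical obstacle. My approach is to leverage the higher IE-tuples: since $\IE_k(X,G)\setminus\Delta_k^2(X)\ne\varnothing$ for every $k$, a disjoint $k$-tuple of open neighborhoods of such a tuple has independence sets of positive density $q_k>0$ inside F\o lner windows, giving a ``$k$-colour'' independence structure on $G$. A pigeonhole argument in the spirit of Lemma \ref{lem:combinatorial}, combined with the observation that amenability lets one translate a F\o lner window to meet any prescribed finite portion of $S$, extracts from the $k$-colour independence set a subset of size $n$ contained in $S$ that still witnesses independence of the original pair $(U_0,V_0)$. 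Choosing $k$ sufficiently large relative to $n$ makes the pigeonhole succeed.

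With arbitrarily large $S$-independence sets for $(U_0,V_0)$ in hand, I would deduce that for every $\varepsilon>0$ the sets $P_{\varepsilon}=\{(x,y)\in X^2:\exists s\in S,\ \rho(sx,sy)<\varepsilon\}$ and $D=\{(x,y)\in X^2:\exists s\in S,\ \rho(sx,sy)\ge \delta/2\}$ are dense open in $X^2$: each $S$-independence set $J$ yields non-empty open rectangles in $X^2$ via the realization $\bigcap_{s\in J}s^{-1}U_{\omega(s)}\ne\varnothing$, landing in $P_\varepsilon$ when the two factors use the same assignment $\omega$ (with $|J|$ large, forcing diameter shrinkage) and in $D$ when they use opposite assignments on a single chosen $s\in J$. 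Intersecting the $P_{1/n}$'s shows the proximal-along-$S$ relation is a dense $G_\delta$, and similarly for the non-asymptotic-along-$S$ relation. The Mycielski--Kuratowski theorem then provides a Cantor set $K\subset X$ such that any two distinct points of $K$ lie simultaneously in both relations, i.e.\ form $S$-scrambled pairs, establishing Li--Yorke chaos along $S$.

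The principal obstacle is the combinatorial upgrade in the second paragraph: forcing independence sets to sit inside an arbitrary infinite $S\subset G$. The inputs that make this step feasible are the non-triviality of $\IE_k$ for \emph{every} $k$ and the translation-invariance provided by amenability; restricting IE-structure to $k=2$ would give Li--Yorke chaos along $G$ but not along arbitrary infinite $S$.
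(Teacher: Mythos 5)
Your outline has the right general shape (large independence sets inside $S$ plus a Mycielski-type argument), but the two steps are weighted wrongly and the one that carries the real content has a gap. First, the genericity step as written fails: the sets $P_\varepsilon$ and $D$ are in general \emph{not} dense in $X^2$. For instance, let $X$ be the disjoint union of a positive-entropy system and two fixed points $p,q$ at distance $1$; every pair in a neighborhood of $(p,q)$ satisfies $\rho(sx,sy)\geq 1/2$ for all $s$, so $P_\varepsilon$ misses a non-empty open set for small $\varepsilon$. The realization sets $\bigcap_{s\in J}s^{-1}U_{\omega(s)}$ being non-empty open only shows the relations contain some open sets, not that they are dense; in the known proofs (Blanchard--Glasner--Kolyada--Maass, Kerr--Li, Huang--Li--Ye) the Mycielski theorem, or a direct Cantor-scheme construction, is applied inside a perfect closed set $Z$ manufactured from such realization points, and proving density of the relevant relations in $Z\times Z$ is precisely the technical heart that your proposal does not address. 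Relatedly, your mechanism for proximality is not valid: if $z,z'$ realize the same pattern $\omega$ over $J$ with values in the \emph{fixed} neighborhoods $U_0,V_0$, one only gets $\rho(sz,sz')\leq\max(\operatorname{diam}U_0,\operatorname{diam}V_0)$ for $s\in J$; a large $|J|$ does not ``force diameter shrinkage''. To get $\liminf_{S\ni s\to\infty}\rho(sx,sy)=0$ one must, for every $\varepsilon$, realize common patterns in $\varepsilon$-small neighborhoods of tuples with entries in $\{x_0,y_0\}$ (using that all such tuples are IE-tuples) and make these realizations coherent across all $\varepsilon$ and across cofinite tails of $S$ simultaneously; likewise your $D$ quantifies over a single $s\in S$, whereas $\limsup_{S\ni s\to\infty}\rho(sx,sy)>0$ needs, for every finite $F\subset S$, some $s\in S\setminus F$ with $\rho(sx,sy)\geq\delta/2$. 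This recursive bookkeeping is exactly what the cited proofs supply and what is missing here.

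Second, the step you single out as the principal obstacle is in fact immediate from the definitions used in this paper, and the argument you sketch for it would not work. By Definition \ref{def:IE pairs}, the independence density of $(U_0,V_0)$ is the largest $q$ such that \emph{every} finite subset $F$ of $G$ contains an independence subset of cardinality at least $q|F|$; applying this to finite subsets of $S$ (or of $S$ minus any finite set) yields arbitrarily large independence sets for $(U_0,V_0)$ inside $S$ at once, with no need for higher IE-tuples, Lemma \ref{lem:combinatorial}, or F\o lner translation. (The nontrivial content hidden here is the equivalence of this definition with the F\o lner-density one, which the paper already invokes via Kerr--Li.) Conversely, your sketch --- translating a F\o lner window to meet a finite portion of $S$ and then appealing to an unspecified pigeonhole on a $k$-colour independence structure --- does not place the positive-density independence subset of the window inside $S$, and no mechanism is indicated that would. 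So the proposal expends its effort on a step that is definitionally easy, while the genuinely hard step, extracting an uncountable $S$-scrambled set from large independence sets in $S$, remains unproved.
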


The following result was obtained using combinatorial independence in the sense of Theorem \ref{thm:naiveentropy} in \cite[Theorem 1.1]{LiRong2019}. 

\begin{theorem}
[Blanchard et al \cite{BlanchardGlasnerKolyadaMaass2002}, Kerr and Li \cite{KerrLi2007,KerrLi2013}, Li and Rong \cite{LiRong2019}]
\label{thm:naiveLiYorke}
    Every action $\act$ with positive naive topological entropy is Li-Yorke chaotic. 
\end{theorem}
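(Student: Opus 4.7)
The plan is to combine the combinatorial characterization of positive naive entropy from Theorem~\ref{thm:naiveentropy} with a Mycielski-style Cantor-scrambled-set construction, adapted from the arguments Kerr and Li developed in the amenable setting to the situation where one only has raw combinatorial independence rather than an IE-pair.

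First I would apply Theorem~\ref{thm:naiveentropy}: positive naive entropy yields a finite family $\mathcal{A}\subset\mathcal{F}^2(X)$ of positive independence density $q>0$. A pigeonhole over an exhaustion $F_1\subset F_2\subset\cdots$ of $G$ by finite subsets with $|F_n|\to\infty$ then produces a single pair $(A_1,A_2)\in\mathcal{A}$ of disjoint closed sets together with independence sets $J_n\subset F_n$ for $(A_1,A_2)$ of size at least $(q/|\mathcal{A}|)|F_n|$. The disjoint closed sets $A_1,A_2$ now play the role that two small disjoint clopen neighbourhoods of the coordinates of an IE-pair play in the amenable proof.

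Next I would build, by induction, a binary tree of nonempty open sets $\{U_\tau\}_{\tau\in\{0,1\}^{<\N}}$ with $\overline{U_{\tau 0}},\overline{U_{\tau 1}}\subset U_\tau$, $\overline{U_{\tau 0}}\cap\overline{U_{\tau 1}}=\varnothing$, and $\diam(U_\tau)\to 0$, together with, for each $n$, group elements $g_n,h_n\in G$ such that (i) $g_n\overline{U_{\tau 0}}\subset A_1$ and $g_n\overline{U_{\tau 1}}\subset A_2$ for every $\tau$ of length $n-1$, and (ii) $\diam(h_n U_\tau)<1/n$ for every $\tau$ of length $n$. The separating elements $g_n$ are drawn from a sufficiently long $J_m$: the independence property lets one prescribe any $\{1,2\}$-labelling of the $2^{n-1}$ current leaves, produce points realising it, and refine to small open neighbourhoods; the branching width $2^{n-1}$ is easily accommodated once $|J_m|$ is taken large enough. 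The proximality witnesses $h_n$ are extracted from a compactness/accumulation argument on the action applied to finitely many representative points, one per leaf at generation $n$. The limit $K=\bigcap_n\bigcup_{|\tau|=n}\overline{U_\tau}$ is then a Cantor set, and any distinct $x,y\in K$ diverge at some tree level, so infinitely many $g_n$ place them in different elements of the disjoint pair $(A_1,A_2)$ (giving $\limsup_{s\to\infty}\rho(sx,sy)\geq\rho(A_1,A_2)>0$), while infinitely many $h_n$ applied on their common ancestor sets give $\liminf_{s\to\infty}\rho(sx,sy)=0$; thus $K$ is an uncountable $G$-scrambled set.

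The main obstacle is the simultaneous production of the separating $g_n$ and the proximality-witnessing $h_n$ without destroying the nested open tree. Condition (i) is the direct output of the independence property and is straightforward once $J_m$ is long enough. Arranging condition (ii) while preserving (i) is the delicate part: one needs the tree leaves at each generation to be chosen so that their orbits admit common contracting elements, and this is where the full strength of \emph{positive} independence density — rather than just arbitrarily large finite independence sets — is essential, since within any sufficiently large finite window in $G$ one must allocate disjoint subwindows for separation and for the compactness extraction while maintaining the combinatorial budget required to refine further. Getting these two bookkeepings to dovetail is the technical core of the argument.
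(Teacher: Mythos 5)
Your overall strategy --- use Theorem \ref{thm:naiveentropy} to extract a single disjoint closed pair $(A_1,A_2)$ with large independence sets inside arbitrarily large finite windows, then run a Kerr--Li style Cantor-set construction --- is the route taken in the work the survey cites for this theorem (Li and Rong follow the Kerr--Li scheme). But there is a genuine gap exactly at the step you yourself flag as the technical core, and the justification you offer for it would fail. The proximality half of Li--Yorke cannot come from ``a compactness/accumulation argument on finitely many representative points'': compactness only gives convergent subnets of $(hz_1,\dots,hz_{2^n})$, with no reason for the limit to be near the diagonal --- distal actions have no proximal pairs at all, so any production of the elements $h_n$ must exploit the independence structure, not compactness. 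Moreover, condition (ii) as stated does not even suffice: contracting each leaf $U_\tau$ separately says nothing about $\rho(h_nx,h_ny)$ when $x$ and $y$ lie in \emph{different} leaves (which is the case for every $n$ past their splitting level), while asking that a fixed common-ancestor cell be contracted to diameter $<1/n$ by elements $h_n\to\infty$ is impossible in general (in a full shift the image of a fixed cylinder under large shifts is the whole space). What the actual proofs establish is stronger and is engineered from independence itself: a Cantor set $Z$ whose points are coded by the independence sets so that for any distinct $y_1,\dots,y_m\in Z$ and arbitrary targets $y_1',\dots,y_m'\in Z$ one has $\liminf_{s\to\infty}\max_i\rho(sy_i,y_i')=0$; both the $\liminf$ and the $\limsup$ halves of Li--Yorke chaos follow from this steering property. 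Your sketch contains no mechanism that produces it.

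Two smaller problems. As written, (i) only guarantees that a given pair $x\neq y$ is separated by the single element $g_m$ at the level $m$ where their addresses first differ; for $n>m$ they may sit in leaves carrying the same final symbol and then $g_n$ sends both into the same $A_i$, so even $\limsup_{s\to\infty}\rho(sx,sy)>0$ is not secured --- you would need to schedule, for every pair of leaves, infinitely many separating elements tending to infinity, and interleave this with the (missing) proximality bookkeeping. Also, the members of $\mathcal{F}^2(X)$ are merely closed and may have empty interior, so one cannot in general find nonempty \emph{open} cells with $g_n\overline{U_{\tau 0}}\subset A_1$ and $g_n\overline{U_{\tau 1}}\subset A_2$; the construction has to be run with closed cells built from the intersections $\bigcap_{s\in I}s^{-1}A_{\phi(s)}$ supplied by independence. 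These last two points are repairable, but nothing in the proposal repairs the proximality step, so as it stands this is a plan rather than a proof.
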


An action $\act$ is \textbf{distal} if for every pair of distinct points $x,y\in X$ one has $\inf _{g\in G}\rho(gx,gy)>0$. Every nilsystem (See Section 3.3) is distal. 
The following result is a direct corollary of Theorem \ref{thm:naiveLiYorke} and answers a question of Bowen \cite[Question 8]{bowen2020examples}. 

\begin{corollary}
[Li and Rong \cite{LiRong2019}]
    Every distal action $\act$ has  zero topological naive entropy. 
\end{corollary}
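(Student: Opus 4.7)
The plan is to argue by contradiction using Theorem \ref{thm:naiveLiYorke}. Suppose that $\act$ is distal yet $\hnaive(\act)>0$. Then Theorem \ref{thm:naiveLiYorke} produces an uncountable Li-Yorke scrambled set, and in particular a pair of distinct points $x,y\in X$ with
\[
\liminf_{G\ni g\to\infty}\rho(gx,gy)=0.
\]
This immediately forces $\inf_{g\in G}\rho(gx,gy)=0$, contradicting the definition of distality (which requires $\inf_{g\in G}\rho(gx,gy)>0$ for every pair of distinct points). Hence $\hnaive(\act)=0$.

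There is essentially no obstacle here: the content of the corollary is a one-line logical combination of the definition of distality with the hypothesis that positive naive entropy implies the existence of a scrambled pair. All the real work has already been carried out in Theorem \ref{thm:naiveLiYorke}, whose proof relies on the combinatorial independence characterisation of positive naive entropy from Theorem \ref{thm:naiveentropy}. Consequently, no additional ingredients are needed to conclude the corollary.
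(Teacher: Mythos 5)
Your argument is correct and is exactly the route the paper intends: the corollary is presented as a direct consequence of Theorem \ref{thm:naiveLiYorke}, since a distal action admits no pair of distinct points with $\liminf_{G\ni g\to\infty}\rho(gx,gy)=0$ and hence cannot be Li-Yorke chaotic, forcing $\hnaive(\act)=0$. No further comment is needed.
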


\begin{question}
 Is every action $\act$ with positive naive topological entropy Li-Yorke chaotic along every infinite $S\subset G$?
\end{question}

\subsection{Induced actions}
Bauer and Sigmund initiated the study of induced dynamical systems on the space of Borel probability measures. In some sense, this is like studying dynamics with random initial conditions. Local entropy theory has been very useful for proving several results that compare the dynamics of an action with the induced dynamics.   

	Let $X$ be a compact metrizable space, $\mathcal{B}_X$ the collection of Borel subsets of $X$, and $C(X)$ the space of continuous maps from $X$ to $\mathbb{R}$ endowed with the uniform convergence topology generated by the sup metric, and $\mathcal{M}(X)$ the set of Borel 
	probability measures on $X$ endowed with the \textbf{weak$^\ast$-topology}, which is the smallest topology making the map
	$\mu\mapsto\int_Xgd\mu$
	continuous for every $g\in C(X)$. The space $\mathcal{M}(X)$ is compact and metrizable. 
 
 Furthermore, given an action $\act$ we naturally induce an action $G\curvearrowright \mathcal{M}(X)$, with the push-forwards. For a TDS $(X,T)$ the induced map on $\mathcal{M}(X)$ will be denoted by $T_*$.

For the proof of the following result see \cite[Proposition 6]{bauer1975topological} and \cite[Theorem A]{glasner1995quasi}.
 \begin{theorem}
 [Bauer and Sigmund \cite{bauer1975topological}, Glasner and Weiss \cite{glasner1995quasi}]
\label{thm:induced}
 Let $(X,T)$ be a TDS. Then $(X,T)$ has positive topological entropy if and only if $(\mathcal{M}(X), T_*)$ has positive topological entropy if and only if $(\mathcal{M}(X), T_*)$ has infinite topological entropy.     
 \end{theorem}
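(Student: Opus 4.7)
The implication $\htop(\mathcal{M}(X), T_*) = \infty \Rightarrow \htop(\mathcal{M}(X), T_*) > 0$ is trivial, so two non-trivial directions remain. My plan is to handle them by quite different mechanisms: the forward direction through symmetric products and the reverse through the variational principle.

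For $\htop(X, T) > 0 \Rightarrow \htop(\mathcal{M}(X), T_*) = \infty$, I would for each $k \geq 1$ consider the continuous equivariant map $\pi_k \colon X^k \to \mathcal{M}(X)$ defined by $\pi_k(x_1, \ldots, x_k) = \frac{1}{k}\sum_{i=1}^k \delta_{x_i}$, where $X^k$ carries the product action $T^{\times k}$. This map factors through the quotient $X^{(k)} = X^k / \Sym(k)$, and the induced map $\bar\pi_k \colon X^{(k)} \to \mathcal{M}(X)$ is a continuous injection from a compact Hausdorff space into a Hausdorff space, hence a topological embedding onto a closed invariant subset $Y_k \subset \mathcal{M}(X)$. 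The quotient map $X^k \to X^{(k)}$ has fibers of cardinality at most $k!$, so by Bowen's fiberwise entropy inequality we obtain $\htop(X^{(k)}, T) = \htop(X^k, T^{\times k}) = k\, \htop(X, T)$. Since the entropy of a closed invariant subsystem is bounded above by the entropy of the ambient system, $\htop(\mathcal{M}(X), T_*) \geq k\, \htop(X, T)$ for every $k \geq 1$; the assumption $\htop(X, T) > 0$ then forces $\htop(\mathcal{M}(X), T_*) = \infty$.

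For $\htop(\mathcal{M}(X), T_*) > 0 \Rightarrow \htop(X, T) > 0$ I would argue the contrapositive. Suppose $\htop(X, T) = 0$. By the variational principle for $\ZZ$-actions it suffices to prove that every $T_*$-invariant Borel probability measure $P$ on $\mathcal{M}(X)$ satisfies $h_P(\mathcal{M}(X), T_*) = 0$, where $h_P$ denotes Kolmogorov--Sinai entropy. To each such $P$ associate its barycenter $\bar P \in \mathcal{M}(X)$, defined by $\bar P(A) = \int_{\mathcal{M}(X)} \mu(A) \, dP(\mu)$; one checks that $\bar P$ is $T$-invariant. The heart of the argument is to establish the inequality $h_P(\mathcal{M}(X), T_*) \leq h_{\bar P}(X, T)$. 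Granted this, the variational principle applied to $(X, T)$ gives $h_{\bar P}(X, T) = 0$, hence $h_P(\mathcal{M}(X), T_*) = 0$, and we conclude by taking the supremum over $P$.

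The main obstacle is establishing the entropy bound $h_P(\mathcal{M}(X), T_*) \leq h_{\bar P}(X, T)$. The idea is that a finite measurable partition $\cP$ of $\mathcal{M}(X)$ can be compared in entropy to a partition of $X$ by exploiting the random-measure interpretation of points in $\mathcal{M}(X)$: one disintegrates $P$ and uses the concavity of $-t \log t$ together with a Jensen-type estimate to transfer entropy computations from $\mathcal{M}(X)$ to $X$, which is essentially the Glasner--Weiss argument in \cite{glasner1995quasi}. A local-entropy-theoretic alternative, starting from an IE-pair $(\mu_1, \mu_2) \in \IE_2(\mathcal{M}(X), \ZZ)$ with $\mu_1 \neq \mu_2$ and trying to extract an IE-pair of points in $X$ via a separating continuous function, appears to fall short because independence in $\mathcal{M}(X)$ encodes constraints on integrals rather than on individual orbit points, so a measure-theoretic ingredient of the Glasner--Weiss type seems unavoidable.
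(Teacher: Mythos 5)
Your forward direction is correct and is essentially the classical Bauer--Sigmund argument that the survey cites for this theorem (the paper itself gives no proof, referring to \cite{bauer1975topological} and \cite{glasner1995quasi}): the embedding of the symmetric product $X^{(k)}$ into $\mathcal{M}(X)$ via $(x_1,\dots,x_k)\mapsto\frac1k\sum_i\delta_{x_i}$, the identity $\htop(X^{(k)})=k\,\htop(X,T)$ from Bowen's fiber inequality for the finite-to-one quotient $X^k\to X^{(k)}$, and monotonicity of entropy on closed invariant subsets do give $\htop(\mathcal{M}(X),T_*)\ge k\,\htop(X,T)$ for every $k$. (For comparison, the survey's own proof of the analogous sequence-entropy statement, Theorem \ref{thm:infty}, uses the atomic measures $\frac{i\delta_x+(k-i)\delta_y}{k}$ and IN-tuples rather than symmetric products, since sequence entropy has no product formula; for ordinary entropy your route is fine.)

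The reverse direction, however, has a genuine gap: the inequality $h_P(\mathcal{M}(X),T_*)\le h_{\bar P}(X,T)$ that you call the heart of the argument is false in general. Take any $T$-invariant $\nu$ with $h_\nu(X,T)>0$ and let $P$ be the push-forward of $\nu\times\nu$ under $(x,y)\mapsto\frac12(\delta_x+\delta_y)$. Then $P$ is $T_*$-invariant, its barycenter is $\bar P=\nu$, and since the map $X^2\to\mathcal{M}(X)$ is at most $2$-to-$1$ onto its image the relative entropy vanishes, so $h_P(\mathcal{M}(X),T_*)=2h_\nu(X,T)>h_{\bar P}(X,T)$; the $k$-fold version rules out any inequality of this shape. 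Under your standing assumption $\htop(X,T)=0$ the right-hand side is $0$, so "establishing the bound" is then literally the conclusion $h_P=0$ you are trying to prove, and no mechanism is actually supplied -- a disintegration/Jensen argument cannot deliver a bound that is false once $h_{\bar P}>0$. The actual Glasner--Weiss proof of this direction is combinatorial rather than a barycenter estimate: from positive entropy of $(\mathcal{M}(X),T_*)$ one produces a two-set open cover of $\mathcal{M}(X)$ of the special form $\{\mu:\int f\,d\mu<a\}$, $\{\mu:\int f\,d\mu>b\}$ with positive entropy, and a Karpovsky--Milman (Sauer--Shelah type) counting lemma -- the ancestor of Lemma \ref{lem:combinatorial} -- converts the exponentially many integral patterns along orbits into a positive-density independence set for the pair $(\{f\le a\},\{f\ge b\})$ in $X$, i.e.\ a non-diagonal IE-pair, whence $\htop(X,T)>0$. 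So the local-entropy route you dismiss at the end is in fact the one that works; what it requires is precisely this combinatorial transfer step, not a measure disintegration.
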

 
While the proof of the previous result does not use local entropy theory, the proofs of the following results use it in some form. The following result was proved for TDSs in \cite[Theorem 4]{bernardes2022uniformly} and for actions of amenable groups in \cite[Theorem 1.1]{liu2022relative}. 
 \begin{theorem}
 [Bernardes, Darji and Vermersch \cite{bernardes2022uniformly}, Liu and Wei \cite{liu2022relative}]
     Let $G$ be an amenable group and $\act$ an action. 
     Then $\act$  has UPE if and only if $G\curvearrowright \mathcal{M}(X)$ has UPE. 
 \end{theorem}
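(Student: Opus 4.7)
The plan is to use the characterization of UPE via independence tuples: $\act$ has UPE iff $\IE_2(X,G)=X^2$, which by Lemma \ref{lem:combinatorial} and a standard induction upgrades to $\IE_k(X,G)=X^k$ for every $k\geq 2$, and the same for $G\curvearrowright\mathcal{M}(X)$.

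For the forward direction, fix $(\mu,\nu)\in\mathcal{M}(X)^2$ and weak$^\ast$-neighborhoods $\tilde U\ni\mu$ and $\tilde V\ni\nu$. First I would approximate $\mu$ and $\nu$ by rational empirical measures $\mu_n=\frac{1}{n}\sum_{i=1}^n\delta_{x_i}\in\tilde U$ and $\nu_n=\frac{1}{n}\sum_{i=1}^n\delta_{y_i}\in\tilde V$ (possible by weak$^\ast$-density). By continuity of the averaging map $X^n\to\mathcal{M}(X)$, choose open neighborhoods $U_i\ni x_i$ and $V_i\ni y_i$ so small that $\frac{1}{n}\sum_i\delta_{z_i}\in\tilde U$ whenever $z_i\in U_i$ for all $i$, and similarly for $\tilde V$. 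Since $\IE_{2n}(X,G)=X^{2n}$, the $2n$-tuple $(U_1,\dots,U_n,V_1,\dots,V_n)$ has some positive independence density $q>0$. For any F\o lner set $F$ and associated independence set $J\subset F$ with $|J|\geq q|F|$, for each $\omega\colon J\to\{1,2\}$ and each $i\in\{1,\dots,n\}$ select $z_i^\omega\in\bigcap_{s\in J}s^{-1}A_{\phi_i(s)}$, where $(A_1,\dots,A_{2n})=(U_1,\dots,U_n,V_1,\dots,V_n)$ and $\phi_i(s)=i$ if $\omega(s)=1$ and $\phi_i(s)=n+i$ otherwise. Setting $\mu_\omega:=\frac{1}{n}\sum_i\delta_{z_i^\omega}$, the choice of $U_i,V_i$ ensures $s\mu_\omega\in\tilde U$ when $\omega(s)=1$ and $s\mu_\omega\in\tilde V$ when $\omega(s)=2$. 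Hence $J$ is an independence set for $(\tilde U,\tilde V)$ in $\mathcal{M}(X)$, so $(\mu,\nu)\in\IE_2(\mathcal{M}(X),G)$.

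For the reverse direction I would argue contrapositively. Suppose some pair $(x,y)\in X^2$ admits open neighborhoods $U\ni x$, $V\ni y$ with $(U,V)$ of zero independence density in $X$. Set $\tilde U=\{\mu:\mu(U)>c\}$ and $\tilde V=\{\mu:\mu(V)>c\}$ for some $c<1/2$; these are weak$^\ast$-open neighborhoods of $\delta_x,\delta_y$. The goal is to show $(\tilde U,\tilde V)$ has zero independence density in $\mathcal{M}(X)$, forcing $(\delta_x,\delta_y)\notin\IE_2(\mathcal{M}(X),G)$ and failure of UPE for $\mathcal{M}(X)$. Assume for contradiction $(\tilde U,\tilde V)$ has independence density $\alpha>0$; then for a large F\o lner $F$ pick an independence set $J\subset F$ with $|J|\geq\alpha|F|$. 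For each $\omega\in\{1,2\}^J$ there is a witness $\mu_\omega$ with $\mu_\omega(s^{-1}W_{\omega(s)})>c$ for all $s\in J$ (writing $W_1=U$, $W_2=V$). A Fubini/averaging argument over $z\sim\mu_\omega$ yields a point $z_\omega\in X$ with $|S_\omega|\geq c|J|$, where $S_\omega=\{s\in J:sz_\omega\in W_{\omega(s)}\}$. Encode this as a profile $\tau_\omega\in\{0,1,2\}^J$ by $\tau_\omega(s)=\omega(s)$ for $s\in S_\omega$ and $\tau_\omega(s)=0$ otherwise, and let $S=\{\tau_\omega:\omega\in\{1,2\}^J\}$.

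Applying Lemma \ref{lem:combinatorial} (with $k=2$ and $Z=J$) to $S$, provided $N_S(\mathcal U)\geq 2^{b|J|}$ for some $b>0$, one obtains $W\subset J$ with $|W|\geq c'|J|\geq c'\alpha|F|$ and $S|_W\supset\{1,2\}^W$. Unpacking, for every $\omega'\colon W\to\{1,2\}$ there is an extension $\omega\in\{1,2\}^J$ with $W\subset S_\omega$ and $\omega|_W=\omega'$, so $z_\omega$ satisfies $sz_\omega\in W_{\omega'(s)}$ for all $s\in W$; thus $W$ is an independence set for $(U,V)$ in $X$ of density at least $c'\alpha$ in $F$, contradicting the zero independence density of $(U,V)$. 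The main obstacle is precisely the verification that $N_S(\mathcal U)\geq 2^{b|J|}$: one needs to show that the profiles $\{\tau_\omega\}$ are sufficiently diverse (exponentially many, in a combinatorial sense), which should follow from the fact that distinct $\omega$'s force their witness measures $\mu_\omega$ to concentrate differently on the sets $s^{-1}W_{\omega(s)}$, but a careful counting argument is required to make this quantitative.
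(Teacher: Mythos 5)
Your forward direction rests on a false step: UPE, i.e.\ $\IE_2(X,G)=X^2$, does \emph{not} upgrade to $\IE_k(X,G)=X^k$ for all $k\geq 2$. ``UPE of all orders'' is strictly stronger than UPE (there are classical examples, due to Huang and Ye, of minimal systems with UPE of order $2$ but not of order $3$), and Lemma \ref{lem:combinatorial} provides no such induction. Fortunately your construction never needs independence of the full $2n$-tuple: since each $\phi_i$ only alternates between the values $i$ and $n+i$, what you actually need is a single positive-density set $J\subset F$ that is simultaneously an independence set for the $n$ pairs $(U_i,V_i)$, i.e.\ an independence set for the pair $\bigl(U_1\times\cdots\times U_n,\ V_1\times\cdots\times V_n\bigr)$ under the diagonal action $G\curvearrowright X^n$. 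That is exactly what the product formula of Theorem \ref{thm:IE properties}(2) gives: $\IE_2(X,G)=X^2$ implies $\IE_2(X^n,G)=(X^n)^2$, so $((x_1,\dots,x_n),(y_1,\dots,y_n))$ is an IE-pair of $X^n$ and the pair of product neighborhoods has positive independence density. With that substitution (product formula instead of the false ``$\IE_k=X^k$'' claim), your forward argument goes through.

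In the reverse direction the step you flag as the main obstacle is a genuine gap, and with your parameter choice it would in fact fail: taking $c<1/2$, you cannot certify $N_S(\mathcal{U})\geq 2^{b|J|}$. Inside a single element of the cover $\mathcal{U}$ (indexed by $i\in\{1,2\}^J$) a profile must equal the flip of $i$ on its support and $0$ elsewhere, hence is determined by its support alone; for $c<1/2$ the admissible supports number up to $2^{|J|}$, while you only guarantee $2^{c|J|}$ distinct profiles in total, so no exponential lower bound on the covering number follows. The repair is to choose the neighborhoods with $c$ close to $1$ --- you are free to do so, since to show $(\delta_x,\delta_y)\notin\IE_2(\mathcal{M}(X),G)$ you only need one bad product neighborhood. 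Then each element of $\mathcal{U}$ contains at most $\sum_{j\leq (1-c)|J|}\binom{|J|}{j}$ members of $S$ (a profile in it is determined by the complement of its support, of size at most $(1-c)|J|$), while $|S|\geq 2^{c|J|}$ because each profile has at most $2^{(1-c)|J|}$ preimages $\omega$; for $c$ sufficiently close to $1$ the ratio of these two quantities is at least $2^{b|J|}$ for some $b>0$ depending only on $c$, and then your extraction of $W$ via Lemma \ref{lem:combinatorial} and the contradiction with zero independence density of $(U,V)$ are correct. Finally, note that the survey itself contains no proof of this theorem (it refers to \cite{bernardes2022uniformly} and \cite{liu2022relative}), so the comparison above is with the IE-pair machinery the paper sets up; your strategy is in the spirit of those proofs, but both gaps above need the indicated repairs.
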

\begin{remark}
    The previous result was also proved for relativized versions of UPE in \cite{liu2022relative}.
\end{remark}

The following result regarding sequence entropy was proved for actions of $\ZZ$ in \cite[Theorem 5.10]{kerr2005dynamical}; it follows from \cite[Proposition 5.8]{KerrLi2007}.
\begin{theorem}
 [Kerr and Li \cite{KerrLi2007}]
 \label{thm:induced}
 Let $\act$ be an action. Then $\htop^*(\act)>0$ if and only if $\htop^*(G \curvearrowright\mathcal{M}(X))>0$.     
 \end{theorem}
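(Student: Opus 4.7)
The plan is to invoke Proposition~\ref{prop:INpair} and show, equivalently, that $\IN_2(X, G) \setminus \Delta_2(X)$ is non-empty if and only if $\IN_2(\mathcal{M}(X), G) \setminus \Delta_2(\mathcal{M}(X))$ is. The forward direction is direct: if $(x_1, x_2) \in X^2$ is a non-diagonal IN-pair, then $(\delta_{x_1}, \delta_{x_2})$ is a non-diagonal IN-pair in $\mathcal{M}(X)^2$, since any weak*-product-neighborhood $\mathcal{V}_1 \times \mathcal{V}_2$ of $(\delta_{x_1}, \delta_{x_2})$ contains $\{\delta_y : y \in W_i\}$ for some sufficiently small open $W_i \ni x_i$, and any independence set for $(W_1, W_2)$ in $X$ lifts via $y \mapsto \delta_y$ to one for $(\mathcal{V}_1, \mathcal{V}_2)$.

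For the backward direction, I would first amplify. The map $\mu \mapsto \mu^{\otimes n}$ is a $G$-equivariant continuous embedding of $\mathcal{M}(X)$ into $\mathcal{M}(X^n)$ (with $X^n$ under the diagonal $G$-action) whose image is closed and $G$-invariant, so a non-diagonal IN-pair $(\mu_1, \mu_2)$ in $\mathcal{M}(X)$ yields a non-diagonal IN-pair $(\mu_1^{\otimes n}, \mu_2^{\otimes n})$ in $\mathcal{M}(X^n)$. Fixing $f \in C(X)$ with $0 \leq f \leq 1$ and $\int f \, d\mu_1 < \int f \, d\mu_2$, a suitable continuous threshold of the sample-mean $\frac{1}{n} \sum_{i=1}^n f(x_i) \in C(X^n)$ produces, for $n$ large enough by a concentration-of-measure argument, a function $F \in C(X^n)$ with $\int F \, d\mu_1^{\otimes n}$ arbitrarily close to $0$ and $\int F \, d\mu_2^{\otimes n}$ arbitrarily close to $1$.

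With this sharp separation I carry out the combinatorial step. Choose scalars $a < a' < b' < b$ so that the Markov constants $1 - a/a'$ and $1 - (1-b)/(1-b')$ both exceed $1/2$; let $\mathcal{V}_i$ be the corresponding weak*-slabs in $\mathcal{M}(X^n)$ and $U_1 = \{F < a'\}, U_2 = \{F > b'\}$ be disjoint open subsets of $X^n$ with $\overline{U_1} \cap \overline{U_2} = \emptyset$. For each large finite independence set $J$ of $(\mathcal{V}_1, \mathcal{V}_2)$ with realizing measures $\nu_\phi$, Markov's inequality gives $\nu_\phi(s^{-1} U_{\phi(s)}) \geq \delta$ for a uniform $\delta > 1/2$; averaging $\sum_{s \in J} \mathbf{1}_{s^{-1} U_{\phi(s)}}$ against $\nu_\phi$ produces a point $y_\phi \in X^n$ with $|\{s \in J : s y_\phi \in U_{\phi(s)}\}| \geq \delta |J|$. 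Encoding each $y \in X^n$ by its profile $\tau_y \in \{0, 1, 2\}^J$ (with $\tau_y(s) = i$ if $sy \in U_i$ and $\tau_y(s) = 0$ otherwise), an entropy count forces $N_S(\mathcal{U}) \geq 2^{c_0 |J|}$ for $S = \{\tau_y : y \in X^n\}$ and some $c_0 > 0$; Lemma~\ref{lem:combinatorial} then yields $W \subseteq J$ with $|W| \geq c|J|$ that is a bona fide independence set for $(U_1, U_2)$ in $X^n$. Letting $|J| \to \infty$ and using the closedness of $\IN_2(X^n, G)$ from Theorem~\ref{thm:IN properties} together with $\overline{U_1} \cap \overline{U_2} = \emptyset$ produces a non-diagonal IN-pair in $X^n$; projecting to any coordinate where it is non-diagonal (a factor map preserving IN-pairs by Theorem~\ref{thm:IN properties}) delivers the desired non-diagonal IN-pair in $X$.

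The hard part will be the entropy count establishing $N_S(\mathcal{U}) \geq 2^{c_0 |J|}$: when each $\phi \in \{1, 2\}^J$ is matched on only $\delta |J|$ coordinates, the count of colorings covered by a single member of $\mathcal{U}$ is bounded by the binary-entropy quantity $2^{H(\delta)|J|}$, which beats $2^{|J|}$ precisely when $\delta > 1/2$. The amplification step is introduced for exactly this reason—to push the effective separation of $(\mu_1^{\otimes n}, \mu_2^{\otimes n})$ arbitrarily close to the maximum so that the Markov constant lies in the usable regime—and checking that the threshold function, the entropy count, and the passage back from $X^n$ to $X$ all cohere is the technical crux.
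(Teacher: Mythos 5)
Your overall strategy is sound, and it is genuinely different from what the paper does: the survey gives no proof of this theorem at all, deferring it to \cite[Proposition 5.8]{KerrLi2007}; the only argument it writes out in this direction is the proof of Theorem 4.22 (the $\htop^*=\infty$ statement), which is essentially your easy implication, realizing IN-tuples of $\mathcal{M}(X)$ by convex combinations of Dirac masses. Your forward direction (lifting an IN-pair $(x_1,x_2)$ to $(\delta_{x_1},\delta_{x_2})$) is correct. Your backward direction is a self-contained combinatorial route that I find convincing up to one step: the tensor-power amplification $\mu\mapsto\mu^{\otimes n}$ does give a non-diagonal IN-pair $(\mu_1^{\otimes n},\mu_2^{\otimes n})$ of $\mathcal{M}(X^n)$ (the map is a $G$-equivariant homeomorphism onto a closed invariant subset, and IN-pairs of a subsystem are IN-pairs of the ambient system), the law-of-large-numbers threshold plus Markov's inequality legitimately produces a uniform $\delta>1/2$, the averaging argument yields, for each colouring $\phi\in\{1,2\}^J$, a point of $X^n$ matching $\phi$ on at least $\delta|J|$ coordinates, and the Hamming-ball count $\sum_{j\le(1-\delta)|J|}\binom{|J|}{j}\le 2^{H(\delta)|J|}$ with $H(\delta)<1$ does force $N_S(\mathcal{U})\ge 2^{(1-H(\delta))|J|}$, so Lemma~\ref{lem:combinatorial} applies and gives honest independence sets for $(U_1,U_2)$ of size at least $c|J|$. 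The amplification is indeed essential here, since the count is useless for $\delta\le 1/2$.

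The step that is not justified as written is the conversion of ``$(U_1,U_2)$ has arbitrarily large finite independence sets in $X^n$'' into ``there is a non-diagonal IN-pair of $X^n$.'' Closedness of $\IN_2(X^n,G)$ does not do this: you have large independence sets for a single fixed pair of open sets, not a sequence of IN-pairs to pass to a limit, and nothing yet pins down two specific points. What is needed is the localization fact that a pair of closed sets admitting arbitrarily large finite independence sets contains an IN-pair with coordinates in those sets (the IN-analogue of the statement for IE-tuples and positive density; it is part of the package of basic IN properties in \cite{KerrLi2007}, proved by subdividing the sets and re-running the combinatorial lemma, together with a nested-compactness argument). With $\overline{U_1}\cap\overline{U_2}=\emptyset$ this yields the non-diagonal IN-pair, after which your projection step via the factor-image property of IN-pairs is fine. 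Alternatively you can bypass IN-pairs of $X^n$ entirely: arbitrarily large finite independence sets for $(U_1,U_2)$ with disjoint closures directly give positive topological sequence entropy of the diagonal action on $X^n$ for the cover $\{\overline{U_1}^{\,\complement},\overline{U_2}^{\,\complement}\}$ along a suitable sequence, and sequence entropy along a fixed sequence is subadditive over finite products, so $\htop^*(G\curvearrowright X^n)>0$ forces $\htop^*(\act)>0$. Either patch completes your argument.
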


We also have the following result. 

 \begin{theorem}
 \label{thm:infty}
 
      Let $\act$ be an action. Then $\htop^*(\act)>0$ if and only if $\htop^*(G \curvearrowright\mathcal{M}(X))>0$ if and only if $\htop^*(G \curvearrowright\mathcal{M}(X))=\infty$. 
 \end{theorem}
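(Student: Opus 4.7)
The plan is to reduce the theorem to the combination of Kerr and Li's Theorem \ref{thm:induced} and Huang and Ye's gap theorem (Theorem \ref{thm:seqentr}). Kerr and Li already give $\htop^*(\act)>0 \Leftrightarrow \htop^*(G\curvearrowright\mathcal{M}(X))>0$, so the only new content is the dichotomy step: once the induced action has positive topological sequence entropy, it must be infinite. By Theorem \ref{thm:seqentr} the value of $\htop^{*}$ is always of the form $\log n$ for $n\in\N\cup\{\infty\}$, and $\htop^*(\act)\ge\log k$ is equivalent to the existence of IN-$k$-tuples outside $\Delta^2_k$. Hence it suffices to produce, for every integer $k\ge 2$, an IN-$k$-tuple of pairwise distinct points in $\mathcal{M}(X)$ for the induced action $G\curvearrowright\mathcal{M}(X)$.

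Since $\htop^*(\act)>0$, Proposition \ref{prop:INpair} gives an IN-pair $(x,y)\in X^2$ with $x\ne y$. Fix $k\ge 2$ and pairwise distinct numbers $t_1,\dots,t_k\in [0,1]$, and set $\mu_i=t_i\delta_x+(1-t_i)\delta_y\in\mathcal{M}(X)$. I would show that $(\mu_1,\dots,\mu_k)\in\IN_k(\mathcal{M}(X),G)$. Given a weak-$\ast$ neighborhood of $(\mu_1,\dots,\mu_k)$ specified by continuous functions $f_1,\dots,f_N\in C(X)$ and tolerance $\varepsilon>0$, choose disjoint open neighborhoods $V$ of $x$ and $W$ of $y$ on which each $f_j$ oscillates by at most $\varepsilon/2$. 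Since $(x,y)$ is an IN-pair of $\act$, $(V,W)$ admits arbitrarily large finite independence sets $J\subset G$. For each such $J$ and each assignment $\phi\colon J\to\{1,\dots,k\}$, select, for every $\psi\colon J\to\{1,2\}$, a witness $z_\psi\in X$ with $sz_\psi\in V$ when $\psi(s)=1$ and $sz_\psi\in W$ when $\psi(s)=2$, and form the convex combination
\[
\nu=\sum_{\psi\in\{1,2\}^J}c_\psi\,\delta_{z_\psi},\qquad c_\psi=\prod_{s\in J}p_s(\psi(s)),
\]
where $p_s(1)=t_{\phi(s)}$ and $p_s(2)=1-t_{\phi(s)}$.

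The crux of the argument is that the product form of the coefficients $c_\psi$ decouples the $|J|$ constraints: for every $s\in J$ the marginals collapse to $\sum_{\psi(s)=1}c_\psi=p_s(1)=t_{\phi(s)}$ and $\sum_{\psi(s)=2}c_\psi=1-t_{\phi(s)}$, so combined with the $\varepsilon/2$ oscillation bound one gets $|\int f_j\,d(s\nu)-\int f_j\,d\mu_{\phi(s)}|<\varepsilon/2$ for every $s\in J$ and every $j\le N$. Hence $s\nu$ lies in the prescribed neighborhood of $\mu_{\phi(s)}$ for every $s\in J$, so $J$ is an independence set for $(U_1,\dots,U_k)$ in the induced action. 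Since $|J|$ may be taken arbitrarily large, $(\mu_1,\dots,\mu_k)$ is the desired IN-$k$-tuple outside $\Delta^2_k(\mathcal{M}(X))$, and Theorem \ref{thm:seqentr} yields $\htop^*(G\curvearrowright\mathcal{M}(X))\ge\log k$ for every $k\ge 2$. The main obstacle is precisely producing a single measure $\nu$ whose translate $s\nu$ is weak-$\ast$ close to $\mu_{\phi(s)}$ for every $s\in J$ simultaneously: the IN-pair property only controls each orbit $(sz_\psi)_{s\in J}$ individually, and the product-measure choice of $c_\psi$ is the combinatorial trick that makes all $|J|$ one-dimensional marginals of $\nu$ fit the prescribed convex combinations at once.
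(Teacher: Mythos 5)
Your proof is correct and takes essentially the same route as the paper: the first equivalence is quoted from Theorem \ref{thm:induced}, then an IN-pair $(x,y)$ of $\act$ is upgraded, for every $k$, to an IN-tuple of pairwise distinct convex combinations of $\delta_x$ and $\delta_y$ in $\mathcal{M}(X)$, and Theorem \ref{thm:seqentr} gives $\htop^*(G\curvearrowright\mathcal{M}(X))=\infty$. The only difference is in the bookkeeping of the witness measure: the paper hits the rational weights $\phi(g)/k$ exactly with a uniform average of $k$ witness points (choosing the colorings $\phi_i$ so that exactly $\phi(g)$ of them select $U_x$ at each $g\in J$), while you hit arbitrary weights $t_{\phi(s)}$ exactly with a product-weighted combination of $2^{|J|}$ witness points; both constructions are valid.
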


 \begin{proof}
The first equivalence is Theorem \ref{thm:induced}. 

Assume that $\htop^*(\act)>0$. There exists $(x,y)\in \IN_2(X,G)\setminus \Delta_2(X)$ (Proposition \ref{prop:INpair}). Let $k\in \N$. We define $\mu_i=\frac{i\delta_x+(k-i)\delta_y}{k}$ for every $0\leq i \leq k$ (where $\delta_x$ is the Dirac measure supported on $\{x\}$). 

Let $U_0\times \cdots \times U_k$ an open product neighbourhood of $(\mu_0,\mu_1,\dots,\mu_k)$. Pick $U_x,U_y$ disjoint open neighbourhoods of $x,y$ sufficiently small so the following condition is satisfied for every $0\leq i\leq k$: if $p_1,...,p_i\in U_x$ and $p_{i+1},...,p_k\in U_y$ then the measure $\frac{\sum_{j=1}^{k}\delta_{p_j}}{k}\in U_i$.

Since $(x,y)\in \IN_2(X,G)\setminus \Delta_2(X)$, there exists an independence set, $J$, for $(U_x,U_y)$ of size $n$. Let $\phi:J\rightarrow \{0,\dots,k\}$. There exist a set of functions $\phi_{i}:J\rightarrow \{x,y\}$ for $1\leq i\leq k$ such that $$|\{1\leq i\leq k: \phi_{i}(g)=x\}|=\phi(g)$$ for every $g\in J$.
Since $J$ is an independence set for $(U_x,U_y)$, for every $1\leq i\leq k$ there exists $z_i\in X$ such that $gz_i\in U_{\phi_i(g)}$ for every $g\in J$. 

Let $\mathbf{z}=\{z_1,\dots,z_k\}$. We have that 
\begin{gather*}
    |\{i\in\{1,...,k\}: gz_i\in U_x\}|= \phi (g) 
\text{, and} \\
     |\{i\in\{1,...,k\}: gz_i\in U_y\}|= k-\phi(g)
     ,
\end{gather*}
for every $g\in J$.

Let $\mu_{\mathbf{z}}=\frac{\sum_{i=1}^{k}\delta_{z_i}}{k}$. Using the condition for $U_x$ and $U_y$ we have that $g\mu_{\mathbf{z}}\in U_{\phi (g)}$ for every $g\in J$. Thus $J$ is an independence set for $(U_0,\dots,U_k)$ and $(\mu_0,\mu_1,\dots,\mu_k)\in \IN_{k+1}(\mathcal{M}(X),G)$. 

Using Theorem \ref{thm:seqentr} we conclude the result. 
 \end{proof}

The next result follows from \cite[Proposition 6.6]{KerrLi2007}.
 \begin{theorem}
 [Kerr and Li \cite{KerrLi2007}]
 Let $\act$ be an action. Then $\act$ is not tame if and only if $G \curvearrowright\mathcal{M}(X)$ is not tame.   
 \end{theorem}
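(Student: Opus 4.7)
The plan is to use the Kerr--Li characterization recalled earlier in this section: $\act$ is tame if and only if $\IT_2(X, G) \subset \Delta_2(X)$. Thus the theorem reduces to showing that $X$ admits a non-diagonal IT-pair if and only if $\mathcal{M}(X)$ does. This parallels the IN-tuple strategy used in the proof of Theorem~\ref{thm:infty}.

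For the forward implication, I would show that if $(x, y) \in \IT_2(X, G) \setminus \Delta_2(X)$, then $(\delta_x, \delta_y)$ is a non-diagonal IT-pair in $\mathcal{M}(X)$. A product neighborhood $U \times V$ of $(\delta_x, \delta_y)$ contains a smaller one of the form $\{\delta_z : z \in U_X\} \times \{\delta_z : z \in V_X\}$ for some open $U_X \ni x$, $V_X \ni y$ in $X$, by continuity of $x \mapsto \delta_x$. Any infinite independence set $J \subset G$ for $(U_X, V_X)$ (which exists by the IT-pair property) also works for $(U, V)$: given $\omega \colon J \to \{0, 1\}$ with witness $z_\omega \in X$, the Dirac measure $\delta_{z_\omega}$ satisfies $g\delta_{z_\omega} = \delta_{gz_\omega}$, which lies in $U$ or $V$ according to $\omega(g)$.

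For the reverse implication, suppose $(\mu, \nu) \in \IT_2(\mathcal{M}(X), G) \setminus \Delta_2(\mathcal{M}(X))$. Pick $f \in C(X)$ with $\int f\, d\mu < a < b < \int f\, d\nu$ for some $a < b$, and set $U = \{\tau : \int f\, d\tau < a\}$, $V = \{\tau : \int f\, d\tau > b\}$. There is an infinite independence set $J \subset G$ for $(U, V)$, so for each $\omega \colon J \to \{0, 1\}$ there exists $\tau_\omega \in \mathcal{M}(X)$ with $\int f(gx)\, d\tau_\omega(x) < a$ if $\omega(g) = 0$ and $> b$ otherwise. Defining $f_g(x) = f(gx)$, the bounded sequence $(f_g)_{g \in J}$ in $C(X)$ can have no weakly Cauchy subsequence, as witnessed by choosing $\omega$ to alternate signs along any candidate subsequence and integrating against the corresponding $\tau_\omega$. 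Rosenthal's $\ell_1$-theorem then yields a subsequence $(f_{g_n})$ equivalent to the canonical $\ell_1$-basis, which by the standard geometric characterization of $\ell_1$-sequences in $C(X)$ gives constants $a' < b'$ such that $\{g_n\}$ is an infinite independence set for the pair of closed sets $(\{f \le a'\}, \{f \ge b'\})$ in $X$. A standard compactness argument (taking limits of the witness points) then produces a non-diagonal IT-pair $(x^*, y^*)$ in $X$ with $f(x^*) \le a'$ and $f(y^*) \ge b'$.

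The main obstacle lies in the reverse direction: passing from ``measure-level'' combinatorial independence in $\mathcal{M}(X)$ to ``point-level'' combinatorial independence in $X$. Rosenthal's $\ell_1$-theorem is the natural tool, converting the absence of weakly Cauchy subsequences in the orbit $(f_g)_{g \in J}$ into an $\ell_1$-basis type independence in $C(X)$, which in turn geometrically translates to combinatorial independence of $G$-translates of the level sets of $f$. Once this translation is secured, extracting an honest IT-pair in $X$ from the independence set is routine.
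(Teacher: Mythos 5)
Your overall route is the right one and is essentially the argument behind the result the paper cites (the survey itself gives no proof, only the reference to Kerr--Li): reduce tameness to the presence of non-diagonal IT-pairs, get the easy direction from the equivariant embedding $x\mapsto\delta_x$, and for the hard direction convert an IT-pair $(\mu,\nu)$ in $\mathcal{M}(X)$ into $\ell_1$-behaviour of the translates $f_g$ of a separating $f\in C(X)$ via Rosenthal's theorem. One small repair is needed where you assert that for \emph{every} $\omega\colon J\to\{0,1\}$ there is a single $\tau_\omega$ realizing the whole infinite pattern: the definition of an independence set only provides witnesses for finite sub-patterns, and since $U=\{\tau:\int f\,d\tau<a\}$ and $V=\{\tau:\int f\,d\tau>b\}$ are open you cannot pass to a limit inside them. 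The fix is routine: replace them by the closed sets $\{\tau:\int f\,d\tau\le a\}$ and $\{\tau:\int f\,d\tau\ge b\}$, take a weak$^*$ cluster point of the finite-pattern witnesses (using continuity of $\tau\mapsto g\tau$), and the contradiction with weak Cauchyness survives because $a<b$.

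The genuine gap is the last step. ``Taking limits of the witness points'' does not produce an IT-pair: the witness $z_\omega$ depends on the pattern $\omega$, and a limit of witnesses inherits no independence property for arbitrarily small neighborhoods of the limit. What you actually need is the standard fact (due to Kerr and Li, and it is precisely the engine behind the equivalence ``tame iff $\IT_2(X,G)\subset\Delta_2(X)$'' quoted in this survey) that if a pair of closed subsets $(A_0,A_1)$ of $X$ has an infinite independence set, then there exists an IT-pair $(x^*,y^*)\in A_0\times A_1$. Its proof is not a naive limit but an iterated refinement: a Ramsey-type splitting lemma showing that if $A_0=B\cup C$ then $(B,A_1)$ or $(C,A_1)$ still admits an infinite independence set, applied to finer and finer closed covers, followed by a nested-compactness argument. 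If you cite or prove that lemma, your chain $(\mu,\nu)\ \Rightarrow$ level-set independence of $(\{f\le a'\},\{f\ge b'\})$ along $\{g_n\}$ $\Rightarrow$ non-diagonal IT-pair in $X$ is complete; as literally written, the extraction step would fail.
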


  \begin{theorem}
 \label{thm:infty2}
 Let $\act$ be an action. Then $\act$ is not tame if and only if $G \curvearrowright\mathcal{M}(X)$ is not tame if and only if $\IT_k(\mathcal{M}(X),G)\setminus \Delta_k^2(\mathcal{M}(X))\neq \emptyset$ for every $k\geq 2$.   
 \end{theorem}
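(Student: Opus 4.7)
The plan is to reduce to the preceding theorem for the first equivalence, to observe that the third condition trivially implies the second (take $k=2$ and recall that non-diagonal IT-pairs characterize non-tameness), and then to prove the remaining implication from non-tameness of $\act$ to existence of non-diagonal IT-tuples in $\mathcal{M}(X)$ of every order by imitating the proof of Theorem \ref{thm:infty}, replacing IN-pairs with IT-pairs.

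Concretely, since $\act$ is not tame, there exists $(x,y)\in \IT_2(X,G)\setminus \Delta_2(X)$. For each $k\geq 2$ I would form the $k+1$ pairwise distinct measures $\mu_i=\frac{i\delta_x+(k-i)\delta_y}{k}$ for $0\leq i\leq k$ and aim to show that $(\mu_0,\dots,\mu_k)\in \IT_{k+1}(\mathcal{M}(X),G)$, which after projecting onto any $k$ coordinates would give the required element of $\IT_k(\mathcal{M}(X),G)\setminus \Delta_k^2(\mathcal{M}(X))$. Given a product neighborhood $U_0\times\cdots\times U_k$ of this tuple, I would choose disjoint open neighborhoods $U_x,U_y$ of $x,y$ small enough that any average $\frac{1}{k}\sum_{j=1}^k\delta_{p_j}$ with exactly $i$ of the $p_j$'s in $U_x$ and $k-i$ in $U_y$ lies in $U_i$, just as in the proof of Theorem \ref{thm:infty}.

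The key step, and the only real difference from the IN-case, is the upgrade to an infinite independence set. Since $(x,y)$ is an IT-pair, I can take an infinite $J\subset G$ that is an independence set for $(U_x,U_y)$. I would then argue that this same $J$ is an independence set for $(U_0,\dots,U_k)$: for any finite $I\subset J$ and any $\phi\colon I\to\{0,\dots,k\}$, I would pick functions $\phi_i\colon I\to\{x,y\}$ for $1\leq i\leq k$ whose pointwise counts satisfy $|\{i:\phi_i(g)=x\}|=\phi(g)$, then use that $I$ (being a finite subset of $J$) is still an independence set for $(U_x,U_y)$ to produce $z_i\in X$ with $gz_i\in U_{\phi_i(g)}$ for all $g\in I$, and finally take $\mu=\frac{1}{k}\sum_{i=1}^k\delta_{z_i}$. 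The neighborhood condition on $U_x,U_y$ then forces $g\mu\in U_{\phi(g)}$ for every $g\in I$, as required. The main subtlety to watch is simply that the $z_i$ are allowed to depend on $\phi$ (equivalently, on the finite subset $I$), which is exactly what the definition of an independence set in Definition \ref{D-IT-pair} permits, so the argument goes through without any genuinely new difficulty beyond the one already handled for IN-tuples.
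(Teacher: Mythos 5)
Your proposal is correct and follows exactly the route the paper intends: the paper's proof of Theorem \ref{thm:infty2} is just the remark that it ``follows the same lines as the proof of Theorem \ref{thm:infty}'', and your adaptation (replacing the IN-pair by an IT-pair, noting that the same infinite independence set $J$ for $(U_x,U_y)$ works for $(U_0,\dots,U_k)$ since the points $z_i$ may depend on the finite subset $I$ and the map $\phi$, and closing the cycle of implications via the Kerr--Li characterization of tameness and the preceding theorem) is precisely the intended argument, with the details filled in correctly.
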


 \begin{proof}
     The proof follows the same lines as the proof of Theorem \ref{thm:infty}. 
 \end{proof}

The following result was proved for $G=\Z$ in \cite[Main Theorem]{burguet2022topological} and for amenable groups in \cite[Main Theorem]{shi2023mean}.
\begin{theorem}
 [Burguet and Shi \cite{burguet2022topological}, Shi and Zhang \cite{shi2023mean}]
 Let $G$ be an amenable group and $\act$ an action. Then $\act$ has positive topological entropy if and only if $G \curvearrowright \mathcal{M}(X)$ has positive mean dimension if and only if $G \curvearrowright \mathcal{M}(X)$  has infinite mean dimension.     
 \end{theorem}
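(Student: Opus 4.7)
The plan is to prove the three-way equivalence by splitting it into the trivial implication (infinite mean dimension $\Rightarrow$ positive mean dimension) and the two substantive directions: (A) $\htop(\act)>0 \Rightarrow \mdim(G\curvearrowright\mathcal{M}(X))=\infty$, and (B) $\mdim(G\curvearrowright\mathcal{M}(X))>0 \Rightarrow \htop(\act)>0$. The work is concentrated in (A), which upgrades the IE-tuple construction of Theorem~\ref{thm:infty} from entropy to mean dimension, and (B), which is the contrapositive direction and will be the main obstacle.

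For direction (A), I would begin by invoking Proposition~\ref{prop:IE pair} to fix an IE-pair $(x,y)\in \IE_2(X,G)\setminus \Delta_2(X)$, choosing disjoint open neighborhoods $U_x\ni x$, $U_y\ni y$ with positive independence density $q>0$. For each integer $N\geq 1$, set $\nu_i = \tfrac{i}{N}\delta_x+\tfrac{N-i}{N}\delta_y$ for $0\leq i\leq N$ and pick pairwise disjoint closed neighborhoods $K_i$ of $\nu_i$, together with small enough open neighborhoods $V_i\subset K_i^c{}^c$ so that every atomic measure of the form $\tfrac1N\sum_{j=1}^N\delta_{p_j}$ with exactly $i$ of the $p_j$ in $U_x$ and the rest in $U_y$ lies in $V_i$. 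The argument from the proof of Theorem~\ref{thm:infty} then shows that for any $F\Subset G$ and any independence set $J\subset F$ for $(U_x,U_y)$ of size at least $q|F|$, and any $\phi\colon J\to\{0,\dots,N\}$, there exists an atomic measure $\mu_\phi\in\mathcal{M}(X)$ supported on at most $N$ points with $g\mu_\phi\in V_{\phi(g)}$ for all $g\in J$.

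The decisive new step, which has no analogue in the entropy proof, is to convert this combinatorial independence statement into a lower bound on $\mathcal{D}(\U^F)$ for the cover $\U = \{V_0^c,\dots,V_N^c\}$ of $\mathcal{M}(X)$. Here I would invoke a widim-type argument: the family of atomic measures $\{\mu_\phi:\phi\in\{0,\dots,N\}^J\}$ can be continuously interpolated through the affine structure of $\mathcal{M}(X)$ to produce a continuous map from $[0,1]^{|J|}$ into $\mathcal{M}(X)$ whose $F$-orbit, read through $\U^F$, has Lebesgue covering dimension at least $\lfloor cN|J|\rfloor$ for some $c>0$ independent of $N$ and $F$. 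By definition of $\mathcal{D}$, this forces $\mathcal{D}(\U^F)\geq c N q |F|$ for Følner $F$, whence $\mdim(G\curvearrowright\mathcal{M}(X),\U)\geq cNq$. Letting $N\to\infty$ gives infinite mean dimension. The passage from pointwise independence to a genuinely $N$-parameter continuous embedding (using the convexity of $\mathcal{M}(X)$) is the main technical maneuver and aligns with the philosophy behind Theorem~\ref{cor:zmd}.

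For the converse (B), I would argue by contraposition: assume $\htop(\act)=0$. By Proposition~\ref{prop:IE pair}, all IE-pairs of $\act$ are diagonal, and by Glasner--Weiss (Theorem~\ref{thm:induced}) one also has $\htop(G\curvearrowright\mathcal{M}(X))=0$. The goal is then to show that any finite open cover $\U$ of $\mathcal{M}(X)$ satisfies $\mathcal{D}(\U^{F_n})/|F_n|\to 0$. The plan is to approximate an arbitrary $\mu\in\mathcal{M}(X)$ by atomic measures supported on a bounded number of points and reduce $\mathcal{D}(\U^F)$ on $\mathcal{M}(X)$ to a dimension estimate on configurations in $X^k/S_k$, then use zero entropy of $\act$ to bound the latter sublinearly via a Kerr--Li-type independence density argument. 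The principal obstacle is quantitative: zero entropy of $X$ only gives subexponential growth of orbit complexity, and one must upgrade this to the sublinear dimensional growth required for zero mean dimension of $\mathcal{M}(X)$. I expect the cleanest route to pass through a variational/metric mean dimension reformulation, combined with an amenable-group version of the Bauer--Sigmund/Glasner--Weiss technique, and this is where the heaviest lifting of the proof lies.
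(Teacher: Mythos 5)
You should first note that the survey does not actually prove this theorem; it only cites Burguet--Shi \cite{burguet2022topological} and Shi--Zhang \cite{shi2023mean}, so your argument has to stand on its own, and it does not.

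The fatal problem is in your direction (A), precisely at the step you call decisive. Your cover $\mathcal{U}=\{V_0^{\complement},\dots,V_N^{\complement}\}$ is built from neighborhoods of the $N+1$ measures $\nu_i=\tfrac{i}{N}\delta_x+\tfrac{N-i}{N}\delta_y$, which all lie on a single segment in $\mathcal{M}(X)$ and are separated by the single continuous observable $\mu\mapsto\int f\,\dd\mu$ for a continuous $f$ with $f=1$ on $U_x$ and $f=0$ on $U_y$ (or by a bounded number of such test functions). Hence $\mathcal{U}$ is pulled back from a finite open cover of $[0,1]$ under this observable, $\mathcal{U}^F$ is pulled back from a cover of $[0,1]^F$, and therefore $\mathcal{D}(\mathcal{U}^F)\le \dim([0,1]^F)=|F|$, so that $\mdim(G\curvearrowright\mathcal{M}(X),\mathcal{U})\le 1$ no matter which family of measures you embed. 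Your claimed bound $\mathcal{D}(\mathcal{U}^F)\ge cNq|F|$ is thus false as soon as $cNq>1$. This is exactly where entropy and mean dimension part ways: refining a one-dimensional observable into $N+1$ levels multiplies entropy (this is how Glasner--Weiss \cite{glasner1995quasi} get infinite entropy of the induced system, and how Theorem \ref{thm:infty} produces long IN-tuples), but it can never push mean dimension above $1$. Relatedly, a map from $[0,1]^{|J|}$ cannot certify $\mathcal{D}\ge cN|J|$ by any widim-type argument. What your construction does prove is $\mdim(G\curvearrowright\mathcal{M}(X))\ge q>0$, i.e.\ \emph{positive} mean dimension; to get \emph{infinite} mean dimension one needs, for each $m$, a cover of $\mathcal{M}(X)$ resolving an $m$-dimensional observable per time step (say the joint vector $(\mu(D_1),\dots,\mu(D_m))$ for sets $D_1,\dots,D_m$ carrying a joint independence structure over $F$), and manufacturing such structures from positive entropy is the actual content of \cite{burguet2022topological,shi2023mean}; it is absent from your sketch.

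Your direction (B) is only a plan with an obstacle you yourself flag as unresolved, so as written it is also a gap --- but here you are overcomplicating. By contraposition, if $\htop(\act)=0$ then the amenable-group version of the Glasner--Weiss theorem gives $\htop(G\curvearrowright\mathcal{M}(X))=0$, and one concludes immediately from the general fact that an action with finite topological entropy has zero mean dimension (Lindenstrauss--Weiss \cite{lindenstrauss2000mean} for $\ZZ$, with standard amenable analogues). No approximation by atomic measures, no passage to $X^k/S_k$, and no quantitative upgrade from subexponential complexity is needed.
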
 


\section{Measure tuples}
Let $G \curvearrowright X$ be an action. A Borel probability measure $\mu$ on $X$ is \textbf{$G$-invariant} if for every Borel subset $A \subset X$ and $g \in G$, we have $\mu(A) = \mu(g^{-1}A)$. In this case we say that $G \curvearrowright (X, \mu)$ is a \textbf{probability measure-preserving (p.m.p.) action}.
We denote the space of all $G$-invariant Borel probability measures by $\mathcal{M}(\act)$.


Let $G \curvearrowright (X,\mu)$ be a p.m.p. action. For a finite partition $\mathcal{P}$ of $X$ consisting of Borel sets the \textbf{Shannon entropy} of $\mathcal{P}$ with respect to $\mu$ is given by \[ H_{\mu}(\mathcal{P}) = \sum_{A \in \mathcal{P}} -\mu(A)\log \mu(A). \]
For an amenable group $G$, the \textbf{measure-theoretical entropy of $G \curvearrowright (X,\mu)$ with respect to $\mathcal{P}$} is given by\[ h_{\mu}(G\curvearrowright X, \mathcal{P}) = \lim_{n\to\infty}\frac{1}{\left\vert
	F_{n}\right\vert }\log H_{\mu}(\bigvee_{g \in F_n} g^{-1}\mathcal{P}), \]
where $\{F_n\}_{n \in \NN}$ is a F\o lner sequence. The \textbf{measure-theoretical entropy} of $G\curvearrowright (X,\mu)$ is the supremum of $h_{\mu}(G\curvearrowright X, \mathcal{P})$ taken over all finite Borel partitions: \[h_{\mu}(G\curvearrowright X) = \sup_{\mathcal{P} \mbox{ finite}}h_{\mu}(G\curvearrowright X, \mathcal{P}).\]

The variational principle relates the topological entropy with the measure-theoretical entropy through the following formula \cite[Theorem 5.2.7]{Ollagnier1985book}.

\begin{theorem}
	Let $G$ be an amenable group and $G \curvearrowright X$ an action. We have\[
	\htop(G \curvearrowright X) = \sup_{\mu \in \mathcal{M}(G\curvearrowright X)}h_{\mu}(G\curvearrowright X).
	\]
	
\end{theorem}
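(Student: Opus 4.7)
The plan is to prove both inequalities of the variational principle separately, adapting Misiurewicz's classical two-part strategy to the setting of amenable group actions.

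For the inequality $h_\mu(\act) \leq \htop(\act)$ for each $\mu \in \mathcal{M}(\act)$, I would fix a finite Borel partition $\mathcal{P} = \{P_1, \ldots, P_k\}$ and, for arbitrary $\delta > 0$, use regularity of $\mu$ to pick compact sets $K_i \subset P_i$ with $\mu(P_i \setminus K_i) < \delta$. Defining the open cover $\mathcal{U} = \{U_1, \ldots, U_k\}$ by $U_i = X \setminus \bigcup_{j \neq i} K_j$ (a cover precisely because the $K_j$'s are pairwise disjoint), a standard concavity-of-Shannon-entropy estimate gives
\[
H_\mu(\mathcal{P}^{F_n}) \leq \log N(\mathcal{U}^{F_n}) + C(\delta)|F_n|,
\]
with $C(\delta) \to 0$ as $\delta \to 0$. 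Dividing by $|F_n|$ and letting $n \to \infty$ followed by $\delta \to 0$ yields $h_\mu(\act, \mathcal{P}) \leq \htop(\act, \mathcal{U}) \leq \htop(\act)$, and supremizing over $\mathcal{P}$ finishes this direction.

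For the reverse inequality, fix $\eta > 0$. Using the standard equivalence between the cover-based and the $(F_n,\varepsilon)$-separated definitions of topological entropy, choose $\varepsilon > 0$ and maximal $(F_n,\varepsilon)$-separated sets $E_n$ with $\limsup_n |F_n|^{-1} \log |E_n| \geq \htop(\act) - \eta$. Form empirical measures $\nu_n = |E_n|^{-1} \sum_{x \in E_n} \delta_x$ and their Følner averages $\mu_n = |F_n|^{-1} \sum_{g \in F_n} g_* \nu_n$, and extract a weak-$*$ subsequential limit $\mu$; the Følner property of $\{F_n\}$ guarantees $\mu \in \mathcal{M}(\act)$. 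Select a finite Borel partition $\mathcal{P}$ with atoms of diameter $< \varepsilon$ and $\mu$-null boundaries; then $(F_n,\varepsilon)$-separation forces each atom of $\mathcal{P}^{F_n}$ to contain at most one point of $E_n$, so $H_{\nu_n}(\mathcal{P}^{F_n}) = \log |E_n|$. For an auxiliary $F \Subset G$, combine concavity of Shannon entropy with a quasi-tiling of $F_n$ by translates of $F$ (using $(K,\delta)$-invariance) to obtain
\[
\frac{1}{|F|} H_{\mu_n}(\mathcal{P}^F) \geq \frac{1}{|F_n|} H_{\nu_n}(\mathcal{P}^{F_n}) - \epsilon_n,
\]
with $\epsilon_n \to 0$. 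Passing to the weak-$*$ limit (the $\mu$-null boundary condition transfers Shannon entropy through the limit) and then letting $F$ run through the Følner sequence yields $h_\mu(\act, \mathcal{P}) \geq \htop(\act) - \eta$; since $\eta$ was arbitrary this gives $\sup_\mu h_\mu(\act) \geq \htop(\act)$.

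The main obstacle is the amenable averaging step in the second half. In the $\mathbb{Z}$-setting, the subadditivity of Shannon entropy over intervals $[0, n-1]$ makes the transfer from the empirical $\nu_n$ to the invariant limit $\mu$ almost automatic; for a general amenable $G$, this step requires an honest Ornstein--Weiss-type quasi-tiling of the Følner sets $F_n$ by shifted copies of a smaller Følner set $F$ (or an equivalent exploitation of $(K,\delta)$-invariance), together with careful accounting of the boundary loss. This is the technical core of the argument; once it is in place, the rest of the proof is a routine adaptation of Misiurewicz's original reasoning.
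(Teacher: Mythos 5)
The paper does not prove this statement at all: it is quoted as a known result with a citation to Moulin Ollagnier's book (Theorem 5.2.7 there), so there is no in-paper argument to compare against. Your sketch is the classical Misiurewicz two-inequality scheme adapted to F\o lner sequences, which is essentially the route taken in the cited literature (and in \cite[Chapter 9]{KerrLiBook2016}), so in substance you are reproducing the standard proof rather than a new one.

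Two remarks on the places where your sketch is thinnest. In the first half, the crude counting bound (each element of $\mathcal{U}^{F_n}$ meets at most two elements of the auxiliary partition $\mathcal{Q}=\{K_1,\dots,K_k, B_0\}$ in each coordinate) gives an additive error of $\log 2$ per group element, not a $C(\delta)$ that tends to $0$; for $G=\ZZ$ one removes it with the power trick, which is unavailable for general amenable $G$, so to justify $C(\delta)\to 0$ you should condition on the partition $\{B_0, X\setminus B_0\}$ coordinatewise: this yields $H_\mu(\mathcal{Q}^{F})\le \log N(\mathcal{U}^{F})+|F|\,H(\mu(B_0))$ and then $H_\mu(\mathcal{P}^{F})\le H_\mu(\mathcal{Q}^{F})+|F|\,H_\mu(\mathcal{P}\mid\mathcal{Q})$, both error terms vanishing as $\delta\to 0$. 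In the second half, the transfer from $\nu_n$ to the F\o lner average $\mu_n$ does not require honest Ornstein--Weiss quasi-tilings: the standard elementary lemma bounds $H_{\nu_n}(\mathcal{P}^{F_n})$ by $\frac{1}{|F|}\sum_{g\in F_n}H_{\nu_n}(\mathcal{P}^{Fg})$ plus a term controlled by $|\{g\in F_n : Fg\not\subset F_n\}|\log|\mathcal{P}|$, and then concavity of Shannon entropy plus $(FF^{-1},\delta)$-invariance of $F_n$ gives exactly your displayed inequality with $\epsilon_n\to 0$; this is what your parenthetical ``equivalent exploitation of $(K,\delta)$-invariance'' should be unpacked into. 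With those two points made explicit, the argument is complete and agrees with the proofs in the sources the paper cites.
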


\begin{remark}
 The theory of measure-theoretical entropy for actions of sofic groups is well developed but we will not use it in this subsection. See  \cite{Bowen2010_2,KerrLi2011,kerr2013sofic} for more information.  
\end{remark}

Let $G\curvearrowright X$ be an action, $\mu$ a $G$-invariant Borel probability measure,
$S=\{s_i\}_{i\in \N}$ a sequence in $G$
and $\mathcal{P}$ a finite Borel partition of $X$.
The \textbf{sequence entropy of $G\curvearrowright (X,\mu)$ along $S$ with respect to $\mathcal{P}$} is defined by
\[h_\mu^S(\act,\mathcal{P})=\limsup \frac{1}{n}
H_\mu\left( \bigvee_{i=0}^{n-1} T^{-s_i}\mathcal{P} \right).\]
The \textbf{sequence entropy of $G\curvearrowright (X,\mu)$ along $S$} is
\[h_\mu^S(\act)=\sup_{\mathcal{P}} h_\mu^S(\act,\mathcal{P}),\]
where the supremum is taken over all finite Borel partitions $\mathcal{P}$.

\subsection{Measure entropy tuples}

Measure entropy pairs were introduced by Blanchard, Host, Maass, Martinez and Rudolph \cite{BlanchardHostMaassMartinezRudolph1995}. Measure entropy tuples are a natural generalization.
\begin{definition}
    Let $G$ be an amenable group, $\act$ an action, $\mu\in \mathcal{M}(\act)$, $k\geq 2$ and $(x_1,\dots,x_k)\in X^k\setminus \Delta_k(X)$. We say $(x_1,\dots,x_k)$ is a \textbf{$\mu$-entropy tuple} if for every finite Borel partition $\mathcal{P}$ such that $\{x_1,\dots,x_k\}\not\subset \overline{P}$ for every $P\in \mathcal{P}$, we have that 
    \[
    h_{\mu}(\act,\mathcal{P})>0.
    \]
\end{definition}

Combinatorial independence can also be used to study $\mu$-entropy pairs.

We denote the family of Borel subsets of $X$ by $\mathcal{B} (X)$. For $\delta > 0$ denote by $\mathcal{B} (\mu , \delta )$ the collection of all
Borel subsets $D$ of $X$ such that $\mu (D) \geq 1 - \delta$, and
by $\mathcal{B}' (\mu , \delta )$ the collection of all maps
$D : G\to\mathcal{B} (X)$ such that $\inf_{s\in G} \mu (D_s ) \geq 1-\delta$.

\begin{definition}\label{D-indset}
Let $\act$ be an action, $\oA = (A_1 ,\dots ,A_k )$ a
tuple of subsets of $X$, and $D$ be a map from $G$ to the power set $2^X$
of $X$, with the image of $s\in G$ written as $D_s$. We say that a set $J\subset G$ is
an {\bf independence set for $\oA$ relative to $D$} if for every $F\Subset J$ and map $\phi : F\rightarrow \{ 1,\dots ,k \}$ we have
$\bigcap_{s\in F} (D_s \cap s^{-1} A_{\phi (s)} ) \neq\emptyset$.
For a subset $D\subset X$, we say that $J$ is an
{\bf independence set for $\oA$ relative to $D$} if for every  $F\Subset J$ and map $\phi : F\rightarrow \{ 1,\dots ,k \}$ we have
$D\cap \bigcap_{s\in F} s^{-1} A_{\sigma (s)} \neq\emptyset$, i.e., if $J$
is an independence set for $\oA$ relative to the map $G\to 2^X$
with constant value $D$.
\end{definition}

Let $\oA = (A_1 , \dots , A_k )$ be a tuple of subsets of $X$ and $\delta > 0$.
For every $F\Subset G$ we define
\begin{align*}
\varphi_{\oA ,\delta} (F) &= \min_{D\in \mathcal{B} (\mu , \delta )}
\max \big\{ |F\cap J| : J\text{ is an independence set for }\oA
\text{ relative to } D \big\} .
\end{align*}
The limit of $\frac{1}{|F|} \varphi_{\oA ,\delta} (F)$ 
as $F$ becomes more and more invariant
might not exist. We define $\upind_\mu (\oA , \delta )$ to be the limit supremum
of $\frac{1}{|F|} \varphi_{\oA ,\delta} (F)$ as $F$ becomes more and more invariant. We refer to the quantity
$\sup_{\delta > 0} \upind_\mu (\oA , \delta)$ as the {\bf upper $\mu$-independence density} of $\mathbf{A}$.



\begin{definition}
\label{def:muIEtuple}
Let $G\curvearrowright X$ be an action, $\mu\in \mathcal{M}(\act)$ and $k\geq 1$. We say that $\ox=(x_1,\dots,x_k)\in X^k$ is a \textbf{$\mu$-IE-tuple}
if for every product neighborhood $U_1\times \cdots \times U_k$ of $\ox$, we have that $(U_1,\dots,U_k)$ has positive upper $\mu$-independence density.
We denote the set of $\mu$-IE-tuples of length $k$ by $\IE^{\mu}_k(X,G)$.
\end{definition}
 
The following result appeared in \cite[Theorem 2.27]{KerrLi2009}. 
\begin{theorem}
[Kerr and Li \cite{KerrLi2009}]
  Let $G$ be an amenable group, $\act$ an action, $\mu\in \mathcal{M}(\act)$, $k\geq 2$ and  $(x_1,\dots, x_k) \in X^k \setminus \Delta_k(X)$. Then $(x_1,\dots, x_k)$ is a $\mu$-entropy tuple if and only if it is a ${\mu}$-IE-tuple. 
\end{theorem}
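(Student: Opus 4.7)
The plan is to mirror the topological proof of Theorem \ref{thm:IE-tuples}, replacing the combinatorial Lemma \ref{lem:combinatorial} with a measure-sensitive Sauer--Shelah type refinement that tracks the $\mu$-mass of witnesses through the distinguished large-measure sets $D\in\mathcal{B}(\mu,\delta)$.

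For the direction $\mu$-IE-tuple $\Rightarrow$ $\mu$-entropy tuple, I would fix any finite Borel partition $\mathcal{P}$ with $\{x_1,\dots,x_k\}\not\subset\overline{P}$ for every $P\in\mathcal{P}$, pick $\phi\colon\mathcal{P}\to\{1,\dots,k\}$ with $x_{\phi(P)}\notin\overline{P}$, and then build open neighborhoods $U_i$ of $x_i$ with $U_i\cap\overline{P}=\emptyset$ for every $P$ with $\phi(P)=i$. By hypothesis $(U_1,\dots,U_k)$ has positive upper $\mu$-independence density with parameters $\delta,q>0$, so for arbitrarily invariant $F\Subset G$ and \emph{every} $D\in\mathcal{B}(\mu,\delta)$ there is an independence set $J\subset F$ for $(U_1,\dots,U_k)$ relative to $D$ with $|J|\ge q|F|$. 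I would strategically take $D=D_F$ to be the union of all atoms of $\mathcal{P}^F$ whose measure exceeds a threshold $\eta$ small enough that $\mu(D_F)\ge 1-\delta$. Each map $\psi\colon J\to\{1,\dots,k\}$ provides a witness $y_\psi\in D_F$ with $sy_\psi\in U_{\psi(s)}$ for $s\in J$; and since $U_{\phi(P)}\cap P=\emptyset$, the atom of $\mathcal{P}$ containing $sy_\psi$ has $\phi$-label different from $\psi(s)$, so at most $(k-1)^{|J|}$ distinct $\psi$'s share the same $\mathcal{P}^J$-trace. This forces at least $(k/(k-1))^{|J|}$ atoms of $\mathcal{P}^F$ of measure $\ge\eta$, which by a direct Shannon inequality yields $H_\mu(\mathcal{P}^F)\ge c(k,q,\delta,\eta)|F|$, hence $h_\mu(\act,\mathcal{P})>0$.

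For the converse $\mu$-entropy tuple $\Rightarrow$ $\mu$-IE-tuple, given a product neighborhood $U_1\times\cdots\times U_k$ of $(x_1,\dots,x_k)$, I would shrink to closed sets $K_i\subset U_i$ with $x_i\in\operatorname{int}(K_i)$ and construct a finite Borel partition $\mathcal{P}$ subordinate to the open cover $\{X\setminus K_i\}$, arranged so that no atom's closure contains all $x_i$ (possible since not all $x_i$ coincide). The hypothesis gives $h_\mu(\act,\mathcal{P})>0$, so the Shannon--McMillan theorem delivers at least $e^{(h-\varepsilon)|F_n|}$ atoms of $\mathcal{P}^{F_n}$ of controlled typical measure, together covering a set of $\mu$-mass close to one. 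Interpreting this collection as a subset $S\subset\{1,\dots,|\mathcal{P}|\}^{F_n}$ and feeding it into a measure-theoretic variant of Lemma \ref{lem:combinatorial}, I would extract $W\subset F_n$ of positive density over which every labelling in $\{1,\dots,k\}^W$ is realised by a point of a set of $\mu$-mass at least $1-\delta$; this $W$ is an independence set for $(U_1,\dots,U_k)$ relative to that set, establishing positive upper $\mu$-independence density.

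The main obstacle in both directions is converting between atom counts and Shannon entropy in the presence of an arbitrary measure. Unlike the topological case where the cover cardinality $N(\mathcal{U})$ is a direct proxy for entropy, a large number of atoms meeting $D$ does not automatically give large entropy if the atom measures are wildly unequal; hence the need to prune $\mathcal{P}^F$ to atoms of substantial measure in direction one, and to invoke Shannon--McMillan to recover atoms of controlled measure in direction two. The key subtlety making the scheme work is that the definition of $\upind_\mu(\oA,\delta)$ quantifies over \emph{every} $D\in\mathcal{B}(\mu,\delta)$, giving exactly the flexibility required to plug in the atomic threshold set $D_F$ above and to apply the measure-theoretic Sauer--Shelah lemma with the right quantitative dependence on $\delta$.
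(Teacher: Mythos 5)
Your overall skeleton (label-avoidance counting giving at least $(k/(k-1))^{|J|}$ distinct $\mathcal{P}^J$-traces among the $k^{|J|}$ witnesses) is the right combinatorial core, but both directions have genuine gaps. In the direction $\mu$-IE $\Rightarrow$ $\mu$-entropy tuple, the concluding step is false as stated: to guarantee $\mu(D_F)\geq 1-\delta$ your threshold must be taken as small as $\eta\approx\delta|\mathcal{P}|^{-|F|}$, and ``at least $(k/(k-1))^{q|F|}$ atoms of measure $\geq\eta$'' does not imply $H_\mu(\mathcal{P}^F)\geq c|F|$ --- those atoms may carry total mass as small as $N\eta$, contributing only $o(|F|)$ to the Shannon entropy, with the remaining mass sitting in one big atom. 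There is no ``direct Shannon inequality'' of the kind you invoke. The repair is to run the argument contrapositively: if $h_\mu(\act,\mathcal{P})=0$, then by Markov's inequality applied to the information function one finds, for invariant enough $F$, a set $D\in\mathcal{B}(\mu,\delta)$ covered by at most $e^{\varepsilon|F|}$ atoms of $\mathcal{P}^F$; an independence set for $(U_1,\dots,U_k)$ relative to this $D$ of size $q|F|$ then forces at least $(k/(k-1))^{q|F|}$ such atoms, a contradiction once $\varepsilon<q\log\frac{k}{k-1}$. So your choice of $D$ must exploit the smallness of the entropy, which your direct formulation cannot do.

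The converse direction is where the real difficulty of the theorem lives, and your proposal essentially assumes it away. The phrase ``a measure-theoretic variant of Lemma~\ref{lem:combinatorial}'' is precisely the technical heart of Kerr and Li's argument (this survey gives no proof and simply cites \cite[Theorem 2.27]{KerrLi2009}); it is a nontrivial relative/probabilistic strengthening of the Karpovsky--Milman lemma, not a routine adaptation, and it has to be formulated and proved. Moreover your quantifiers are reversed: positive upper $\mu$-independence density requires that for \emph{every} Borel set $D$ with $\mu(D)\geq 1-\delta$ (indeed for every $D\in\mathcal{B}'(\mu,\delta)$) there exist an independence set $J\subset F$ of size $\geq q|F|$ \emph{relative to that $D$}, with $J$ allowed to depend on $D$. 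Your sketch builds $W$ from the Shannon--McMillan atom structure of $\mathcal{P}^{F_n}$ alone and then asserts that every labelling is realised ``by a point of a set of $\mu$-mass at least $1-\delta$''; that is the wrong order of quantification, and nonemptiness of the labelled cells (which is all a Sauer--Shelah-type extraction gives) is far weaker than the requirement that every labelled cell meet an arbitrary prescribed set of measure $1-\delta$. Handling this relativization is exactly why the measure-theoretic proof in \cite{KerrLi2009} is substantially more involved than the topological proof of Theorem~\ref{thm:IE-tuples}, and as written your argument does not bridge that gap.
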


For a proof of the following result see \cite[Theorem 2.23]{KerrLi2009}. 
\begin{theorem}
    [Blanchard et al \cite{BlanchardHostMaassMartinezRudolph1995}, Blanchard et al \cite{blanchard1997variation}, Kerr and Li \cite{KerrLi2009}]
    Let $G$ be an amenable group, $\act$ an action and $k\geq 2$. 
    We have that 
    \begin{enumerate}
        \item $\IE_k^{\mu}(X,G)\subset \IE_k(X,G)$ for every $\mu \in \mathcal{M}(\act)$ and
        \item there exists $\mu \in \mathcal{M}(\act)$ such that $\IE_k^{\mu}(X,G)= \IE_k(X,G)$.
    \end{enumerate}
\end{theorem}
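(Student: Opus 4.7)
My plan is to handle (1) by a direct combinatorial comparison, and (2) by reducing to a countable-cover problem via stability of $\IE_k^{\mu}$ under convex combinations, combined with the preceding theorem of Kerr and Li identifying $\mu$-entropy tuples with $\mu$-IE-tuples.

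For (1), the key observation is that $X\in\mathcal{B}(\mu,\delta)$ always, and every independence set for $\oA$ relative to any $D\subset X$ is automatically an ordinary independence set for $\oA$. Hence $\varphi_{\oA,\delta}(F)$ is bounded above by the largest size of an ordinary independence set for $\oA$ inside $F$. Positivity of $\upind_{\mu}(\oA,\delta)$ for some $\delta>0$ therefore forces ordinary independence sets of positive density along a F\o lner sequence, which by the F\o lner-sequence reformulation of independence density in \cite[Theorem 4.8]{KerrLi2013} gives $\oA$ positive ordinary independence density. Applied to every product neighborhood of a $\mu$-IE-tuple $\ox$, this yields $\IE_k^{\mu}(X,G)\subset\IE_k(X,G)$.

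For (2), I would first establish a stability property: if $\mu=\sum_{i\geq 1}c_i\mu_i$ is a countable convex combination with each $c_i>0$, then any $D\in\mathcal{B}(\mu,\delta)$ satisfies $\mu_i(D)\geq 1-\delta/c_i$, so $\mathcal{B}(\mu,\delta)\subset\mathcal{B}(\mu_i,\delta/c_i)$ for each $i$; consequently the minimum defining $\varphi_{\oA,\delta}$ relative to $\mu$ is taken over a smaller family than the one relative to $\mu_i$ at scale $\delta/c_i$, giving $\IE_k^{\mu}(X,G)\supset\IE_k^{\mu_i}(X,G)$ for every $i$. It therefore suffices to produce a countable family $\{\mu_i\}\subset\mathcal{M}(\act)$ with $\bigcup_i\IE_k^{\mu_i}(X,G)\supset\IE_k(X,G)$; the measure $\mu=\sum_{i\geq 1}2^{-i}\mu_i$ is $G$-invariant and, combined with (1), satisfies $\IE_k^{\mu}(X,G)=\IE_k(X,G)$.

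To build such a family I would combine the Kerr--Li equivalence of $\mu$-entropy tuples and $\mu$-IE-tuples (the theorem immediately preceding the one to be proved) with the variational principle $\htop(\act)=\sup_{\mu}h_{\mu}(\act)$. By Theorem \ref{thm:IE-tuples} every non-diagonal $\ox\in\IE_k(X,G)$ is an entropy tuple, so each open cover $\mathcal{U}$ coming from pairwise disjoint closed neighborhoods of points in $\ox$ has $\htop(\act,\mathcal{U})>0$, and the variational principle supplies an invariant measure with positive measure-theoretic entropy on a finite Borel partition refining $\mathcal{U}$. The main obstacle is then to produce, for each fixed $\ox$, a \emph{single} invariant measure $\mu_{\ox}$ witnessing positive measure-theoretic entropy on a countable family of partitions sufficient to test the $\mu_{\ox}$-entropy-tuple condition at $\ox$; this rests on the same convex-combination stability idea applied on the measure-theoretic side, using that $h_{\mu}(\act,\mathcal{P})$ is concave in $\mu$ and preserves positivity under convex combinations with a summand of positive entropy on $\mathcal{P}$. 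The resulting $\mu_{\ox}$ makes $\ox$ a $\mu_{\ox}$-entropy tuple, hence by the preceding theorem a $\mu_{\ox}$-IE-tuple. Separability of $X^k$ and closedness of each $\IE_k^{\mu}(X,G)$ then let us extract a countable dense subset $\{\ox^{(n)}\}$ of the non-diagonal part of $\IE_k(X,G)$ whose associated measures $\{\mu_{\ox^{(n)}}\}$ cover all of $\IE_k(X,G)$; diagonal IE-tuples are handled separately by a direct verification.
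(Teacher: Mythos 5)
Your part (1) is correct: taking $D=X$ in the minimum defining $\varphi_{\oA,\delta}$ shows that independence sets relative to any $D$ are ordinary independence sets, and the passage from positive density along increasingly invariant sets to the ``every finite subset'' formulation is exactly the equivalence of definitions already recorded in the survey via \cite[Theorem 4.8]{KerrLi2013}. The aggregation machinery you set up for part (2) is also sound: the inclusion $\mathcal{B}(\mu,\delta)\subset\mathcal{B}(\mu_i,\delta/c_i)$ does give $\IE_k^{\mu}(X,G)\supset\IE_k^{\mu_i}(X,G)$ for $\mu=\sum_i c_i\mu_i$, and combining this with a countable dense subset of $\IE_k(X,G)$ and closedness of $\IE_k^{\mu}(X,G)$ is indeed how a single measure is assembled in the literature.

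The gap is in the step that produces, for a fixed non-diagonal IE-tuple $\ox$, a measure $\mu_{\ox}$ making it a $\mu_{\ox}$-entropy tuple. The global variational principle $\htop(\act)=\sup_{\mu}h_{\mu}(\act)$ only yields a measure of positive entropy, hence positive entropy on some sufficiently fine partition refining $\mathcal{U}$; it gives no control over the coarse Borel partitions $\mathcal{P}$ with $\{x_1,\dots,x_k\}\not\subset\overline{P}$ for every $P\in\mathcal{P}$, and it is exactly these that the $\mu$-entropy-tuple condition quantifies over. Since there are uncountably many such partitions, they cannot be pre-selected into a countable list and handled by your concavity/convex-combination trick: the combined measure only controls the partitions (or covers) you planned for. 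What is actually needed is, for each cover $\mathcal{U}_n$ coming from shrinking pairwise disjoint closed neighborhoods of the $x_i$, a \emph{single} invariant measure $\mu_n$ with $h_{\mu_n}(\act,\mathcal{P})>0$ for \emph{every} Borel partition $\mathcal{P}$ finer than $\mathcal{U}_n$. That statement is the local variational principle for open covers --- precisely the content of \cite{blanchard1997variation} (one of the attributions of the theorem), or, in the combinatorial framework of this survey, of the construction in \cite{KerrLi2009} of an invariant measure for which a tuple of neighborhoods with positive independence density has positive upper $\mu$-independence density. Your proposal implicitly assumes this nontrivial ingredient while citing only the global variational principle, so as written the argument does not close; once you replace that appeal by the local variational principle (or by the Kerr--Li measure construction), the rest of your outline, including the separate treatment of diagonal tuples via supports of invariant measures, does go through.
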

\subsection{Measure sequence entropy tuples}
Measure sequence entropy pairs were introduced by Huang, Maass and Ye \cite{huang2004sequence}.
\begin{definition}
Let $G\curvearrowright X$ be an action, $\mu\in \mathcal{M}(\act)$ and $k\geq 2$. 
We say that $(x_1,..,x_k)\in X^k\setminus \Delta_k(X)$ is a $\mu$-\textbf{sequence entropy pair}
if for any finite Borel partition $\mathcal{P}$ such that $\{x_1,\dots,x_k\}\not\subset \overline{P}$ for every $P\in \mathcal{P}$,
there exists a sequence $S$ in $G$ with $h_\mu^S(\act,\mathcal{P})>0$.
\end{definition}

\begin{definition}
\label{def:indfiniteset}
Let $G\curvearrowright X$ be an action and $\mu\in \mathcal{M}(\act)$.
For $A_1,\dots,A_k\subset X$ and $\varepsilon>0$
we say that $(A_1,\dots,A_k)$ has $(\varepsilon,\mu)$-\textbf{independence
over arbitrarily large finite sets}
if there exists $c>0$ such that for every $N>0$
there is a finite set $F\subset G$ with $|F|>N$
such that for every $E\in\mathcal{B}'({\mu},\varepsilon)$
there is an independence set $I\subset F$ for $(A_1,\dots,A_k)$ relative to $E$
with $|I|\geq c|F|$.

\end{definition}
\begin{definition}
\label{def:INpair}
Let $G\curvearrowright X$ be an action, $\mu\in \mathcal{M}(\act)$ and $k\geq 1$. We say that $\ox=(x_1,\dots,x_k)\in X^k$ is a \textbf{$\mu$-IN-tuple}
if for every product neighborhood $U_1\times \cdots \times U_k$ of $\ox$
there exists $\varepsilon>0$ such that
$(U_{1},\dots,U_{k})$ has $(\varepsilon,\mu)$-independence
over arbitrarily large finite sets.
\end{definition}

For a proof of the following theorem see \cite[Theorem 4.9]{KerrLi2009}.
\begin{theorem}[Kerr and Li \cite{KerrLi2009}]
\label{thm:INpair}
Let $G\curvearrowright X$ be an action, $\mu\in \mathcal{M}(\act)$ and $k\geq 2$. A tuple $(x_1,\dots,x_k)\in X^k\setminus \Delta_k(X)$
is a ${\mu}$-IN-tuple if and only if it is a $\mu$-sequence entropy tuple.
\end{theorem}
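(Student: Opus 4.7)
The plan is to adapt the proof of Theorem \ref{P-SE vs IN} (topological IN-tuples equal sequence entropy tuples) to the measure-theoretic setting, in direct parallel with the Kerr--Li identification of $\mu$-IE-tuples with $\mu$-entropy tuples recorded in \cite[Theorem 2.27]{KerrLi2009}. In both directions the independence of Definition \ref{def:indfiniteset} will play the role that combinatorial independence for open covers plays in Definition \ref{D-IN-pair}, and Lemma \ref{lem:combinatorial} will supply the key combinatorial input.

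For the direction $\mu$-IN-tuple $\Rightarrow$ $\mu$-sequence entropy tuple, I would begin with any finite Borel partition $\mathcal{P}$ whose cells' closures each miss at least one of the $x_i$'s, pick $i(P)\in\{1,\dots,k\}$ with $x_{i(P)}\notin\overline{P}$ for each $P\in \mathcal{P}$, and shrink open neighborhoods $U_i\ni x_i$ so that $U_{i(P)}\cap P=\emptyset$ for every $P\in \mathcal{P}$. By Definition \ref{def:INpair} I obtain $c,\varepsilon>0$ and finite sets $F_n\subset G$ with $|F_n|\to\infty$ such that, for every $E\in \mathcal{B}'(\mu,\varepsilon)$, some independence set $J_n\subset F_n$ for $(U_1,\dots,U_k)$ relative to $E$ has $|J_n|\geq c|F_n|$. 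I would apply this with $E^{(n)}_g\equiv D_n$, where $D_n$ is the union of atoms of $\bigvee_{g\in F_n}g^{-1}\mathcal{P}$ of $\mu$-mass at least a threshold $\eta_n$ chosen so that $\mu(D_n)\geq 1-\varepsilon$. Each $\omega\colon J_n\to\{1,\dots,k\}$ then produces a point $y_\omega\in D_n\cap\bigcap_{g\in J_n}g^{-1}U_{\omega(g)}$ sitting inside an atom of $\bigvee_{g\in J_n}g^{-1}\mathcal{P}$ of mass $\geq \eta_n$, and the constraint $U_{i(P)}\cap P=\emptyset$ forces distinct $\omega$'s into exponentially many distinct such atoms. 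A Shannon-entropy computation then gives $H_\mu\bigl(\bigvee_{g\in J_n}g^{-1}\mathcal{P}\bigr)\geq c'|J_n|$ for a uniform $c'>0$, and concatenating the $J_n$'s into a sequence $S$ with geometrically growing block sizes yields $h_\mu^S(\act,\mathcal{P})>0$.

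For the direction $\mu$-sequence entropy tuple $\Rightarrow$ $\mu$-IN-tuple, I would fix a product neighborhood $U_1\times\cdots\times U_k$, pick $V_i$ with $\overline{V_i}\subset U_i$, and take a finite Borel partition $\mathcal{P}$ refining $\{X\setminus V_1,\dots,X\setminus V_k\}$ whose cells' closures each miss some $x_i$. The hypothesis gives a sequence $S=\{s_j\}$ with $h_\mu^S(\act,\mathcal{P})>0$, hence a $b>0$ and arbitrarily large $n$ with $H_\mu(\bigvee_{j=1}^n s_j^{-1}\mathcal{P})\geq bn$. After labelling $\mathcal{P}$ by $\{0,1,\dots,k\}$ via $P\mapsto i(P)$ and restricting to atoms of sufficient mass, Lemma \ref{lem:combinatorial} will extract $c'>0$ and $J_n\subset \{s_1,\dots,s_n\}$ of size $\geq c'n$ along which every $\omega\in\{1,\dots,k\}^{J_n}$ is realized by a positive-mass atom, providing points in $\bigcap_{g\in J_n}g^{-1}V_{\omega(g)}\subset \bigcap_{g\in J_n}g^{-1}U_{\omega(g)}$. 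The main obstacle will be upgrading this combinatorial independence to $(\varepsilon,\mu)$-independence uniformly over every $E\in \mathcal{B}'(\mu,\varepsilon)$, where $E$ can depend on $g$ and is chosen after $F_n$ is fixed: my remedy is to prune, before applying Lemma \ref{lem:combinatorial}, atoms whose intersection with $\bigcap_{g\in F_n}g^{-1}E_g$ has negligible $\mu$-mass, with a first-moment bound controlling the total pruned mass by $O(\varepsilon n)$ and thereby trimming only a linear slice from the entropy while still leaving an independence set of linear size. This uniform-in-$E$ pruning, together with the initial separation $\overline{V_i}\subset U_i$, will be the additional measure-theoretic ingredient absent from the topological proof of Theorem \ref{P-SE vs IN}.
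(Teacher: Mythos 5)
The survey does not actually prove this theorem (it only cites Kerr--Li \cite[Theorem 4.9]{KerrLi2009}), so I am judging your outline on its own merits. In the direction ``$\mu$-IN-tuple $\Rightarrow$ $\mu$-sequence entropy tuple'' your setup is right, but the decisive step is not valid as written: from the single application of the independence hypothesis with $E=D_n$ (the union of atoms of mass $\geq \eta_n$) you only learn that at least $(k/(k-1))^{|J_n|}$ atoms of $\bigvee_{g\in J_n}g^{-1}\mathcal{P}$ meet $D_n$ and that each has mass $\geq \eta_n$; since $\eta_n$ must be taken of order $\varepsilon|\mathcal{P}|^{-|F_n|}$ to guarantee $\mu(D_n)\geq 1-\varepsilon$, ``exponentially many atoms of exponentially small mass'' gives no linear lower bound on $H_\mu\bigl(\bigvee_{g\in J_n}g^{-1}\mathcal{P}\bigr)$: the measure could still be almost entirely carried by one atom, a configuration your one choice of $E$ cannot rule out. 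The repair is to invoke the hypothesis for a \emph{minimal} union $E$ of atoms of $\bigvee_{g\in F_n}g^{-1}\mathcal{P}$ with $\mu(E)\geq 1-\varepsilon$ (equivalently, the largest atoms): the count of at most $(k-1)^{|I|}$ patterns per atom then forces the minimal number $m$ of atoms needed to capture mass $1-\varepsilon$ to satisfy $m\geq (k/(k-1))^{c|F_n|}$, and since the atoms beyond the $m-1$ largest carry total mass $\geq\varepsilon$ and each has mass $\leq 1/(m-1)$, one gets $H_\mu\bigl(\bigvee_{g\in F_n}g^{-1}\mathcal{P}\bigr)\geq \varepsilon\log(m-1)$, which is linear in $|F_n|$; your concatenation step is then fine.

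The direction ``$\mu$-sequence entropy tuple $\Rightarrow$ $\mu$-IN-tuple'' has two genuine gaps. First, your labelling points the wrong way: a cell $P$ of a partition refining $\{X\setminus V_1,\dots,X\setminus V_k\}$ carries a label $i(P)$ with $P\cap V_{i(P)}=\emptyset$, so even if Lemma \ref{lem:combinatorial} produced all label patterns on a large $J_n$, you would obtain points whose orbit \emph{avoids} prescribed $V_i$'s, not points of $\bigcap_{g\in J_n}g^{-1}V_{\omega(g)}$; moreover Lemma \ref{lem:combinatorial} requires a lower bound on the covering number $N_S(\U)$ of a pattern set, and converting the Shannon bound $H_\mu\bigl(\bigvee_{j=1}^{n}s_j^{-1}\mathcal{P}\bigr)\geq bn$ into such a bound for \emph{membership} patterns in disjoint closed neighborhoods $K_i\subset U_i$ is exactly the hard content of the Kerr--Li argument, not a matter of ``restricting to atoms of sufficient mass''. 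Second, the uniformity over $E$ is not achieved by your pruning: by Definition \ref{D-indset} the witnesses must lie in $\bigcap_{s\in I}E_s$ (the sets $E_s$ themselves, not $s^{-1}E_s$, which you appear to have misread) for a single independence set $I$ with $|I|\geq c|F_n|$; once $|F_n|\gg 1/\varepsilon$ the intersection of linearly many sets of measure $1-\varepsilon$ can be null or empty, so a ``first-moment bound of $O(\varepsilon n)$ on the pruned mass'' is vacuous, and per-coordinate pruning cannot produce, for every pattern, an atom containing a point of $\bigcap_{s\in I}E_s$ --- that would require each witnessing atom to place all but a $1/|I|$-fraction of its mass in every $E_s$, which no bound of the form $\mu(E_s)\geq 1-\varepsilon$ supplies. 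These are precisely the points where the actual proof in \cite{KerrLi2009} uses substantially more machinery than the topological argument behind Theorem \ref{P-SE vs IN}, so you should consult that source rather than expect a direct transcription.
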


Let $G$ be an amenable group, $\{F_n\}_{n\in \N}$ a Følner sequence for $G$ and $S\subset G$.
We define the upper density of $S$ as
\[\overline{D}(S)=\limsup_{n\to\infty} \frac{|S\cap F_n|}{|F_n|}.\]
Measure theoretic mean sensitivity is a strong form of sensitivity of initial conditions introduced in \cite{garcia2017weak}. It is invariant under (measure) isomorphisms and related to discrete spectrum. This notion can be localized as follows. 

\begin{definition}
Let $G$ be an amenable group, $\{F_n\}$ a Følner sequence for $G$, $G\curvearrowright X$ an action and $\mu\in \mathcal{M}(\act)$.
We say that $\ox=(x_1,\dots,x_k) \in X^k\setminus \Delta_k(X)$ is a $\mu$-\textbf{mean sensitivity tuple} if for every product neighborhood $U_1\times \cdots \times U_k$ of $\ox$,
there exists $\varepsilon>0$ such that for every Borel subset $A$ of positive measure,
there exist $p_1,\dots,p_k\in A$ such that
$$\overline{D}(\{s\in G: sp_i\in U_{i}\text{ } \forall i \in \{1,...,k\} \})>\epsilon.$$
\end{definition}


Given a group action, we say a measure $\mu$ is \textbf{ergodic} if $\mu\in \mathcal{M}(\act)$, and every $G$-invariant Borel set has full or null measure. 

The following result follows from \cite[Corollary 39 and Section 5.2]{garcia2017weak}. 
\begin{theorem}
[García-Ramos \cite{garcia2017weak}]
Let $G$ be an abelian group, $G\curvearrowright X$ an action, and $\mu$ an ergodic measure. We have $h_\mu^S(\act)>0$ if and only if there exists a $\mu$-mean sensitivity pair with respect to a tempered Følner sequence. 
\end{theorem}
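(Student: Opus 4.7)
The plan is to use Theorem~\ref{thm:INpair} to reformulate the left-hand side. Since $h_\mu^S(\act) > 0$ for some sequence $S$ is equivalent to the existence of a non-diagonal $\mu$-sequence entropy pair, which by Theorem~\ref{thm:INpair} is equivalent to the existence of a non-diagonal $\mu$-IN pair, it suffices to show that $\IN_2^\mu(X,G) \setminus \Delta_2(X) \neq \emptyset$ if and only if a $\mu$-mean sensitivity pair exists (with respect to some tempered F{\o}lner sequence, whose existence for any countable amenable group is standard).

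For the direction $\mu$-IN $\Rightarrow$ $\mu$-mean sensitivity, I would show any non-diagonal $\mu$-IN pair $(x,y)$ is itself a $\mu$-mean sensitivity pair. Fix neighborhoods $U_1, U_2$ of $x, y$ and the parameters $\varepsilon, c > 0$ guaranteed by the IN property. A preliminary observation is that $x, y \in \mathrm{supp}(\mu)$, since otherwise (taking $E_s = X \setminus s^{-1} U_i$) one produces $E \in \mathcal{B}'(\mu, 0)$ admitting no nonempty independence set, contradicting the IN property. Given any Borel $A$ with $\mu(A) > 0$, use ergodicity of $\mu$ to pick $g_1, \dots, g_n \in G$ with $\mu\bigl(\bigcup_i g_i A\bigr) > 1 - \varepsilon$, set the constant family $E_s = \bigcup_i g_i A$, and apply the IN property to get an independence set $I \subset F$ of size $\geq c|F|$ for large $F$ drawn from a tempered F{\o}lner sequence. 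The constant assignments $\phi \equiv 1$ and $\phi \equiv 2$ yield points $z_1, z_2 \in \bigcup_i g_i A$ with $sz_j \in U_j$ for every $s \in I$; writing $z_j = g_{i_j} p_j$ with $p_j \in A$ and using the commutativity of $G$ to commute $s$ and $g_{i_j}$, the orbits of $p_1, p_2$ visit $U_1, U_2$ at times in $g_{i_1} I, g_{i_2} I$ respectively. A pigeonhole argument over the finitely many index pairs $(i_1, i_2)$, combined with the Lindenstrauss pointwise ergodic theorem to quantify visit densities along the tempered F{\o}lner sequence, then produces $p_1, p_2 \in A$ whose joint visits to $U_1 \times U_2$ have positive upper density.

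For the reverse direction, let $(x, y)$ be a $\mu$-mean sensitivity pair with parameter $\varepsilon$ for neighborhoods $U_1, U_2$. For $F \Subset G$ large and $E \in \mathcal{B}'(\mu, \delta)$ with $\delta$ small, use Markov's inequality applied to $\sum_{s \in F}\mu(E_s^\complement)$ to identify a set $A_F = \{p : p \in E_s \text{ for at least } (1-\sqrt{\delta})|F| \text{ values } s \in F\}$ of measure at least $1 - \sqrt{\delta}$. Apply mean sensitivity to $A_F$ to obtain $p_1, p_2 \in A_F$ whose joint visits to $U_1 \times U_2$ have upper density $> \varepsilon$, hence producing a set $J \subset F$ of size at least roughly $\varepsilon |F|$ on which $sp_1 \in U_1$ and $sp_2 \in U_2$ and where both $p_1, p_2 \in E_s$. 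To upgrade this single ``aligned realization'' into an independence set admitting every $\phi : I \to \{1,2\}$, invoke the Sauer--Perles--Shelah style combinatorial dichotomy (Lemma~\ref{lem:combinatorial}) applied to the trace of the $U_i$-visit patterns indexed by varying $(p_1, p_2)$ over $A_F \times A_F$.

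The main obstacle is the alignment step in the forward direction: the two translate structures $g_{i_1} I$ and $g_{i_2} I$ need not have large intersection a priori, and for abelian $G$ one is using commutativity to rewrite $sg_{i_j}p_j = g_{i_j}sp_j$, but still must argue that by taking $F$ far enough out in the tempered F{\o}lner sequence and pigeonholing, the contribution of a single recurring index pair dominates. The tempered condition is essential here to invoke Lindenstrauss' pointwise ergodic theorem on the finitely many translates $g_i A$, giving uniform density control that would be unavailable for a general F{\o}lner sequence.
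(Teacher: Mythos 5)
Your reduction via Theorem~\ref{thm:INpair} turns the statement into the pairwise identification ``$\mu$-IN pairs $=$ $\mu$-mean sensitivity pairs'' for ergodic $\mu$, which is essentially the (later, stronger) result of \cite{li2022sequence} rather than what is needed here, and it is known to fail without ergodicity; so ergodicity must enter structurally, not just through the covering trick $\mu(\bigcup_i g_iA)>1-\varepsilon$. The argument in the cited source \cite{garcia2017weak} is instead global and structural: ergodic $+$ zero sequence entropy for all $S$ is equivalent to discrete spectrum (Kushnirenko), which is equivalent to $\mu$-mean equicontinuity (this is where the tempered F{\o}lner sequence and Lindenstrauss's pointwise ergodic theorem are used), and mean equicontinuity is incompatible with the existence of a $\mu$-mean sensitivity pair. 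Your direct combinatorial route has genuine gaps in both directions. Forward: the alignment problem you flag is fatal as proposed. The relative independence property hands you, for each pattern, a \emph{different} witness in $E=\bigcup_i g_iA$; the two constant patterns give $z_1=g_{i_1}p_1$, $z_2=g_{i_2}p_2$, and after translating back into $A$ the visit-time sets are $g_{i_1}I$ and $g_{i_2}I$. Pigeonholing the index pair and invoking Lindenstrauss's theorem on the translates $g_iA$ gives no lower bound whatsoever on $|g_{i_1}I\cap g_{i_2}I|$: since the density constant $c$ may be $\le 1/2$, these two sets can be disjoint inside every witnessing $F$, so the joint visit set for $(p_1,p_2)$ can be empty, and nothing in the proposal forces the single recurring index pair to repair this.

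Reverse: two problems. First, a quantifier mismatch: the definition of $(\varepsilon,\mu)$-independence over arbitrarily large finite sets requires one finite set $F$ that works for \emph{every} $E\in\mathcal{B}'(\mu,\varepsilon)$, whereas mean sensitivity only gives, for the pair $(p_1,p_2)$ you extract from $A_F$ (which depends on $F$ and $E$), a \emph{limsup} density along the F{\o}lner sequence; the good indices for that pair may lie far beyond the fixed $F$, so you do not get $|J\cap F|\ge\varepsilon|F|$ for the $F$ you committed to. Second, and more fundamentally, a set $J$ on which one aligned pair realizes the two constant patterns is very far from an independence set relative to $E$: you need every map $\phi\colon I\to\{1,2\}$ realized inside $\bigcap_{s}(E_s\cap s^{-1}U_{\phi(s)})$, and Lemma~\ref{lem:combinatorial} only produces such an $I$ from a family of realized patterns of size exponential in $|F|$ (the hypothesis $N_S(\U)\ge k^{b|Z|}$). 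Varying $(p_1,p_2)$ over $A_F\times A_F$ gives no mechanism for generating that exponential richness; this is exactly the step where the known proofs switch to measure-theoretic structure (Kushnirenko's theorem and the Kronecker factor, or in \cite{li2022sequence} the relative product over the maximal null factor) instead of a direct combinatorial upgrade. As it stands, both implications are unproved.
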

The following result was obtained in \cite[Theorem 1.3]{li2022sequence}.

\begin{theorem}
[Li, Liu, Tu and Yu \cite{li2022sequence}]
Let $(X,T)$ be a TDS, $\mu$ an ergodic measure and $k\geq 2$.
We have that $(x_1,\dots,x_k)\in X^k\setminus \Delta_k(X)$ is a $\mu$-sequence entropy tuple if and only if $(x_1,\dots,x_k)$ is a $\mu$-mean sensitivity tuple (with respect to $F_n=\{1,...,n\}$).
\end{theorem}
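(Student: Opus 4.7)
By Theorem~\ref{thm:INpair}, the notions of $\mu$-sequence entropy tuple and $\mu$-IN-tuple coincide, so it suffices to prove that $(x_1,\dots,x_k)$ is a $\mu$-IN-tuple if and only if it is a $\mu$-mean sensitivity tuple (with respect to $F_n=\{1,\dots,n\}$).

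For the forward implication, assume $\ox$ is a $\mu$-IN-tuple and fix a product neighborhood $U_1 \times \cdots \times U_k$; let $\varepsilon_0, c > 0$ be the constants furnished by Definition~\ref{def:indfiniteset}. Given $A$ with $\mu(A)>0$, the ergodicity of $\mu$ lets us choose $M \in \NN$ so that $B := \bigcup_{m=0}^{M-1} T^{-m}A$ satisfies $\mu(B) \geq 1-\varepsilon_0$, and put $E_s := T^{-s}B$. Since $E \in \mathcal{B}'(\mu,\varepsilon_0)$, the $\mu$-IN condition yields, for each large $F=\{1,\dots,N\}$, an independence set $I \subset F$ of size at least $cN$ relative to $E$. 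The $k$ constant colorings $\phi \equiv i$ produce points $y_1,\dots,y_k$ with $T^s y_i \in U_i \cap B$ for every $s\in I$. A joint pigeonhole on the vector of first-return times to $A$, $(m_s^1,\dots,m_s^k)\in[0,M)^k$, isolates a single shift vector $(m^1,\dots,m^k)$ and a subset $I' \subset I$ with $|I'| \geq cN/M^k$ on which these return times are uniform. Choosing $s_0 \in I'$ and setting $p_i := T^{s_0+m^i} y_i$ places each $p_i$ in $A$, and the common good times $\{n : T^n p_i \in U_i \text{ for all } i\}$ then contain $\bigcap_i (I - s_0 - m^i)$, whose upper density is bounded below by a positive constant depending only on $c$, $M$, and $k$ (obtained by exploiting the fact that all shifts $s_0+m^i$ lie in a window of length $M$, and passing to suitable sub-windows of $F$ where $I'$ is well-distributed).

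For the reverse implication, I plan to show that $\mu$-mean sensitivity implies positive $\mu$-sequence entropy, which is equivalent to being a $\mu$-IN-tuple by Theorem~\ref{thm:INpair}. Let $\mathcal{P}$ be a finite Borel partition of $X$ whose cells $P$ satisfy $\{x_1,\dots,x_k\} \not\subset \overline{P}$; by refining the $U_i$'s if necessary, one may assume they lie in $k$ distinct elements of $\mathcal{P}$. For each cell $A \in \mathcal{P}$ with $\mu(A)>0$, mean sensitivity supplies $(p_1^A,\dots,p_k^A) \in A^k$ and a set $S^A \subset \ZZ$ with $\overline{D}(S^A) > \varepsilon$ along which $T^s p_i^A \in U_i$ for each $i$. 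A diagonal extraction organized via a Rohlin tower for $T$, together with the pointwise ergodic theorem applied to the indicator functions of the $U_i$'s, produces a single sequence $S = \{s_n\}_{n\geq 1}$ of positive upper density such that, for every positive-measure cell $A$, the join $\bigvee_{n=1}^N T^{-s_n}\mathcal{P}$ splits $A$ into at least $k$ distinct subcells witnessed by the $p_i^A$'s. The resulting linear growth of Shannon entropy yields $h_\mu^S(T,\mathcal{P}) > 0$, so $\ox$ is a $\mu$-sequence entropy tuple.

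The main obstacle is the reverse direction: extracting a single sequence $S$ that simultaneously witnesses positive Shannon entropy growth across all positive-measure cells of $\mathcal{P}$ from the cell-by-cell data of mean sensitivity. The Rohlin-tower bookkeeping and the careful use of the pointwise ergodic theorem must guarantee that the joined partitions along $S$ genuinely separate the $p_i^A$'s in a quantitative, uniform-in-$A$ way. In the forward direction, the more explicit pigeonhole-and-shift argument also hinges on a non-trivial density estimate for $\bigcap_i (I - s_0 - m^i)$, which requires exploiting the boundedness of the shifts and the distribution of $I'$ within $F$.
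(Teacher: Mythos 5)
Your reduction to showing ``$\mu$-IN-tuple $\Leftrightarrow$ $\mu$-mean sensitivity tuple'' via Theorem~\ref{thm:INpair} is legitimate, but both implications as you argue them have genuine gaps. In the forward direction there are two. First, the density claim for $\bigcap_i (I-s_0-m^i)$ is false in general: a subset $I$ of $F$ with $|I|\geq c|F|$ can be an arithmetic progression of gap $2$, in which case two copies shifted by amounts differing by $1$ intersect in the empty set; ``the shifts lie in a window of length $M$'' gives no lower bound on the intersection, so no constant depending only on $c,M,k$ exists. Second, and more fundamentally, Definition~\ref{def:indfiniteset} only supplies \emph{finite} sets $F$ (which need not be intervals $\{1,\dots,N\}$, nor located near the origin) and finite independence sets $I\subset F$; your construction therefore yields, for each $N$, points $p_i^{(N)}\in A$ with finitely many common good times, while a $\mu$-mean sensitivity tuple requires \emph{fixed} $p_1,\dots,p_k\in A$ whose common good-time set has positive upper density along $F_n=\{1,\dots,n\}$. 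No limiting argument is given, and the obvious one fails: $A$ is merely Borel, so limits of the $p_i^{(N)}$ need not lie in $A$, and the finite good-time sets attached to far-out windows need not accumulate positive upper density along initial segments. In the reverse direction, the decisive step --- deducing linear growth of $H_\mu\bigl(\bigvee_{n=1}^N T^{-s_n}\mathcal{P}\bigr)$ from the existence, in each positive-measure cell $A$, of $k$ \emph{points} $p_i^A$ with a common set of visiting times of positive upper density --- is unsupported. Pointwise witnesses do not control the measures of the cells of the join; even if every cell split into $k$ pieces of positive measure at each stage, the Shannon entropy increments could tend to $0$, so ``splitting'' does not imply $h_\mu^S(\mathcal{P})>0$. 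The appeal to a Rohlin tower and the pointwise ergodic theorem is a placeholder for exactly the missing argument.

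For comparison, the proof in the cited paper of Li, Liu, Tu and Yu does not work at the level of $\mu$-IN-tuples at all: for ergodic $\mu$ it uses the Huang--Maass--Ye characterization of $\mu$-sequence entropy tuples as the tuples in the support of the conditionally independent self-joining $\mu\times_{K_\mu}\cdots\times_{K_\mu}\mu$ over the Kronecker factor (equivalently, Kushnirenko's criterion that a partition has positive sequence entropy iff it is not measurable with respect to the Kronecker algebra), and then relates visit densities of $k$-tuples of points to that joining via ergodic averages and the mean (Besicovitch) pseudometric. This is where ergodicity is genuinely used --- in your sketch it only enters to choose the sweep-out time $M$ --- and it is what produces fixed points in $A$ with positive-density common visits, the very step your finite-window argument cannot reach. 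If you want a proof along your combinatorial lines, you would need a mechanism converting finite independence data into infinite positive-density behavior for fixed points of $A$ (and, in the converse, converting pointwise sensitivity witnesses into measure-theoretic independence of positive-measure sets); as written, neither is present.
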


The previous result fails if the measure is not ergodic \cite[Theorem 1.6]{li2022sequence}. 

 \bibliographystyle{abbrv}
\bibliography{bib}

\newcommand{\noop}[1]{}
\begin{thebibliography}{10}

\bibitem{auslander1988minimal}
J.~Auslander.
\newblock {\em Minimal Flows and Their Extensions}.
\newblock North-Holland Mathematics Studies, 153. North-Holland Publishing Co.,
  Amsterdam, 1988.

\bibitem{barbieri2021hierarchy}
S.~Barbieri and F.~Garc{\'\i}a-Ramos.
\newblock A hierarchy of topological systems with completely positive entropy.
\newblock {\em Journal d'Analyse Math{\'e}matique}, 143(2):639--680, 2021.

\bibitem{barbieri2022markovian}
S.~Barbieri, F.~Garc{\'\i}a-Ramos, and H.~Li.
\newblock Markovian properties of continuous group actions: algebraic actions,
  entropy and the homoclinic group.
\newblock {\em Advances in Mathematics}, 397:108196, 2022.

\bibitem{barge1985chaos}
M.~Barge and J.~Martin.
\newblock Chaos, periodicity, and snakelike continua.
\newblock {\em Transactions of the American Mathematical Society},
  289(1):355--365, 1985.

\bibitem{bauer1975topological}
W.~Bauer and K.~Sigmund.
\newblock Topological dynamics of transformations induced on the space of
  probability measures.
\newblock {\em Monatshefte f{\"u}r Mathematik}, 79:81--92, 1975.

\bibitem{bernardes2022uniformly}
N.~C. Bernardes, U.~B. Darji, and R.~M. Vermersch.
\newblock Uniformly positive entropy of induced transformations.
\newblock {\em Ergodic Theory and Dynamical Systems}, 42(1):9--18, 2022.

\bibitem{Blanchard1992}
F.~Blanchard.
\newblock Fully positive topological entropy and topological mixing.
\newblock In {\em Symbolic Dynamics and its Applications (New Haven, CT,
  1991)}, pages 95--105. Contemporary Mathematics, 135. American Mathematical
  Society, Providence, RI, 1992.

\bibitem{Blanchard1993}
F.~Blanchard.
\newblock A disjointness theorem involving topological entropy.
\newblock {\em Bulletin de la Soci{\'e}t{\'e} Math{\'e}matique de France},
  121(4):465--478, 1993.

\bibitem{blanchard1997variation}
F.~Blanchard, E.~Glasner, and B.~Host.
\newblock A variation on the variational principle and applications to entropy
  pairs.
\newblock {\em Ergodic Theory and Dynamical Systems}, 17(1):29--43, 1997.

\bibitem{BlanchardGlasnerKolyadaMaass2002}
F.~Blanchard, E.~Glasner, S.~Kolyada, and A.~Maass.
\newblock On {L}i-{Y}orke pairs.
\newblock {\em Journal f\"{u}r die Reine und Angewandte Mathematik},
  547:51--68, 2002.

\bibitem{BlanchardHostMaassMartinezRudolph1995}
F.~Blanchard, B.~Host, A.~Maass, S.~Martinez, and D.~J. Rudolph.
\newblock Entropy pairs for a measure.
\newblock {\em Ergodic Theory and Dynamical Systems}, 15(4):621--632, 1995.

\bibitem{blanchard1993zero}
F.~Blanchard and Y.~Lacroix.
\newblock Zero entropy factors of topological flows.
\newblock {\em Proceedings of the American Mathematical Society},
  119(3):985--992, 1993.

\bibitem{blokh1984transitive}
A.~Blokh.
\newblock On transitive mappings of one-dimensional branched manifolds.
\newblock In {\em Differential-Difference Equations and Problems of
  Mathematical Physics (Russian)}, pages 3--9. Institute of Mathematics, Kiev,
  1984.

\bibitem{Bowen2010_2}
L.~Bowen.
\newblock Measure conjugacy invariants for actions of countable sofic groups.
\newblock {\em Journal of the American Mathematical Society}, 23(1):217--245,
  2010.

\bibitem{bowen2020examples}
L.~Bowen.
\newblock Examples in the entropy theory of countable group actions.
\newblock {\em Ergodic Theory and Dynamical Systems}, 40(10):2593--2680, 2020.

\bibitem{burguet2022topological}
D.~Burguet and R.~Shi.
\newblock Topological mean dimension of induced systems.
\newblock {\em arXiv:2206.10508}.

\bibitem{burton2017naive}
P.~Burton.
\newblock Naive entropy of dynamical systems.
\newblock {\em Israel Journal of Mathematics}, 219(2):637--659, 2017.

\bibitem{CapraroLupini2015}
V.~Capraro and M.~Lupini.
\newblock {\em Introduction to Sofic and Hyperlinear Groups and Connes'
  Embedding Conjecture. With an appendix by Vladimir Pestov.}
\newblock Lecture Notes in Mathematics, 2136. Springer, Cham, 2015.

\bibitem{Ceccherini-SilbersteinCoornaert2010}
T.~Ceccherini-Silberstein and M.~Coornaert.
\newblock {\em Cellular Automata and Groups}.
\newblock Springer Monographs in Mathematics. Springer-Verlag, Berlin, 2010.

\bibitem{chou1980elementary}
C.~Chou.
\newblock Elementary amenable groups.
\newblock {\em Illinois Journal of Mathematics}, 24(3):396--407, 1980.

\bibitem{ChungLi2015}
N.-P. Chung and H.~Li.
\newblock Homoclinic groups, {IE} groups, and expansive algebraic actions.
\newblock {\em Inventiones Mathematicae}, 199(3):805--858, 2015.

\bibitem{cook1967continua}
H.~Cook.
\newblock Continua which admit only the identity mapping onto non-degenerate
  subcontinua.
\newblock {\em Fundamenta Mathematicae}, 60(3):241--249, 1967.

\bibitem{coornaert2015topological}
M.~Coornaert.
\newblock {\em Topological Dimension and Dynamical Systems}.
\newblock Universitext. Springer, Cham, 2015.

\bibitem{darji2021local}
U.~B. Darji and F.~Garc{\'\i}a-Ramos.
\newblock Local entropy theory and descriptive complexity.
\newblock {\em arXiv:2107.09263}.

\bibitem{darji2021note}
U.~B. Darji and F.~Garc{\'\i}a-Ramos.
\newblock A note on derivatives, expansions and ${\Pi}_1^1$-ranks.
\newblock {\em arXiv:2107.09866}.

\bibitem{darji2017chaos}
U.~B. Darji and H.~Kato.
\newblock Chaos and indecomposability.
\newblock {\em Advances in Mathematics}, 304:793--808, 2017.

\bibitem{Day1957}
M.~M. Day.
\newblock Amenable semigroups.
\newblock {\em Illinois Journal of Mathematics}, 1(4):509--544, 1957.

\bibitem{de2013elements}
J.~De~Vries.
\newblock {\em Elements of Topological Dynamics}.
\newblock Mathematics and its Applications, 257. Springer, Dordrecht, 1993.

\bibitem{deninger2006fuglede}
C.~Deninger.
\newblock Fuglede--{K}adison determinants and entropy for actions of discrete
  amenable groups.
\newblock {\em Journal of the American Mathematical Society}, 19(3):737--758,
  2006.

\bibitem{dong2013infinite}
P.~Dong, S.~Donoso, A.~Maass, S.~Shao, and X.~Ye.
\newblock Infinite-step nilsystems, independence and complexity.
\newblock {\em Ergodic Theory and Dynamical Systems}, 33(1):118--143, 2013.

\bibitem{Downarowicz2011}
T.~Downarowicz.
\newblock {\em Entropy in Dynamical Systems}.
\newblock New Mathematical Monographs, 18. Cambridge University Press,
  Cambridge, 2011.

\bibitem{DownarowiczFrejRomagnoli2016}
T.~Downarowicz, B.~Frej, and P.-P. Romagnoli.
\newblock Shearer{'}s inequality and infimum rule for {S}hannon entropy and
  topological entropy.
\newblock In {\em Dynamics and Numbers}, pages 63--75. Contemporary
  Mathematics, 669. American Mathematical Society, Providence, RI, 2016.

\bibitem{fuhrmann2021irregular}
G.~Fuhrmann, E.~Glasner, T.~J{\"a}ger, and C.~Oertel.
\newblock Irregular model sets and tame dynamics.
\newblock {\em Transactions of the American Mathematical Society},
  374(5):3703--3734, 2021.

\bibitem{fuhrmann2020tameness}
G.~Fuhrmann and D.~Kwietniak.
\newblock On tameness of almost automorphic dynamical systems for general
  groups.
\newblock {\em Bulletin of the London Mathematical Society}, 52(1):24--42,
  2020.

\bibitem{garciameandimesnion}
F.~García-Ramos and Y.~Gutman.
\newblock Local mean dimension theory for sofic group actions.
\newblock {\em arXiv:2401.08440}.

\bibitem{garcia2017weak}
F.~Garc{\'\i}a-Ramos.
\newblock Weak forms of topological and measure-theoretical equicontinuity:
  relationships with discrete spectrum and sequence entropy.
\newblock {\em Ergodic Theory and Dynamical Systems}, 37(4):1211--1237, 2017.

\bibitem{garcia2021mean}
F.~Garc{\'\i}a-Ramos, T.~J{\"a}ger, and X.~Ye.
\newblock Mean equicontinuity, almost automorphy and regularity.
\newblock {\em Israel Journal of Mathematics}, 243(1):155--183, 2021.

\bibitem{glasner1997simple}
E.~Glasner.
\newblock A simple characterization of the set of $\mu$-entropy pairs and
  applications.
\newblock {\em Israel Journal of Mathematics}, 102:13--27, 1997.

\bibitem{glasner2006tame}
E.~Glasner.
\newblock On tame dynamical systems.
\newblock {\em Colloquium Mathematicum}, 105(2):283--295, 2006.

\bibitem{glasner2018structure}
E.~Glasner.
\newblock The structure of tame minimal dynamical systems for general groups.
\newblock {\em Inventiones Mathematicae}, 211(1):213--244, 2018.

\bibitem{glasner1995quasi}
E.~Glasner and B.~Weiss.
\newblock Quasi-factors of zero-entropy systems.
\newblock {\em Journal of the American Mathematical Society}, 8(3):665--686,
  1995.

\bibitem{GlasnerYe2009}
E.~Glasner and X.~Ye.
\newblock Local entropy theory.
\newblock {\em Ergodic Theory and Dynamical Systems}, 29(2):321--356, 2009.

\bibitem{SE}
T.~Goodman.
\newblock Topological sequence entropy.
\newblock {\em Proceedings of the London Mathematical Society}, 29(2):331--350,
  1974.

\bibitem{gowers2003ramsey}
W.~T. Gowers.
\newblock Ramsey methods in {B}anach spaces.
\newblock In {\em Handbook of the Geometry of Banach Spaces}, volume~2, pages
  1071--1097. North-Holland, Amsterdam, 2003.

\bibitem{Grigorchuk1986}
R.~I. Grigorchuk.
\newblock On the growth degrees of $p$-groups and torsion-free groups.
\newblock {\em Mathematics of the {USSR}-Sbornik}, 54(1):185--205, 1986
  [Russian original: Matematicheskii Sbornik 126(168):194--214, 1985].

\bibitem{gromov1999topological}
M.~Gromov.
\newblock Topological invariants of dynamical systems and spaces of holomorphic
  maps: I.
\newblock {\em Mathematical Physics, Analysis and Geometry}, 2(4):323--415,
  1999.

\bibitem{gutman2020embedding}
Y.~Gutman and M.~Tsukamoto.
\newblock Embedding minimal dynamical systems into {H}ilbert cubes.
\newblock {\em Inventiones Mathematicae}, 221(1):113--166, 2020.

\bibitem{host2018nilpotent}
B.~Host and B.~Kra.
\newblock {\em Nilpotent Structures in Ergodic Theory}.
\newblock Mathematical Surveys and Monographs, 236. American Mathematical
  Society, Providence, RI, 2018.

\bibitem{huang2006tame}
W.~Huang.
\newblock Tame systems and scrambled pairs under an abelian group action.
\newblock {\em Ergodic Theory and Dynamical Systems}, 26(5):1549--1567, 2006.

\bibitem{huang2021positive}
W.~Huang, J.~Li, and X.~Ye.
\newblock Positive entropy implies chaos along any infinite sequence.
\newblock {\em Transactions of the Moscow Mathematical Society}, 82:1--14,
  2021.

\bibitem{huang2003null}
W.~Huang, S.~Li, S.~Shao, and X.~Ye.
\newblock Null systems and sequence entropy pairs.
\newblock {\em Ergodic Theory and Dynamical Systems}, 23(5):1505--1523, 2003.

\bibitem{huang2021minimal}
W.~Huang, Z.~Lian, S.~Shao, and X.~Ye.
\newblock Minimal systems with finitely many ergodic measures.
\newblock {\em Journal of Functional Analysis}, 280(12):109000, 2021.

\bibitem{huang2004sequence}
W.~Huang, A.~Maass, and X.~Ye.
\newblock Sequence entropy pairs and complexity pairs for a measure.
\newblock {\em Annales de l'Institut Fourier}, 54(4):1005--1028, 2004.

\bibitem{HuangYe2006}
W.~Huang and X.~Ye.
\newblock A local variational relation and applications.
\newblock {\em Israel Journal of Mathematics}, 151(1):237--279, 2006.

\bibitem{HuangYe2009}
W.~Huang and X.~Ye.
\newblock Combinatorial lemmas and applications to dynamics.
\newblock {\em Advances in Mathematics}, 220(6):1689--1716, 2009.

\bibitem{kechris2012classical}
A.~Kechris.
\newblock {\em Classical Descriptive Set Theory}.
\newblock Graduate Texts in Mathematics, 156. Springer-Verlag, New York, 1995.

\bibitem{kerr2013sofic}
D.~Kerr.
\newblock Sofic measure entropy via finite partitions.
\newblock {\em Groups, Geometry, and Dynamics}, 7(3):617--632, 2013.

\bibitem{kerr2005dynamical}
D.~Kerr and H.~Li.
\newblock Dynamical entropy in {B}anach spaces.
\newblock {\em Inventiones Mathematicae}, 162(3):649--686, 2005.

\bibitem{KerrLi2007}
D.~Kerr and H.~Li.
\newblock Independence in topological and {$C^\ast$}-dynamics.
\newblock {\em Mathematische Annalen}, 338(4):869--926, 2007.

\bibitem{KerrLi2009}
D.~Kerr and H.~Li.
\newblock Combinatorial independence in measurable dynamics.
\newblock {\em Journal of Functional Analysis}, 256(5):1341--1386, 2009.

\bibitem{KerrLi2011}
D.~Kerr and H.~Li.
\newblock Entropy and the variational principle for actions of sofic groups.
\newblock {\em Inventiones Mathematicae}, 186(3):501--558, 2011.

\bibitem{KerrLi2013}
D.~Kerr and H.~Li.
\newblock Combinatorial independence and sofic entropy.
\newblock {\em Communications in Mathematics and Statistics}, 1(2):213--257,
  2013.

\bibitem{kerrli2013soficity}
D.~Kerr and H.~Li.
\newblock Soficity, amenability, and dynamical entropy.
\newblock {\em American Journal of Mathematics}, 135(3):721--761, 2013.

\bibitem{KerrLiBook2016}
D.~Kerr and H.~Li.
\newblock {\em Ergodic Theory. Independence and Dichotomies}.
\newblock Springer Monographs in Mathematics. Springer, Cham, 2016.

\bibitem{kohler1995enveloping}
A.~K{\"o}hler.
\newblock Enveloping semigroups for flows.
\newblock {\em Proceedings of the Royal Irish Academy. Section A: Mathematical
  and Physical Sciences}, 95(2):179--191, 1995.

\bibitem{leibman2005pointwise}
A.~Leibman.
\newblock Pointwise convergence of ergodic averages for polynomial sequences of
  translations on a nilmanifold.
\newblock {\em Ergodic Theory and Dynamical Systems}, 25(1):201--213, 2005.

\bibitem{li2013soficmean}
H.~Li.
\newblock Sofic mean dimension.
\newblock {\em Advances in Mathematics}, 244:570--604, 2013.

\bibitem{LiRong2019}
H.~Li and Z.~Rong.
\newblock Combinatorial independence and naive entropy.
\newblock {\em Ergodic Theory and Dynamical Systems}, 41(7):2136--2147, 2021.

\bibitem{li2022sequence}
J.~Li, C.~Liu, S.~Tu, and T.~Yu.
\newblock Sequence entropy tuples and mean sensitive tuples.
\newblock {\em Ergodic Theory and Dynamical Systems, in press}.

\bibitem{liperiod}
T.-Y. Li and J.~A. Yorke.
\newblock Period three implies chaos.
\newblock {\em American Mathematical Monthly}, 82(10):985–992, 1975.

\bibitem{LindSchmidt1999}
D.~Lind and K.~Schmidt.
\newblock Homoclinic points of algebraic $\mathbb{Z}^d$-actions.
\newblock {\em Journal of the American Mathematical Society}, 12(4):953--980,
  1999.

\bibitem{Lindenstrauss99}
E.~Lindenstrauss.
\newblock Mean dimension, small entropy factors and an embedding theorem.
\newblock {\em Publications Mathématiques de l'IHÉS}, 89(1):227--262, 1999.

\bibitem{lindenstrauss2000mean}
E.~Lindenstrauss and B.~Weiss.
\newblock Mean topological dimension.
\newblock {\em Israel Journal of Mathematics}, 115:1--24, 2000.

\bibitem{liu2022relative}
K.~Liu and R.~Wei.
\newblock Relative uniformly positive entropy of induced amenable group
  actions.
\newblock {\em Ergodic Theory and Dynamical Systems, in press}.

\bibitem{maass2007structure}
A.~Maass and S.~Shao.
\newblock Structure of bounded topological-sequence-entropy minimal systems.
\newblock {\em Journal of the London Mathematical Society}, 76(3):702--718,
  2007.

\bibitem{Meyerovitch2017}
T.~Meyerovitch.
\newblock Pseudo-orbit tracing and algebraic actions of countable amenable
  groups.
\newblock {\em Ergodic Theory and Dynamical Systems}, 39(9):2570--2591, 2019.

\bibitem{moschovakis2009descriptive}
Y.~N. Moschovakis.
\newblock {\em Descriptive Set Theory}.
\newblock Mathematical Surveys and Monographs, 155. American Mathematical
  Society, Providence, RI, 2009.

\bibitem{Ollagnier1985book}
J.~{Moulin Ollagnier}.
\newblock {\em Ergodic Theory and Statistical Mechanics}.
\newblock Lecture Notes in Mathematics, 1115. Springer-Verlag, Berlin, 1985.

\bibitem{nadler1992continuum}
S.~Nadler.
\newblock {\em Continuum Theory. An Introduction}.
\newblock Monographs and Textbooks in Pure and Applied Mathematics, 158. Marcel
  Dekker, New York, 1992.

\bibitem{Passman1985}
D.~S. Passman.
\newblock {\em The Algebraic Structure of Group Rings}.
\newblock Pure and Applied Mathematics. John Wiley and Sons, New
  York-London-Sydney, 1977.

\bibitem{pavlov2018topologically}
R.~Pavlov.
\newblock Topologically completely positive entropy and zero-dimensional
  topologically completely positive entropy.
\newblock {\em Ergodic Theory and Dynamical Systems}, 38(5):1894--1922, 2018.

\bibitem{Pestov2008}
V.~G. Pestov.
\newblock Hyperlinear and sofic groups: a brief guide.
\newblock {\em Bulletin of Symbolic Logic}, 14(4):449--480, 2008.

\bibitem{pollicott1998dynamical}
M.~Pollicott and M.~Yuri.
\newblock {\em Dynamical Systems and Ergodic Theory}.
\newblock London Mathematical Society Student Texts, 40. Cambridge University
  Press, Cambridge, 1998.

\bibitem{qiu2020independence}
J.~Qiu.
\newblock Independence and almost automorphy of higher order.
\newblock {\em Ergodic Theory and Dynamical Systems}, 43(4):1363–1381, 2023.

\bibitem{qiuyu}
J.~Qiu and J.~Yu.
\newblock Saturated theorem along cubes for a measure and applications.
\newblock {\em arXiv:2311.14198}.

\bibitem{rosenthal1978some}
H.~P. Rosenthal.
\newblock Some recent discoveries in the isomorphic theory of {B}anach spaces.
\newblock {\em Bulletin of the American Mathematical Society}, 84(5):803--831,
  1978.

\bibitem{salo2019entropy}
V.~Salo.
\newblock Entropy pair realization.
\newblock {\em Ergodic Theory and Dynamical Systems}, 43(7):2471 -- 2488, 2023.

\bibitem{Schmidt1995}
K.~Schmidt.
\newblock {\em Dynamical Systems of Algebraic Origin}.
\newblock Modern Birkh\"{a}user Classics. Birkh\"{a}user/Springer, Basel, 1995.

\bibitem{shao2012regionally}
S.~Shao and X.~Ye.
\newblock Regionally proximal relation of order $d$ is an equivalence one for
  minimal systems and a combinatorial consequence.
\newblock {\em Advances in Mathematics}, 231(3-4):1786--1817, 2012.

\bibitem{shi2023mean}
R.~Shi and G.~Zhang.
\newblock Mean topological dimension of induced amenable group actions.
\newblock {\em arXiv:2308.03270}.

\bibitem{snoha2020topology}
L.~Snoha, X.~Ye, and R.~Zhang.
\newblock Topology and topological sequence entropy.
\newblock {\em Science China Mathematics}, 63(2):205--296, 2020.

\bibitem{Walters1982}
P.~Walters.
\newblock {\em An Introduction to Ergodic Theory}.
\newblock Graduate Texts in Mathematics, 79. Springer-Verlag, New York-Berlin,
  1982.

\bibitem{weiss2000single}
B.~Weiss.
\newblock {\em Single Orbit Dynamics}.
\newblock CBMS Regional Conference Series in Mathematics, 95. American
  Mathematical Society, Providence, RI, 2000.

\bibitem{westrick2019topological}
L.~Westrick.
\newblock Topological completely positive entropy is no simpler in
  $\mathbb{Z}^2$-{SFT}s.
\newblock {\em arXiv:1904.11444}.

\bibitem{zhang2012local}
G.~Zhang.
\newblock Local variational principle concerning entropy of a sofic group
  action.
\newblock {\em Journal of Functional Analysis}, 262(4):1954--1985, 2012.

\end{thebibliography}

\Addresses

\end{document}